\title{Transseries: Ratios,\\Grids, and Witnesses}
\author{G. A. Edgar}
\date{\today}
\theoremstyle{plain}
\newtheorem{pr}{Proposition}
\newtheorem{thm}[pr]{Theorem}
\newtheorem{co}[pr]{Corollary}
\newtheorem{lem}[pr]{Lemma}
\theoremstyle{remark}
\newtheorem{re}[pr]{Remark}
\newtheorem{de}[pr]{Definition}
\newtheorem{no}[pr]{Notation}
\newtheorem{ex}[pr]{Example}
\newtheorem{qu}[pr]{Question}
\numberwithin{pr}{section}
\newcommand{\Def}[1]{\textbf{\itshape #1}} 
\newcommand{\Esubgrid}{Prop.~3.35}
\newcommand{\Eheightwins}{Prop.~3.72}
\newcommand{\EcomponN}{Prop.~3.98}
\newcommand{\Ederivpropertiesa}{Prop.~3.114(a)}
\newcommand{\Edominates}{Def.~4.12}
\newcommand{\Emudominateprop}{Prop.~4.17}
\newcommand{\Ecostinfixed}{Prop.~4.22}
\newcommand{\Cexponentiality}{Prop.~4.5}
\newcommand{\Cderivcompare}{Prop.~4.10}
\newcommand{\Cmvti}{Prop.~4.12}
\newcommand{\Cinverse}{Prop.~4.20}
\newcommand{\Ctaylorlabel}{5.1}
\newcommand{\Cconverge}{Sec.~6}
\renewcommand{\phi}{\varphi}
\renewcommand{\epsilon}{\varepsilon}
\renewcommand{\emptyset}{\varnothing}
\newcommand{\takes}{\colon}
\newcommand{\fgt}{\succ}
\newcommand{\fst}{\prec}
\newcommand{\fe}{\asymp}
\newcommand{\fgteq}{\succcurlyeq}
\newcommand{\fsteq}{\preccurlyeq}
\newcommand{\SET}[2]{ \left\{\, {#1} : {#2} \,\right\} }
\newcommand{\R}{\mathbb R}
\newcommand{\N}{\mathbb N}
\newcommand{\Z}{\mathbb Z}
\newcommand{\G}{\mathfrak G}
\renewcommand{\AA}{\mathfrak A}
\newcommand{\BB}{\mathfrak B}
\newcommand{\MM}{\mathfrak M}
\newcommand{\GRID}{\mathfrak J}
\newcommand{\SA}{\EuScript A}
\newcommand{\SD}{\EuScript D}
\newcommand{\LP}{\EuScript P}
\renewcommand{\P}{\EuScript P}
\newcommand{\SW}{\EuScript W}
\newcommand{\BF}{\mathbf F}
\newcommand{\T}{\mathbb T}
\newcommand{\TW}[1]{{}^{#1}\T}
\newcommand{\TWG}[2]{{}^{#1}\T^{#2}}
\newcommand{\bk}{\mathbf k}
\newcommand{\bp}{\mathbf p}
\newcommand{\bq}{\mathbf q}
\newcommand{\bm}{\mathbf m}
\newcommand{\bn}{\mathbf n}
\newcommand{\0}{\mathbf 0}
\newcommand{\fa}{\mathfrak a}
\newcommand{\fb}{\mathfrak b}
\newcommand{\g}{\mathfrak g}
\newcommand{\m}{\mathfrak m}
\newcommand{\n}{\mathfrak n}
\renewcommand{\l}{\mathfrak l}
\newcommand{\A}{\mathbf A}
\newcommand{\B}{\mathbf B}
\newcommand{\DD}{\mathbf D}
\newcommand{\Gsmall}{\G^{\mathrm{small}} }
\newcommand{\Glarge}{\G^{\mathrm{large}} }
\newcommand{\supp}{\operatorname{supp}}
\newcommand{\lsupp}{\operatorname{lsupp}}
\newcommand{\tsupp}{\operatorname{tsupp}}
\renewcommand{\mag}{\operatorname{mag}}
\newcommand{\Min}{\operatorname{Min}}
\newcommand{\dom}{\operatorname{dom}}
\newcommand{\sm}{\operatorname{small}}
\newcommand{\la}{\operatorname{large}}
\newcommand{\const}{\operatorname{const}}
\newcommand{\bmu}{{\boldsymbol{\mu}}}
\newcommand{\ebmu}{{\boldsymbol{\mu}}}
\newcommand{\tbmu}{{\widetilde{\bmu}}}
\newcommand{\tebmu}{{\tilde{\ebmu}}}
\newcommand{\ba}{{\boldsymbol{\alpha}}}
\newcommand{\bb}{{\boldsymbol{\beta}}}
\newcommand{\bnu}{{\boldsymbol{\nu}}}
\newlength{\uuu}
\newcommand{\lbb}{\begin{picture}(8,8)(-2,2)
	\put(0,0){\line(0,1){9}}
	\put(2,0){\line(0,1){9}}
	\put(0,0){\line(1,0){5}}
	\put(0,9){\line(1,0){5}}
	\end{picture}}
\newcommand{\rbb}{\begin{picture}(7,8)(0,2)
	\put(3,0){\line(0,1){9}}
	\put(5,0){\line(0,1){9}}
	\put(0,0){\line(1,0){5}}
	\put(0,9){\line(1,0){5}}
	\end{picture}}
\newcommand{\lbbb}{\begin{picture}(10,8)(-2,2)
	\put(0,0){\line(0,1){9}}
	\put(2,0){\line(0,1){9}}
	\put(4,0){\line(0,1){9}}
	\put(0,0){\line(1,0){7}}
	\put(0,9){\line(1,0){7}}
	\end{picture}}
\newcommand{\rbbb}{\begin{picture}(9,8)(0,2)
	\put(3,0){\line(0,1){9}}
	\put(5,0){\line(0,1){9}}
	\put(7,0){\line(0,1){9}}
	\put(0,0){\line(1,0){7}}
	\put(0,9){\line(1,0){7}}
	\end{picture}}
\begin{document} 
\maketitle
\setcounter{tocdepth}{4}

\abstract{More remarks and questions on transseries.  In particular
we deal with the system of ratio sets and grids used in the
grid-based formulation of transseries.  This involves
a ``witness'' concept that keeps track of the ratios required
for each computation.  There are, at this stage, questions and
missing proofs in the development.}

\tableofcontents


\section{Introduction}
Most of the definitions and computations with transseries
found in \cite{edgar} (see ``Review'' below)
were done in the ``grid-based'' setting.
But often the use of the ratio set was just a hint or an aside.
Here we will carry out these constructions more completely.

This is also an attempt to derive results in a manner
continuing the elementary approach of \cite{edgar}.  So in
some cases I am attempting alternate proofs for results that already
exist in the literature.

I am using the totally ordered monomial group $\G$.  Maybe
there should be separate consideration of the parts that are
valid for partially ordered (or quasi-ordered) monomial group.
This would be useful if (when) we have to discuss $\R\lbbb x,y \rbbb$.

\subsection*{Review}
The differential field $\T$ of transseries is completely explained in
my recent expository introduction \cite{edgar}.  Other
sources for the definitions are:
\cite{asch}, \cite{costintop}, \cite{DMM}, \cite{hoeven}.
I will generally follow
the notation from \cite{edgar}.
Write $\LP = \SET{S \in \T}{S \fgt 1, S > 0}$ for the set of
large positive transseries. The operation of
composition $T \circ S$ is defined for $T \in \T$, $S \in \LP$.
The set $\LP$ is a group under composition
(\cite[\S~5.4.1]{hoeven}, \cite[Cor.~6.25]{DMM},
\cite[\Cinverse]{edgarc}.
Both notations $T\circ S$ and $T(S)$ will be used.

We write $\G$ for the ordered group of transmonomials.
We write $\G_{N,M}$ for the transmonomials with
exponential height $N$ and logarithmic depth $M$.
We write $\G_N$ for the log-free transmonomials with height $N$.
Let $\l_m = \log\log \cdots \log x$ with $m$ logarithms.
A ratio set $\bmu$ is a finite subset of $\Gsmall$;
$\GRID^\ebmu$ is the group generated by $\bmu$.
If $\bmu = \{\mu_1,\cdots,\mu_n\}$, then
$\GRID^\ebmu = \SET{\bmu^\bk}{\bk \in \Z^n}$.
If $\bm \in \Z^n$, then
$\GRID^{\ebmu,\bm} = \SET{\bmu^\bk}{\bk \in \Z^n, \bk \ge \bm}$
is a grid.  A grid-based transseries is supported
by some grid.  A subgrid is a subset of a grid.
If $T \in \T = \R\lbb\G\rbb$, then
the support $\supp T$ is a subgrid.

Recall \cite{edgar} some of the reasons for using
the grid-based field $\R\lbb\MM\rbb$ instead of the
full well-based Hahn field $\R[[\MM]]$:
\begin{enumerate}
\item[(i)] The finite ratio set is conducive to computer
calculations.
\item[(ii)] Problems from analysis almost always
have solutions in this smaller system.
\item[(iii)] Some proofs and formulations of definitions
are simpler in one system than in the other.
\item[(iv)] Perhaps (?) the analysis used for \'Ecalle--Borel
convergence can be applied only to grid-based series.
\item[(v)] In the well-based case, the domain of $\exp$
cannot be all of $\R[[\MM]]$.
\item[(vi)] The grid-based ordered set $\R\lbb \MM \rbb$ is a
``Borel order,'' but the well-based ordered set $\R[[\MM]]$ is not.
\end{enumerate}

\section{Framework}
When $A = L + c + S$ with $L$ purely large, $c \in \R$, $S$ small,
write $L = \la A$, $c = \const A$, and $S = \sm A$.
For $\AA,\BB \subseteq \G$, write
$\AA\BB = \SET{\fa\fb}{\fa\in \AA, \fb\in \BB}$.  And for $\g \in \G$, write
$\g \BB := \{\g\}\BB = \SET{\g\fb}{\fb\in \BB}$.

\begin{re}
For $\g \in \G$ and $A \in \T$, we have
$\supp(\g A) = \g\supp A$.  But for $A,B \in \T$,
we have only
$\supp(A B) \subseteq \supp A \supp B$, and not necessarily
equality, because of possible cancellation.  If all coefficients
are $\ge 0$ then there is no cancellation.
\end{re}

Let $\bmu = \{\mu_1,\cdots,\mu_n\} \subset \Gsmall$ be a ratio set.
Write $\bmu^*$ for the set of words and $\bmu^+$ the set
of nonempty words over $\bmu$ (the monoid and semigroup,
respectively, generated by $\bmu$).  That is,
$$
	\bmu^* = \SET{\bmu^\bk}{\bk \in \Z^n, \bk \ge \0},
	\qquad
	\bmu^+ = \SET{\bmu^\bk}{\bk \in \Z^n, \bk > \0}.
$$
Empty-set conventions say: $\emptyset^* = \{1\}$ and
$\emptyset^+ = \emptyset$.
The grids $\GRID^{\ebmu,\bm}$ may then be written
$\GRID^{\ebmu,\bm} = \bmu^\bm\bmu^*$.
In \cite{hoeven}, the definition of \Def{grid} is
more general: a set of the form $\g\bmu^*$.  But in this totally ordered
setting, we have the following.

\begin{pr}\label{griddef}
Let $\g \in \G$ and let $\ba \subset \Gsmall$ be finite.
Then there is finite $\bmu \subset \Gsmall$
and $\bm \in \Z^n$ such that $\g\ba^* \subseteq \GRID^{\ebmu,\bm}$.
\end{pr}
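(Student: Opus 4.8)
The plan is to treat the only genuine obstruction here — that the translate $\g$ need not itself be small — by adjoining $\g$, or its inverse, to the ratio set, at the price of allowing $\bm$ to have a negative coordinate (so that the ``top monomial'' $\bmu^{\bm}$ of the grid is permitted to be large). Write $\ba=\{\alpha_1,\dots,\alpha_k\}$. First I would dispose of the case $\g\fsteq 1$: if $\g=1$ then $\g\ba^*=\ba^*$, so $\bmu=\ba$ and $\bm=\0$ work (the conventions $\emptyset^*=\{1\}$ cover $\ba=\emptyset$); and if $\g\fst 1$ then $\g\in\Gsmall$, so I put $\bmu=\ba\cup\{\g\}$, enumerate it as $\{\mu_1,\dots,\mu_n\}$, and observe that since $\ba^{\bk}\in\bmu^*$ for every $\bk\ge\0$ and $\g\in\bmu$, each element $\g\,\ba^{\bk}$ of $\g\ba^*$ has the form $\bmu^{\bp}$ with $\bp\in\Z^n$, $\bp\ge\0$; hence $\g\ba^*\subseteq\bmu^{\0}\bmu^*=\GRID^{\ebmu,\0}$.

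The case $\g\fgt 1$ is where the actual content lies. Because $\G$ is totally ordered, $\g\fgt 1$ forces $\g^{-1}\fst 1$, so $\g^{-1}\in\Gsmall$ and I may take $\bmu=\ba\cup\{\g^{-1}\}$. If $\g^{-1}\notin\ba$, enumerate $\bmu=\{\mu_1,\dots,\mu_n\}$ with $\mu_i=\alpha_i$ for $i\le k$ and $\mu_n=\g^{-1}$ (so $n=k+1$); then $\g=\mu_n^{-1}$, so for $\bk\ge\0$ the monomial $\g\,\ba^{\bk}$ equals $\bmu^{\bp}$ with $\bp=(k_1,\dots,k_k,-1)$, whence $\bp\ge\bm$ for $\bm:=(0,\dots,0,-1)\in\Z^n$; thus $\g\ba^*\subseteq\GRID^{\ebmu,\bm}$. (If instead $\g^{-1}=\alpha_{i_0}\in\ba$ already, take $\bmu=\ba$ and the same computation gives $\g\,\ba^{\bk}=\bmu^{\bk-\mathbf e_{i_0}}\ge -\mathbf e_{i_0}=:\bm$.)

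I do not expect a real obstacle: the single idea required is that a ``generalized grid'' $\g\bmu^*$ in the sense of \cite{hoeven}, even with a large translate $\g$, still fits inside a grid $\GRID^{\ebmu',\bm'}$ in the restricted sense used in the excerpt, provided $\bmu'$ is allowed to absorb $\g^{-1}$ and $\bm'$ is allowed a negative entry; everything else is routine bookkeeping about which coordinate of $\Z^{n}$ indexes the adjoined generator. One could fold all three cases into a single construction — always set $\bmu=\ba\cup\{\g^{\pm1}\}$, discarding $\g^{\pm1}$ when it equals $1$ — but keeping the cases separate keeps the index arithmetic transparent.
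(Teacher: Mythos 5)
Your proof is correct and is essentially the paper's own argument: adjoin $\g$ (if small) or $\g^{-1}$ (if large) to $\ba$ and shift $\bm$ accordingly, with the trivial case $\g=1$ handled separately. The only cosmetic difference is that in the case $\g\fst 1$ you take $\bm=\0$ while the paper takes $\bm$ with a single component equal to $1$ so that $\bmu^{\bm}=\g$; both choices work.
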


So, \Def{subgrid} (that is, a subset of some grid) has the same meaning for
each of the two definitions of ``grid.''

\begin{proof}[Proof of Proposition~\ref{griddef}]
If $\g = 1$, let $\bmu = \ba$ and $\bm = 0$ so that $\bmu^\bm = \g$.
If $\g \fst 1$,
let $\bmu = \ba \cup \{\g\}$, and let $\bm$ have a single nonzero component
$1$ so that $\bmu^\bm = \g$.  If $\g \fgt 1$,
let $\bmu = \ba \cup \{\g^{-1}\}$, and let $\bm$ have a single nonzero
component $-1$ so that $\bmu^\bm = \g$.
\end{proof}

When he allows a partially ordered $\G$, van der Hoeven \cite{hoeven} defines
a \Def{grid} as a finite
union of sets of the form $\g\bmu^*$.  But
in our (totally ordered) case
that is taken care of by the following.

\begin{pr}
Given any two grids, there is a third grid that contains them both.
\end{pr}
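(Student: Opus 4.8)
The plan is to enlarge the two ratio sets to a common one; this turns both grids into grids over the \emph{same} ratio set, and for those the claim is immediate by taking a componentwise minimum of the two base exponent vectors.

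First I would write the given grids as $\GRID^{\ebmu,\bm} = \bmu^\bm\bmu^*$ and $\GRID^{\bnu,\bn} = \bnu^\bn\bnu^*$ with $\bmu,\bnu\subset\Gsmall$ finite. (Should a grid instead be presented in van der Hoeven's single-piece form $\g\ba^*$, Proposition~\ref{griddef} first encloses it in one of the above shape, so this costs nothing.) Put $\bs := \bmu\cup\bnu$, still a finite subset of $\Gsmall$; say it has $r$ elements. Since $\bmu\subseteq\bs$ and $\bnu\subseteq\bs$, every word over $\bmu$ or over $\bnu$ is a word over $\bs$, so $\bmu^*\subseteq\bs^*$ and $\bnu^*\subseteq\bs^*$.

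The one genuine step is the next one: $\bmu^\bm$ is a product of integer powers of elements of $\bs$, hence lies in the group $\GRID^{\bs}$ generated by $\bs$, so $\bmu^\bm = \bs^\ba$ for some $\ba\in\Z^r$; likewise $\bnu^\bn = \bs^\bb$ for some $\bb\in\Z^r$. (When $\bmu$ and $\bnu$ overlap, $\ba$ and $\bb$ are simply the original exponent vectors padded with zeros in the extra coordinates; nothing requires the elements of $\bs$ to be multiplicatively independent, and indeed the total order on $\G$ plays no role in this argument.) Combining with the inclusions above,
$$
\GRID^{\ebmu,\bm} = \bmu^\bm\bmu^* \subseteq \bs^\ba\bs^* = \GRID^{\bs,\ba}
\qquad\text{and}\qquad
\GRID^{\bnu,\bn} = \bnu^\bn\bnu^* \subseteq \bs^\bb\bs^* = \GRID^{\bs,\bb},
$$
so it suffices to find a single grid over $\bs$ containing both $\GRID^{\bs,\ba}$ and $\GRID^{\bs,\bb}$.

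For that, let $\bc\in\Z^r$ be the componentwise minimum of $\ba$ and $\bb$. If $\bk\ge\ba$ then $\bk\ge\bc$, whence $\GRID^{\bs,\ba} = \SET{\bs^\bk}{\bk\in\Z^r,\bk\ge\ba} \subseteq \SET{\bs^\bk}{\bk\in\Z^r,\bk\ge\bc} = \GRID^{\bs,\bc}$, and similarly $\GRID^{\bs,\bb}\subseteq\GRID^{\bs,\bc}$; so $\GRID^{\bs,\bc}$ is the desired grid. I do not expect a real obstacle here: the only matters needing a little care are the indexing bookkeeping when the two ratio sets overlap, and the appeal to Proposition~\ref{griddef} to normalize the inputs. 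Finally, an obvious induction upgrades ``two grids'' to ``any finite union of grids,'' which is exactly what makes the notion of \emph{subgrid} insensitive to the precise definition of ``grid.''
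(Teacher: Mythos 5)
Your proof is correct and follows essentially the same route as the paper's: take the union of the two ratio sets, pad the base exponent vectors with zeros, and pass to the componentwise minimum. The extra care you give to overlapping ratio sets and to normalizing a grid given in the form $\g\ba^*$ via Proposition~\ref{griddef} is sound but not needed beyond what the paper already records.
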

\begin{proof}
(\cite[Lemma~7.8]{DMM}.)
Let $\GRID^{\bmu,\bm}$ and $\GRID^{\bnu,\bn}$ be grids.
If we define $\ba = \bmu \cup \bnu$ and extend $\bm$ and $\bn$
with $0$s in the new components, the two grids are contained
respectively in $\GRID^{\ba,\bm}$ and $\GRID^{\ba,\bn}$.
Then, let $\bp$ be the componentwise minimum of $\bm$ and $\bn$,
so that both of these grids are contained in $\GRID^{\ba,\bp}$.
\end{proof}

The partial well order property of $\Z^n$ is
used for the next result.  This result turns out to be very useful.
It looks simple (and it is), but it is essential for the
theory.  (The name will be explained below.)

\begin{thm}[Subgrid Witness Theorem]\label{sgwt}
Let $\AA \subseteq \GRID^{\ebmu,\bm}$ be a nonempty subgrid.
Let $\g = \max \AA$.  Then there is a ratio set
$\ba \subset \GRID^\ebmu$
such that $\AA \subseteq \g\ba^*$.
\end{thm}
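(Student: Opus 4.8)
The plan is to convert the multiplicative statement into one about exponent vectors in $\N^n$ and then apply the partial well-order property of $\Z^n$. Begin with the trivial but crucial reduction. Since $\g=\max\AA$ actually belongs to $\AA$, every $\fa\in\AA$ satisfies $\fa\fsteq\g$, hence $\g^{-1}\fa\fsteq 1$, i.e. $\g^{-1}\fa\in\Gsmall\cup\{1\}$; and since $\AA\subseteq\GRID^{\ebmu,\bm}\subseteq\GRID^\ebmu$ and $\g\in\GRID^\ebmu$, also $\g^{-1}\fa\in\GRID^\ebmu$. So $\g^{-1}\AA\subseteq\GRID^\ebmu\cap(\Gsmall\cup\{1\})$, and it suffices to produce a finite $\ba\subset\Gsmall$ with $\ba\subset\GRID^\ebmu$ and $\g^{-1}\AA\subseteq\ba^*$; then $\AA\subseteq\g\ba^*$.

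Next, lift to exponents. Writing $\bmu=\{\mu_1,\dots,\mu_n\}$, set
\[
K=\SET{\bk\in\Z^n}{\bk\ge\bm\text{ and }\bmu^\bk\in\AA}.
\]
Since every $\fa\in\AA\subseteq\GRID^{\ebmu,\bm}$ is $\bmu^\bk$ for some $\bk\ge\bm$, we have $\AA=\SET{\bmu^\bk}{\bk\in K}$ (the non-uniqueness of the exponent vector is harmless — we only need one valid vector per element of $\AA$). Now $K$ lies in the translate $\bm+\N^n$ of $\N^n$, so by the partial well-order property of $\Z^n$ (Dickson's lemma) it has only finitely many minimal elements $\bk_1,\dots,\bk_r$, and every $\bk\in K$ dominates some $\bk_j$.

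Finally, exhibit the witness. Put $\delta_j=\g^{-1}\bmu^{\bk_j}$ for $1\le j\le r$; each $\delta_j\in\GRID^\ebmu$, and $\delta_j\fsteq 1$ because $\bmu^{\bk_j}\in\AA$. Let
\[
\ba=\{\mu_1,\dots,\mu_n\}\cup\SET{\delta_j}{1\le j\le r,\ \delta_j\ne 1},
\]
a finite subset of $\Gsmall$ contained in $\GRID^\ebmu$. Given $\fa\in\AA$, choose $\bk\in K$ with $\bmu^\bk=\fa$ and $j$ with $\bk\ge\bk_j$; then $\bk-\bk_j\ge\0$ and $\g^{-1}\fa=\delta_j\,\bmu^{\bk-\bk_j}$, where $\bmu^{\bk-\bk_j}$ is a word over $\bmu$, hence over $\ba$, and prepending $\delta_j$ (which is in $\ba$, or equals $1$) keeps it a word over $\ba$. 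Thus $\g^{-1}\AA\subseteq\ba^*$, as required.

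The only genuine ingredient is Dickson's lemma; the one place to watch is the exponent bookkeeping, since the $\mu_i$ may be multiplicatively dependent, which is exactly why I work with the full set $K$ of valid exponent vectors rather than trying to choose a canonical one. The hypothesis that $\max\AA$ exists is used only to guarantee $\g^{-1}\fa\fsteq 1$, and is itself part of the standard fact that a nonempty subgrid is reverse-well-ordered.
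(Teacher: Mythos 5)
Your proof is correct and follows essentially the same route as the paper's: both pass to the exponent set $\BF=\SET{\bk\ge\bm}{\bmu^\bk\in\AA}$, invoke the finiteness of its set of minimal elements, and take $\ba$ to be $\bmu$ together with the ratios $\bmu^{\bk_j}/\g$ for the minimal $\bk_j$ (discarding the trivial ratio $1$). Your extra remarks on non-uniqueness of exponent vectors and on why each $\delta_j$ is small are sound and merely make explicit what the paper leaves implicit.
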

\begin{proof}
(See \cite[4.198]{costinasymptotics},
\cite[Lemma~7.8]{DMM},
\cite[Proposition~2.1]{hoeven}.)
Let $\BF := \SET{\bk \in \Z^n}{\bk \ge \bm, \bmu^\bk \in \AA}$.  Then
the set $\Min \BF$ of minimal elements of $\BF$ is finite.
Now $\g = \max\AA$ so $\g = \bmu^\bp$ for some $\bp \in \Min \BF$.  Let
$$
	\ba := \bmu \cup \SET{\bmu^\bk/\g}{\bk \in \Min F, \bmu^\bk \ne \g} .
$$
[So $\ba$ consists of $\bmu$ together with a finite number
of additional monomials, all elements of $\GRID^\ebmu$.]
We claim $\AA \subseteq \g\ba^*$.
Indeed, let $\n \in \AA$, say $\n = \bmu^\bn$ where $\bn \in \BF$.
Then there is $\bk \in \Min \BF$ so that $\bk \le \bn$.
Now $\bmu^\bk/\g \in \ba$, so $\bmu^\bk \in \g\ba \subseteq \g\ba^*$.
And $\n/\bmu^\bk = \bmu^{\bn-\bk} \in \bmu^* \subseteq \ba^*$.  So
$\n \in \g\ba^*\ba^* = \g\ba^*$.
\end{proof}

\subsection*{Order, Far Larger}
Let $\bmu \subset \Gsmall$ be a (finite) ratio set.
Let $\m, \n \in \G$.  Then:
$$
	\m \fsteq^\ebmu \n \Longleftrightarrow \m/\n \in \bmu^* ,
	\qquad
	\m \fst^\ebmu \n \Longleftrightarrow \m/\n \in \bmu^+ .
$$
We may rephrase this:
$$
	\m \fsteq^\ebmu \n \Longleftrightarrow \m \in \n\bmu^* ,
	\qquad
	\m \fst^\ebmu \n \Longleftrightarrow \m \in \n\bmu^+ .
$$
Of course $\m \fst \n$ if and only if there exists $\bmu$
such that $\m \fst^\ebmu \n$.

Let $\AA,\BB \subseteq \G$ be two sets.
Then \cite[\Edominates]{edgar} we say $\AA \fst^\ebmu \BB$ iff:
for every $\fa \in \AA$ there exists $\fb \in \BB$ with $\fa\fst^\ebmu \fb$,
and we say $\BB$ \Def{$\bmu$-dominates} $\AA$.  So
\begin{equation*}
	\AA \fst^\ebmu \BB \Longleftrightarrow \AA \subseteq \BB\,\bmu^+ .
\end{equation*}
Similarly, define:
\begin{align*}
	\AA \fsteq^\ebmu \BB \Longleftrightarrow \AA \subseteq \BB\,\bmu^* ,
	\qquad
	\AA \fe^\ebmu \BB \Longleftrightarrow \AA\,\bmu^* = \BB\,\bmu^* .
\end{align*}

The corresponding non-generator definition could be: $\AA \fst \BB$ iff
for every $\fa \in \AA$ there exists $\fb \in \BB$ with $\fa\fst \fb$.
Of course $\AA \fst^\ebmu \BB \Longrightarrow \AA \fst \BB$.  But:

\begin{ex}
Let $\AA=\{x^{-1/2}, x^{-2/3}, x^{-3/4}, x^{-4/5},\cdots\}$,
$\BB = \{1\}$.  Then $\AA \fst \BB$, but
there is no finite $\bmu \subset \Gsmall$
such that $\AA \fst^\ebmu \BB$.
\end{ex}

Let $A,B \in \T$.  Then we say
$A \fst^\ebmu B$ iff $\supp A \fst^\ebmu \supp B$; we say
$A \fsteq^\ebmu B$ iff $\supp A \fsteq^\ebmu \supp B$; we say
$A \fe^\ebmu B$ iff $\supp A \fe^\ebmu \supp B$.

\begin{pr}\label{pwitness}
Let $A,B \in \T$.  Then:
$A \fst B$ if and only if there exists $\bmu$ such that
$A \fst^\ebmu B$.
\end{pr}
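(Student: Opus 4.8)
The plan is to prove the two implications separately; the ``if'' direction is immediate, and the ``only if'' direction is where the Subgrid Witness Theorem enters.

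For ``if'': suppose $A \fst^\ebmu B$ for some ratio set $\bmu$, i.e.\ $\supp A \fst^\ebmu \supp B$. The text already records that $\AA \fst^\ebmu \BB \Rightarrow \AA \fst \BB$, so $\supp A \fst \supp B$. If $A = 0$ there is nothing to check; otherwise $\mag A = \max\supp A$ lies in $\supp A$, so some $\fb \in \supp B$ has $\mag A \fst \fb$, and since $\fb \fsteq \max\supp B = \mag B$ we get $\mag A \fst \mag B$, hence $A \fst B$. (This step only invokes the standard facts from \cite{edgar}: $A \fst B \iff \mag A \fst \mag B$ for $B \ne 0$, and $A \fst 0 \iff A = 0$.)

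For ``only if'': assume $A \fst B$. If $A = 0$ then $\supp A = \emptyset \subseteq \supp B\,\bmu^+$ for the empty ratio set, so $A \fst^\ebmu B$ and we are done; so assume $A \ne 0$, whence $B \ne 0$. Write $\g := \mag A$ and $\h := \mag B$; then $\g \fst \h$, so $\g/\h \in \Gsmall$. Since $\supp A$ is a nonempty subgrid with $\max\supp A = \g$, the Subgrid Witness Theorem (Theorem~\ref{sgwt}) produces a ratio set $\ba \subset \Gsmall$ with $\supp A \subseteq \g\,\ba^*$. Put $\bmu := \ba \cup \{\g/\h\}$, still a finite subset of $\Gsmall$ and so a ratio set. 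Then
\[
  \supp A \;\subseteq\; \g\,\ba^* \;=\; \h\cdot(\g/\h)\cdot\ba^*,
\]
and each element of $(\g/\h)\,\ba^*$ is the product of the generator $\g/\h \in \bmu$ with a word over $\ba \subseteq \bmu$, hence a nonempty word over $\bmu$ (its exponent vector is $\ge \0$ with positive $(\g/\h)$-component); thus $(\g/\h)\,\ba^* \subseteq \bmu^+$, giving $\supp A \subseteq \h\,\bmu^+ \subseteq \supp B\,\bmu^+$ since $\h \in \supp B$. Therefore $A \fst^\ebmu B$.

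I do not anticipate a genuine obstacle: the real work — packaging the subgrid $\supp A$ into a finite witness set anchored at its dominant monomial $\g$ — is exactly the content of the Subgrid Witness Theorem, and what remains is the routine move of re-anchoring at $\h = \mag B$ by adjoining the single extra ratio $\g/\h$ to the generators and checking that the resulting exponent vectors are nonempty words. The one thing to watch is to clear the degenerate cases $A = 0$ and $B = 0$ at the outset, so that $\mag A$ and $\mag B$ exist and the inequality $\g \fst \h$ is strict (so that $\g/\h$ really is a small monomial and may legitimately sit in a ratio set).
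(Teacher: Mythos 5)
Your proposal is correct and follows essentially the same route as the paper: the paper's proof also applies the Subgrid Witness Theorem to get $\ba$ with $A \fsteq^\ba \mag A$, witnesses $\mag A \fst \mag B$ by the singleton $\{\mag A/\mag B\}$, and takes the union of the two ratio sets. Your version merely spells out the verification that the union works and handles the degenerate cases and the easy direction explicitly.
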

\begin{proof}
The grid-based definitions must be used:
By Theorem~\ref{sgwt} there is $\ba$ such that
$A \fsteq^\ba \mag A$.  
And $\mag A \fst \mag B$,
so there is $\bb$
such that $\mag A \fst^\bb \mag B$.  Taking the union
$\bmu = \ba \cup \bb$, we get
$A \fst^\ebmu B$.
\end{proof}

\subsection*{Witnesses and Generators}

If $A \fst^\ebmu B$, we may say that $\bmu$ is a \Def{witness}
for $A \fst B$, or that $\bmu$ \Def{witnesses} $A \fst B$.
A given pair $A,B$ may of course have many
different witnesses.  If $\m,\n \in \G$ and $\m \fst\n$,
then it is witnessed by the singleton $\{\m/\n\}$.
Similarly, if $A \fsteq^\ebmu B$, we say $\bmu$ is
a witness for $A \fsteq B$; if $A \fe^\ebmu B$, we
say $\bmu$ is a witness for $A \fe B$.

For some purposes (such as computer algebra calculation) it may be
desirable to provide a witness for every assertion
$A \fst B$.  In \cite{edgar} we talked of keeping track
of generators, and providing addenda for the set
of generators.  Here, we will be doing this more
systematically.

Other ``witness'' terminology:  If $\AA \subseteq \G$ is a
subgrid, we say that $\ba$ is a \Def{witness} for $\AA$ if
$\AA \subseteq \g\ba^*$, where $\g = \max \AA$.
Theorem~\ref{sgwt} says that every subgrid has a witness.
(And of course this is the reason
we call it the Subgrid Witness Theorem.)
If $T \in \T$, then we say that $\ba$ is a
\Def{witness} for $T$ if $\ba$ is a witness for $\supp T$
in the sense just defined.
Thus: if $T \ne 0$,
then $\ba$ is a witness for $T$ if and only if
$\ba$ is a witness for $T \fsteq \mag T$.
That is, $\ba$ is a witness for $a\g(1+S)$
[where $a \in \R$, $a \ne 0$, $\g \in \G$, $S \in \T$, $S \fst 1$]
iff $S \fst^\ba 1$.  Given $\bmu$
with $\supp T \subseteq\GRID^{\bmu,\bm}$, to produce
a witness for $T$ we may need to augment $\bmu$ with
a smallness addendum for $S$.  Also note the extreme case:
if $\AA = \{\g\}$ is a singleton, then $\emptyset$ witnesses $\AA$.

For a subgrid $\AA \subseteq \G$
we will say $\bmu$ \Def{generates} $\AA$ iff $\AA \subseteq \GRID^{\ebmu,\bm}$
for some $\bm$.  And for a transseries $A$ we will say $\bmu$ \Def{generates}
$A$ iff $\bmu$ generates $\supp A$.  There are two conditions:
\begin{enumerate}
\item[({i})] $\bmu$ generates $A$
\item[({ii})] $\bmu$ witnesses $A$
\end{enumerate}
They are
related but not the same.  If $\bmu$ witnesses $A$, we may need
to add a generator for the monomial $\mag A$ to get a generator
for $A$.  On the other hand,
the usual example $1+xe^{-x}$ is generated by $\{x^{-1}, e^{-x}\}$
but not witnessed by it.  A witness can be obtained using a smallness
addendum $xe^{-x}$.

\begin{no}
$\T^\bb$ denotes the set of transseries generated by $\bb$;
$\TW\ba$ denotes the set of transseries witnessed by $\ba$;
$\TWG\ba\bb$ denotes the set of transseries witnessed by $\ba$
and generated by $\bb$.
\end{no}

\begin{re}
\textit{Closure properties.} (See Section~\ref{sec:begin}.)
The set $\T^\bb$ is closed under sums and products,
but in general not quotients.  The set $\TW\ba$ is closed under
products and quotients; but in general not sums.  The set $\TWG\ba\bb$
is closed under products, but in general not sums
or quotents.  The set $\TWG\ba\ba$ is closed under
products and quotents, but in general not sums.
\end{re}

\begin{ex}
If $A \sim B$ and $B \fst^\ebmu C$, it need not follow that
$A \fst^\ebmu C$.  For example:
$\ebmu = \{x^{-1},e^{-x}\}$, $A = x^{-1}+xe^{-x}$, $B = x^{-1}$, $C = 1$.
\end{ex}

\begin{pr}\label{wit_EL}
Let $A,B,C \in \T$ and let $\bmu$ be a ratio set.
If $A \sim B$, $B \fst^\ebmu C$ and $\bmu$ witnesses $A$,
then $A \fst^\ebmu C$.
\end{pr}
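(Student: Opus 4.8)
The plan is to unwind all three hypotheses into inclusions of supports inside the group $\GRID^\ebmu$, and then to combine them using the semigroup identity $\bmu^+\bmu^* = \bmu^+$. First dispose of the degenerate case: if $A = 0$ then $\supp A = \emptyset$ and the desired conclusion $\supp A \subseteq (\supp C)\bmu^+$ holds vacuously, so assume $A \ne 0$. Then $B \ne 0$ as well (since $A \sim B$), and $A \sim B$ gives $\mag A = \mag B$; put $\g := \mag A = \mag B \in \G$.

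Now translate the hypotheses. Since $\bmu$ witnesses $A$, the grid-based meaning of ``witness for a transseries'' gives $\supp A \subseteq (\mag A)\bmu^* = \g\bmu^*$ (equivalently, $A \fsteq^\ebmu \mag A$). Since $B \fst^\ebmu C$, by definition $\supp B \subseteq (\supp C)\bmu^+$; applying this to the particular monomial $\g = \mag B \in \supp B$ produces some $\fc \in \supp C$ with $\g/\fc \in \bmu^+$, i.e.\ $\g \in \fc\bmu^+$ (in particular $C \ne 0$). Combining the two inclusions,
$$
\supp A \;\subseteq\; \g\bmu^* \;\subseteq\; (\fc\bmu^+)\bmu^* \;=\; \fc\bigl(\bmu^+\bmu^*\bigr) \;=\; \fc\bmu^+ \;\subseteq\; (\supp C)\bmu^+ ,
$$
where $\bmu^+\bmu^* = \bmu^+$ because appending any word to a nonempty word yields a nonempty word. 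This says exactly $\supp A \fst^\ebmu \supp C$, that is, $A \fst^\ebmu C$, as wanted.

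I do not expect a genuine obstacle here. The only points that need care are: (a) reading ``$\bmu$ witnesses $A$'' in its grid-based form $\supp A \subseteq (\mag A)\bmu^*$ rather than merely as ``$\sm A \fst^\ebmu 1$'' taken in isolation from $\mag A$; and (b) using $A \sim B$ — not just $A \fst^\ebmu B$ or $A \fe^\ebmu B$ — in order to identify $\mag A$ with $\mag B$, which is precisely what lets the single relation $\g \fst^\ebmu \fc$ be spread over all of $\supp A$. The example immediately preceding the statement, where the witness hypothesis is dropped, shows that this hypothesis cannot be omitted.
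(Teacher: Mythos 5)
Your proof is correct and is essentially the paper's own argument: the paper's one-line proof takes $\fa \in \supp A$ and chains $\fa \fsteq^\ebmu \mag A = \mag B \fst^\ebmu C$, which is exactly your inclusion $\supp A \subseteq \g\bmu^* \subseteq \fc\bmu^+\bmu^* = \fc\bmu^+ \subseteq (\supp C)\bmu^+$ written monomial-by-monomial. The extra care you take (the degenerate case, the identity $\bmu^+\bmu^*=\bmu^+$) is sound but adds nothing beyond what the paper leaves implicit.
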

\begin{proof}
Let $\fa \in \supp A$.  Then $\fa \fsteq^\ebmu \mag A = \mag B \fst^\ebmu C$.
\end{proof}

\begin{ex}
If $A \fst^\ebmu B$ and $B \sim C$, it need not follow that
$A \fst^\ebmu C$.  For example:
$\bmu = \{x^{-1},e^{-x}\}$, $A = xe^{-x}$,
$B = 1+x^2e^{-x}$, $C = 1$.
\end{ex}

\begin{pr}\label{wit_LE}
Let $A,B,C \in \T$ and let $\bmu$ be a ratio set.
If $A \fst^\bmu B$, $B \sim C$, and
$\bmu$ witnesses $B$, then $A \fst^\ebmu C$.
\end{pr}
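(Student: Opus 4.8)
The plan is to run essentially the one-line argument of Proposition~\ref{wit_EL}, but now using the witness hypothesis on $B$ to collapse $\supp B$ to the single monomial $\mag B$, and using $B \sim C$ only through the consequence $\mag B = \mag C$.

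First I would dispose of the trivial case: if $A = 0$ then $\supp A = \emptyset$ and $A \fst^\ebmu C$ holds vacuously, so assume $A \ne 0$; note that $B \sim C$ already forces $B \ne 0$ and $C \ne 0$, so $\mag B = \max\supp B$ and $\mag C = \max\supp C$ are defined and lie in $\supp B$ and $\supp C$ respectively. Now I unwind the hypotheses. The relation $A \fst^\ebmu B$ says $\supp A \subseteq \supp B\,\bmu^+$, and ``$\bmu$ witnesses $B$'' says precisely $\supp B \subseteq (\mag B)\,\bmu^*$. Chaining these and using $\bmu^*\bmu^+ = \bmu^+$ gives $\supp A \subseteq (\mag B)\,\bmu^+$.

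Next I would invoke the standard fact that asymptotically equivalent nonzero transseries have the same leading monomial, so $\mag B = \mag C$, whence $\supp A \subseteq (\mag C)\,\bmu^+$. For each $\fa \in \supp A$ this means $\fa/\mag C \in \bmu^+$, i.e. $\fa \fst^\ebmu \mag C$, and $\mag C \in \supp C$; hence $\supp A \fst^\ebmu \supp C$, which is by definition $A \fst^\ebmu C$.

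I do not expect a real obstacle; the one point that must not be fudged is keeping $\mag B$ out of the semigroup part $\bmu^+$: it is essential that the witness hypothesis lands $\supp B$ inside $(\mag B)\,\bmu^*$ rather than merely inside some grid $\GRID^{\bmu,\bm}$, since the Example immediately preceding the statement shows that the weaker assumption ``$\bmu$ generates $B$'' does not suffice. Contrast this with Proposition~\ref{wit_EL}, where the witness hypothesis sits on the source $A$ of the transported relation; here it sits on $B$, the target of $\fst^\ebmu$ before $\sim$ is applied.
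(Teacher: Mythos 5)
Your proof is correct and is essentially the paper's own argument: the paper runs the same chain $\fa \fst^\ebmu \fb \fsteq^\ebmu \mag B = \mag C$ element-wise, which is exactly your set-level inclusion $\supp A \subseteq \supp B\,\bmu^+ \subseteq (\mag B)\bmu^*\bmu^+ = (\mag C)\bmu^+$. The added care about the trivial case and about where the witness hypothesis is used is fine but not a different method.
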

\begin{proof}
Let $\fa \in \supp A$.  Then there is $\fb \in \supp B$
with $\fa \fst^\ebmu \fb \fsteq^\ebmu \mag B = \mag C$.
\end{proof}

A natural partial order for ratio sets is inclusion of the generated
semigroups.  Let $\ba, \bb$ be ratio sets.  The following are equivalent:
\begin{enumerate}
\item[({i})] $\ba^* \supseteq \bb^*$
\item[({ii})] $\ba^+ \supseteq \bb^+$
\item[({iii})] $\ba^* \supseteq \bb$
\item[({iv})] For all $A,B \in \T$, if $A \fst^\bb B$, then $A \fst^\ba B$.
\end{enumerate}

\subsection*{Exponent Subgrids}
\begin{lem}[Support Lemma]\label{supportlemma}
If $U_1, \cdots, U_n \in \R\lbb \G \rbb$, then among the linear
combinations
$$
	\sum_{i=1}^n a_i U_i,\qquad a_i \in \Z
$$
there are only finitely many different magnitudes.
\end{lem}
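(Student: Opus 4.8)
The plan is to prove the formally stronger (and easier) statement that every finite-dimensional subspace of $\R\lbb\G\rbb$ has only finitely many magnitudes, then apply it to $V := \operatorname{span}_\R(U_1,\dots,U_n)$: the magnitudes occurring among nonzero $\Z$-combinations of the $U_i$ form a subset of $\SET{\mag T}{T\in V,\ T\ne 0}$, so finiteness of the latter gives the lemma. I would induct on $d := \dim_\R V$. If $d = 0$ the magnitude set is empty, and if $d = 1$ it is a singleton, so the base cases are immediate.

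For the inductive step, assume not all $U_i$ vanish. Each $\supp U_i$ is a (sub)grid, hence reverse-well-ordered with a greatest element when nonempty, so the union $\AA := \supp U_1 \cup \cdots \cup \supp U_n$ has a greatest element $\g_0 := \max\AA$ (this is exactly the kind of fact — every nonempty subgrid has a maximum — already taken for granted in the statement of Theorem~\ref{sgwt}). Since $\supp T \subseteq \AA$ for every $T \in V$, we have $\mag T \preccurlyeq \g_0$, while $\g_0 \in \supp U_{i_0}$ for some $i_0$. Let $\phi\takes V \to \R$ be the linear functional reading off the coefficient at $\g_0$. Then $\phi(U_{i_0}) \ne 0$, so $\phi$ is not identically zero and $W := \ker\phi$ is a subspace of dimension $d-1$.

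Now split $V\setminus\{0\}$ according to $\phi$. If $\phi(T) \ne 0$, then $\g_0 \in \supp T$ together with $\mag T \preccurlyeq \g_0$ forces $\mag T = \g_0$. If $\phi(T) = 0$, then $T \in W\setminus\{0\}$; choosing any basis of $W$ — finitely many elements of $\R\lbb\G\rbb$ — and applying the inductive hypothesis to them shows $\SET{\mag T}{T\in W,\ T\ne 0}$ is finite. Hence
$$
	\SET{\mag T}{T\in V,\ T\ne 0} \subseteq \{\g_0\}\cup\SET{\mag T}{T\in W,\ T\ne 0}
$$
is finite (indeed of size at most $d$), which closes the induction.

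I do not expect a genuine obstacle: the argument is an induction on dimension driven by the single functional ``coefficient at the top monomial of $\bigcup_i\supp U_i$,'' and the only ingredient beyond linear algebra is that a nonempty subgrid has a maximum, which is already assumed in Theorem~\ref{sgwt}. The one point to keep straight is the gap between the integer and real versions: over $\Z$ fewer cancellations at $\g_0$ are available than over $\R$, so the stated lemma is the weaker of the two, and it is cleanest to establish the real statement and then restrict the scalars.
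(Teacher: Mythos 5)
Your proof is correct, but it takes a genuinely different route from the paper's. The paper argues by contradiction: if $V_1,\dots,V_{n+1}$ were linear combinations with $\mag V_1 > \mag V_2 > \cdots > \mag V_{n+1}$, then linear dependence of $n+1$ vectors in a space of dimension at most $n$ forces some $V_k$ into the span of $V_{k+1},\dots,V_{n+1}$, and the non-archimedean estimate $\mag\bigl(\sum_i c_i W_i\bigr) \le \max_i \mag W_i$ contradicts the strict descent. That argument uses nothing about supports beyond this ultrametric inequality for $\mag$; in particular it never needs the existence of $\max\bigl(\supp U_1 \cup \cdots \cup \supp U_n\bigr)$. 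Your induction on $\dim V$ instead peels off the top monomial $\g_0$ of the combined support via the coefficient functional $\phi$, at the cost of that one extra structural input (a nonempty subgrid --- or any reverse-well-ordered support --- has a maximum), which is harmless here and is indeed taken for granted elsewhere in the paper, e.g.\ in Theorem~\ref{sgwt}. Both arguments yield the same quantitative bound of at most $\dim V \le n$ magnitudes; yours has the mild advantage of exhibiting the magnitudes explicitly as the successive top monomials of the nested kernels, while the paper's is shorter. Your closing remark about $\Z$ versus $\R$ scalars matches the paper exactly: it too proves the real-coefficient statement and restricts.
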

\begin{proof}
There are at most $n$ different magnitudes among the
real linear combinations of $U_1,\cdots,U_n$.
Indeed, the set of real linear
combinations has dimension at most $n$.  If possible,
let $V_1, \cdots, V_{n+1}$ be linear combinations
of $U_1,\cdots,U_n$ with
$\mag V_1 > \mag V_2 > \dots > \mag V_{n+1}$.  Then,
since they are linearly dependent, there is some $k$
such that $V_k$ belongs to the linear span of
$\{V_{k+1},\cdots,V_{n+1}\}$.  But then
$\mag V_k \le \max\{\mag V_{k+1},\cdots,\mag V_{n+1}\}$,
a contradiction.
\end{proof}

\begin{lem}\label{expsubgrid}
Let $\AA \subseteq \G$ be a subgrid.  Let
$\AA_1 := \bigcup \supp L$ where the union is over
all $L$ such that $e^L \in \AA$.  Then $\AA_1 \subset \Glarge$
is also a subgrid.
\end{lem}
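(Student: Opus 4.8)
The plan is to use the Subgrid Witness Theorem (Theorem~\ref{sgwt}) to reduce to a single witness, and then to use the Support Lemma (Lemma~\ref{supportlemma}) to control the supports of the exponents. First I would observe that since $\AA$ is a subgrid, Theorem~\ref{sgwt} gives a ratio set $\ba = \{\mu_1,\dots,\mu_n\}$ and a monomial $\g_0 = \max\AA$ with $\AA \subseteq \g_0\ba^*$. Every monomial of the form $e^L$ lying in $\AA$ is therefore $e^L = \g_0\ba^\bk$ for some $\bk \in \N^n$. Writing $\g_0 = e^{L_0}$ and $\mu_i = e^{M_i}$ (recall $\mu_i \in \Gsmall$, so $M_i$ is purely large and negative, hence each $M_i \in \Glarge \cup\{?\}$; in fact $\supp M_i \subset \Glarge$), we get $L = L_0 + \sum_i k_i M_i$, up to the usual ``plus an integer'' ambiguity in choosing $L$ — but since $e^L \in \G$ means $L$ has no constant term, $L$ is determined and equals $\la(L_0 + \sum k_i M_i)$, the purely large part. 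So $\supp L \subseteq \supp L_0 \cup \bigcup_i \supp M_i$.

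Next I would bound $\AA_1 = \bigcup_L \supp L$. From the previous paragraph, $\supp L \subseteq \supp L_0 \cup \bigcup_{i=1}^n \supp M_i$ for \emph{every} admissible $L$, and this is a fixed finite union of supports, each of which is itself a subgrid (support of a transseries). Hence $\AA_1$ is contained in a finite union of subgrids. By the proposition that any two grids are contained in a common grid (applied inductively), that finite union is contained in a single grid, so $\AA_1$ is a subgrid. Finally, $\AA_1 \subset \Glarge$ because each $L$ (and each $L_0$, $M_i$) is purely large, so every element of its support is a large monomial.

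The main obstacle — really the only point needing care — is the ``$+\Z$'' ambiguity: an element $\m \in \G$ of exponential height $\ge 1$ can be written $\m = e^L$ only for the unique purely large $L$, but when we factor $\m = \g_0\ba^\bk$ and expand logarithmically, $\log\g_0 + \sum k_i\log\mu_i$ may acquire a real constant term that must be discarded. I would handle this by noting $\la$ is additive on the relevant sums (constants just get dropped) and $\supp(\la W) \subseteq \supp W$, so discarding constants only shrinks supports and the containment $\supp L \subseteq \supp L_0 \cup \bigcup_i \supp M_i$ survives. A secondary small point is that the Support Lemma as stated is about integer linear combinations of finitely many $U_i$ having finitely many magnitudes; here I actually want the stronger structural conclusion that $\bigcup \supp(\sum k_i U_i)$ over all $\bk \in \N^n$ is a subgrid, which follows directly from $\supp(\sum k_i U_i) \subseteq \bigcup_i \supp U_i$ without even invoking the lemma — so strictly speaking the Support Lemma is not needed for this argument, though it is morally the relevant tool.
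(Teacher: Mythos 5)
Your proof is correct and is essentially the paper's own argument: write the generating ratios as $e^{L_i}$ with $L_i$ purely large, observe that every admissible $L$ is an integer combination of the $L_i$ (plus your $L_0$), so every $\supp L$ lies in the fixed finite union $\supp L_0\cup\bigcup_i\supp L_i$, which is a finite union of subgrids and hence a subgrid contained in $\Glarge$. The only differences are cosmetic: the paper invokes the definition of subgrid ($\AA\subseteq\GRID^{\ebmu,\bm}$) rather than the Subgrid Witness Theorem, and your constant-term worry is moot because a $\Z$-linear combination of purely large transseries is again purely large (you are also right that the Support Lemma is not needed; the paper does not use it here either).
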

\begin{proof}
There is $\bmu = \{\mu_1,\cdots,\mu_n\}$ and $\bm \in \Z^n$
with $\AA \subseteq \GRID^{\ebmu,\bm}$.  Write
$\mu_i = e^{L_i}$, where $L_i \in \R\lbb\G\rbb$
is purely large.  Then for any $e^L \in \AA$,
the logarithm $L$ belongs to
$\SW := \SET{\sum_{i=1}^n p_i L_i}{\bp \in \Z^n}$.
So
$$
	\bigcup_{L \in \SW} \supp L \subseteq \bigcup_{i=1}^n \supp L_i
$$
is contained in a finite union of subgrids and is therefore a subgrid itself.
\end{proof}

\begin{de}
Call $\AA_1$ the \Def{exponent subgrid} of $\AA$.
\end{de}

There is a variant for use with log-free transseries and subgrids.

\begin{lem}\label{lfexpsubgrid}
Let $\AA \subseteq \G_\bullet$ be a subgrid.  Let
$\AA_1 := \bigcup \supp L$ where the union is over
all $L$ such that $x^be^L \in \AA$.  Then $\AA_1 \subset \Glarge_\bullet$
is also a subgrid.
\end{lem}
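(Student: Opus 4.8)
The plan is to run the same argument as in Lemma~\ref{expsubgrid}, keeping track of the extra power-of-$x$ factor that log-free monomials carry. Since $\AA$ is a subgrid of $\G_\bullet$, I would first fix a finite ratio set $\bmu = \{\mu_1,\dots,\mu_n\}$ of small log-free monomials and some $\bm \in \Z^n$ with $\AA \subseteq \GRID^{\ebmu,\bm}$. Each generator has a canonical form $\mu_i = x^{b_i}e^{L_i}$ with $b_i \in \R$ and $L_i$ a purely large log-free transseries (the very form used in the statement; when $\mu_i$ has exponential height $0$ we have $L_i = 0$).

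The key step is to check that whenever $x^b e^L \in \AA$, its exponent $L$ lies in the fixed finitely generated lattice $\SW := \SET{\sum_{i=1}^n p_i L_i}{\bp \in \Z^n}$. Indeed, such an element equals $\bmu^\bk$ for some $\bk \ge \bm$, and multiplying out gives $\bmu^\bk = x^{\sum_i k_i b_i}e^{\sum_i k_i L_i}$; a $\Z$-linear combination of purely large (log-free) transseries is again purely large (and log-free), so this already displays the canonical form, and uniqueness of that form forces $L = \sum_i k_i L_i \in \SW$. If the uniqueness of the decomposition $x^b e^L$ has not been recorded earlier, I would note that $x^b e^L = x^{b'}e^{L'}$ with $L,L'$ purely large yields $e^{L-L'} = x^{b'-b}$; since a nontrivial exponential of a purely large transseries has exponential height at least $1$ while $x^{b'-b}$ has height $0$, this forces $L=L'$ and then $b=b'$.

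With that in hand, the conclusion follows exactly as in Lemma~\ref{expsubgrid}:
$$
	\AA_1 \;=\; \bigcup_{x^b e^L \in \AA}\supp L \;\subseteq\; \bigcup_{L\in\SW}\supp L \;\subseteq\; \bigcup_{i=1}^n \supp L_i ,
$$
using that $\supp\big(\sum_i p_i L_i\big) \subseteq \bigcup_i \supp L_i$. Each $\supp L_i$ is a subgrid, being the support of a transseries, and a finite union of subgrids is contained in a common grid (amalgamating grids as in the proposition preceding Theorem~\ref{sgwt}); hence $\AA_1$ is a subgrid. Since each $L_i$ is purely large and log-free, $\supp L_i \subseteq \Glarge_\bullet$, so $\AA_1 \subseteq \Glarge_\bullet$, as required. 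I expect the only genuine point of care to be the bookkeeping of the $x^b$ factor — specifically, confirming uniqueness of the $x^b e^L$ form so that the exponents appearing among elements of $\AA$ really do land in the single lattice $\SW$; everything else is a verbatim transcription of the proof of Lemma~\ref{expsubgrid}.
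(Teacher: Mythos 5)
Your proof is correct and follows essentially the same route as the paper, which simply says ``proceed as before,'' i.e., repeat the argument of Lemma~\ref{expsubgrid} with $\mu_i = x^{b_i}e^{L_i}$: the exponents land in the lattice $\SW$, so $\AA_1 \subseteq \bigcup_i \supp L_i$ is a finite union of subgrids. Your extra check of uniqueness of the canonical form $x^b e^L$ is exactly the bookkeeping the paper leaves implicit.
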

\begin{proof}
There is $\bmu = \{\mu_1,\cdots,\mu_n\}$ and $\bm \in \Z^n$
with $\AA \subseteq \GRID^{\ebmu,\bm}$.  Write
$\mu_i = x^{b_i}e^{L_i}$, where $b_i \in \R$ and $L_i \in \R\lbb\G_\bullet\rbb$
is purely large.  Proceed as before.
\end{proof}

\begin{re}
Let $\AA$ be a log-free subgrid.  If $\AA \subset \G_{N}$, $N \ge 1$,
then $\AA_1 \subset \G_{N-1}$.  If $\AA \subset \G_{0}$,
then $\AA_1 = \emptyset$.
\end{re}

\begin{de}
Call $\AA_1$ the \Def{log-free exponent subgrid} of $\AA$.
If $T \in \T_\bullet$, then the log-free exponent subgrid
of $\supp T$ is also called the log-free exponent subgrid
of $T$.  If $\bmu \subset \Gsmall_\bullet$ is a ratio set, it
is a finite set, so it is a subgrid.  So we will
sometimes refer to the log-free exponent subgrid of
a ratio set $\bmu$
(which is equal to the log-free exponent subgrid of
any grid $\GRID^{\ebmu,\bm}$).
\end{de}

\begin{de}
An \Def{exponent generator} for a subgrid $\AA \subset \G_\bullet$ is a
ratio set $\ba$ such that: $\ba$ is contained in the subgroup
generated by the log-free exponent subgrid of $\AA$
and $L \in \T^\ba$ for all $L$ with $x^be^L \in \AA$.  We say ``an''
exponent generator since there is more than one possibility.
Of course, if $\AA \subset \G_N$, then $\ba \subset \G_{N-1}$.
\end{de}

\subsection*{Heredity Addendum}
A ``heredity addendum'' is mentioned in \cite{edgar}.
Now we will discuss it more fully.

\begin{de}
Let $\BB \subseteq \G_\bullet$ be a log-free subgrid.
Then $\BB$ is \Def{hereditary} iff, for all
$x^b e^L \in \BB$ with $b \in \R$ and $L \in \T$ purely large
log-free, we have $\supp L \subseteq \BB$.
\end{de}

\begin{pr}\label{heredsubgrid}
Let $\AA \subseteq \G_\bullet$ be a log-free subgrid.
There is a hereditary log-free subgrid $\BB$ such that
$\BB \supseteq \AA$ and the height of $\BB$ is the
same as the height of $\AA$.
\end{pr}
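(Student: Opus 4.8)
The plan is to build $\BB$ from $\AA$ by iterating the exponent-subgrid construction and taking a union, the point being that each step stays log-free and (by the Remark following Lemma~\ref{lfexpsubgrid}) strictly decreases height, so the process terminates. Concretely, set $\AA^{(0)} = \AA$, and given $\AA^{(k)}$ let $\AA^{(k+1)}$ be the log-free exponent subgrid of $\AA^{(k)}$, i.e.\ $\AA^{(k+1)} = \bigcup \supp L$ over all $L$ with $x^b e^L \in \AA^{(k)}$. By Lemma~\ref{lfexpsubgrid} each $\AA^{(k)}$ is a subgrid, and by the Remark, if $\AA \subseteq \G_N$ then $\AA^{(k)} \subseteq \G_{N-k}$; in particular $\AA^{(k)} = \emptyset$ for $k > N$. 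I would then define $\BB := \bigcup_{k=0}^{N} \AA^{(k)}$, a finite union of subgrids, hence a subgrid by the second Proposition of Section~2 (any two grids lie in a common grid, applied $N$ times).

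**Next** I would check the two required properties. Height: each $\AA^{(k)}$ has height at most $N = $ height of $\AA$, and $\AA^{(0)} = \AA$ has height exactly $N$, so the union $\BB$ has height exactly $N$. (Here one uses that height of a finite union is the max of the heights, which follows from the log-free exponent subgrid of $\GRID^{\bmu,\bm}$ being computed from the $L_i$ in $\mu_i = x^{b_i}e^{L_i}$, all of height $\le N-1$ when $\mu_i \in \G_N$.) Containment: $\AA = \AA^{(0)} \subseteq \BB$ is immediate.

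**The main point** to verify is hereditariness. Suppose $x^b e^L \in \BB$ with $L$ purely large log-free; I must show $\supp L \subseteq \BB$. Since $\BB = \bigcup_k \AA^{(k)}$, we have $x^b e^L \in \AA^{(k)}$ for some $k \le N$. Now the subtlety: the definition of $\AA^{(k+1)}$ unions the supports of logarithms $L'$ with $x^{b'}e^{L'} \in \AA^{(k)}$, so directly we get $\supp L \subseteq \AA^{(k+1)} \subseteq \BB$, and we are done — \emph{provided} $k \le N$, which holds. The one thing to be careful about is the case $k = N$, where a priori $x^b e^L$ with $L \ne 0$ would force a monomial of height $> N$ inside $\AA^{(N)} \subseteq \G_0$; but $\G_0$ contains no such monomials (any $e^L$ with $L$ purely large log-free nonzero has height $\ge 1$), so in fact every element of $\AA^{(N)}$ is of the form $x^b$ with $L = 0$, whose support is empty — vacuously fine. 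So the recursion is self-contained and heredity holds.

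**The expected obstacle** is purely bookkeeping: making precise the claim that the log-free exponent subgrid operation behaves well under unions and that ``height'' is additive/monotone in the way used, i.e.\ assembling the Remark after Lemma~\ref{lfexpsubgrid} with the two-grids-in-a-common-grid Proposition into a clean termination argument. There is no deep difficulty — the recursion terminates because height strictly drops — but one must state the height bookkeeping carefully enough that ``the height of $\BB$ equals the height of $\AA$'' is actually justified rather than asserted.
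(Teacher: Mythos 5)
Your proof is correct and is essentially the paper's proof with the induction unrolled: the paper argues by induction on height, setting $\BB = \AA \cup \BB_1$ where $\BB_1$ is the hereditary closure of the log-free exponent subgrid $\AA_1$, which when expanded yields exactly your $\bigcup_{k=0}^{N}\AA^{(k)}$. The key ingredients (Lemma~\ref{lfexpsubgrid}, the height-drop Remark, and closure of subgrids under finite unions) are the same in both versions.
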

\begin{proof}
The proof is by induction on the height.
Suppose first that $\AA$ has height $0$, so
$\AA \subseteq \G_0 = \SET{x^b}{b \in \R}$.
Take $\BB = \AA$.
If $x^be^L \in \AA$, then $L = 0$, so $\supp L \subseteq \AA$
vacuously.

Now suppose $\AA \subseteq \G_N$, $N > 0$, and the result is
known for height $N-1$.  Let $\AA_1$ be the
log-free exponent subgrid of $\AA$.  So $\AA_1 \subseteq \G_{N-1}$,
and there is a hereditarty log-free subgrid
$\BB_1 \subseteq \G_{N-1}$ with $\BB_1 \supseteq \AA_1$.
Let $\BB = \AA \cup \BB_1$.
Then $\BB \supseteq \AA$ is a log-free subgrid of height $N$.
I must show $\BB$ is hereditary.
Let $x^b e^L \in \BB$.  If $x^be^L \in \AA$,
then $\supp L \subseteq \AA_1 \subseteq \BB$.
If $x^be^L \in \BB_1$, then
$\supp L \subseteq \BB_1 \subseteq \BB$.
So $\BB$ is hereditary.  This completes the induction.
\end{proof}

\begin{re}
Let $\AA$ and $\BB$ be hereditary log-free subgrids.
Then $\AA \cup \BB$ and $\AA\cdot\BB \cup \AA \cup \BB$ are also
hereditary log-free subgrids.
\end{re}

\begin{re}
Let $\bmu = \{x^{b_1}e^{L_1},\cdots,x^{b_n}e^{L_n}\}$
be a log-free ratio set.  Then $\GRID^{\ebmu,\bm}$
is hereditary iff $\supp L_i \subseteq \GRID^{\ebmu,\bm}$
for $1 \le i \le n$.  If
$$
	\bigcup_{i=1}^n \supp L_i \subseteq \GRID^\ebmu ,
$$
then $\GRID^{\ebmu,\bm}$ is hereditary for some $\bm$, and in that
case we may abuse the above terminology and
say simply that $\bmu$ is hereditary.
\end{re}

\begin{pr}\label{derivgen}
Let $\bmu$ be a hereditary log-free ratio set.
Let $T \in \T$.  If $\supp T \subseteq \GRID^\ebmu$, then
$\supp(xT') \subseteq \GRID^\ebmu$ and
$\supp\big((xT)'\big) \subseteq \GRID^\ebmu$.  Assume also that
$x^{-1} \in \GRID^\ebmu$.
If $\supp T \subseteq \GRID^\ebmu$,
then $\supp(T') \subseteq \GRID^\ebmu$.
\end{pr}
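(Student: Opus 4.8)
The plan is to exploit the hereditary structure of $\bmu$ together with the log-free derivative formula. Recall that for a log-free transmonomial $\g = x^b e^L$ (with $L \in \T$ purely large log-free), one has $\g' = (b/x + L')\g$, so $x\g' = (b + xL')\g$. Thus the heart of the matter is to control $\supp(xL')$, and then to iterate down the exponential height. I would first treat the key building block: if $\g = x^b e^L \in \GRID^\ebmu$, then because $\bmu$ is hereditary, $\supp L \subseteq \GRID^\ebmu$, so $\supp(xL') \subseteq \GRID^\ebmu$ \emph{provided we already know the statement one height lower}. This is the inductive skeleton.

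The key steps, in order: (1) Set up induction on the exponential height $N$ of (a subgrid containing) $\supp T$. In the base case $N = 0$, every $\g \in \supp T$ is a power $x^b$, so $x\g' = b\,\g$ and $(x\g)' = (b+1)\g$; hence $\supp(xT') \subseteq \supp T \subseteq \GRID^\ebmu$ and similarly for $(xT)'$, with no new monomials at all. (2) For the inductive step, take $\g = x^b e^L \in \supp T \subseteq \G_N$; by heredity $\supp L \subseteq \GRID^\ebmu$ and $L \in \G_{N-1}$-supported, so $x L'$ has support in $\GRID^\ebmu$ by the height-$(N-1)$ case applied to $L$ (here I use $\supp(xL') \subseteq \GRID^\ebmu$, exactly the first assertion at lower height). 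Then $x\g' = (b + xL')\g$ has $\supp(x\g') \subseteq \supp(xL')\,\g \cup \{\g\} \subseteq \GRID^\ebmu \cdot \GRID^\ebmu = \GRID^\ebmu$, using that $\GRID^\ebmu$ is a group. (3) Sum over $\g \in \supp T$: one must check that forming the actual series $xT' = \sum_{\g} c_\g \, x\g'$ does not leave $\GRID^\ebmu$; since each term is supported in $\GRID^\ebmu$ and $\supp(xT') \subseteq \bigcup_\g \supp(x\g') \subseteq \GRID^\ebmu$, this is immediate (cancellation only shrinks the support). The computation for $(xT)'$ is the same with $b$ replaced by $b+1$. (4) For the last assertion, assume $x^{-1} \in \GRID^\ebmu$; then $T' = x^{-1}(xT')$, and since $x^{-1} \in \GRID^\ebmu$ and $\GRID^\ebmu$ is a group, $\supp(T') = x^{-1}\supp(xT') \subseteq \GRID^\ebmu$.

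I expect the main obstacle to be bookkeeping rather than conceptual: making precise that the derivative $T'$ of a grid-based transseries is computed term-by-term and that its support is genuinely contained in $\bigcup_{\g \in \supp T}\supp(\g')$ — this needs the grid-based summability/derivative machinery (\Ederivexist\ in \cite{edgar}), but once granted, the containment is forced because $\GRID^\ebmu$ is closed under products and inverses. A secondary subtlety is confirming that when $\g = x^b e^L$ has height $N$, the large part $L$ is itself height $< N$ so that the induction hypothesis applies to $L$ in place of $T$; this is just the definition of $\G_N$. No new ratios are ever introduced — the whole point of heredity is that $\GRID^\ebmu$ already contains everything $\supp L_i$ needs — so the proof is a clean descent on height with the group property of $\GRID^\ebmu$ doing all the closure work.
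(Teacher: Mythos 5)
Your proposal is correct and follows essentially the same route as the paper: induction on exponential height, using heredity to place $\supp L$ in $\GRID^\ebmu$ so that the inductive hypothesis applied to $L$ gives $\supp(xL')\subseteq\GRID^\ebmu$, then multiplying by $\g$ and summing over $\supp T$, with the final assertion obtained from $T'=x^{-1}(xT')$. The only (immaterial) difference is that the paper handles $(xT)'$ via $(xT)'=T+xT'$ rather than by redoing the computation with $b+1$.
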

\begin{proof}
We first consider $xT'$.  This is proved by induction on
the height.  First consider height $0$.  If
$\g \in \GRID^\ebmu$, $\g \in \G_0$, say $\g = x^b$, then
$\g' = bx^{b-1}$ and $x\g' = b\g$, so
$\supp(x\g') \subseteq \GRID^\ebmu$.  If
$\supp T \subseteq \GRID^\ebmu \cap \G_0$,
then
$$
	\supp(xT') = x\supp(T') \subseteq
	x\left(\bigcup_{\g \in \supp T} \supp(\g')\right)
	=\left(\bigcup_{\g \in \supp T} \supp(x\g')\right)
	\subseteq \GRID^\ebmu .
$$
Assume it is true for height $N-1$.  If $\g \in \GRID^\ebmu$,
$\g \in \G_N$, say $\g = x^be^L$, then
$\g' = (bx^{-1}+L')\g$ and $x\g' = (b+xL')\g$.  By
the induction hypothesis, $\supp(xL') \subseteq \GRID^\ebmu$.
Since $\GRID^\ebmu$ is closed under multiplication, we have
$\supp(x\g') \subseteq \GRID^\ebmu$.  If
If $\supp T \subseteq \GRID^\ebmu \cap \G_N$, then
add as before.

Next consider $(xT)'$.  We have $(xT)' = T + xT'$, and
both terms have support in $\GRID^\ebmu$, so also
$\supp((xT)') \subseteq \GRID^\ebmu$.

In case $x^{-1} \in \GRID^\ebmu$, when we have
$\supp(xT') \subseteq \GRID^\ebmu$ we will also get
$\supp(T') \subseteq \GRID^\ebmu$.
\end{proof}

\section{Beginning Witnesses}\label{sec:begin}
We begin with the basic things to be checked concerning the ratio sets.
Some of them were already spelled out in \cite{edgar}.

\begin{pr}[{\cite[\Esubgrid]{edgar}}]
If $\AA, \BB$ are subgrids, then so are
$\AA \cup \BB$ and $\AA\cdot \BB$.  Thus: if $S,T \in \T$, then so are
$S+T$ and $ST$.
\end{pr}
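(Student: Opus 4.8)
The plan is to settle the two set-theoretic closure claims about subgrids and then read off the statements about transseries. Recall that a subgrid is, by definition, any subset of a grid, so fix grids $\GRID^{\ebmu,\bm} \supseteq \AA$ and $\GRID^{\bnu,\bn} \supseteq \BB$. For the union I would simply invoke the proposition just proved (any two grids lie in a common grid): there is a grid containing both $\GRID^{\ebmu,\bm}$ and $\GRID^{\bnu,\bn}$, and it then contains $\AA \cup \BB$ as well, so $\AA \cup \BB$ is a subgrid.

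For the product the key move is to pass to a common ratio set: put $\ba := \bmu \cup \bnu$ and pad $\bm$, $\bn$ with zeros in the new coordinates to get $\bm', \bn'$, exactly as in the proof that two grids have a common supergrid; then $\GRID^{\ebmu,\bm} \subseteq \GRID^{\ba,\bm'}$ and $\GRID^{\bnu,\bn} \subseteq \GRID^{\ba,\bn'}$. Now use $\GRID^{\ba,\bm'} = \ba^{\bm'}\ba^*$, commutativity of $\G$, and $\ba^*\ba^* = \ba^*$ (since $\ba^*$ is a monoid):
$$\AA\cdot\BB \subseteq \GRID^{\ba,\bm'}\cdot\GRID^{\ba,\bn'} = \ba^{\bm'}\ba^*\,\ba^{\bn'}\ba^* = \ba^{\bm'+\bn'}\ba^* = \GRID^{\ba,\bm'+\bn'},$$
so $\AA\cdot\BB$ is a subgrid.

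Finally, for $S,T \in \T = \R\lbb\G\rbb$ one has $\supp(S+T) \subseteq \supp S \cup \supp T$ straight from the definition of addition, and $\supp(ST) \subseteq \supp S \cdot \supp T$ by the Remark at the start of the Framework section (cancellation can make this strict, but the containment always holds). Both right-hand sides are subgrids by the above, hence so are the two supports; since being an element of $\T$ means precisely having subgrid support, $S+T, ST \in \T$. I do not expect a serious obstacle: the whole argument is bookkeeping, the one structural ingredient being the reduction to a common ratio set, after which the grid arithmetic is a one-line commutative-monoid computation. The only extra point worth a word is that the convolution defining each coefficient of $ST$ is a finite sum, so $ST$ is a genuine element of $\R\lbb\G\rbb$; this is the standard consequence of subgrids being partially well ordered (Dickson's lemma on $\Z^n$) and belongs to the standing setup of $\T$ rather than to this proposition.
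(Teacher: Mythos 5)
Your argument is correct, and it is the standard one: reduce both grids to a common ratio set $\ba = \bmu \cup \bnu$ by padding the exponent vectors with zeros, then use $\ba^{\bm'}\ba^*\,\ba^{\bn'}\ba^* = \ba^{\bm'+\bn'}\ba^*$ for the product and the common-supergrid proposition for the union, after which the transseries statements follow from $\supp(S+T) \subseteq \supp S \cup \supp T$ and $\supp(ST) \subseteq \supp S \cdot \supp T$. The paper does not prove this proposition itself (it is quoted from the earlier reference), but your proof is exactly the expected bookkeeping, and you are right to flag point-finiteness of the convolution as belonging to the standing setup of $\R\lbb\G\rbb$ rather than to this statement.
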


\begin{pr}\label{sumproduct}
If $\bmu$ generates both $S$ and $T$,
then $\bmu$ generates $S+T$ and $ST$.
\end{pr}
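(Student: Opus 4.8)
The plan is to unwind the definition of ``generates'' and use the explicit description $\GRID^{\ebmu,\bm} = \bmu^\bm\bmu^*$ recorded in the Framework section. By hypothesis there are $\bm,\bn \in \Z^n$ with $\supp S \subseteq \GRID^{\ebmu,\bm}$ and $\supp T \subseteq \GRID^{\ebmu,\bn}$; the goal in each case is to exhibit a single exponent vector whose grid contains the relevant support.

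For the sum, I would take $\bp$ to be the componentwise minimum of $\bm$ and $\bn$. Since $\bk \ge \bm$ implies $\bk \ge \bp$ (and likewise for $\bn$), both $\GRID^{\ebmu,\bm}$ and $\GRID^{\ebmu,\bn}$ are contained in $\GRID^{\ebmu,\bp}$. As $\supp(S+T) \subseteq \supp S \cup \supp T$ (possible cancellation only shrinks the support, as noted in the opening Remark), we get $\supp(S+T) \subseteq \GRID^{\ebmu,\bp}$, so $\bmu$ generates $S+T$.

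For the product, recall $\supp(ST) \subseteq \supp S \cdot \supp T$, so it suffices to bound $\GRID^{\ebmu,\bm}\cdot\GRID^{\ebmu,\bn}$. Writing each factor as $\bmu^\bm\bmu^*$ and $\bmu^\bn\bmu^*$ and using that $\bmu^*$ is a monoid, their product is $\bmu^{\bm+\bn}\bmu^*\bmu^* = \bmu^{\bm+\bn}\bmu^* = \GRID^{\ebmu,\bm+\bn}$. Hence $\supp(ST) \subseteq \GRID^{\ebmu,\bm+\bn}$ and $\bmu$ generates $ST$.

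There is no real obstacle here: the statement is a direct bookkeeping consequence of the fact that grids over a fixed ratio set are closed under the relevant operations (minimum of base exponents for unions, addition of base exponents for products), together with the support inclusions for sums and products already established. The only point worth stating carefully is that cancellation in $S+T$ is harmless because we only need a containment, not an equality, of supports.
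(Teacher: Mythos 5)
Your proof is correct, and it is exactly the argument the paper intends: the paper states this proposition without proof, treating it as immediate from the definitions, and your componentwise-minimum step for the sum is the same device used two propositions earlier to show any two grids lie in a common grid, while the product step is the standard $\bmu^\bm\bmu^*\cdot\bmu^\bn\bmu^* = \bmu^{\bm+\bn}\bmu^*$ computation. Nothing is missing.
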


\begin{pr}\label{witproduct}
If $\bmu$ witnesses both $S$ and $T$,
then $\bmu$ also witnesses $ST$.
\end{pr}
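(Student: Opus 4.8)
The plan is to unpack the definition of "witness" for a transseries and reduce the claim to a computation with supports and ratio sets. Recall that $\bmu$ witnesses a transseries $U$ (with $U\neq 0$) precisely when $\supp U\subseteq \mag U\cdot\bmu^*$, i.e. $U\fsteq^\ebmu\mag U$, equivalently $\sm(U/\mag U)\fst^\ebmu 1$. So the hypotheses give $\supp S\subseteq \mag S\cdot\bmu^*$ and $\supp T\subseteq \mag T\cdot\bmu^*$, and I must show $\supp(ST)\subseteq \mag(ST)\cdot\bmu^*$. (If either $S$ or $T$ is $0$, then $ST=0$ and $\emptyset$—hence $\bmu$—witnesses it trivially, so assume $S,T\neq 0$.)

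First I would record $\mag(ST)=\mag S\cdot\mag T$, which holds because $\G$ is totally ordered and the leading monomials of $S$ and $T$ multiply without cancellation to give the leading monomial of $ST$. Next, by the Remark at the start of Section~2, $\supp(ST)\subseteq \supp S\cdot\supp T$. Combining with the two witness hypotheses,
\[
\supp(ST)\subseteq \supp S\cdot\supp T\subseteq \big(\mag S\cdot\bmu^*\big)\big(\mag T\cdot\bmu^*\big)=\mag S\cdot\mag T\cdot\bmu^*\bmu^*=\mag(ST)\cdot\bmu^*,
\]
using that $\G$ is commutative (so the factors rearrange) and that $\bmu^*\bmu^*=\bmu^*$ since $\bmu^*$ is a monoid. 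This says exactly that $ST\fsteq^\ebmu\mag(ST)$, i.e. $\bmu$ witnesses $ST$.

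I do not expect a serious obstacle here; the only point that needs a word of care is the identity $\mag(ST)=\mag S\cdot\mag T$, which should either be cited from \cite{edgar} or justified in one line from total order of $\G$ and the fact that the coefficient of $\mag S$ in $S$ and of $\mag T$ in $T$ are nonzero reals whose product is nonzero. Everything else is formal manipulation of the containments $\AA\BB$ and the monoid property of $\bmu^*$. This is exactly the reason, noted in the Remark on closure properties, that $\TW\ba$ is closed under products: witnessing is a statement purely about $\supp U$ relative to $\mag U$, and under multiplication both the support and the magnitude multiply, so the relation is preserved.
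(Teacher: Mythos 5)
Your proof is correct, and since the paper states this proposition without supplying a proof, your argument is exactly the intended one: unwind the definition of witness as $\supp U\subseteq(\mag U)\bmu^*$, use $\supp(ST)\subseteq\supp S\cdot\supp T$ together with $\mag(ST)=\mag S\cdot\mag T$ and the monoid identity $\bmu^*\bmu^*=\bmu^*$. The one point deserving a word of care, $\mag(ST)=\mag S\cdot\mag T$, you correctly flag and justify.
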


\begin{re}
But possibly not $S+T$:
For example, $S = x+1$, $T = -x+xe^{-x}$, $\bmu = \{x^{-1},e^{-x}\}$.
\end{re}

\begin{pr}\label{witless1}
If $\bmu$ witnesses both $S \fst 1$ and $T \fst 1$, then
$\bmu$ witnesses $ST \fst 1$ and $S+T \fst 1$.
\end{pr}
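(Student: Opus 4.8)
The plan is to unpack the definition of ``$\bmu$ witnesses $S \fst 1$'' in terms of supports. Recall from the discussion above that $\bmu$ witnesses a transseries of the form $a\g(1+R)$ (with $a \ne 0$, $\g \in \G$, $R \fst 1$) iff $R \fst^\ebmu 1$. But the hypothesis here is stronger and simpler: $S \fst 1$ means $\mag S \fst 1$, so the ``$1+R$'' decomposition has $\g \fst 1$ itself; in fact, saying $\bmu$ witnesses $S \fst 1$ is just saying $S \fst^\ebmu 1$, i.e. $\supp S \subseteq \bmu^+$. (If $S = 0$ the statement is trivial, so assume $S, T \ne 0$.) So the concrete hypotheses are $\supp S \subseteq \bmu^+$ and $\supp T \subseteq \bmu^+$, and I must show $\supp(ST) \subseteq \bmu^+$ and $\supp(S+T) \subseteq \bmu^+$.

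For the product: by the Remark at the start of Section~2, $\supp(ST) \subseteq \supp S \cdot \supp T \subseteq \bmu^+ \bmu^+$. Since $\bmu^+$ is the semigroup generated by $\bmu$, it is closed under multiplication: $\bmu^+\bmu^+ \subseteq \bmu^+$ (concretely, $\bmu^\bk\bmu^\l = \bmu^{\bk+\l}$ with $\bk,\l > \0$ forcing $\bk+\l > \0$). Hence $\supp(ST) \subseteq \bmu^+$, which is exactly $ST \fst^\ebmu 1$, so $\bmu$ witnesses $ST \fst 1$. (This is essentially Proposition~\ref{witproduct} specialized, plus the observation that the product of two things $\fst 1$ is again $\fst 1$ with the same witness.)

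For the sum: again by that same Remark, $\supp(S+T) \subseteq \supp S \cup \supp T \subseteq \bmu^+ \cup \bmu^+ = \bmu^+$ (there can be cancellation, but cancellation only removes monomials, so the inclusion still holds). Thus $\supp(S+T) \subseteq \bmu^+$, i.e. $S + T \fst^\ebmu 1$, so $\bmu$ witnesses $S + T \fst 1$.

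I do not expect a genuine obstacle here; the whole content is that the hypothesis ``witnesses $S \fst 1$'' is strong enough to be just ``$\supp S \subseteq \bmu^+$,'' which is closed under both the unions coming from sums and the products coming from multiplication. The one subtlety worth stating explicitly — and the reason this is not subsumed by Proposition~\ref{witproduct}/the Remark after it (where $S+T$ can fail) — is that here both $S$ and $T$ are already small \emph{and} their magnitudes lie in $\bmu^+$, so no ``smallness addendum'' for a leading monomial $\mag S$ or $\mag T$ is ever needed; the bad example $S = x+1$, $T = -x + xe^{-x}$ is excluded precisely because there $S \not\fst 1$.
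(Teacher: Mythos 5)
Your proof is correct, and since the paper states this proposition without proof, yours is evidently the intended argument: ``$\bmu$ witnesses $S\fst 1$'' is the relational sense of witness, i.e.\ $S \fst^\ebmu 1$, i.e.\ $\supp S \subseteq \bmu^+$, and $\bmu^+$ is closed under the union coming from $S+T$ and the product coming from $ST$. Your closing remark correctly isolates why this does not contradict the failure of witnessing for sums in general (the $S=x+1$ example): there the leading monomials are not in $\bmu^+$, whereas here the hypothesis forces all of $\supp S$ and $\supp T$, magnitudes included, into $\bmu^+$.
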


\subsection*{Multiply Far-Greater Relations}

It was noted in \cite{edgar} that
$A \fst^\ebmu B$ need not imply $A S \fst^\ebmu B S$,
even if $\bmu$ generates $A, B, S$.  The ``witness''
concept can overcome this.

\begin{pr}\label{productwitness}
Let $A, B, S \in \T$.  Assume $\bmu$ witnesses
either $B$ or $S$.
If $A \fst^\ebmu B$, then $A S \fst^\ebmu B S$.
If $A \fsteq^\ebmu B$, then $A S \fsteq^\ebmu B S$.
\end{pr}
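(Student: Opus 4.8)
The plan is to reduce everything to one cancellation lemma and then dispatch the two hypotheses (``$\bmu$ witnesses $B$'' and ``$\bmu$ witnesses $S$'') by the same support bookkeeping. First dispose of the degenerate cases: if $S=0$ both sides vanish; if $B=0$ then $A\fst^\ebmu B$ forces $\supp A=\emptyset$, i.e.\ $A=0$. So assume $B,S\neq0$, hence $BS\neq0$.

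The lemma I would establish is: \emph{if $W\in\T$ with $W\fst^\ebmu 1$, then $\supp C\subseteq\supp\big(C(1+W)\big)\cdot\bmu^*$ for every $C\in\T$} (in fact $C\fe^\ebmu C(1+W)$, as the reverse inclusion $\supp\big(C(1+W)\big)\subseteq\supp C\cup\supp(CW)\subseteq\supp C\cdot\bmu^*$ is immediate from $\supp W\subseteq\bmu^+$). For the main inclusion, argue by contradiction. Suppose $E:=\{\,\fb\in\supp C:\fb\notin\supp(C(1+W))\cdot\bmu^*\,\}$ is nonempty; being a subset of the subgrid $\supp C$, it is a subgrid, so it has a greatest element $\fb_0$ (in the order $\fgt$). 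Since $\fb_0\in E$ we have $\fb_0\notin\supp\big(C(1+W)\big)$, so the coefficient of $\fb_0$ in $C+CW$ is $0$; since the coefficient of $\fb_0$ in $C$ is nonzero, the coefficient of $\fb_0$ in $CW$ is nonzero, hence there are $\fb_1\in\supp C$ and $\m_1\in\supp W\subseteq\bmu^+\subseteq\Gsmall$ with $\fb_1\m_1=\fb_0$. Then $\fb_1=\fb_0/\m_1\fgt\fb_0$, so $\fb_1\notin E$ by maximality of $\fb_0$; write $\fb_1=\fc\rho$ with $\fc\in\supp\big(C(1+W)\big)$ and $\rho\in\bmu^*$. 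Then $\fb_0=\fc(\rho\m_1)$ with $\rho\m_1\in\bmu^*$, so $\fb_0\notin E$ --- a contradiction.

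Granting the lemma, the Proposition follows by a short computation, using repeatedly $\supp(PQ)\subseteq\supp P\cdot\supp Q$, $\supp(\g P)=\g\supp P$, and invariance of supports under nonzero scalar factors. If $\bmu$ witnesses $S$, write $S=s\,\mag S\,(1+W)$ with $s\neq0$ and $W\fst^\ebmu 1$; then $\supp(AS)=\mag S\cdot\supp\big(A(1+W)\big)\subseteq\mag S\cdot\supp A\cdot\bmu^*$, and since $\supp A\subseteq\supp B\cdot\bmu^+$ (from $A\fst^\ebmu B$) and $\supp B\subseteq\supp\big(B(1+W)\big)\cdot\bmu^*$ (lemma with $C:=B$), this is contained in $\mag S\cdot\supp\big(B(1+W)\big)\cdot\bmu^+=\supp(BS)\cdot\bmu^+$, i.e.\ $AS\fst^\ebmu BS$. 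If $\bmu$ witnesses $B$, write $B=b\,\mag B\,(1+V)$ with $b\neq0$ and $V\fst^\ebmu 1$; then $\supp A\subseteq\supp B\cdot\bmu^+\subseteq\mag B\cdot\bmu^+$, so $\supp(AS)\subseteq\bmu^+\cdot(\mag B\cdot\supp S)$, while the lemma with $C:=\mag B\cdot S$ and $W:=V$ gives $\mag B\cdot\supp S=\supp(\mag B\cdot S)\subseteq\supp\big((\mag B\cdot S)(1+V)\big)\cdot\bmu^*=\supp(BS)\cdot\bmu^*$; combining, $\supp(AS)\subseteq\supp(BS)\cdot\bmu^+$. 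In either case, replacing the $\bmu^+$ that comes from $A\fst^\ebmu B$ by $\bmu^*$ gives the $\fsteq^\ebmu$ statement word for word.

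The only real content is the lemma: the key realization is that cancellation in $C(1+W)$ can only shift a monomial $\fb_0$ to \emph{strictly larger} monomials through ratios lying in $\bmu^+$, and the maximality of $\fb_0$ is exactly what lets the argument close in a single step with no induction on supports. I expect everything after the lemma to be routine, the only mild care needed being to track whether each inclusion lives over $\bmu^*$ or over $\bmu^+$.
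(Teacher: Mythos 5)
Your proof is correct, but it takes a genuinely different route from the paper's. The paper argues pointwise: given $\m=\fa_0\g_0\in\supp(AS)$, it lifts $\fa_0$ to some $\fb_0\in\supp B$, passes to the $\fgteq^\ebmu$-maximal elements $\fb_1\in\supp B$ and $\g_1\in\supp S$ above $\fb_0$ and $\g_0$, and shows the single monomial $\fb_1\g_1$ survives cancellation in $BS$ because any competing factorization $\fb_2\g_2=\fb_1\g_1$ would force $\fb_1\fst^\ebmu\fb_2$ (using $\g_1=\mag S$), contradicting maximality. You instead isolate a global unit lemma --- $C\fe^\ebmu C(1+W)$ whenever $W\fst^\ebmu 1$ --- proved by taking the $\fgt$-greatest ``lost'' monomial of $\supp C$ and showing it cannot be lost, and then reduce the proposition to set algebra with $\bmu^*$ and $\bmu^+$ after writing the witnessed factor in the form $(\text{scalar})\cdot(\text{monomial})\cdot(1+W)$. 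Both arguments turn on the same mechanism (cancellation can only be caused by strictly $\bmu$-smaller contributions, so a maximal element survives), but your packaging buys a clean, reusable statement: multiplication by a unit $1+W$ with $W\fst^\ebmu 1$ changes supports only within $\bmu^*$, which is exactly why witnesses behave well under such products and could be cited directly in several later arguments (e.g.\ Propositions~\ref{witproduct} and~\ref{D_power}). The paper's version is slightly more symmetric in $B$ and $S$ and avoids the canonical multiplicative decomposition, at the cost of redoing the cancellation argument inside the proof. Your handling of the degenerate cases $S=0$, $B=0$ and the bookkeeping of $\bmu^*$ versus $\bmu^+$ are both accurate.
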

\begin{proof}
Let $\m \in \supp(AS)$.  Then there exist $\fa_0 \in \supp A$
and $\g_0 \in \supp S$ with $\m = \fa_0\g_0$.  There is
$\fb_0 \in \supp B$ with $\fa_0 \fst^\ebmu \fb_0$.  Let
\begin{align*}
	\fb_1 &= \max\SET{\fb \in \supp B}{\fb \fgteq^\ebmu \fb_0},
	\\
	\g_1 &= \max\SET{\g \in \supp S}{\g \fgteq^\ebmu \g_0},
\end{align*}
which exist because these supports are well ordered.
Now we have assumed that $\bmu$ witnesses either $B$ or $S$.
The two cases are similar, so assume $\bmu$ witnesses $S$.
Then $\g_1 = \mag S$.
Let $\n = \fb_1\g_1$.  I claim $\n \in \supp(BS)$.  Assume not:
it must be because of cancellation in the product $BS$.  So
there exist $\fb_2 \in \supp B$ and $\g_2 \in \supp S$
so that $\fb_1\g_1 = \fb_2\g_2$ but $\fb_1 \ne \fb_2$
and $\g_1 \ne \g_2$.  Now $\g_1 = \mag S$ and $\bmu$ witnesses $S$,
so $\g_2 \fst^\ebmu \g_1$.  That means $\g_2/\g_1 \in \bmu^+$.
But $\fb_1/\fb_2 = \g_2/\g_1$, so $\fb_1 \fst^\ebmu \fb_2$,
which contradicts the maximiality of $\fb_1$.  This contradiction
shows that $\n \in \supp(BS)$.  Now
\begin{equation*}
	\m = \fa_0 \g_0 \fst^\ebmu \fb_0 \g_0 \fsteq^\ebmu
	\fb_1 \g_1 = \n,
\end{equation*}
so $\m \fst^\ebmu \n$.  Therefore $AS \fst^\ebmu BS$.

The second assertion is proved similarly.
\end{proof}

\begin{ex}
False in general: $A_1 \fst^\ebmu A_2, B_1 \fst^\ebmu B_2
\Longrightarrow A_1B_1 \fst^\ebmu A_2B_2$.  (It is
true for monomials.)  Take
$\bmu = \{x^{-1},e^{-x}\}$.  Then
\begin{align*}
	&x^{-3} \fst^\ebmu x^{-2}+e^{-2x},
	\qquad\text{and}\qquad
	e^{-3x} \fst^\ebmu x^{-2}-e^{-2x},
	\\ \text{but not }\quad &
	x^{-3}e^{-3x} \fst^\ebmu \big(x^{-2}+e^{-2x}\big)
	\big(x^{-2}-e^{-2x}\big) = x^{-4}-e^{-4x} .
\end{align*}
\end{ex}

\begin{pr}\label{prodwitnessmult}
Let $A_1, A_2, B_1, B_2 \in \T$, let $\bmu$ be a ratio set.
Assume $A_1 \fst^\ebmu A_2$, $B_1 \fst^\ebmu B_2$, and
$\bmu$ witnesses $B_2$.  Then $A_1B_1 \fst^\ebmu A_2B_2$.
\end{pr}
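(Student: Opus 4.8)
The plan is to chain together the two results we already have about multiplying far-greater relations: Proposition~\ref{productwitness} (which lets us multiply a $\fst^\ebmu$ relation by a fixed factor, provided $\bmu$ witnesses that factor) and Proposition~\ref{witproduct}/Proposition~\ref{witless1} for handling witnesses under products. The hypothesis gives us $A_1 \fst^\ebmu A_2$ and $B_1 \fst^\ebmu B_2$, with $\bmu$ witnessing $B_2$. The idea is to insert an intermediate term and telescope:
\begin{equation*}
	A_1 B_1 \;\fst^\ebmu\; A_2 B_1 \;\fsteq^\ebmu\; A_2 B_2 .
\end{equation*}

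First I would handle the left step. Since $A_1 \fst^\ebmu A_2$ and we want to multiply both sides by $B_1$, Proposition~\ref{productwitness} applies provided $\bmu$ witnesses either $A_2$ or $B_1$. Neither is among our hypotheses directly, so this is the point that needs care. However, from $B_1 \fst^\ebmu B_2$ together with the fact that $\bmu$ witnesses $B_2$, I expect to be able to show that $\bmu$ witnesses $B_1$ as well: indeed $\mag B_1 \fst^\ebmu \mag B_2$ (or $\mag B_1 \fsteq^\ebmu \mag B_2$), and for any $\fa \in \supp B_1$ we have $\fa \fst^\ebmu$ some element of $\supp B_2$, which in turn is $\fsteq^\ebmu \mag B_2$; comparing magnitudes within $B_1$ should then give $\fa \fsteq^\ebmu \mag B_1$, i.e.\ $B_1 \fsteq^\ebmu \mag B_1$, which is exactly the statement that $\bmu$ witnesses $B_1$. (This is essentially the content of Proposition~\ref{wit_LE} specialized appropriately, or a short direct support-chasing argument.) With $\bmu$ witnessing $B_1$ in hand, Proposition~\ref{productwitness} yields $A_1 B_1 \fst^\ebmu A_2 B_1$.

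For the right step, $B_1 \fst^\ebmu B_2$ gives in particular $B_1 \fsteq^\ebmu B_2$, and since $\bmu$ witnesses $B_2$ by hypothesis, the second assertion of Proposition~\ref{productwitness} (the $\fsteq^\ebmu$ version, with the fixed factor $A_2$ on the other side — and here we need $\bmu$ to witness $B_2$, which we have) gives $A_2 B_1 \fsteq^\ebmu A_2 B_2$. Finally, composing $A_1 B_1 \fst^\ebmu A_2 B_1 \fsteq^\ebmu A_2 B_2$ and using that $\bmu^+ \bmu^* = \bmu^+$ (so that $\fst^\ebmu$ followed by $\fsteq^\ebmu$ is again $\fst^\ebmu$) yields $A_1 B_1 \fst^\ebmu A_2 B_2$, as desired.

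The main obstacle I anticipate is the claim that $\bmu$ witnesses $B_1$. It is not literally one of the earlier propositions, so I would need to verify it by a direct support argument: one must rule out the possibility that $\supp B_1$ has an element strictly $\bmu$-larger than $\mag B_1$, using the combination of $B_1 \fst^\ebmu B_2$ and the witness property of $B_2$. The subtlety is that $\fst^\ebmu$ between transseries is defined via the supports and magnitudes need not be comparable to every support element in the $\fst^\ebmu$ sense without the witness hypothesis — but since we do have a witness for $B_2$, transitivity of $\fsteq^\ebmu$ closes the gap. If this lemma turns out to require its own statement, I would extract it as a short preliminary claim before the proof proper.
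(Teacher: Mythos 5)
Your telescoping through the intermediate term $A_2B_1$ breaks at the first step, and the auxiliary claim you propose to repair it --- that $B_1 \fst^\ebmu B_2$ together with $\bmu$ witnessing $B_2$ forces $\bmu$ to witness $B_1$ --- is false. Those hypotheses give $\supp B_1 \subseteq (\mag B_2)\bmu^+$, but nothing places $\supp B_1$ inside $(\mag B_1)\bmu^*$. Concretely, take $\bmu = \{x^{-1},e^{-x}\}$, $B_2 = 1$, $B_1 = x^{-2}-e^{-2x}$: then $B_1 \fst^\ebmu B_2$ and $\bmu$ witnesses $B_2$, yet $\bmu$ does not witness $B_1$, since $e^{-2x}/x^{-2} = x^2e^{-2x}\notin\bmu^*$. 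Completing this to $A_1 = x^{-3}$, $A_2 = x^{-2}+e^{-2x}$ satisfies every hypothesis of the proposition (and neither $A_2$ nor $B_1$ is witnessed by $\bmu$), but $A_1B_1 = x^{-5}-x^{-3}e^{-2x}$ is not $\fst^\ebmu A_2B_1 = x^{-4}-e^{-4x}$: the monomial $x^{-3}e^{-2x}$ is $\bmu$-far-smaller than neither $x^{-4}$ nor $e^{-4x}$. So your first step fails outright; it is not merely that a lemma is left unproved.

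The repair is to telescope through the other intermediate term, $A_1B_2$, which is what the paper does. Proposition~\ref{productwitness} applied to $B_1 \fst^\ebmu B_2$ with fixed factor $S = A_1$ gives $A_1B_1 \fst^\ebmu A_1B_2$ (here $\bmu$ witnesses $B_2$ in the role of the dominating series), and applied to $A_1 \fst^\ebmu A_2$ with fixed factor $S = B_2$ gives $A_1B_2 \fst^\ebmu A_2B_2$ (here $\bmu$ witnesses $B_2$ in the role of the multiplier). Both applications use only the one witness you are actually given, and transitivity of $\fst^\ebmu$ (that is, $\bmu^+\bmu^+\subseteq\bmu^+$) finishes the argument. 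Your second step, from $A_2B_1$ to $A_2B_2$, was sound; only the choice of intermediate term needs to change.
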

\begin{proof}
Apply Proposition~\ref{productwitness} twice:
$A_1B_1 \fst^\ebmu A_1B_2$ and $A_1B_2 \fst^\ebmu A_2B_2$.
\end{proof}

\subsection*{Laurent Series}
If $\ba$ witnesses $S$ and $S \fst^\ba 1$, then
$\ba$ witnesses (and generates)
the sum $A = \sum_{j=p}^\infty a_j S^j$.  If $p \ge 1$, then
$\ba$ witnesses $A \fst 1$.

Let $S_1 \fst 1, \cdots, S_m \fst 1$.  Assume $\ba_i$ witnesses
$S_i \fst 1$ for $1 \le i \le m$.  Consider the sum
$$
	A = \sum_{j_1=p_1}^\infty\,\sum_{j_2=p_2}^\infty\cdots
	\sum_{j_m=p_m}^\infty\;c_{j_1 j_2 \dots j_m}
	S_1^{j_1} S_2^{j_2} \cdots S_m^{j_m} .
$$
If the ``leading coefficient'' $c_{p_1 p_2\dots p_m}$ is not zero,
then $\bb := \bigcup_{j=1}^m \ba_j$ witnesses $A$.  But as with a finite sum,
an addendum may be required in general.

\begin{pr}\label{D_power}
Let $A \ne 0$ and assume $\ba$ witnesses $A$.
Then $\ba$ witnesses $A^{-1}$.  And
$\ba$ witnesses $A^b$ for any $b \in \R$.
\end{pr}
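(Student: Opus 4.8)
The plan is to reduce everything to a single normalization. Write $A = a\g(1+S)$ where $a\in\R$, $a\ne 0$, $\g = \mag A \in \G$, and $S\in\T$ with $S\fst 1$. By the definition of ``witness'' recalled just before Proposition~\ref{D_power}, the hypothesis that $\ba$ witnesses $A$ is exactly the statement $S\fst^\ba 1$. So the task is to show that $\ba$ also witnesses $A^{-1}$ and $A^b$; and since $\mag(A^{-1}) = \g^{-1}$ and $\mag(A^b) = \g^b$, it suffices to produce the $(1+S')$ factor in each case and check its small part is $\fst^\ba 1$.

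First I would handle $A^{-1}$. We have
$$
	A^{-1} = a^{-1}\g^{-1}(1+S)^{-1}
	= a^{-1}\g^{-1}\sum_{j=0}^\infty (-1)^j S^j .
$$
The sum $\sum_{j\ge 0}(-1)^j S^j = 1 + S''$ where $S'' = \sum_{j\ge 1}(-1)^j S^j$. This is precisely the Laurent-series situation discussed in the paragraph immediately preceding the proposition: since $\ba$ witnesses $S$ and $S\fst^\ba 1$, that paragraph tells us $\ba$ witnesses $\sum_{j\ge 0}(-1)^j S^j$, and moreover (the $p\ge 1$ clause applied to $S''$) that $\ba$ witnesses $S''\fst 1$. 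Hence $S''\fst^\ba 1$, so $\ba$ witnesses $A^{-1} = a^{-1}\g^{-1}(1+S'')$.

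For $A^b$ with $b\in\R$, the same idea works via the binomial series: $(1+S)^b = \sum_{j\ge 0}\binom{b}{j}S^j = 1 + S'''$ with $S''' = \sum_{j\ge 1}\binom{b}{j}S^j$, and again the cited Laurent-series paragraph gives that $\ba$ witnesses this sum and that $\ba$ witnesses $S'''\fst 1$ (the leading coefficient $\binom{b}{0}=1$ is nonzero, and $S'''$ starts at $j=1$). Therefore $A^b = a^b\g^b(1+S''')$ with $S'''\fst^\ba 1$, so $\ba$ witnesses $A^b$.

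The only genuine point to verify is that the convergence/summation machinery invoked is legitimate: that $\sum_j c_j S^j$ is a well-defined transseries whenever $S\fst 1$, and that its witness is inherited from a witness for $S$. But that is exactly what the preceding paragraph on Laurent series asserts and is not re-proved here. I expect the main (minor) obstacle is bookkeeping: making sure that ``witnesses $A$'' is unpacked correctly when $a<0$ or when $\g\ne 1$, i.e.\ that the scalar $a^{-1}$ or $a^b$ and the monomial $\g^{-1}$ or $\g^b$ genuinely play no role in the witness condition, which only constrains the small part. Once that is observed, the two cases are immediate applications of the Laurent-series remark.
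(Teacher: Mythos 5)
Your proposal is correct and matches the paper's own proof: both decompose $A=a\,\mag(A)\,(1+S)$, unpack ``$\ba$ witnesses $A$'' as $S\fst^\ba 1$, and invoke the preceding Laurent-series remark on $\sum_j\binom{b}{j}S^j$ (your $A^{-1}$ case is just the instance $b=-1$). No gaps.
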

\begin{proof}
Write $A = a e^L (1+S)$ [with $a \in \R$, $a\ne 0$,
$L$ purely large, $S$ small] so $\ba$ witnesses
$S \fst 1$, and
$$
	A^b = a^b e^{bL} \sum_{j=0}^\infty \binom{b}{j} S^j 
$$
so $\ba$ witnesses $A^b$.
\end{proof}

\begin{re}
A generator for $A^b$ is
$\ba \cup \{e^{\pm b L}\}$, the sign chosen so that the monomial
is small.
If $bL<0$, then $\ba \cup \{e^{b L}\}$ witnesses $A^b \fst 1$.
\end{re}

\begin{re}
Let $\ba$ be a ratio set.
Then $\SET{A \in \T}{A \ne 0, \ba \text{ witnesses } A}$ is closed under
products and quotients.
\end{re}

\subsection*{Logarithm and Exponential}

\begin{pr}\label{D_log}
Let $A = a e^L(1+S)$, where $a \in \R$, $a > 0$,
$L$ is purely large, $S$ is small.  If $\ba$ witnesses $L$ and
$\bb$ witnesses $S \fst 1$, then
$$
	\bmu := \ba \cup \bb \cup \{(\mag L)^{-1}\}
	\quad\text{witnesses}\quad
	\log A = L + \log a - \sum_{j=1}^\infty \frac{(-1)^j S^j}{j} .
$$
\end{pr}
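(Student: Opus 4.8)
The plan is to check separately that $\bmu$ witnesses each of the three terms appearing in $\log A = L + \log a - \sum_{j\ge1}(-1)^jS^j/j$, and then assemble them. The constant $\log a$ is witnessed by $\emptyset \subseteq \bmu$ trivially. The term $L$ is witnessed by $\ba \subseteq \bmu$ by hypothesis. For the infinite sum $\Sigma := -\sum_{j\ge1}(-1)^jS^j/j$: since $\bb$ witnesses $S \fst 1$, the discussion of Laurent series above (``if $\ba$ witnesses $S$ and $S\fst^\ba 1$, then $\ba$ witnesses $\sum_{j\ge p}a_jS^j$'') applies with $p = 1$, so $\bb \subseteq \bmu$ witnesses $\Sigma$, and in fact $\Sigma \fst^\bb 1$.

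Now I must combine. The obstacle is exactly the one flagged repeatedly in the excerpt: the set of transseries witnessed by a fixed ratio set is \emph{not} closed under addition, so ``$\bmu$ witnesses $L$'' and ``$\bmu$ witnesses $\Sigma$'' do not immediately give ``$\bmu$ witnesses $L + \Sigma$''. The key structural fact that saves us is that $L$ is purely large and $\Sigma$ is small (indeed $\Sigma \fst 1$), so there is no cancellation between the two: $\mag(L + \log a + \Sigma) = \mag L$, and $\supp(L+\log a+\Sigma) = \supp L \cup \{1\}\cup\supp\Sigma$ with the $1$ absorbed if $\log a\ne0$. So to witness $\log A$ I need to show every monomial in $\supp L \cup \supp \Sigma \cup \{1\}$ is $\fsteq^\bmu \mag L$.

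For monomials $\fa \in \supp L$: $\ba$ witnesses $L$, so $\fa \fsteq^\ba \mag L$, hence $\fa \fsteq^\bmu \mag L$. For $1$ (if $\log a\ne0$): we need $1 \fsteq^\bmu \mag L$, i.e. $(\mag L)^{-1} \in \bmu^*$ — this is precisely why the generator $(\mag L)^{-1}$ was thrown into $\bmu$; since $\mag L \fgt 1$ we have $(\mag L)^{-1} \in \Gsmall$, so $\bmu$ is a legitimate ratio set and $1 \fst^\bmu \mag L$. For $\g \in \supp\Sigma$: we know $\g \fst^\bb 1$, hence $\g \fsteq^\bmu 1 \fst^\bmu \mag L$ by the previous line, so $\g \fst^\bmu \mag L$ by transitivity of $\fst^\bmu$ and $\fsteq^\bmu$ (composition of words in $\bmu$). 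Combining the three cases, every element of $\supp(\log A)$ is $\fsteq^\bmu \mag L = \mag(\log A)$, which is the statement that $\bmu$ witnesses $\log A$. I would remark that the single extra generator $(\mag L)^{-1}$ does double duty: it handles the constant $\log a$ and it bridges from ``$\fst^\bmu 1$'' to ``$\fst^\bmu \mag L$'', which is what lets the small tail $\Sigma$ be compared against the large leading monomial.
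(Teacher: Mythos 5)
Your proof is correct; the paper states this proposition without a proof, and your argument is the natural one the author evidently intends: decompose $\supp(\log A)$ into the purely large part $\supp L$, the constant monomial $1$, and the small tail $\supp\Sigma$ (no cancellation is possible among these three groups, so $\mag(\log A)=\mag L$), and compare each against $\mag L$ using $\ba$, the added generator $(\mag L)^{-1}$, and $\bb$ together with $(\mag L)^{-1}$, respectively. The only over-claim is your citation of the Laurent-series remark to get ``$\bb$ witnesses $\Sigma$'': that remark assumes both that $\bb$ witnesses $S$ and that $S\fst^\bb 1$, whereas here only the latter is hypothesized; but the one fact you actually use, namely $\Sigma\fst^\bb 1$ (equivalently $\supp\Sigma\subseteq\bb^+$), follows from $S\fst^\bb 1$ alone since $\bb^+$ is closed under products, so the argument stands.
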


Also: $\bmu$ generates $\log A$; $\bb$ witnesses $\sm(\log A) \fst 1$;
if $A \sim 1$, then $\bb$ witnesses $\log A \fst 1$;
if $A \fe 1$, then $\bb$ witnesses and generates $\log A$.

Note: If $A \not\fe 1$, then $\log A \fgt 1$.

\begin{pr}\label{D_exp}
Let $A = L + c + S$, where $L=\la A$, $c=\const A$,
and $S = \sm A$.  If $\ba$ witnesses $S \fst 1$, then
$\ba$ witnesses
$$
	e^A = e^c e^L \sum_{j=0}^\infty \frac{S^j}{j!}
$$
and $\bmu := \ba \cup \{e^{\pm L}\}$ generates $e^A$.

If $L<0$ {\rm(}that is, $A$ is large and negative{\rm)}, then
$\bmu := \ba \cup \{e^L\}$ witnesses $e^A \fst 1$.
\end{pr}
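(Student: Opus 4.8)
The plan is to push everything through to the small series $S$. First I would write $e^A = e^c\,e^L\,e^S$, where $e^c \in \R$ with $e^c > 0$, where $e^L \in \G$ because $L$ is purely large, and where $e^S = \sum_{j=0}^\infty S^j/j! \ato 1$ because $S$ is small. Multiplication by the nonzero scalar $e^c$ does not affect supports, and $\supp(\g T) = \g\,\supp T$ for $\g \in \G$, so $\supp(e^A) = e^L\,\supp(e^S)$; since $e^S \ato 1$ this gives $\mag(e^A) = e^L$. Thus the three claims reduce to: $\supp(e^S) \subseteq \ba^*$ (the witness statement), $\supp(e^A) \subseteq \GRID^{\bmu,\bm}$ for some $\bm$ (the generator statement), and, when $L < 0$, $\supp(e^A) \subseteq \bmu^+$ (smallness of $e^A$).

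Second I would establish the crucial inclusion $\supp(e^S) \subseteq \ba^*$, in fact $\supp\big(\sm(e^S)\big) \subseteq \ba^+$. The hypothesis that $\ba$ witnesses $S \fst 1$ means $\supp S \subseteq \ba^+$. Using $\supp(UV) \subseteq \supp U\,\supp V$ repeatedly, $\supp(S^j) \subseteq (\supp S)^j \subseteq (\ba^+)^j \subseteq \ba^+$ for each $j \ge 1$, since $\ba^+$ is a semigroup. The support of an infinite sum is contained in the union of the supports of its summands, so $\supp\big(\sm(e^S)\big) = \supp\big(\sum_{j \ge 1} S^j/j!\big) \subseteq \bigcup_{j \ge 1}\supp(S^j) \subseteq \ba^+$; adjoining the leading monomial $1 \in \ba^*$ gives $\supp(e^S) \subseteq \{1\}\cup\ba^+ = \ba^*$.

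Third I would assemble the conclusions. From $\supp(e^A) = e^L\,\supp(e^S) \subseteq e^L\,\ba^* = \mag(e^A)\,\ba^*$ it follows directly that $\ba$ witnesses $e^A$ (equivalently, writing $e^A = e^c e^L\big(1+\sm(e^S)\big)$, the defining condition $\sm(e^S) \fst^\ba 1$ is exactly the content of the second paragraph). For the generator I would apply Proposition~\ref{griddef} with $\g = e^L$ and the finite set $\ba \subset \Gsmall$: its proof yields precisely $\bmu = \ba \cup \{e^{\pm L}\}$, with the sign chosen so the adjoined monomial is small (and $\bmu = \ba$ when $L = 0$), together with a shift $\bm$ such that $e^L\,\ba^* \subseteq \GRID^{\bmu,\bm}$; since $\supp(e^A) \subseteq e^L\,\ba^*$, this shows $\bmu$ generates $e^A$. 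Finally, when $L < 0$ the monomial $e^L$ lies in $\Gsmall$, so $\bmu = \ba\cup\{e^L\}$ is a ratio set and $\supp(e^A) \subseteq e^L\,\ba^* \subseteq \bmu\cdot\bmu^* = \bmu^+$, i.e.\ $e^A \fst^\bmu 1$; hence $\bmu$ witnesses $e^A \fst 1$.

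There is no deep obstacle here; the only points needing attention are the bookkeeping between $\ba^*$ and $\ba^+$ (tracking whether the empty word is allowed) and the trichotomy $L < 0$, $L > 0$, $L = 0$ in the generator statement, both handled cleanly by invoking Proposition~\ref{griddef}.
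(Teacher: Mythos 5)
Your proof is correct, and it is exactly the argument the paper intends: the paper states this proposition without a proof, relying on the immediately preceding ``Laurent Series'' remark and the one-line proof of Proposition~\ref{D_power}, both of which read off the witness property from the expansion $e^A = e^c e^L\sum S^j/j!$ together with the semigroup property of $\ba^+$, just as you do. Your handling of the generator statement via Proposition~\ref{griddef} and of the case $L<0$ via $e^L\ba^*\subseteq\bmu^+$ matches the paper's conventions (sign of $e^{\pm L}$ chosen to make the monomial small), so there is nothing to correct.
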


\subsection*{Series}
If $S = \sum A_i$ is $\bmu$-convergent, then of course
there is a witness for $S$.  But is there a single
witness for all the terms $A_i$?
In general, there is no such witness.

\begin{ex}\label{nongeom}
Let $\bmu = \{x^{-1},e^{-x}\}$ and for $j \in \N$ let
$A_j = x^{-2j} + x^{-j-1}e^{-x}$:
\begin{align*}
	A_1 &= x^{-2}+x^{-2}e^{-x}, \\
	A_2 &= x^{-4}+x^{-3}e^{-x}, \\
	A_3 &= x^{-6}+x^{-4}e^{-x}, \\
	A_4 &= x^{-8}+x^{-5}e^{-x}, \\
	A_5 &= x^{-10}+x^{-6}e^{-x}, \cdots
\end{align*}
Of course $S = \sum A_j$ is $\bmu$-convergent, since in that sum
each monomial occurs at most once.  And $\bmu$ witnesses $S$.  Now
$A_j = x^{-2j}\big(1+x^{j-1}e^{-x}\big)$, so if
$\ba$ witnesses $A_j$, then $x^{j-1}e^{-x} \fst^\ba 1$.
But since the set $\SET{x^{j-1}e^{-x}}{j \in \N}$ is not
well-ordered, it is not contained in any grid, and in particular
it is not contained in $\ba^+$.
\end{ex}

\subsection*{Geometric Convergence}
There is a ``more rapid'' type of convergence for series (and sequences).
Compare it to ``pseudo convergence'' commonly used in valuation theory
\cite{gravett}.
The terms of the series decrease at a rate specified
by a ratio set $\bmu$.  [The ``ratio'' in the name comes
from this usage: the ratio of consecutive terms in a series.]

\begin{de}\label{geom_conv}
Let $\bmu$ be a ratio set.  Let $A_j \in \T$ for $j \in \N$.
The series $\sum_{j=1}^\infty A_j$ is said to be
\Def{$\bmu$-geometrically convergent}
if $\bmu$ witnesses $A_j$ and $A_{j} \fgt^\ebmu A_{j+1}$ for
all $j$.

A series is said to be \Def{geometrically convergent}
if it is $\bmu$-geometrically convergent for some $\bmu$.
\end{de}

Example~\ref{nongeom} is convergent but not geometrically convergent.

\begin{pr}\label{geom_basic}
Assume $\sum A_j$ is $\bmu$-geometrically convergent.
Then: All $A_j$ are supported by the subgrid $(\mag A_1)\bmu^*$.
The series $\sum A_j$ converges in the point-finite sense.
The sum $S = \sum A_j$ is witnessed by $\bmu$ and $S \sim A_1$.
\end{pr}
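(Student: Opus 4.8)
The plan is to prove the three assertions in turn; everything except one finiteness lemma for the monoid $\bmu^*$ is bookkeeping, and that lemma is where the real work lies.

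\emph{The subgrid.} Recall that $A_j \fgt^\ebmu A_{j+1}$ means $\supp A_{j+1}\subseteq \supp A_j\,\bmu^+$, while $\bmu$ witnessing $A_j$ means $\supp A_j\subseteq(\mag A_j)\bmu^*$. Iterating the first inclusion gives $\supp A_j\subseteq\supp A_1\,(\bmu^+)^{j-1}$ for all $j\ge 1$ (with $(\bmu^+)^{0}=\{1\}$); combining this with $\supp A_1\subseteq(\mag A_1)\bmu^*$ and $\bmu^*\bmu^+\subseteq\bmu^*$ yields $\supp A_j\subseteq(\mag A_1)\bmu^*$ for every $j$. That $(\mag A_1)\bmu^*$ is itself a subgrid is exactly Proposition~\ref{griddef}, so $\bigcup_j\supp A_j\subseteq(\mag A_1)\bmu^*$ is a subgrid as well. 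Moreover, unwinding the $(\bmu^+)^{j-1}$ factor, every $\m\in\supp A_j$ can be written $\m=(\mag A_1)\bmu^\bk$ with $\bk\ge\0$ and $|\bk|\ge j-1$, where $|\bk|:=k_1+\dots+k_n$ for $\bk=(k_1,\dots,k_n)$.

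\emph{The finiteness lemma.} I would isolate the statement: for each $\m\in\bmu^*$ the set $K(\m):=\SET{\bk\in\Z^n}{\bk\ge\0,\ \bmu^\bk=\m}$ is finite. The proof uses the partial well-order property of $\Z^n$ (the same ingredient behind Theorem~\ref{sgwt}): $K(\m)$ has finitely many minimal elements and each of its elements dominates one of them. If $K(\m)$ were infinite, some minimal $\bk_0$ would be dominated by infinitely many $\bk\in K(\m)$; but then $\bmu^{\bk-\bk_0}=\m/\m=1$ with $\bk-\bk_0\ge\0$, giving infinitely many distinct nonnegative vectors $\bd$ with $\bmu^\bd=1$ — whereas $\bmu^\bp\fst 1$ for every $\bp>\0$, so the only such $\bd$ is $\0$. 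Contradiction.

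\emph{Point-finiteness, and $S$.} For $\m\in\supp A_j$ the representation from the first step puts $\bk\in K(\m/\mag A_1)$ with $|\bk|\ge j-1$; since $K(\m/\mag A_1)$ is finite, $j\le 1+\max\SET{|\bk|}{\bk\in K(\m/\mag A_1)}$, so each monomial lies in only finitely many $\supp A_j$. As the supports all lie in the common subgrid $(\mag A_1)\bmu^*$, the family $(A_j)$ is summable and $\sum A_j$ converges point-finitely. For $S=\sum A_j$: the bound $|\bk|\ge j-1$ shows that for $j\ge 2$ every monomial of $A_j$ is $\fst\mag A_1$, hence the coefficient of $\mag A_1$ in $S$ equals its (nonzero) coefficient in $A_1$; together with $\supp S\subseteq(\mag A_1)\bmu^*$ this forces $\mag S=\mag A_1$ and $\supp S\subseteq(\mag S)\bmu^*$, i.e.\ $\bmu$ witnesses $S$. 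Finally $S-A_1=\sum_{j\ge 2}A_j$ has support in $(\mag A_1)\bmu^+$, so $S-A_1\fst A_1$ and therefore $S\sim A_1$.

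The main obstacle is the finiteness lemma: if $\bmu$ carries multiplicative relations, there is no a priori bound on the weight $|\bk|$ of a representation $\m=(\mag A_1)\bmu^\bk$, and without such a bound point-finiteness cannot be read off from the support inclusions alone. The Dickson-type argument — using crucially that every element of $\bmu^+$ is $\fst 1$ — is precisely what supplies that bound; the rest is just tracking witnesses and magnitudes through the inclusions.
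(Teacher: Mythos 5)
Your proof is correct, and it is worth noting that the paper states Proposition~\ref{geom_basic} without proof, so there is nothing to match it against directly. Your reduction of everything to the finiteness of $K(\m)=\SET{\bk\ge\0}{\bmu^\bk=\m}$ is sound: the iterated inclusion $\supp A_j\subseteq\supp A_1(\bmu^+)^{j-1}$ does give a representation $\m=(\mag A_1)\bmu^\bk$ with $|\bk|\ge j-1$, and your Dickson-type argument (finitely many minimal elements of $K(\m)$, plus the fact that $\bmu^\bd\fst 1$ for $\bd>\0$ so $\bmu^\bd=1$ forces $\bd=\0$) correctly bounds $|\bk|$ over $K(\m)$ and hence bounds $j$. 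The remaining bookkeeping for $\mag S=\mag A_1$, the witnessing of $S$, and $S\sim A_1$ is all in order. For comparison: where the paper needs this kind of point-finiteness elsewhere (Propositions \ref{derivconverge} and \ref{compconverge}), it does not reprove it but cites \cite[\Emudominateprop]{edgar}, which handles exactly the situation of a $\fgt^\ebmu$-decreasing chain of subgrids; your finiteness lemma is a self-contained substitute for that citation, bought at the cost of redoing the partial-well-order argument that the paper prefers to quote. Either route is legitimate; if you want to align with the paper's style, replace the lemma by the citation and keep the rest of your argument as is.
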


\begin{de}
A sequence $S_j, j = 1,2,3,\cdots$ is said to be
\Def{$\bmu$-geometrically Cauchy} if $\bmu$ witnesses $S_{j+1}-S_j$
and $S_{j+1}-S_j \fgt^\ebmu S_j - S_{j-1}$ for all $j$.
(Compare this to the usual ``pseudo Cauchy'' \cite{gravett}.)
\end{de}

This means the series $\sum_{j=1}^\infty (S_{j+1}-S_j)$
is $\bmu$-geometrically convergent in the sense above.  And of course
$S_j$ converges in the asymptotic (Costin) topology.

\begin{de}
Let $S_j, S \in \T$.  We say the sequence $S_j$ is $\bmu$-geometrically
convergent to $S$ if
$\bmu$ witnesses $S-S_j$ and $S-S_j \fgt^\ebmu S-S_{j+1}$
for all $j$.  (It follows that $S_j \to S$.
Of course $S_j - S \sim S_j - S_{j+1}$
follows, so this is also pseudo convergence.)
\end{de}

\begin{pr}\label{geometric}
Let $S_j$ be $\bmu$-geometrically Cauchy.  Then there is $S$
so that $S_j$ converges $\bmu$-geometrically to $S$.
\end{pr}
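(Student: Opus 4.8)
The plan is to reduce the statement to the properties of $\bmu$-geometrically convergent series already recorded in Proposition~\ref{geom_basic}. Put $A_j := S_{j+1}-S_j$ for $j\ge 1$. Being $\bmu$-geometrically Cauchy says exactly that $\bmu$ witnesses each $A_j$ and that the difference series $\sum_{j\ge1}A_j$ is $\bmu$-geometrically convergent in the sense of Definition~\ref{geom_conv}; in particular $A_j\fgt^\ebmu A_{j+1}$ for all $j$. By Proposition~\ref{geom_basic} this series converges in the point-finite sense; write $T:=\sum_{j\ge1}A_j$ and set $S:=S_1+T$. The $N$th partial sum of $\sum A_j$ telescopes to $S_{N+1}-S_1$, so $S_N\to S$ point-finitely (hence also in the asymptotic topology).

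Now fix $j$. The tail series $\sum_{i\ge j}A_i$ (re-indexed to start at $1$) inherits both defining conditions of $\bmu$-geometric convergence, and since both the full series and its tail converge point-finitely we have $S-S_j=\sum_{i\ge j}A_i$. If some $A_j=0$ then $A_j\fgt^\ebmu A_{j+1}$ forces $A_i=0$ for all $i\ge j$, the sequence is eventually constant, and the conclusion is immediate; so assume every $A_j\ne 0$. Applying Proposition~\ref{geom_basic} to the tail $\sum_{i\ge j}A_i$, whose leading term is $A_j$, shows that $\bmu$ witnesses $S-S_j$ and that $S-S_j\sim A_j$, so $\mag(S-S_j)=\mag A_j$. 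The first of these is precisely the first half of the assertion that $S_j$ converges $\bmu$-geometrically to $S$.

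It remains to show $S-S_j\fgt^\ebmu S-S_{j+1}$ for every $j$, and this is the delicate point, since---as the examples preceding Proposition~\ref{wit_EL} warn---the relation $\fgt^\ebmu$ does not combine freely with $\sim$. I would argue directly with supports, using that $\fgt^\ebmu$ is transitive ($\AA\fst^\ebmu\BB\fst^\ebmu\CC$ gives $\AA\subseteq\BB\bmu^+\subseteq\CC\bmu^+\bmu^+=\CC\bmu^+$). For $i>j$, the chain $A_j\fgt^\ebmu A_{j+1}\fgt^\ebmu\cdots\fgt^\ebmu A_i$ yields $\supp A_i\subseteq\supp A_j\,\bmu^+$; and since $\bmu$ witnesses $A_j$ we have $\supp A_j\subseteq(\mag A_j)\bmu^*$, whence $\supp A_i\subseteq(\mag A_j)\bmu^+$ for all $i>j$. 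Therefore every monomial occurring in $S-S_{j+1}=\sum_{i\ge j+1}A_i$ lies in $(\mag A_j)\bmu^+$, i.e.\ is $\bmu$-far-smaller than $\mag A_j=\mag(S-S_j)\in\supp(S-S_j)$. Hence $S-S_{j+1}\fst^\ebmu S-S_j$, as required, and combining with the previous paragraph, $S_j$ converges $\bmu$-geometrically to $S$. The only real work is this last paragraph; the rest is bookkeeping around Proposition~\ref{geom_basic}.
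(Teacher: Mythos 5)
Your proof is correct and follows essentially the same route as the paper: telescope into the difference series, apply Proposition~\ref{geom_basic} to the tails to get the witness and the asymptotic equivalence $S-S_j\sim A_j$, then push the relation $\fgt^\ebmu$ through. The only difference is that your final paragraph proves by hand (via the transitivity chain and the witness of $A_j$) exactly the instance of Proposition~\ref{wit_LE} that the paper invokes to get $S-S_n\fgt^\ebmu S-S_{n+1}$, so the two arguments coincide in substance.
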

\begin{proof}
Let $S_n = \sum_{j=1}^n A_j$, so that $T = \sum_{j=2}^\infty A_j$
is $\bmu$-geometrically convergent.  So
$S_n$ converges to $S = S_1 + T$.  Now
$S-S_n = \sum_{j=n+1}^\infty A_j$, which is $\bmu$-geometrically
convergent, so $\bmu$ witnesses $S-S_n$ and
$S-S_n \sim A_{n+1}$.  Also
$S-S_n \sim A_{n+1} \fgt^\ebmu S-S_{n+1}$, so
$S-S_n \fgt^\ebmu S-S_{n+1}$ by Proposition~\ref{wit_LE}.
\end{proof}

\begin{re}
The usual version of this in valuation theory would be:
the series $\sum A_j$ is \Def{pseudo Cauchy}
iff $A_j \fgt A_{j+1}$ for all $j$.  The sequence
$S_n$ is \Def{pseudo Cauchy} iff
$S_j-S_{j-1} \fgt S_{j+1}-S_j$ for all $j$.
(This is often also used for sequences indexed by ordinals.)
The sequence $S_j$ is \Def{pseudo convergent} to $S$ iff
$S-S_j \sim S_{j+1}-S_j$ for all $j$.  This will be the useful
notion only for well based transseries spaces.
For example, $\sum_{j=1}^\infty x^{-\log n}$ is pseudo Cauchy,
but its sum is not grid based.
Also: pseudo convergence does not imply convergence (in any
of the three senses of \cite[\Cconverge]{edgarc}).  For example
$S_j = x^{-j}e^x+x^je^{-x}$ is pseudo convergent to $0$.
Also, in the well-based case, where $ \T \subsetneq \R[[\G]]$,
there exist pseudo Cauchy sequences in $\T$ with pseudo limits
only in $\R[[\G]] \setminus \T$.
\end{re}

\begin{lem}[Summation Lemma]\label{summationlemma}
Let $\bmu \subset \Gsmall$ be a ratio set.
Assume $\bmu$ witnesses $V$, the series $S = \sum B_j$
converges $\bmu$-geometrically, $\bmu$ witnesses
$A_j$, and $A_j \sim B_j V$ for $j = 1,2,3,\cdots$.
Then $T = \sum A_j$ converges $\bmu$-geometrically
and $T \sim SV$.
\end{lem}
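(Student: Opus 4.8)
The plan is to verify the three conclusions of the Summation Lemma in turn, leaning on the earlier results about witnesses and geometric convergence. First I would check that $T = \sum A_j$ is $\bmu$-geometrically convergent. By hypothesis $\bmu$ witnesses each $A_j$, so the only thing to check is the decay condition $A_j \fgt^\ebmu A_{j+1}$. Here is where I would use the relation $A_j \sim B_j V$ together with Proposition~\ref{productwitness}: since $\bmu$ witnesses $V$, from $B_j \fgt^\ebmu B_{j+1}$ (which holds because $\sum B_j$ converges $\bmu$-geometrically) we get $B_j V \fgt^\ebmu B_{j+1} V$. Now I want to transfer this from $B_j V, B_{j+1}V$ to $A_j, A_{j+1}$ using the asymptotic equivalences $A_j \sim B_j V$ and $A_{j+1} \sim B_{j+1} V$. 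This is exactly the situation of Propositions~\ref{wit_EL} and~\ref{wit_LE}: I would apply Proposition~\ref{wit_EL} (using that $\bmu$ witnesses $A_j$, hence $\mag A_j = \mag(B_j V)$) to pass from $B_j V \fgt^\ebmu B_{j+1}V$ to $A_j \fgt^\ebmu B_{j+1} V$, and then Proposition~\ref{wit_LE} (using that $\bmu$ witnesses $A_{j+1}$, which also witnesses $B_{j+1}V$ since $\mag A_{j+1} = \mag(B_{j+1}V)$ and witnessing depends only on the magnitude together with the smallness addendum) to pass from $A_j \fgt^\ebmu B_{j+1} V$ to $A_j \fgt^\ebmu A_{j+1}$. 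That establishes $\bmu$-geometric convergence of $T$.

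Next I would address the claim $T \sim SV$. By Proposition~\ref{geom_basic} applied to $\sum B_j$ we have $S \sim B_1$, and applied to $\sum A_j$ we have $T \sim A_1$. Since $A_1 \sim B_1 V$ and $S \sim B_1$, we get $T \sim A_1 \sim B_1 V \sim SV$, provided multiplication respects $\sim$ in the needed sense — that is, $S \sim B_1$ implies $SV \sim B_1 V$. This last point is the one deserving care: $A \sim B$ means $\mag A = \mag B$ and the leading coefficients agree, and multiplying both by $V$ multiplies the magnitude by $\mag V$ and the leading coefficient by $\const$ of the leading term of $V$; as long as $V \ne 0$ (which it is, since $\bmu$ witnesses $V$ in the sense requiring $V \ne 0$ for a nontrivial witness, and anyway the $A_j \sim B_j V$ would force $V \ne 0$ when some $A_j \ne 0$) there is no cancellation at the leading monomial, so $SV \sim B_1 V$ holds. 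Chaining the equivalences gives $T \sim SV$.

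The main obstacle, and the step I would be most careful about, is the magnitude-transfer argument in the first paragraph: making sure that the hypothesis ``$\bmu$ witnesses $A_j$'' really does license the use of Propositions~\ref{wit_EL} and~\ref{wit_LE} with $B_j V$ in place of an arbitrary transseries, i.e. that $\mag A_j = \mag(B_j V)$ (immediate from $A_j \sim B_j V$) and that $\bmu$ witnessing $A_j$ is the same as $\bmu$ witnessing $B_j V$. Recalling the discussion after the Subgrid Witness Theorem, $\ba$ witnesses $a\g(1+S)$ iff $S \fst^\ba 1$, so whether $\bmu$ witnesses a transseries depends only on its magnitude $\g$ and its ``small part relative to the magnitude''; since $A_j$ and $B_j V$ have the same magnitude and, being $\sim$, the same leading coefficient, one must confirm their relative small parts generate the same smallness condition — but in fact Proposition~\ref{productwitness} already gives that $\bmu$ witnesses $B_j V$ directly (from $\bmu$ witnessing $V$ and $\bmu$ witnessing $B_j$, the latter from $\bmu$-geometric convergence of $\sum B_j$ via Definition~\ref{geom_conv}), so I would route through that rather than trying to argue magnitude-equality suffices. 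With that in hand the two applications of Propositions~\ref{wit_EL} and~\ref{wit_LE} go through cleanly, and the remaining assertions are as above.
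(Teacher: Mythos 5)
Your proposal is correct and follows essentially the same route as the paper's proof: the definition of $\bmu$-geometric convergence gives $B_j \fgt^\ebmu B_{j+1}$ and that $\bmu$ witnesses $B_j$, Proposition~\ref{witproduct} gives that $\bmu$ witnesses $B_jV$, Proposition~\ref{productwitness} gives $B_jV \fgt^\ebmu B_{j+1}V$, and Propositions~\ref{wit_EL} and~\ref{wit_LE} transfer this through the equivalences $A_j \sim B_jV$, with the final $T \sim SV$ obtained from the magnitudes exactly as you describe. Your closing remark correctly identifies that the witness needed for the transfer step is that of $B_jV$ (obtained via Proposition~\ref{witproduct}), not of $A_j$ directly, which is precisely how the paper handles it.
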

\begin{proof}
By definition $\bmu$ witnesses $B_j$ and
$B_j \fgt^\ebmu B_{j+1}$ for all $j$.
By Proposition~\ref{witproduct} $\bmu$ witnesses $B_jV$.
By Proposition~\ref{productwitness} $\bmu$ witnesses
$B_jV \fgt^\ebmu B_{j+1}V$.  So
$A_j \sim B_j V \fgt^\ebmu B_{j+1} V \sim A_{j+1}$,
and by Propositions \ref{wit_EL} and~\ref{wit_LE},
$A_j \fgt^\ebmu A_{j+1}$.  So $T = \sum A_j$
converges $\bmu$-geometrically.  Finally,
$\mag T = \mag A_1 = \mag(B_1V) = \mag B_1 \mag V = \mag S \mag V = \mag(SV)$
so $T \sim SV$.
\end{proof}

\subsection*{Geometric Convergence of Multiple Series}
Geometric convergence of series adapts well to multiple series.

\begin{de}
Let $n \ge 2$ be an integer.  An \Def{$n$-fold multiple series}
is a series indexed by $\N^n$:
$$
	\sum_{\bp \in \N^n} A_\bp .
$$
Let $\bmu$ be a ratio set.  We say the $n$-fold multiple series
$\sum A_\bp$ is \Def{$\bmu$-geometrically convergent} iff:
$\bmu$ witnesses $A_\bp$ for all $\bp \in \N^n$,
$A_\0 \ne 0$, and
for all $\bp,\bq \in \N^n$, if $\bp < \bq$, $A_\bp \ne 0$,
and $A_\bq \ne 0$, then $A_\bp \fgt^\ebmu A_\bq$.
\end{de}

\begin{re}\label{gridmultiseries}
A grid-based transseries is, of course, the primary example of this.
Let $T \in \TW\ebmu$.
Write $\bmu = \{\mu_1,\cdots,\mu_n\}$
with $\mu_1 \fgt \cdots \fgt \mu_n$.
Then $\supp T \subseteq \m\bmu^* = \SET{\m\bmu^\bp}{\bp \in \N^n}$
where $\m = \mag T$.  And the ``formal'' series
$$
	T = \sum_\g T[\g]\g = \sum_{\bp \in \N^n} a_\bp \cdot \m\bmu^\bp
$$
is a $\bmu$-geometrically convergent $n$-fold multiple series.
(If the representation of $\g$ as $\m\bmu^\bp$ is unique,
then the coefficient $a_\bp$ must be $T[\m\bmu^\bp]$.  But if
it is not unique, then there is more than one choice
for the coefficients.)
\end{re}

\begin{pr}
Assume $\sum A_\bp$ is $\bmu$-geometrically convergent.  Then all
$A_\bp$ are supported by the subgrid $(\mag A_\0)\bmu^*$.
The series $\sum A_\bp$ converges in the point-finite sense.
The sum $S = \sum A_\bp$ is witnessed by $\bmu$ and $S \sim A_\0$.
\end{pr}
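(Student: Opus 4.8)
The plan is to follow the single‑series argument (Proposition~\ref{geom_basic}) for all three assertions; the one genuinely new ingredient is the reduction of point‑finite convergence to that single‑series statement by way of the well‑partial‑order structure of $\N^n$.

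First I would record the support bound $\supp A_\bp \subseteq (\mag A_\0)\bmu^*$ for every $\bp$. For $\bp=\0$ this is exactly the hypothesis that $\bmu$ witnesses $A_\0$ (which uses $A_\0\ne 0$). For $\bp\ne\0$ we have $\0<\bp$ in $\N^n$; the claim is vacuous if $A_\bp=0$, and if $A_\bp\ne0$ then $A_\0\fgt^\bmu A_\bp$, so $\supp A_\bp \subseteq (\supp A_\0)\,\bmu^+ \subseteq (\mag A_\0)\bmu^*\,\bmu^+ \subseteq (\mag A_\0)\bmu^*$, again using that $\bmu$ witnesses $A_\0$. Thus every support lies in the single subgrid $(\mag A_\0)\bmu^*$.

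Next, point‑finite convergence. Fix a monomial $\g$ and suppose, for contradiction, that $X := \{\bp\in\N^n : \g\in\supp A_\bp\}$ is infinite. Because $\N^n$ with the coordinatewise order is a well‑partial‑order (cf. the partial well order property of $\Z^n$ used for Theorem~\ref{sgwt}), every infinite subset of $\N^n$ contains an infinite strictly increasing chain — an infinite sequence in a well‑partial‑order has an infinite non‑decreasing subsequence, and distinctness of the elements of $X$ upgrades it to strictly increasing. Pick $\bp_1<\bp_2<\bp_3<\cdots$ in $X$. Each $A_{\bp_j}$ is nonzero (since $\g\in\supp A_{\bp_j}$), $\bmu$ witnesses each $A_{\bp_j}$, and $A_{\bp_j}\fgt^\bmu A_{\bp_{j+1}}$ (from $\bp_j<\bp_{j+1}$ and both being nonzero), so $\sum_j A_{\bp_j}$ is a $\bmu$‑geometrically convergent single series in the sense of Definition~\ref{geom_conv}. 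By Proposition~\ref{geom_basic} it converges point‑finitely, contradicting $\g\in\supp A_{\bp_j}$ for every $j$. Hence each $X$ is finite and $\sum_\bp A_\bp$ converges point‑finitely. This is the step I expect to require the real care: one cannot instead group the terms by $|\bp|$ (the sum of the coordinates of $\bp$) and quote the single‑series result directly, because a finite sum of $\bmu$‑witnessed transseries need not be $\bmu$‑witnessed (see the remark after Proposition~\ref{witproduct}), so the grouped partial sums need not form a $\bmu$‑geometrically convergent single series; extracting an honest chain via the well‑partial‑order property is what repairs this.

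Finally, put $S := \sum_\bp A_\bp$, now a legitimate point‑finite sum. Then $\supp S \subseteq \bigcup_\bp \supp A_\bp \subseteq (\mag A_\0)\bmu^*$. Writing $S = A_\0 + (S-A_\0)$ with $S-A_\0 = \sum_{\bp\ne\0} A_\bp$, the first step gives $\supp(S-A_\0) \subseteq (\mag A_\0)\bmu^+$, so every monomial of $S-A_\0$ is $\fst \mag A_\0$; hence $\mag S = \mag A_\0$ and $S$ and $A_\0$ have the same leading coefficient, i.e. $S\sim A_\0$. Since $\mag S = \mag A_\0$, the inclusion $\supp S \subseteq (\mag A_\0)\bmu^* = (\mag S)\bmu^*$ is precisely the assertion that $\bmu$ witnesses $S$, which completes the proof.
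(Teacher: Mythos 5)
Your proof is correct. The paper states this proposition without any proof (as it does its single-series antecedent, Proposition~\ref{geom_basic}), so there is no argument of the author's to compare against; what you have written supplies the omitted proof and is the natural one. All three parts check out: the support bound follows directly from the witnessing hypothesis together with $\AA \fst^\ebmu \BB \Leftrightarrow \AA \subseteq \BB\,\bmu^+$; the reduction of point-finiteness to the single-series case by extracting, via the well-partial-order property of $\N^n$, an infinite strictly increasing chain inside $\SET{\bp}{\g \in \supp A_\bp}$ (along which all terms are automatically nonzero, so the comparability clause of the definition applies) is exactly the right move; and your closing observation that the terms cannot simply be grouped by $|\bp|$, because a finite sum of $\bmu$-witnessed transseries need not be $\bmu$-witnessed, correctly identifies why the chain argument is needed rather than a direct quotation of the single-series result.
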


\begin{lem}[Multiple Summation Lemma]\label{multsummationlemma}
Let $\bmu \subset \Gsmall$ be a ratio set.
Assume $\bmu$ witnesses $V$, the series $S = \sum B_\bp$
converges $\bmu$-geometrically, $\bmu$ witnesses
$A_\bp$, and $A_\bp \sim B_\bp V$ for all $\bp \in \N^n$.
Then $T = \sum A_\bp$ converges $\bmu$-geometrically
and $T \sim SV$.
\end{lem}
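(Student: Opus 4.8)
The plan is to follow the proof of Lemma~\ref{summationlemma} almost verbatim, replacing the linear index $j \in \N$ by a multi-index $\bp \in \N^n$ and the step ``$j \to j+1$'' by an arbitrary pair $\bp < \bq$ in $\N^n$; since every comparison is made one pair at a time, the partial order causes no difficulty. First I would record the nonvanishing facts that the multiple-series definition explicitly requires: since $\bmu$ witnesses $V$ we have $V \ne 0$, and since $\sum B_\bp$ converges $\bmu$-geometrically we have $B_\0 \ne 0$; moreover, because $A_\bp \sim B_\bp V$, for each $\bp$ the transseries $A_\bp$ vanishes exactly when $B_\bp$ does (if $B_\bp = 0$ then $A_\bp \sim 0$ forces $A_\bp = 0$, and if $B_\bp \ne 0$ then $B_\bp V \ne 0$, hence $A_\bp \ne 0$). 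In particular $A_\0 \sim B_\0 V \ne 0$.

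Next I would verify the comparison condition. Fix $\bp, \bq \in \N^n$ with $\bp < \bq$, $A_\bp \ne 0$, and $A_\bq \ne 0$. Then $B_\bp \ne 0$ and $B_\bq \ne 0$ by the previous paragraph, so $B_\bp \fgt^\ebmu B_\bq$ because $\sum B_\bp$ converges $\bmu$-geometrically. By Proposition~\ref{witproduct}, $\bmu$ witnesses $B_\bp V$ and $B_\bq V$; and since $\bmu$ witnesses $V$, Proposition~\ref{productwitness} upgrades $B_\bp \fgt^\ebmu B_\bq$ to $B_\bp V \fgt^\ebmu B_\bq V$. Then
$$
	A_\bp \sim B_\bp V \fgt^\ebmu B_\bq V \sim A_\bq ,
$$
and since $\bmu$ witnesses both $A_\bp$ and $A_\bq$, Propositions~\ref{wit_EL} and~\ref{wit_LE} give $A_\bp \fgt^\ebmu A_\bq$. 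Together with the hypothesis that $\bmu$ witnesses every $A_\bp$ and the already-noted $A_\0 \ne 0$, this is precisely the definition of $\bmu$-geometric convergence of $\sum A_\bp$.

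Finally I would identify the sum. Applying the Proposition stated immediately before this Lemma to the $\bmu$-geometrically convergent series $\sum A_\bp$ gives $T \sim A_\0$, whence $\mag T = \mag A_\0 = \mag(B_\0 V) = \mag B_\0 \cdot \mag V$; applying the same Proposition to $\sum B_\bp$ gives $S \sim B_\0$, so $\mag B_\0 = \mag S$. Therefore $\mag T = \mag S \cdot \mag V = \mag(SV)$, i.e.\ $T \sim SV$. I do not expect a genuine obstacle: the substance is carried by Propositions~\ref{witproduct} and~\ref{productwitness}, which are indifferent to whether the index set is $\N$ or $\N^n$. The only point needing a moment's care is that the multiple-series comparison condition quantifies over all pairs $\bp < \bq$, not merely over adjacent indices, so one must invoke $B_\bp \fgt^\ebmu B_\bq$ pair-by-pair rather than obtaining it from a transitivity step.
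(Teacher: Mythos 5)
Your proposal is correct and is exactly what the paper intends: the paper gives no separate proof, saying only that the proof of Lemma~\ref{summationlemma} ``adapts with no difficulty,'' and your argument is precisely that adaptation (same chain through Propositions~\ref{witproduct}, \ref{productwitness}, \ref{wit_EL}, \ref{wit_LE}, and the same magnitude computation for $T \sim SV$). Your extra care with the vanishing terms $A_\bp = 0 \Leftrightarrow B_\bp = 0$ and with the pairwise (rather than successive) comparisons is the right way to handle the one genuinely new feature of the multi-index setting.
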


The proof of Lemma~\ref{summationlemma} adapts with no difficulty.

\section{Derivative}
\cite[\Ederivpropertiesa]{edgar} states
$\m \fst \n \Longrightarrow \m' \fst \n'$ for monomials $\m,\n$.
Here is the ``witness'' version.

\begin{pr}\label{monoderivcompare}
Let $\bmu \subset \Gsmall$ be a ratio set.
Then there is a ratio set $\ba$ such that:
{\rm(a)}~$\ba^* \supseteq \bmu$;
{\rm(b)}~if $\m \in \GRID^\ebmu$, then $\m' \in \TWG\ba\ba$;
{\rm(c)}~for all $\m,\n \in \GRID^\ebmu$, if $\m \fst^\ebmu \n$
and $\n \ne 1$, then $\m' \fst^\ba \mag(\n')$,
so that $\m' \fst^\ba \n'$.
\end{pr}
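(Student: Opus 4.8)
The plan is to phrase everything in terms of the logarithmic derivative. For a monomial $\m \ne 1$ write $R_\m := \m^{-1}\m' \in \T$; then $\m' = \m\,R_\m$, so $\supp(\m') = \m\,\supp(R_\m)$ and $\mag(\m') = \m\,\mag(R_\m)$, and since logarithmic derivatives are additive over products, $R_{\bmu^\bk} = \sum_{i=1}^n k_i R_{\mu_i}$ for $\bk \in \Z^n$. Set $\DD := \bigcup_{i=1}^n \supp(R_{\mu_i})$; each $\supp(R_{\mu_i})$ is a subgrid (it equals $\mu_i^{-1}\supp(\mu_i')$), so $\DD$ is a subgrid, and $\supp(R_\m) \subseteq \DD$ for every $\m$. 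I would fix a ratio set $\bnu$ with $\DD \subseteq \GRID^{\bnu,\bn}$ for some $\bn$, and let $\ba$ begin life as $\bmu \cup \bnu$. Then (a) is immediate (as $\ba^* \supseteq \ba \supseteq \bmu$), and $\ba$ already generates every $\m'$, since $\supp(\m') = \m\,\supp(R_\m) \subseteq \bmu^\bk\bnu^\bn\bnu^* \subseteq \GRID^{\ba,(\bk,\bn)}$.

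For the witness part of (b) I would enlarge $\ba$ so that it witnesses $R_\m$ for every $\m$ — this suffices, because $\supp(\m')/\mag(\m') = \supp(R_\m)/\mag(R_\m)$. By the Support Lemma (Lemma~\ref{supportlemma}) applied to $R_{\mu_1},\dots,R_{\mu_n}$ there are only finitely many magnitudes among the $R_\m$, say $\fc_1,\dots,\fc_r \in \DD$. Apply the Subgrid Witness Theorem (Theorem~\ref{sgwt}) to $\DD$: with $\g = \max\DD$ there is a ratio set $\bc$ with $\DD \subseteq \g\bc^*$; writing $\fc_j = \g\bc^{\bl_j}$ and letting $\bm'$ be the componentwise maximum of the $\bl_j$, every $\fb \in \supp(R_\m)$ has $\fb/\mag(R_\m) = \bc^{\bl-\bl_j}$ (for the relevant $j$), which is $\fsteq 1$ and lies in $\GRID^{\bc,-\bm'}$. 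The set $\SET{\n \in \GRID^{\bc,-\bm'}}{\n \fsteq 1}$ is a subgrid with maximum $1$, so by Theorem~\ref{sgwt} it has a witness $\bc'$; adjoining $\bc'$ to $\ba$ gives $\supp(R_\m) \subseteq \mag(R_\m)\,\ba^*$ for all $\m$, hence $\ba$ witnesses every $\m'$, completing (b).

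For (c), take $\m \fst^\ebmu \n$ with $\n \ne 1$; the case $\m = 1$ is trivial ($\m' = 0$), so assume $\m \ne 1$. Since $\ba$ witnesses $\m'$, it is enough to get $\mag(\m') \in \mag(\n')\,\ba^+$, and
$$
	\frac{\mag(\m')}{\mag(\n')} \;=\; \frac{\mag(R_\m)}{\mag(R_\n)}\cdot\frac{\m}{\n},
$$
where $\m/\n \in \bmu^+$ and $\mag(R_\m)/\mag(R_\n)$ runs over the finite set $\SET{\fc_i/\fc_j}{1 \le i,j \le r}$. Hence the left side lies in $\bigcup_{i,j}(\fc_i/\fc_j)\bmu^+$, a subgrid by Proposition~\ref{griddef}; and by \cite[\Ederivpropertiesa]{edgar} (monotonicity of the derivative on monomials, which is where $\n \ne 1$ is used) it is in fact $\fst 1$, so it lies in the small subgrid $\mathcal F := \bigl(\bigcup_{i,j}(\fc_i/\fc_j)\bmu^+\bigr) \cap \Gsmall$. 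One last application of Theorem~\ref{sgwt} produces $\g_* = \max\mathcal F \in \Gsmall$ and a ratio set $\bd$ with $\mathcal F \subseteq \g_*\bd^*$; after adjoining $\g_*$ and $\bd$ to $\ba$ we get $\mathcal F \subseteq \ba^+$, so $\mag(\m')/\mag(\n') \in \ba^+$. Then $\supp(\m') \subseteq \mag(\m')\ba^* \subseteq \mag(\n')\ba^+$, i.e. $\m' \fst^\ba \mag(\n')$, whence $\m' \fst^\ba \n'$.

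I expect the real difficulty to be exactly the step in (c) where $\mag(R_\m) \fgt \mag(R_\n)$, which genuinely occurs — for instance $\n = e^x$, $\m = e^{x-e^x} = \n\,e^{-e^x}$ has $\mag(R_\m) = e^x \fgt 1 = \mag(R_\n)$ — so the factor $\mag(R_\m)/\mag(R_\n)$ cannot be absorbed into $\ba$ on its own. What makes it go through is that the \emph{product} with $\m/\n$ is still strictly small by \cite[\Ederivpropertiesa]{edgar}, so the whole family of these products fits inside a single small subgrid which the Subgrid Witness Theorem lets us cover by $\ba^+$. A minor bookkeeping point, when $\bmu$ is not log-free, is that the $R_{\mu_i}$ introduce the monomials $\l_m'$; here these cause no trouble since they already belong to $\DD$, but one could alternatively first pass to a hereditary $\bmu$ and lean on Proposition~\ref{derivgen}. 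The case $\bmu = \emptyset$ is trivial.
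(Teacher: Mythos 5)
Your proof is correct, but it takes a genuinely different route from the paper's, most visibly in part~(c). The paper first reduces to the case $\bmu \subset \Gsmall_{N,-1}$ (each $\mu_i = e^{L_i}$ with $L_i$ purely large and log-free), where $\m^\dagger = \sum p_i L_i'$, and then proves the witnessed comparison from scratch: ``Height Wins'' is used to arrange $\mag(P) \fgt^\ba \mag(Q)\mu_j$ for $P,Q$ in the finite-magnitude family $\SW$ of logarithmic derivatives, and for a given pair $\m \fst^\ebmu \n$ one takes the largest index $j$ with $p_j \ne q_j$, uses $\m^\dagger - \n^\dagger \fsteq L_j'$ together with $\m/\mu_j \fsteq^\ebmu \n$, and finally transports everything to general $\bmu$ by conjugating with $\exp_M$ and adjoining $\l_M'$. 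You instead work with arbitrary $\bmu$ directly, take the non-witnessed comparison $\m \fst \n \Rightarrow \m' \fst \n'$ of \cite[\Ederivpropertiesa]{edgar} as a black box (legitimate, since the proposition is explicitly billed as the ``witness version'' of that known fact, and this is not circular), and observe that every ratio $\mag(\m')/\mag(\n')$ that must be witnessed lies in the single small subgrid $\bigl(\bigcup_{i,j}(\fc_i/\fc_j)\bmu^+\bigr) \cap \Gsmall$, which one application of Theorem~\ref{sgwt} covers by $\ba^+$ uniformly. Both proofs rest on the Support Lemma to make the set of magnitudes of the logarithmic derivatives finite, and your observation that witnessing $\m'$ is the same as witnessing $\m^\dagger$ is exactly the reduction the paper also exploits. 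What your route buys is brevity and uniformity: no induction on height, no $\exp_M$ reduction, and the monomials $\l_m'$ are absorbed into $\DD$ automatically. What it gives up is explicitness of $\ba$, which emerges from three applications of the Subgrid Witness Theorem rather than from a concrete recipe; this matters slightly downstream, since the subsequent claim that $\ba$ can be kept inside $\G_{N,M+1}$ and the computations in Example~\ref{ex:derivativeaddendum} refer to the paper's specific construction and would have to be re-examined for yours.
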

\begin{proof}
(I) We begin with the case where $\bmu \subseteq \Gsmall_{N,-1}$, $N \ge 1$.
That is, every monomial $\mu_i \in \bmu$ has the form
$e^{L_i}$ with $L_i \in \R\lbb\G_{N-1}\rbb$ purely large and log-free.
Order $\bmu = \{\mu_1,\cdots,\mu_n\}$ as usual so that
$1 \fgt \mu_1 \fgt \mu_2 \fgt \cdots \fgt \mu_n$.
So $0 > L_1 > L_2 > \dots > L_n$ and thus
$L_1 \fsteq L_2 \fsteq \cdots \fsteq L_n$
and $L_1' \fsteq L_2' \fsteq \cdots \fsteq L_n'$
[the $L_i$ are large, so do not have magnitude $1$].

Let $1 \le j \le n$, $P = \sum_{i=1}^n p_i L_i'$, and
$Q = \sum_{i=1}^n q_i L_i'$,
$p_i, q_i \in \Z$.  If $Q \fsteq L_j'$ and $P \fsteq L_j'$, then
$Q\mu_j \fst P$ by
``Height Wins'' \cite[\Eheightwins]{edgar}, since
$\mu_j = e^{L_j}$ has greater height than both $L_j$ and $L_j'$.

Write
$$
	\SW := \SET{\sum_{i=1}^n p_i L_i'}{\bp \in \Z^n}.
$$
By the Support Lemma \ref{supportlemma},
$\SET{\mag(Q)}{Q \in \SW}$ is a finite set of monomials.
So we may define $\ba$ so that
$\ba^* \supseteq \bmu$ and:
\begin{enumerate}
\item[(i)]
$\ba$ generates $\mag Q$ for all $Q \in \SW$

\item[(ii)]
$\ba$ witnesses $Q$ for all $Q \in \SW$
[by (i) and (ii), $\ba$ generates all $Q \in \SW$]

\item[(iii)]
$\ba$ witnesses $\mag(P) \fgt \mag(Q)\mu_j$
for all $j$, $1 \le j \le n$, and all $P,Q \in \SW$
such that $Q\fsteq L_j'$ and $P\fsteq L_j'$.
\end{enumerate}

\textit{Claim: if $1 \le j \le n$, $P,Q \in \SW$, and $Q-P \fsteq L_j'$,
then $\mag(P) \fgt^\ba \mag(Q)\mu_j$.}  Indeed, in case $Q \fsteq L_j'$
it follows that $P \fsteq L_j'$ also and the claim follows from (iii).
In the other case $Q \fgt L_j'$ it follows that
$P \fe Q$ so that $\mag(P) \fgt^\ebmu \mag(P)\mu_j = \mag(Q)\mu_j$.

(a) holds by construction.

(b) Let $\m \in \GRID^\ebmu$.  Then the derivative is
$$
	\m' = \left(\sum_{i=1}^n p_i L_i'\right)\m = \m^\dagger\m
$$
[We used notation $\m^\dagger = \m'/\m$ for the logarithmic derivative of $\m$.]
Now $\m^\dagger \in \SW$ so, as noted, $\ba$ generates and
witnesses $\m^\dagger$.
Thus $\ba$ generates and witnesses $\m' = \m^\dagger \m$.

(c) Now let $\m,\n \in \GRID^\ebmu$ with $\m \fst^\ebmu \n$
and $\n \ne 1$.  Say $\m = \bmu^\bp$, $\n = \bmu^\bq$,
with $\bp > \bq$ in $\Z^n$.  The derivatives are:
$$
	\m' = \left(\sum_{i=1}^n p_i L_i'\right)\m = \m^\dagger\m ,\qquad
	\n' = \left(\sum_{i=1}^n q_i L_i'\right)\n = \n^\dagger\n .
$$
Let $j$ be largest such that $p_j \ne q_j$.
Then $\m^\dagger - \n^\dagger$ is a linear combination of
$L_1',\cdots, L_j'$, and thus $\m^\dagger - \n^\dagger \fsteq L_j'$.
So $\mag(\n^\dagger) \fgt^\ba \mag(\m^\dagger)\mu_j$.
If $\g \in \supp(\m')$, then $\g = \g_1\m$ where $\g_1 \in \supp(\m^\dagger)$;
and $\m^\dagger \in \SW$, so $\g_1 \fsteq^\alpha \mag(\m^\dagger)$
by (ii).  Also $\m/\mu_j \fsteq^\ebmu \n$ since $p_j > q_j$.  Thus:
\begin{equation*}
	\g = \g_1\m \fsteq^\ba \mag(\m^\dagger)\m =
	\big(\mag(\m^\dagger)\mu_j\big)\big(\m/\mu_j\big) \fst^\ba \mag(\n^\dagger)\n
	=\mag(\n') .
\end{equation*}
This shows $\m' \fst^\ba \mag(\n')$ and thus that
$\m' \fst^\ba \n'$.

(II) Now let $\bmu$ be any ratio set.  Say $\bmu \subset \G_{N,M-1}$,
$N \ge 1, M \ge 1$. Since $\G_{n,m} \subseteq \G_{n+1,m+1}$ where we
identify $\g\circ\log_m \in \G_{n,m}$ with
$(\g \circ \exp) \circ \log_{m+1} \in \G_{n+1,m+1}$, this includes the
general case.  Given such $\bmu$, define
$$
	\tbmu  := \SET{\g\circ\exp_M}{\g \in \bmu} ,
$$
so that $\tbmu \subset \G_{N,-1}$.  Construct the corresponding
ratio set $\widetilde\ba$ from $\tbmu$ as in~(I).
Then define
$\ba := \SET{\widetilde{\g}\circ\log_M}{\widetilde{\g} \in
\widetilde{\ba}} \cup \{\l_M'\}$.  Recall that
$\l_M'$ is a small monomial, since it is a finite product
of the form $(x \log x \log_2 x \cdots)^{-1}$.

(a) Of course $\ba^* \supseteq \bmu$ since
$\widetilde{\ba}^* \supseteq \tbmu$.

(b) Let $\m \in \GRID^\ebmu$.  Then $\m = \widetilde{\m}\circ\log_M$
where $\widetilde{\m} \in \GRID^\tebmu$.  So by (I)
$\widetilde{\ba}$ generates and witnesses $\widetilde{\m}'$, and therefore
$\ba$ generates and witnesses $\widetilde{\m}'\circ \log_M$.  But
$$
	\m' = (\widetilde{\m}\circ \log_M)'
	= \big(\widetilde{\m}'\circ\log_M\big)\cdot\l_M'
$$
and $\log_M' \in \ba$, so $\ba$ generates and witnesses $\m'$.

(c) Now let $\m,\n \in \GRID^\ebmu$ with $\m \fst^\ebmu \n$
and $\n \ne 1$.  Then $\m = \widetilde{\m} \circ \log_M$,
$\n = \widetilde{\n} \circ \log_M$, where
$\widetilde{\m}, \widetilde{\n} \in \GRID^\tebmu$ with
$\widetilde{\m} \fst^\tebmu \widetilde{\n}$,
$\widetilde{\n} \ne 1$.  So by (I) we have
$\widetilde{\m}' \fst^{\tilde{\ba}} \widetilde{\n}'$.
Therefore
\begin{align*}
	\m' &= (\widetilde{\m}\circ\log_M)'
	= \big(\widetilde{\m}'\circ\log_M\big)\cdot\l_M'
	\\ &\fst^{\ba} \big(\widetilde{\n}'\circ\log_M\big)\cdot\l_M'
	= (\widetilde{\n}\circ\log_M)'
	= \n' ,
\end{align*}
as required.
\end{proof}

\begin{de}
We will say that $\ba$ is a \Def{derivative addendum} for $\bmu$.
\end{de}

\begin{ex}\label{ex:derivativeaddendum}
Computations from this proof:

$\bmu = \{e^{-a_1x},\cdots, e^{-a_nx}\} \subset \Gsmall_{0,-1}$
leads to $\ba = \bmu$.

$\bmu = \{x^{-a_1},\cdots,x^{-a_n}\} \subset \Gsmall_0$ leads to
$\ba = \bmu \cup \{x^{-1}\}$.

$\bmu = \{e^{-x},e^{-e^x}\}$ leads to $\ba = \{e^{-x},e^x e^{-e^x}\}$.

$\bmu = \{x^{-1},e^{-x}\}$ leads to $\ba = \{x^{-1},xe^{-x}\}$.
\end{ex}

\begin{ex}
Of course Proposition~\ref{monoderivcompare}(c) does not say:
$$
	\textit{For all $\m,\n \in \G$, if $\m \fst^\ebmu \n$ and $\n \ne 1$,
	then $\m' \fst^\ba \n'$.}
$$For example, if $\bmu = \{x^{-1}\}$,
then there is no finite ratio set $\ba$ such that
$(x^{-1}\n)' \fst^\ba \n'$ for all $\n \in \G$.  We can see
this by considering $\n = \exp_k$ for $k=1,2,3,\cdots$.
\end{ex}

\begin{pr}
If $\bmu \subset \G_{N,M}$, then the derivative addendum
$\ba$ may be chosen so that $\ba \subset \G_{N,M+1}$.
\end{pr}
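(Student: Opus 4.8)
The plan is to reopen the construction in the proof of Proposition~\ref{monoderivcompare} and carry the two indices $N$ (exponential height) and $M$ (logarithmic depth) along through it; no new ideas are needed, only careful bookkeeping. As there, the construction runs in two stages: a log-free stage (part~(I), $\bmu\subset\Gsmall_{N,-1}$, so $M=-1$) and a reduction to it (part~(II), $\bmu\subset\G_{N,M-1}$) via composing with $\exp_M$ and then un-composing with $\log_M$. It suffices to establish the bound for the least $(N,M)$ with $\bmu\subset\G_{N,M}$, since larger pairs give a weaker conclusion; in particular one must not first lift $\bmu$ along the identification $\G_{n,m}\subseteq\G_{n+1,m+1}$, which would inflate $N$.

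For the log-free stage I would first note that differentiation keeps everything at the same height level. Writing $\mu_i=e^{L_i}$ with $\supp L_i\subseteq\G_{N-1}$, Proposition~\ref{heredsubgrid} puts $\supp L_i$ inside a hereditary log-free subgrid of height $\le N-1$, which after adjoining $x^{-1}$ we may take to be $\GRID^\bnu$ for a hereditary log-free ratio set $\bnu\subset\Gsmall_{N-1}$; then Proposition~\ref{derivgen} gives $\supp L_i'\subseteq\GRID^\bnu\subseteq\G_{N-1}$. Hence the lattice $\SW=\SET{\sum_i p_iL_i'}{\bp\in\Z^n}$ lies in $\R\lbb\G_{N-1}\rbb$ and $\bigcup_{Q\in\SW}\supp Q$ is a subgrid of $\G_{N-1}$. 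The three families of monomials adjoined to form $\ba$ are then seen to be harmless: the generators for $\bmu$ stay at the same exponential height as $\bmu$ (they can be taken in $\G_{N,-1}\subseteq\G_N$); the generators for the finitely many magnitudes $\mag Q$ and the smallness addenda witnessing the $Q\in\SW$ are drawn from the fixed subgrid $\bigcup_{Q\in\SW}\supp Q\subseteq\G_{N-1}$ (a subgrid of small monomials has a ratio-set witness in the same group, by Theorem~\ref{sgwt}); and the ratio monomials $\mag(Q)\mu_j/\mag(P)$ of condition~(iii) are products of the height-$(N-1)$ monomial $\mag(Q)/\mag(P)$ with $\mu_j$, hence log-free of height $N$. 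So the log-free stage outputs $\ba\subset\G_N=\G_{N,0}$, which is $\G_{N,(-1)+1}$ as required.

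For the reduction stage $\bmu\subset\G_{N,M-1}$ one passes to $\tbmu=\SET{\g\circ\exp_M}{\g\in\bmu}$, applies the log-free stage to get $\widetilde\ba\subset\G_N$, and sets $\ba=\SET{\widetilde\g\circ\log_M}{\widetilde\g\in\widetilde\ba}\cup\{\l_M'\}$. Since $\exp_M$ and $\log_M$ are mutually inverse $M$-fold iterates, this passage shifts the logarithmic-depth index by $-M$ and then $+M$ and leaves the height index alone (this is exactly the content of the identities $\m=\widetilde\m\circ\log_M$ used in the proof of Proposition~\ref{monoderivcompare}); so $\widetilde\g\circ\log_M\in\G_{N,M}$ for $\widetilde\g\in\G_N$, while $\l_M'=(x\log x\cdots\log_{M-1}x)^{-1}$ is a monomial of height $0$ and logarithmic depth $M-1$, hence also in $\G_{N,M}$. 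This gives $\ba\subset\G_{N,M}=\G_{N,(M-1)+1}$. The remaining low cases, chiefly $\bmu\subset\G_{0,M}$, I would settle by hand: every $\m\in\GRID^\ebmu$ is then a product $x^{b_0}\l_1^{b_1}\cdots\l_M^{b_M}$, its logarithmic derivative is an $\R$-linear combination of the $\l_k'=(x\log x\cdots\log_{k-1}x)^{-1}$ with $k\le M$ (each of height $0$, depth $\le M-1$), and one may take $\ba\subset\G_{0,M}\subseteq\G_{0,M+1}$.

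The one thing I expect to require genuine care — rather than mere transcription from the earlier proof — is the claim that $\SW\subseteq\R\lbb\G_{N-1}\rbb$, i.e.\ that a single differentiation of the (log-free, purely large, height $N-1$) exponents $L_i$ cannot escape level $N-1$; that is where Proposition~\ref{derivgen} and the hereditary-grid construction of Proposition~\ref{heredsubgrid} are doing the work, and everything else is routine index-chasing. Note the bound obtained is clean but not tight: the derivative addenda computed in Example~\ref{ex:derivativeaddendum} in fact all stay in $\G_{N,M}$, one level of depth below what is asserted.
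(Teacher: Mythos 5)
Your proposal is correct and is essentially the paper's own (one-line) proof carried out in full: verify that the log-free stage of Proposition~\ref{monoderivcompare} produces $\ba\subset\G_N$ when $\bmu\subset\G_{N,-1}$ (the point being that $L_i'$ stays in $\R\lbb\G_{N-1}\rbb$, so all adjoined magnitudes, smallness addenda, and ratio monomials $\mag(Q)\mu_j/\mag(P)$ lie in $\G_N$), and then let the $\log_M$-conjugation of part~(II) shift the depth index. One microscopic slip in your hand-done $\G_{0,M}$ case: the logarithmic derivative of $x^{b_0}\l_1^{b_1}\cdots\l_M^{b_M}$ is a combination of $\l_k'$ for $k\le M+1$ (since $\l_k^\dagger=\l_{k+1}'$), the last of which has depth $M$ rather than $M-1$ --- but this still lies in $\G_{0,M}\subseteq\G_{0,M+1}$, so the conclusion stands.
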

\begin{proof}
Examine the proof to see first: if $\bmu \subset \G_{N,-1}$,
then $\ba \subset \G_{N}$.
\end{proof}

\begin{re}\label{gridderiv}
Consider a grid $\GRID^{\ebmu,\bm}$.  In the preceding proposition,
if $\n \in \GRID^{\ebmu,\bm}$, then
$\supp \n' \subseteq \GRID^{\ba,\widetilde{\bm}}$,
where $\widetilde{\bm}$ is chosen so that
$\mag((\bmu^\bm)') = \ba^{\widetilde{\bm}}$.  This works as long
as $\bm \ne \0$.  Now consider the grid $\GRID^{\ebmu,\0}$.
Of course $\GRID^{\ebmu,\0} \subseteq \GRID^{\ebmu,\bm}$, where
$\bm = (-1,0,\cdots,0)$.
So choose $\widetilde{\bm}$
where $\mag((\mu_1^{-1})') = \ba^{\widetilde{\bm}}$.
[Recall that $\ba$ witnesses $(\mu_1^{-1})'$.]
\end{re}

\begin{pr}\label{derivconverge}
Let $\bmu$ be a ratio set, and let $\ba$ be a derivative
addendum for $\bmu$ as in
Proposition~\ref{monoderivcompare}.
Let $\sum_{i \in I} T_i$ be $\bmu$-convergent.
Then $\sum T_i'$ is $\ba$-convergent.
\end{pr}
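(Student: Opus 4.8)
The plan is to verify the two conditions that constitute $\ba$-convergence of $\sum T_i'$: that there is one grid generated by $\ba$ containing every support $\supp(T_i')$, and that the family $\{T_i'\}_{i\in I}$ is point-finite. Recall that $\bmu$-convergence of $\sum T_i$ furnishes $\bm$ with $\supp T_i\subseteq\GRID^{\bmu,\bm}$ for all $i$, together with point-finiteness of $\{T_i\}$.

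For the common grid, write $T_i'=\sum_{\g\in\supp T_i}T_i[\g]\,\g'$, so that $\supp(T_i')\subseteq\bigcup_{\g\in\supp T_i}\supp(\g')$. By Remark~\ref{gridderiv} there is a single $\widetilde{\bm}$ (depending only on $\bmu$, $\bm$, $\ba$) with $\supp(\g')\subseteq\GRID^{\ba,\widetilde{\bm}}$ for every $\g\in\GRID^{\bmu,\bm}$; hence $\supp(T_i')\subseteq\GRID^{\ba,\widetilde{\bm}}$ for all $i$.

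For point-finiteness, fix a monomial $\n\in\G$; the task is to show that $\SET{i}{\n\in\supp(T_i')}$ is finite. The decisive observation is that only finitely many monomials $\g$ can produce $\n$ inside $\g'$. Write $\bmu=\{\mu_1,\dots,\mu_n\}$ and put $\DD:=\bigcup_{i=1}^n\supp(\mu_i^\dagger)$, a subgrid (finite union of subgrids). For $\g=\bmu^\bk\in\GRID^\ebmu$ one has $\g^\dagger=\sum_i k_i\mu_i^\dagger$, hence $\supp(\g^\dagger)\subseteq\DD$ and therefore $\supp(\g')=\g\cdot\supp(\g^\dagger)\subseteq\g\DD$. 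So $\n\in\supp(\g')$ forces $\g^{-1}\n\in\DD$, i.e.\ $\g\in\n\DD^{-1}$, and thus
$$
	E_\n:=\SET{\g\in\GRID^{\bmu,\bm}}{\n\in\supp(\g')}\ \subseteq\ \GRID^{\bmu,\bm}\cap\n\DD^{-1}.
$$
Now $\GRID^{\bmu,\bm}$ contains no infinite strictly increasing sequence, whereas $\n\DD^{-1}$---the image of the subgrid $\DD$ under the order-reversing map $\fb\mapsto\n\fb^{-1}$---contains no infinite strictly decreasing sequence; since any infinite subset of a totally ordered set contains an infinite strictly monotone sequence, the intersection, and hence $E_\n$, is finite. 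Finally, if $\n\in\supp(T_i')$ then from $T_i'[\n]=\sum_\g T_i[\g]\,\g'[\n]$ there is $\g\in\supp T_i$ with $\n\in\supp(\g')$, so $\g\in E_\n$; therefore
$$
	\SET{i}{\n\in\supp(T_i')}\ \subseteq\ \bigcup_{\g\in E_\n}\SET{i}{\g\in\supp T_i},
$$
a finite union of sets each finite by point-finiteness of the $\bmu$-convergent family $\sum T_i$. Hence $\SET{i}{\n\in\supp(T_i')}$ is finite, and $\sum T_i'$ is $\ba$-convergent.

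I expect the finiteness of $E_\n$ to be the crux: this is where the reverse-well-order (grid) structure is genuinely used, and where one must invoke the \emph{dual} of the obvious fact---not that each $\g$ contributes only finitely many monomials to $\g'$, but that each target monomial $\n$ is hit by only finitely many source monomials $\g$. The specific features of the derivative addendum $\ba$ enter only through the grid-containment of Remark~\ref{gridderiv}; the point-finiteness step uses nothing about $\ba$ beyond $\DD$ being a subgrid and logarithmic-derivative additivity.
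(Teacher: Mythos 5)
Your proof is correct, but it reaches the point-finiteness in a genuinely different way from the paper. The paper argues by contradiction: if some monomial lay in $\supp(T_i')$ for infinitely many $i$, it would lie in $\supp(\n')$ for infinitely many distinct $\n$ in the common grid; one then extracts an infinite chain $\n_1 \fgt^\ebmu \n_2 \fgt^\ebmu \cdots$, applies Proposition~\ref{monoderivcompare}(c) to get $\n_1' \fgt^\ba \n_2' \fgt^\ba \cdots$, and invokes the point-finiteness of $\ba$-dominated families (\cite[\Emudominateprop]{edgar}) to reach a contradiction. You instead fix the target monomial and bound the set of \emph{sources}: from $\supp(\g') = \g\,\supp(\g^\dagger) \subseteq \g\DD$ with $\DD = \bigcup_i \supp(\mu_i^\dagger)$ a fixed subgrid, the set $E_\n$ is trapped in the intersection of a set with no infinite strictly increasing sequence and a set with no infinite strictly decreasing sequence, hence is finite; point-finiteness of $\sum T_i'$ then follows from that of $\sum T_i$ by a finite union. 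Your route is more self-contained and more elementary --- for the point-finiteness step it uses nothing about the derivative addendum (only the grid containment of Remark~\ref{gridderiv} needs $\ba$ at all), only additivity of the logarithmic derivative and the order structure of grids; it also avoids the external citation. What the paper's template buys in exchange is uniformity: the identical contradiction-plus-domination argument is reused verbatim for composition in Proposition~\ref{compconverge}, where your source-counting trick has no evident analogue because $\supp(\g\circ S)$ does not factor as $\g$ times a fixed subgrid. Both proofs are valid.
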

\begin{proof}
There is a grid $\GRID^{\ebmu,\bm}$ that supports all $T_i$,
so by Remark~\ref{gridderiv} there is a grid $\GRID^{\ba,\widetilde{\bm}}$
that supports all $T_i'$. So it remains to show that the series
$\sum T_i'$ is point-finite.  Suppose, to the contrary, that
there is $\g$ such that $\AA = \SET{i \in I}{\g \in \supp(T_i')}$ is infinite.
For $i \in \AA$ there is $\n \in \supp(T_i)$ with $\g \in \supp(\n')$.
Since $\sum T_i$ is point-finite, there are infinitely many
different $\n \in \bigcup\supp(T_i)$ with $\g \in \supp(\n')$.
This is contained in a grid $\GRID^{\ebmu,\bm}$, so
there is an infinite sequence $\n_1 \fgt^\bmu \n_2 \fgt^\ebmu \cdots$
of such monomials.  (Of course $1$ is not in this sequence.)
But then by Proposition~\ref{monoderivcompare},
$\n_1' \fgt^\ebmu \n_2' \fgt^\ebmu \cdots$.  So the sequence
$\supp(\n_1'),\supp(\n_2'),\cdots$
is point-finite by \cite[\Emudominateprop]{edgar}.
So in fact $\g$ cannot belong to all of them.
This contradiction completes the proof.
\end{proof}

\begin{pr}\label{derivcompare}
Let $\bmu$ be a ratio set, and let $\ba$ be a derivative addendum
for $\bmu$ as
defined in Proposition~\ref{monoderivcompare}.  For all
$S, T \in \T^\ebmu$, if $S \fst^\ebmu T$,
$T \not\fe 1$, and $\bmu$ witnesses $T$,
then $S' \fst^\ba T'$.
\end{pr}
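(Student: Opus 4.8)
We may assume $S, T \neq 0$. The target $S' \fst^\ba T'$ means $\supp(S') \fst^\ba \supp(T')$, and the plan is to exhibit one monomial of $\supp(T')$ that $\ba$-dominates every monomial of $\supp(S')$; the natural candidate is $\mag\big((\mag T)'\big)$. Write $\fc = \mag T$, so $\fc \neq 1$ (by $T \not\fe 1$) and $\fc \in \GRID^\ebmu$ (since $T \in \T^\ebmu$); moreover, because $\bmu$ witnesses $T$, every $\fb \in \supp T$ satisfies $\fb \fsteq^\ebmu \fc$.

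First I would bound $\supp(S')$. Differentiating term by term, any $\g \in \supp(S')$ lies in $\supp(\m')$ for some $\m \in \supp S$. From $S \fst^\ebmu T$ pick $\fb \in \supp T$ with $\m \fst^\ebmu \fb$; since $\fb \fsteq^\ebmu \fc$ this gives $\m \fst^\ebmu \fc$, and as $\fc \neq 1$ Proposition~\ref{monoderivcompare}(c) yields $\m' \fst^\ba \mag(\fc')$. Hence $\g \fst^\ba \mag(\fc')$, so \emph{every} monomial of $\supp(S')$ is $\fst^\ba \mag(\fc')$.

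Next I would verify that $\mag(\fc')$ actually belongs to $\supp(T')$, i.e.\ that no cancellation occurs there. Expand $T' = \sum_{\fb \in \supp T} T[\fb]\,\fb'$. Since a transmonomial $\neq 1$ has nonzero derivative, $\fc' \neq 0$, and the summand $\fb = \fc$ contributes to the coefficient of $\mag(\fc')$ the nonzero value $T[\fc]\cdot\fc^\dagger[\mag(\fc^\dagger)]$. Every other $\fb \in \supp T$ either is $1$, so $\fb' = 0$, or satisfies $\fb \fst^\ebmu \fc$ (as $\fb \fsteq^\ebmu \fc$, $\fb \neq \fc$), whence Proposition~\ref{monoderivcompare}(c) gives $\fb' \fst^\ba \mag(\fc')$, so $\mag(\fc') \notin \supp(\fb')$ and the summand contributes $0$. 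Therefore $\mag(\fc')$ has nonzero coefficient in $T'$, i.e.\ $\mag(\fc') \in \supp(T')$. Combining: each $\g \in \supp(S')$ satisfies $\g \fst^\ba \mag(\fc') \in \supp(T')$, which is exactly $S' \fst^\ba T'$. (When $S' = 0$ the conclusion is vacuous; and $T \not\fe 1$ ensures $T' \neq 0$, consistent with the previous step.)

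The one step that is not mere bookkeeping is the no-cancellation check: I need the non-leading monomials of $T$ to contribute to $T'$ only terms \emph{strictly} $\ba$-below $\mag\big((\mag T)'\big)$, and this strictness is exactly what Proposition~\ref{monoderivcompare}(c) provides. It is also here that the hypothesis ``$\bmu$ witnesses $T$'' does its work: it upgrades every $\m \fst^\ebmu \fb$ (with $\fb \in \supp T$) to $\m \fst^\ebmu \mag T$, so that the single monomial $\mag T$ can serve as the right-hand side of part~(c) both in the bound on $\supp(S')$ and in the cancellation argument.
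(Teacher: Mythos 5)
Your proof is correct and follows essentially the same route as the paper's: reduce every monomial of $\supp(S')$ to $\fst^\ba \mag\bigl((\mag T)'\bigr)$ via the witnessing hypothesis and Proposition~\ref{monoderivcompare}(c), then observe that this monomial lies in $\supp(T')$. The only difference is that you spell out the no-cancellation argument showing $\mag\bigl((\mag T)'\bigr) = \mag(T') \in \supp(T')$, which the paper asserts without detail; that is a worthwhile addition but not a different method.
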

\begin{proof}
Let $\m \in \supp(S')$.  Then there is $\fa \in \supp S$
with $\m \in \supp(\fa')$.  There is $\fb \in \supp T$
with $\fa \fst^\ebmu \fb$.  Since $\bmu$ witnesses $T$,
$\fb \fsteq^\ebmu \mag T$.  So $\fa \fst^\ebmu \mag T$.
Then $\fa' \fst^\ba (\mag T)'$
by Proposition~\ref{monoderivcompare}(c).  There is
$\n \in \supp((\mag T)')$ with $\m \fst^\ba \n$.
But $\mag T \in \GRID^\ebmu$, so
$\ba$ witnesses $(\mag T)'$ by Proposition~\ref{monoderivcompare}(b).
Thus $\n \fsteq^\ba \mag((\mag T)')$ and therefore
$\m \fst^\ba \mag((\mag T)')= \mag(T') \in \supp(T')$.
This shows $S' \fst^\ba T'$.
\end{proof}

\begin{ex}
The hypothesis ``$\bmu$ witnesses $T$'' cannot be omitted
in Proposition~\ref{derivcompare}.  Let $\bmu = \{x^{-1},e^{-x}\}$.
Consider $S = x^{-1}$ and $T = x^{-j}e^x + 1$ for any $j \in \N$.
We have $\bmu$ witnesses and generates $S$, $\bmu$ generates $T$,
but $\bmu$ does not witness $T$.  Of course $S \fst^\ebmu T$
since $x^{-1} \fst^\ebmu 1$.  Compute
\begin{equation*}
	S' = -x^{-2},\qquad
	T' = -jx^{-j-1}e^x + x^{-j}e^x .
\end{equation*}
Now assume there is a ratio set $\ba$
such that $S' \fst^\ba T'$ for all $j \in \N$.  This would mean
\begin{equation*}
	\frac{x^{-2}}{x^{-j-1}e^x} = x^{j-1}e^{-x}
\end{equation*}
belongs to $\ba^+$ for all $j$, which is impossible since
$\ba^+$ is well-ordered for the reverse of $\fst$.
\end{ex}

\begin{pr}\label{genderiv}
Let $\bmu$ be a ratio set, and let $\ba$ be a derivative addendum
for $\bmu$  as
defined in Proposition~\ref{monoderivcompare}.
If $\bmu$ generates $T$ then $\ba$ generates $T'$.
If $\bmu$ generates and witnesses $T$ and $T \not\fe 1$, then
$\ba$ witnesses $T'$.
\end{pr}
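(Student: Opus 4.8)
The plan is to reduce both assertions of Proposition~\ref{genderiv} to the corresponding facts about monomials already contained in Proposition~\ref{monoderivcompare}, using the point-finiteness machinery of Proposition~\ref{derivconverge} and the far-greater comparison of Proposition~\ref{derivcompare}.

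For the first assertion, suppose $\bmu$ generates $T$, i.e.\ $\supp T \subseteq \GRID^{\ebmu,\bm}$ for some $\bm$. Then $T = \sum_{\g \in \supp T} T[\g]\,\g$ is a $\bmu$-convergent (indeed point-finite) series, so by Proposition~\ref{derivconverge} the differentiated series $\sum_{\g} T[\g]\,\g'$ is $\ba$-convergent; in particular $\supp(T') \subseteq \bigcup_\g \supp(\g')$ is contained in a single grid $\GRID^{\ba,\widetilde{\bm}}$, which says exactly that $\ba$ generates $T'$. (This is essentially Remark~\ref{gridderiv} repackaged.)

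For the second assertion, assume additionally that $\bmu$ witnesses $T$ and $T \not\fe 1$. Write $\m = \mag T$ and $T = a\m(1+S)$ with $a \in \R\setminus\{0\}$ and $S \fst^\ebmu 1$. The goal is to show $\mag(T') = \mag(\m')$, since then every $\fc \in \supp(T')$ satisfies $\fc \fsteq^\ba \mag(T')$: indeed, each such $\fc$ lies in $\supp(\fb')$ for some $\fb \in \supp T$, and $\fb \fsteq^\ebmu \m$ because $\bmu$ witnesses $T$; applying Proposition~\ref{monoderivcompare}(c) (when $\fb \fst^\ebmu \m$) together with part~(b) (when $\fb = \m$, so $\fc \fsteq^\ba \mag(\m')$) gives $\fc \fsteq^\ba \mag(\m')$ in every case. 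So it remains to verify $\mag(T') = \mag(\m')$ and $\mag(\m') \in \supp(T')$. For this, differentiate $T = a\m + a\m S$, getting $T' = a\m' + a(\m S)'$. Now $\m S \fst^\ebmu \m$ and $\m \not\fe 1$ with $\bmu$ witnessing $\m$ (a singleton support), so Proposition~\ref{derivcompare} gives $(\m S)' \fst^\ba \m'$; hence the term $a\m'$ strictly $\ba$-dominates, no cancellation can reach its magnitude, and $\mag(T') = \mag(a\m') = \mag(\m')$ with $\mag(\m') \in \supp(T')$. This yields $T' \fsteq^\ba \mag(T') = \mag(\m')$ and, combined with the first assertion giving generation, shows $\ba$ witnesses $T'$.

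**The main obstacle** I expect is the bookkeeping around cancellation in $T'$: one must be sure that differentiating the far-smaller tail $a\m S$ cannot produce, after cancellation, a monomial as large as $\mag(\m')$. This is exactly what Proposition~\ref{derivcompare} rules out — its hypotheses ($\m S \fst^\ebmu \m$, $\m \not\fe 1$, $\bmu$ witnesses $\m$) are met here — so the obstacle is handled, provided one is careful that $\m \not\fe 1$ follows from $T \not\fe 1$ (since $\m = \mag T$ and $\supp T \subseteq \m\bmu^*$, so $T \fe \m$, hence $\m \fe 1 \iff T \fe 1$). The rest is the routine verification that a witness together with a generator is what "witnesses $T'$" demands, and the identification $\mag(\m') = \mag(T')$ as above.
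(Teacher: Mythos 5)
Your proposal is correct and follows the same overall strategy as the paper's proof: both assertions are reduced to Proposition~\ref{monoderivcompare}, parts (b) and (c), applied monomial by monomial over $\supp T$. The one place you genuinely add something is in the second assertion: the paper's proof simply asserts $(\mag T)' \sim \mag(T')$ without justification, whereas you actually prove this identification by splitting $T = a\m + a\m S$ with $\m = \mag T$ and invoking Proposition~\ref{derivcompare} (whose hypotheses $\m S \fst^\ebmu \m$, $\m \not\fe 1$, and ``$\bmu$ witnesses $\m$'' you correctly verify, the last being automatic for a monomial) to conclude $(\m S)' \fst^\ba \m'$ and hence that no cancellation can occur at $\mag(\m')$. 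This fills a gap rather than taking a different route, and there is no circularity since Proposition~\ref{derivcompare} precedes Proposition~\ref{genderiv} and depends only on Proposition~\ref{monoderivcompare}. For the first assertion, your appeal to Proposition~\ref{derivconverge} and Remark~\ref{gridderiv} is heavier machinery than the paper's one-line argument, but it has the small advantage of explicitly exhibiting a single grid $\GRID^{\ba,\widetilde{\bm}}$ containing $\supp(T')$, which is what the definition of ``generates'' literally requires, whereas the paper only records $\supp(T') \subseteq \GRID^\ba$.
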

\begin{proof}
Assume $\bmu$ generates $T$.
If $\m \in \supp T$, then $\m \in \GRID^\ebmu$, so
$\supp \m' \subseteq \GRID^\ba$ by Proposition~\ref{monoderivcompare}(b).
This holds for all $\m \in \supp T$, so
$\supp T' \subseteq \GRID^\ba$.  That is,
$\ba$ generates $T'$.

Now assume $\bmu$ generates and witnesses $T$ and $T \not\fe 1$.
Let $\g \in \supp(T')$.  Then
$\g \in \supp(\m')$ for some $\m \in \supp(T)$.
Now $\bmu$ witnesses $T$, so $\m \fsteq^\ebmu \mag(T)$.
Then by Proposition~\ref{monoderivcompare},
$\m' \fsteq^\ba (\mag T)' \sim \mag(T')$, so
$\m' \fsteq^\ba \mag(T')$ since $\ba$ witnesses $\m'$.
But $\g \in \supp(\m')$,
so $\g \fsteq^\ba \mag(T')$.
\end{proof}

\begin{ex}
The case $T \fe 1$ is not included in Proposition~\ref{genderiv}.
It is false:

Let $\bmu = \{x^{-1}, x^{-\sqrt{2}}\,\}$.  Then $\ba = \bmu$,
and
$$
	\bmu^* = \GRID^{\ebmu,\0} =
	\SET{x^{-j-k\sqrt{2}}}{j,k \in \N} .
$$
Let $T = 1 + x^{-1} + x^{-\sqrt{2}}$.  Then $\bmu$ witnesses
$T$, since $x^{-1}, x^{-\sqrt{2}} \in \bmu^*$.  So
$T' = -x^{-2}-\sqrt{2} x^{-1-\sqrt{2}} =
-x^{-2}(1+\sqrt{2}x^{1-\sqrt{2}}\,)$.
But $\bmu$ does not
witness $T'$ since $x^{1-\sqrt{2}} \not\in \bmu^*$.

Even more is true:  There is no ratio set $\ba$ such that
$\ba$ witnesses $T'$ for all $T$ witnessed by $\{x^{-1}, x^{-\sqrt{2}}\}$.
Indeed, $\{x^{-1}, x^{-\sqrt{2}}\}$ witnesses every transseries
$T = 1 + x^{-j} + x^{-k\sqrt{2}}$ with $j,k \in \N$, while
there exist pairs $(j,k) \in \N^2$ with $j-k\sqrt{2}$
negative but as close as we like to $0$.
\end{ex}

\begin{pr}\label{derivseriesgeom}
Let $\bmu$ be a ratio set, and let $\ba$ be a derivative addendum
for $\bmu$.  Assume series $\sum_{j=1}^\infty A_j$
is $\bmu$-geometrically convergent, $\bmu$ generates $A_1$,
and $A_1 \not\fe 1$.  Then
$\sum_{j=1}^\infty A_j'$ is $\ba$-geometrially convergent.
\end{pr}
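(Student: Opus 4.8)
The plan is to verify the two defining conditions for $\ba$-geometric convergence of $\sum_{j=1}^\infty A_j'$: that $\ba$ witnesses $A_j'$ for every $j$, and that $A_j' \fgt^\ba A_{j+1}'$ for every $j$. First I would record what the hypotheses give. By Proposition~\ref{geom_basic} every $A_j$ is supported by the subgrid $(\mag A_1)\bmu^*$, and since $\bmu$ generates $A_1$ we have $\mag A_1 \in \GRID^\bmu$, so $(\mag A_1)\bmu^*$ is itself a grid $\GRID^{\bmu,\bk}$; hence $\bmu$ generates every $A_j$. By definition of $\bmu$-geometric convergence, $\bmu$ also witnesses every $A_j$ and $A_j \fgt^\bmu A_{j+1}$ for all $j$; in particular the magnitudes of the nonzero terms strictly decrease, and a vanishing $A_j$ is followed only by vanishing terms.

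Granting for the moment that $A_j \not\fe 1$ for every $j$, both conditions follow from results already in hand. For the witnessing condition: $\bmu$ generates and witnesses $A_j$ and $A_j \not\fe 1$, so Proposition~\ref{genderiv} gives that $\ba$ witnesses $A_j'$; and $A_j' \ne 0$ whenever $A_j \ne 0$, since a nonzero $A_j$ with $A_j \not\fe 1$ is not a constant. For the comparison condition: $A_j \fgt^\bmu A_{j+1}$ gives $A_{j+1} \fst^\bmu A_j$, and since $A_j, A_{j+1} \in \T^\bmu$, $\bmu$ witnesses $A_j$, and $A_j \not\fe 1$, Proposition~\ref{derivcompare} yields $A_{j+1}' \fst^\ba A_j'$, that is, $A_j' \fgt^\ba A_{j+1}'$. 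This proves the proposition in this case, and in particular whenever $A_1 \fst 1$: then the strict decrease of magnitudes from $\mag A_1 \fst 1$ forces every $\mag A_j \fst 1$, so no $A_j$ is $\fe 1$.

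The step I expect to be the real obstacle is exactly the clause ``$A_j \not\fe 1$ for every $j$.'' Since the magnitudes strictly decrease and $A_1 \not\fe 1$, at most one index $j_0$ can have $A_{j_0} \fe 1$, and such a $j_0$ (if present) satisfies $j_0 \ge 2$ and can occur only when $\mag A_1 \fgt 1$. Such an exceptional term really can break the conclusion: with $\bmu = \{x^{-1}\}$ (so $\ba = \{x^{-1}\}$) and $A_j = x^{2-j}$ all the hypotheses hold and $A_1 = x \not\fe 1$, yet $A_2 = 1$ makes $A_2' = 0$, so $A_2' \fgt^\ba A_3'$ fails; and even when the exceptional derivative is nonzero it can fail to be witnessed, e.g.\ $\bmu = \{x^{-1},\, x^{-\sqrt2}\}$, $\ba = \bmu$, $A_1 = x$, $A_{j_0} = 1 + x^{-1} + x^{-\sqrt2}$, whose derivative $-x^{-2} - \sqrt2\, x^{-1-\sqrt2}$ is not witnessed by $\ba$ because $x^{1-\sqrt2} \notin \ba^*$. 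So the argument really proves the statement under the extra hypothesis that $A_j \not\fe 1$ for every $j$ (for instance when $A_1 \fst 1$); handling a genuinely large leading term seems to need that extra hypothesis, or an ad hoc smallness addendum attached to the single exceptional term, and I would flag this as a gap in the statement as written, in keeping with the paper's own caveat about missing proofs.
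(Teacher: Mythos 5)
Your main argument is the same as the paper's: Proposition~\ref{geom_basic} to see that every $A_j$ lies in $(\mag A_1)\bmu^*$ and hence is generated by $\bmu$ (you spell this out; the paper merely asserts it), Proposition~\ref{genderiv} for ``$\ba$ witnesses $A_j'$,'' and Proposition~\ref{derivcompare} for $A_{j+1}'\fst^\ba A_j'$ --- all of which need $A_j\not\fe 1$. So in the generic case your proof and the paper's coincide.

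The point you flag is real, and it is exactly where the paper's own proof is thinnest: it disposes of a possible term $A_{j_0}\fe 1$ with the single phrase ``If some $A_j\fe 1$, omit it,'' which proves geometric convergence only for the series with that term deleted, not for $\sum A_j'$ itself (these differ unless $A_{j_0}$ is a constant, and even then the comparison across the omitted index is skipped rather than proved). Your two counterexamples are correct: with $\bmu=\{x^{-1}\}$ and $A_j=x^{2-j}$ the term $A_2'=0$ breaks the chain $A_2'\fgt^\ba A_3'$, and your second example is precisely the paper's own counterexample to extending Proposition~\ref{genderiv} to $T\fe 1$ (the $\{x^{-1},x^{-\sqrt2}\}$ example), embedded in a $\bmu$-geometrically convergent series with $A_1=x\not\fe 1$, showing the witnessing clause can also fail. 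So your diagnosis stands: the proposition is correct as stated only under the extra hypothesis that no $A_j\fe 1$ (automatic when $A_1\fst 1$), or with the conclusion read as applying to the series with the one exceptional term removed; the paper offers nothing beyond the ``omit it'' patch, and you have identified, with valid counterexamples, why that patch does not rescue the literal statement.
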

\begin{proof}
Now $\bmu$ witnesses and generates all $A_j$, so $\ba$
witnesses $A_j'$.  If some $A_j \fe 1$, omit it.
Then $A_j' \fgt^\ba A_{j+1}'$, so $\sum A_j'$
is $\ba$-geometrically convergent.
\end{proof}

\begin{pr}\label{derivseriesgeommult}
Let $\bmu$ be a ratio set, and let $\ba$ be a derivative addendum
for $\bmu$.  Assume multiple series $\sum A_\bp$
is $\bmu$-geometrically convergent, $\bmu$ generates $A_\0$,
and $A_\0 \not\fe 1$.  Then
$\sum A_\bp'$ is $\ba$-geometrially convergent.
\end{pr}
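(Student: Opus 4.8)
The plan is to follow the proof of Proposition~\ref{derivseriesgeom} almost verbatim, replacing the ad hoc estimates there by Proposition~\ref{genderiv} (to get witnesses for the derived terms) and Proposition~\ref{derivcompare} (to get the $\fgt^\ba$ ordering of successive terms), and using the partial well order of $\N^n$ to control the finitely many exceptional indices.

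First I would upgrade ``$\bmu$ generates $A_\0$'' to ``$\bmu$ generates $A_\bp$ for every $\bp$.'' Fix $\bm$ with $\supp A_\0 \subseteq \GRID^{\ebmu,\bm}$. For $\bp \ne \0$ with $A_\bp \ne 0$ we have $\0 < \bp$ and $A_\0,A_\bp \ne 0$, so $A_\0 \fgt^\ebmu A_\bp$, i.e. $\supp A_\bp \subseteq (\supp A_\0)\bmu^+ \subseteq \GRID^{\ebmu,\bm}\bmu^+ \subseteq \GRID^{\ebmu,\bm}$. Since $\bmu$ also witnesses every $A_\bp$ (definition of $\bmu$-geometric convergence), Proposition~\ref{genderiv} then gives that $\ba$ generates $A_\bp'$, and that $\ba$ \emph{witnesses} $A_\bp'$ whenever $A_\bp \ne 0$ and $A_\bp \not\fe 1$.

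Next I would isolate $P_1 := \SET{\bp \in \N^n}{A_\bp \fe 1}$. This is an antichain: if $\bp < \bp'$ were both in $P_1$, then $A_\bp \fgt^\ebmu A_{\bp'}$ would give $\supp A_{\bp'} \subseteq (\supp A_\bp)\bmu^+ \subseteq \bmu^*\bmu^+ = \bmu^+ \subseteq \Gsmall$ (here $\mag A_\bp = 1$ and $\bmu$ witnesses $A_\bp$), forcing $\mag A_{\bp'}\fst 1$, contradicting $\bp'\in P_1$; so $P_1$ is finite by the partial well order property of $\N^n$, and $\0 \notin P_1$ because $A_\0 \not\fe 1$. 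For $\bp \notin P_1$, the previous paragraph gives that $\ba$ witnesses $A_\bp'$ (trivial when $A_\bp = 0$), and $A_\0' \ne 0$ since $A_\0$ is not a real constant. For the decreasing property: given $\bp < \bq$ with $A_\bp', A_\bq' \ne 0$, necessarily $\bp,\bq \notin P_1$; since $A_\bp \fgt^\ebmu A_\bq$, $A_\bp \not\fe 1$, $\bmu$ witnesses $A_\bp$, and $A_\bp,A_\bq \in \T^\ebmu$, Proposition~\ref{derivcompare} yields $A_\bq' \fst^\ba A_\bp'$, i.e. $A_\bp' \fgt^\ba A_\bq'$. Discarding the finitely many terms indexed by $P_1$ — exactly as $A_j \fe 1$ terms are omitted in the proof of Proposition~\ref{derivseriesgeom} — the multiple series $\sum A_\bp'$ satisfies all the requirements for $\ba$-geometric convergence.

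The only genuine obstacle is the necessity of setting aside the $A_\bp \fe 1$ terms: for such a term $\ba$ need not witness $A_\bp'$. For instance, with $\bmu = \{x^{-1},e^{-x}\}$ and derivative addendum $\ba = \{x^{-1},xe^{-x}\}$, the term $A_\bp = 1 + x^{-1} + e^{-x}$ (which can legitimately occur with $\bp \ne \0$, e.g.\ alongside $A_\0 = x$) has $A_\bp' = -x^{-2}-e^{-x}$, and $x^2 e^{-x} \notin \ba^*$. The finiteness of $P_1$ is precisely what makes omitting these terms harmless, so the bookkeeping there is the one point that requires care.
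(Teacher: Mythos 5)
Your proof is correct and follows essentially the route the paper intends: the paper states Proposition~\ref{derivseriesgeommult} without proof, and its single-series argument for Proposition~\ref{derivseriesgeom} is exactly the skeleton you flesh out --- witnesses for the derived terms via Proposition~\ref{genderiv}, the ordering via Proposition~\ref{derivcompare}, and omission of the $\fe 1$ terms --- with your observation that $P_1$ is a finite antichain in $\N^n$ being the one genuinely new piece of bookkeeping needed in the multi-index setting. One wording slip: the clause ``given $\bp<\bq$ with $A_\bp',A_\bq'\ne 0$, necessarily $\bp,\bq\notin P_1$'' is false as literally stated (your own example $A_\bp=1+x^{-1}+e^{-x}$ has $A_\bp\fe 1$ and $A_\bp'\ne 0$); what the argument actually uses, and what you clearly intend, is that after discarding the finitely many $P_1$-indexed terms the remaining pairs automatically lie outside $P_1$.
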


\section{Composition}
Now we will consider a composition $T \circ S = T(S)$.  Here
$T,S \in \LP$ are large and positive.

Let $L$ be purely large (so that $\g = e^L$ is a monomial).  By \ref{D_exp},
a witness for $\g \circ S = e^{L\circ S}$ is
a witness for $\sm(L\circ S) \fst 1$.  A ratio set
for $e^{L\circ S}$ may be constructed as this witness together with
one more monomial $e^{\pm \la(L\circ S)}$.

\begin{de}\label{DCA1}
Let $\bmu = \{\mu_1,\cdots,\mu_n\}$ be a ratio set.
Write $\mu_i = e^{L_i}$, where $L_i$ is purely large
and negative.  For each $i$, let $\ba_i$ be a witness
for $\sm(L_i \circ S) \fst 1$.  Define $\ba = \bigcup_{i=1}^n \ba_i$.
(We use this definition only temporarily.)
\end{de}

\begin{de}\label{DCA}
Let $\bmu = \{\mu_1,\cdots,\mu_n\}$ be a ratio set.
Write $\mu_i = e^{L_i}$, where $L_i$ is purely large
and negative.  For each $i$, let $\ba_i$ be a witness
for $\sm(L_i \circ S) \fst 1$ and let
$\bb_i = \ba_i \cup \{e^{\la(L_i\circ S)}\}$.
Define $\bb = \bigcup_{i=1}^n \bb_i$.
The ratio set $\bb$ is called the
\Def{$S$-composition addendum} for $\bmu$.
\end{de}

Of course $\ba$ and $\bb$ depend on $\bmu$ and on $S$.  The
dependence on $S$ is not simply on a ratio set or
a witness for $S$, however.

\begin{re}
According to the construction given,
if $\bb$ is the $S$-composition addendum for $\bmu$,
then $\bb\circ\log := \SET{\fb\circ\log}{\fb \in \bb}$
is the $S\circ\log$-composition addendum for $\bmu$.
And $\bb\circ\exp$ is the $S\circ\exp$-composition addendum
for $\bmu$.  But in general it may not be true that
$\bb\circ U$ is the $S\circ U$-composition addendum
for $\bmu$.  The difference is that when
$L$ is purely large, $L\circ U$ need not be.
\end{re}

\begin{ex}
Suppose $\bmu \subset \G_0$.  Then $\mu_i = x^{b_i} = e^{b_i\log x}$.
Write $S = a e^A (1+U)$, with $a \in \R, a > 0, A \in \T, A>0$,
$A$ purely large, $U$ small.  Then
\begin{align*}
	&\log S = A + \log a + \sum_{j=1}^\infty \frac{(-1)^{j+1}}{j} U^j,
	\\
	&\la(L_i\circ S) = b_i A, \qquad
	e^{\la(L_i\circ S)} = e^{b_i A} = \mag(S)^{b_i},
	\\
	&\sm(L_i \circ S) = b_i \sum_{j=1}^\infty \frac{(-1)^{j+1}}{j} U^j .
\end{align*}
Now any witness for $S$ is a witness for $U \fst 1$, so a witness
for $\sm(L_i \circ S) \fst 1$.  So we may take $\ba$ any witness for $S$.
And $e^{\la(L_i\circ S)}=e^{b_iA}$ is a monomial.  So for $\bb$
add these $n$ monomials to $\ba$.
\end{ex}

\begin{ex}\label{G0_special}
\textit{A special case we need later.}  Not only $\bmu \subset \G_0$
but $S = x+B$ where $B \fst x$.  Then for $\ba$ we need a
witness for $S$, which is to say a witness for $B \fst x$.
And for $\bb$ we need to add $\mag(S)^{b_i} = x^{b_i} = \mu_i$.
So the $S$-composition addendum for $\bmu$ in this case is:
$\bmu$ itself together with a witness for $B \fst x$.
\end{ex}

\begin{pr}\label{compomono}
Let $\bmu$ be a ratio set, let $S \in \LP$,
let $\ba$ be as in Definition~\ref{DCA1},
and let $\bb$ be an $S$-composition addendum
as in Definition~\ref{DCA}.
Then {\rm(i)}~$\ba$ witnesses $\m(S)$ for all $\m \in \GRID^\ebmu$;
{\rm(ii)}~$\bb$ generates $\m(S)$ for all $\m \in \GRID^\ebmu$;
{\rm(iii)}~if $\m \in \G$ and $\m \fst^\ebmu 1$, then
$\m(S) \fst^\bb 1$;
{\rm(iv)}~if $\m,\n \in \G$ and $\m \fst^\ebmu \n$, then
$\m(S) \fst^\bb \n(S)$.
\end{pr}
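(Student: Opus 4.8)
The plan is to reduce all four parts to the single-monomial case. Write $\mu_i = e^{L_i}$ with $L_i$ purely large and negative as in Definition~\ref{DCA}, and put $\lambda_i := \la(L_i\circ S)$. Since $-L_i\in\LP$ and $\LP$ is a group under composition, $-(L_i\circ S) = (-L_i)\circ S\in\LP$; hence $L_i\circ S$ is large and negative and its purely large part $\lambda_i$ is negative, so $e^{\lambda_i}$ is a small monomial and lies in $\bb_i := \ba_i\cup\{e^{\lambda_i}\}$. Because composition is $\R$-linear in its left argument and commutes with $\exp$, for $\m = \bmu^\bk\in\GRID^\ebmu$ we have $\m(S) = e^{(\sum_i k_iL_i)\circ S} = \prod_{i=1}^n e^{k_i(L_i\circ S)} = \prod_{i=1}^n \mu_i(S)^{k_i}$, and likewise $(\m/\n)(S) = \m(S)/\n(S)$ for all $\m,\n\in\G$. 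Everything below rests on these identities.

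For (i) and (ii) the idea is to apply Proposition~\ref{D_exp} to $A = L_i\circ S$, whose decomposition has $\la A = \lambda_i$ and $\sm A = \sm(L_i\circ S)$ witnessed by $\ba_i$ by construction; this gives that $\ba_i$ witnesses $\mu_i(S) = e^{L_i\circ S}$ and $\bb_i$ generates it. By Proposition~\ref{D_power}, $\ba_i$ then witnesses $\mu_i(S)^{k_i}$, and by the remark following it a generator of $\mu_i(S)^{k_i}$ is $\ba_i\cup\{(e^{\lambda_i})^{|k_i|}\}$ (and $\mu_i(S)^0=1$), whose extra monomial is a power of $e^{\lambda_i}\in\bb_i$, so $\bb_i$, and a fortiori $\bb$, generates $\mu_i(S)^{k_i}$. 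Thus each $\mu_i(S)^{k_i}$ is witnessed by $\ba$ and generated by $\bb$; since that class of transseries is closed under products (Propositions~\ref{witproduct} and~\ref{sumproduct}), so is $\m(S) = \prod_i\mu_i(S)^{k_i}$, which is (i) and (ii).

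For (iii), write $\m = \bmu^\bk$ with $\bk\ge\0$, $\bk\ne\0$. As $\lambda_i<0$, the last assertion of Proposition~\ref{D_exp} gives that $\bb_i$ witnesses $\mu_i(S)\fst 1$, i.e. $\supp(\mu_i(S))\subseteq\bb_i^+\subseteq\bb^+$. Since $\bb^+$ is a semigroup, $\supp(\mu_i(S)^{k_i})\subseteq\bb^+$ whenever $k_i\ge 1$, while $\supp(\mu_i(S)^0) = \{1\}\subseteq\bb^*$; multiplying (using $\supp(UV)\subseteq\supp U\,\supp V$) and noting that some $k_j\ge 1$, we get $\supp(\m(S))\subseteq\bb^+$, i.e. $\m(S)\fst^\bb 1$. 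For (iv): if $\m\fst^\ebmu\n$ then $\m/\n\in\bmu^+$, so $\m/\n\fst^\ebmu 1$, and (iii) applied to the monomial $\m/\n$ yields $\m(S)/\n(S) = (\m/\n)(S)\fst^\bb 1$, i.e. $\supp(\m(S)/\n(S))\subseteq\bb^+$. Then $\supp(\m(S)) = \supp\big(\n(S)\cdot(\m(S)/\n(S))\big)\subseteq\supp(\n(S))\cdot\bb^+$, which says exactly $\m(S)\fst^\bb\n(S)$.

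The only step that will need genuine care is the generator claim in (ii): one must confirm that the magnitude addendum required when raising $\mu_i(S)$ to an integer power is always a power of the single monomial $e^{\lambda_i}$ already placed in $\bb_i$ (so that nothing genuinely new is added), and that a grid over $\ba_i\cup\{(e^{\lambda_i})^{|k_i|}\}$ embeds into a grid over $\bb_i$. The remaining ingredients — the sign of $\lambda_i$, the composition identities collected in the first paragraph, and the semigroup bookkeeping with $\bb^+$ — are routine.
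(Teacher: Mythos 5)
Your proposal is correct and follows essentially the same route as the paper: establish the claims for each $\mu_i(S)=e^{L_i\circ S}$ via the decomposition $L_i\circ S=\la+\const+\sm$ (your appeal to Proposition~\ref{D_exp} encapsulates exactly the series expansion the paper writes out), extend to all of $\GRID^\ebmu$ by closure of witnessing/generating under products and powers, get (iii) from $e^{\lambda_i}\in\bb$, and get (iv) by applying (iii) to $\m/\n$ and multiplying back by $\n(S)$ (your direct support computation is the special, cancellation-free case of Proposition~\ref{productwitness} that the paper cites). The "genuine care" you flag for (ii) is already resolved by your own observation that $e^{k_i\lambda_i}$ lies in the group generated by $\bb_i$.
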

\begin{proof}
For $1 \le i \le n$, we have $\mu_i = e^{L_i}$ and
$\mu_i\circ S = e^{L_i\circ S}$.  Then
$L_i \circ S = A+c+B$, where $A = \la(L_i\circ S)$
is purely large, $c$ is a constant, and $B = \sm(L_i \circ S)$
is small.  Of course $B \fst^\ba 1$ by the definition of $\ba$.
Then $\mu_i(S) = e^{A+c+B} = e^A e^c e^B$.  But
$e^A$ is a monomial, $e^c$ is a constant,
$$
	e^B = 1+\sum_{j=1}^\infty \frac{B^j}{j!} 
	\quad\text{and}\quad
	\sum_{j=1}^\infty \frac{B^j}{j!} \fst^\ba 1 .
$$
Therefore $\ba$ is a witness for $\mu_i(S)$.
By \ref{D_power} $\ba$ witnesses
$1/\mu_i(S)$.  By Proposition~\ref{sumproduct} $\ba$ witnesses
$\bmu^\bk(S)$ for all $\bk \in \Z^n$.  This proves (i).

Next note that $e^A \in \bb$ by the definition of $\bb$.
Therefore $\bb$ generates $\mu_i(S)$ for all $i$,
and $\bb$ generates $\bmu^\bk(S)$.  This proves (ii).
Also $e^A \fst^\bb 1$ by the definition of $\bb$,
so $\mu_i(S) \fst^\bb 1$.  By Proposition~\ref{sumproduct}
$\bmu^\bk(S) \fst^\bb 1$ for all $\bk > \0$.
This proves (iii).

Now assume $\m \fst^\ebmu \n$.  Then $\m/\n \fst^\bmu 1$.
By (iii), $(\m/\n)\circ S \fst^\bb 1$.
But $\bb$ witnesses $1$, so we may apply Proposition~\ref{productwitness}
to get $\big((\m/\n)\circ S\big)\cdot\big(\n\circ S\big) \fst^\bb \n\circ S$.
That is, $\m(S) \fst^\bb \n(S)$.  This proves (iv).
(Note: We did not assume $\m,\n \in \GRID^\ebmu$; we did not assume
that $\bb$ witnesses $\n\circ S$.)
\end{proof}

\begin{re}\label{composupp}
Consider a grid $\GRID^{\ebmu,\bm}$.  In the preceding
proposition, if $\n \in \GRID^{\ebmu,\bm}$,
then $\supp(\n\circ S) \subseteq \GRID^{\bb,\widetilde{\bm}}$,
where $\widetilde{\bm}$ is chosen so that
$\mag(\bmu^\bm \circ S) = \bb^{\widetilde{\bm}}$.
\end{re}

\begin{pr}\label{compconverge}
Let $\bmu$ be a ratio set, let $S \in \LP$, and let
$\bb$ be an $S$-composition addendum
as in Definition~\ref{DCA}.   Let $\sum_{i \in I} T_i$
be $\bmu$-convergent.  Then $\sum (T_i\circ S)$
is $\bb$-convergent.
\end{pr}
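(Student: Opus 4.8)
The plan is to mirror the proof of Proposition~\ref{derivconverge}, using the composition monotonicity of Proposition~\ref{compomono} in place of the derivative monotonicity of Proposition~\ref{monoderivcompare}. Recall that ``$\sum_{i\in I}T_i$ is $\bmu$-convergent'' packages two facts: a single grid $\GRID^{\ebmu,\bm}$ supports every $T_i$, and the family $\SET{\supp T_i}{i\in I}$ is point-finite. From the first fact together with Remark~\ref{composupp}, every $T_i\circ S$ is supported by one fixed grid $\GRID^{\bb,\widetilde\bm}$, where $\widetilde\bm$ is chosen with $\mag(\bmu^\bm\circ S)=\bb^{\widetilde\bm}$. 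Hence the only thing left to verify is that $\sum(T_i\circ S)$ is point-finite.

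So suppose not: there is a monomial $\g$ with $\AA:=\SET{i\in I}{\g\in\supp(T_i\circ S)}$ infinite. For each $i\in\AA$ choose some $\n\in\supp T_i$ with $\g\in\supp(\n\circ S)$. Since $\sum T_i$ is point-finite, no monomial lies in $\supp T_i$ for infinitely many $i$; a pigeonhole argument then forces infinitely many \emph{distinct} monomials $\n$, all lying in $\GRID^{\ebmu,\bm}$, each with $\g\in\supp(\n\circ S)$.

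From those infinitely many monomials I would extract an infinite $\fgt^\ebmu$-decreasing sequence $\n_1\fgt^\ebmu\n_2\fgt^\ebmu\cdots$. Writing $\n=\bmu^\bk$ with $\bk\ge\bm$, the poset $\SET{\bk\in\Z^n}{\bk\ge\bm}$ is order-isomorphic to $\N^n$, hence a well-quasi-order, so any infinite set of distinct exponent vectors contains an infinite strictly $\le$-increasing subsequence; and a strictly increasing exponent vector corresponds precisely to a strictly $\fgt^\ebmu$-decreasing monomial (larger exponent vectors give smaller monomials). Then Proposition~\ref{compomono}(iv) yields $\n_1\circ S\fgt^\bb\n_2\circ S\fgt^\bb\cdots$. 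Each $\supp(\n_j\circ S)$ is a subgrid (being the support of a transseries), and these subgrids form a $\fgt^\bb$-decreasing sequence, so by \cite[\Emudominateprop]{edgar} the family $\SET{\supp(\n_j\circ S)}{j}$ is point-finite; in particular $\g$ cannot belong to all of them, contradicting the choice of the $\n_j$. This contradiction gives point-finiteness of $\sum(T_i\circ S)$, completing the proof.

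There is no serious obstacle here, since the argument is a direct transcription of the proof of Proposition~\ref{derivconverge}; the most substantive step is the well-quasi-order extraction of the $\fgt^\ebmu$-decreasing sequence together with the resulting point-finiteness of the composed supports. Indeed this case is slightly cleaner than the derivative case, because Proposition~\ref{compomono}(iv) imposes no ``$\n\ne1$'' restriction (the monomial $1$ may freely occur in the extracted sequence), unlike Proposition~\ref{monoderivcompare}(c), where that case must be handled separately.
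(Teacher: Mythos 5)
Your proposal is correct and follows essentially the same argument as the paper: reduce to point-finiteness via Remark~\ref{composupp}, extract an infinite $\fgt^\ebmu$-decreasing sequence of monomials from the assumed failure of point-finiteness, and apply Proposition~\ref{compomono}(iv) together with \cite[\Emudominateprop]{edgar} to reach a contradiction. The extra detail you supply on the well-quasi-order extraction, and the observation that no ``$\n\ne 1$'' case split is needed here, are both accurate but do not change the route.
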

\begin{proof}
There is a grid $\GRID^{\ebmu,\bm}$ that supports all $T_i$, so
by Remark~\ref{composupp} there is a grid $\GRID^{\bb,\widetilde{\bm}}$
that supports all $T_i\circ S$.  So it remains to show that the series
$\sum (T_i\circ S)$ is point-finite.  Suppose, to the contrary, that there
is $\g$ such that $\AA = \SET{i \in I}{\g \in \supp(T_i\circ S)}$ is infinite.
For $i \in \AA$ there is $\n \in \supp(T_i)$ with
$\g \in \supp(\n\circ S)$.  Since $\sum T_i$ is point-finite,
there are infinitely many different $\n \in \bigcup\supp(T_i)$
with $\g \in \supp(\n\circ S)$.  This is contained in a grid
$\GRID^{\ebmu,\bm}$ so there is an infinite sequence
$\n_1 \fgt^\ebmu n_2 \fgt^\ebmu \cdots$ of such monomials.
But then by Proposition~\ref{compomono},
$\n_1 \circ S \fgt^\bb \n_2\circ S \fgt^\bb \cdots$.
So the sequence $\supp(\n_1\circ S),\supp(\n_2\circ S),\cdots$
is point-finite by \cite[\Emudominateprop]{edgar}.  So in fact
$\g$ cannot belong to all of them.  This contradiction
completes the proof.
\end{proof}

\begin{pr}\label{gencomp}
Let $\bmu$ be a ratio set, let $S \in \LP$
and let $\bb$ be as in Definition~\ref{DCA}.  Then
{\rm(i)}~If $\bmu$ generates $T$, then $\bb$ generates $T(S)$.
{\rm(ii)}~If $\bmu$ generates and witnesses $T$, then $\bb$ witnesses $T(S)$.
{\rm(iii)}~If $A \fst^\ebmu B$, $\bmu$ witnesses $B$,
and $\bmu$ generates $B$, then
$A(S) \fst^\bb \mag(B(S))$ so that $A(S) \fst^\bb B(S)$.
\end{pr}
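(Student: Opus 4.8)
The plan is to treat the three parts in order, reducing each to Proposition~\ref{compomono} (on composed monomials) together with one elementary fact about magnitudes. Two observations will be used throughout parts~(ii) and~(iii). First, $\bb \supseteq \ba$ by Definitions~\ref{DCA1} and~\ref{DCA}, so $\bb$ inherits every witnessing statement of Proposition~\ref{compomono}(i) that was established for $\ba$. Second, if $\bmu$ witnesses a nonzero $W \in \T$, write $W = w\,\mag(W)\,(1+U)$ with $w \in \R$, $w \ne 0$, and $U \fst^\ebmu 1$; then $U\circ S \fst^\bb 1$ by Proposition~\ref{compomono}(iii), so $1 + U\circ S \fe 1$, and since $\mag$ is multiplicative on nonzero transseries,
$$
	\mag(W\circ S) \;=\; \mag\big((\mag W)\circ S\big)\cdot\mag(1+U\circ S) \;=\; \mag\big((\mag W)\circ S\big).
$$
This magnitude identity is the only place where the hypothesis ``$\bmu$ witnesses'' genuinely enters, and it is the step I expect to carry the real content; the rest is bookkeeping with the definitions of $\fst^\bb$, $\fsteq^\bb$ and the inclusion $\bb\supseteq\ba$.

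\textbf{Part (i).} If $\bmu$ generates $T$, then $\supp T \subseteq \GRID^{\ebmu,\bm}$ for some $\bm$. Writing $T(S) = \sum_{\m \in \supp T} T[\m]\,(\m\circ S)$ gives $\supp(T(S)) \subseteq \bigcup_{\m\in\supp T}\supp(\m\circ S)$, and by Remark~\ref{composupp} there is a single $\widetilde\bm$ with $\supp(\m\circ S) \subseteq \GRID^{\bb,\widetilde\bm}$ for every $\m \in \GRID^{\ebmu,\bm}$. Hence $\supp(T(S)) \subseteq \GRID^{\bb,\widetilde\bm}$, i.e.\ $\bb$ generates $T(S)$. (One could equally invoke Proposition~\ref{compconverge}.)

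\textbf{Part (ii).} Assume $T \ne 0$. Let $\g \in \supp(T(S))$; as in part~(i) choose $\m \in \supp T$ with $\g \in \supp(\m\circ S)$. Since $\bmu$ witnesses $T$ we have $\m \fsteq^\ebmu \mag T$, so $\m\circ S \fsteq^\bb (\mag T)\circ S$ (trivially if $\m = \mag T$, and by Proposition~\ref{compomono}(iv) otherwise). Since $\bmu$ generates $T$, $\mag T \in \GRID^\ebmu$, so $\bb$ witnesses $(\mag T)\circ S$ by Proposition~\ref{compomono}(i). Combining these, $\g \fsteq^\bb \mag\big((\mag T)\circ S\big) = \mag(T(S))$ by the magnitude identity. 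As $\g$ was arbitrary, $\bb$ witnesses $T(S)$.

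\textbf{Part (iii).} Assume $B \ne 0$. Let $\g \in \supp(A(S))$ and choose $\fa \in \supp A$ with $\g \in \supp(\fa\circ S)$. From $A \fst^\ebmu B$ there is $\fb \in \supp B$ with $\fa \fst^\ebmu \fb$, and since $\bmu$ witnesses $B$, $\fb \fsteq^\ebmu \mag B$; hence $\fa \fst^\ebmu \mag B$. By Proposition~\ref{compomono}(iv), $\fa\circ S \fst^\bb (\mag B)\circ S$. Since $\bmu$ generates $B$, $\mag B \in \GRID^\ebmu$, so $\bb$ witnesses $(\mag B)\circ S$ by Proposition~\ref{compomono}(i); combining, $\g \fst^\bb \mag\big((\mag B)\circ S\big) = \mag(B(S))$ by the magnitude identity. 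Thus $A(S) \fst^\bb \mag(B(S))$, and since $B \ne 0$ makes $\mag(B(S)) \in \supp(B(S))$, this also gives $A(S) \fst^\bb B(S)$.
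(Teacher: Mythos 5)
Your proof is correct and follows essentially the same route as the paper: part (i) and part (iii) match the paper's argument almost step for step (you even supply the justification for $\mag\big((\mag B)(S)\big)=\mag(B(S))$ that the paper merely asserts), and all three parts rest on Proposition~\ref{compomono} exactly as in the paper. The only cosmetic difference is in part (ii), where the paper argues via the multiplicative decomposition $T=a\g(1+U)$ and closure of witnessing under products, while you chase support elements as in part (iii); the two arguments use the same underlying facts.
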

\begin{proof}
(i) Let $\g \in \supp(T\circ S)$.  There is $\m \in \supp T$
with $\g \in \supp(\m \circ S)$.  Now $\m \in \GRID^\ebmu$,
so $\supp(\m\circ S) \subseteq \GRID^\bb$.

(ii) Write $T = a\g\cdot(1+U)$ be the
canonical multiplicative decomposition.  Then
$T(S) = a \g(S)\cdot(1+U(S))$.
Since $\bmu$ witnesses
$T$, we have $U \fst^\ebmu 1$.  So $U(S) \fst^\bb 1$ and
$\bb$ witnesses $1+U(S)$.  Since $\bmu$ generates $T$,
we have $\g \in \GRID^\ebmu$.  Therefore $\bb$
witnesses $\g(S)$.  So $\bb$ witnesses the product
$T(S) = a \g(S)\cdot(1+U(S))$.

(iii) Let $\g \in \supp  A(S)$.  There is $\m \in \supp(A)$
with $\g \in \supp \m(S)$.  Next, $A \fst^\bmu B$,
so there is $\n \in \supp(B)$ with $\m \fst^\ebmu \n$.
And $\ebmu$ witnesses $B$, so $\n \fsteq^\ebmu \mag(B)$.
Thus $\m \fst^\ebmu \mag(B)$.  Therefore
$\m(S) \fst^\bb \mag(B)(S)$ so there is
$\fb \in \supp(\mag(B)(S))$ with $\g \fst^\bb \fb$.  Now
$\bmu$ generates $B$, so $\mag(B) \in \GRID^\ebmu$,
so $\bb$ witnesses $\mag(B)(S)$.  So
$\fb \fsteq^\bb \mag(\mag(B)(S)) = \mag(B(S))$.
Thus $\g \fst^\bb \mag(B(S))$.  This shows
that $A(S) \fst^\bb B(S)$.
\end{proof}

\begin{pr}\label{compserieswitness}
Let $\bmu$ be a ratio set, let $S \in \LP$,
and let $\bb$ be an
$S$-composition addendum for $\bmu$ as
in Definition~\ref{DCA}.  Assume series $\sum_{j=1}^\infty A_j$
converges $\bmu$-geometrically and $\bmu$ generates $A_1$.
Then $\sum_{j=1}^\infty A_j(S)$ converges $\bb$-geometrically.
\end{pr}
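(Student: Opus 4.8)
The plan is to reduce everything to Proposition~\ref{gencomp}. By Definition~\ref{geom_conv}, the hypothesis that $\sum_{j=1}^\infty A_j$ converges $\bmu$-geometrically means exactly that $\bmu$ witnesses each $A_j$ and $A_j \fgt^\ebmu A_{j+1}$ for all $j$. To obtain the conclusion, I must establish the two defining properties for $\sum A_j(S)$: first, that $\bb$ witnesses $A_j(S)$ for every $j$; second, that $A_j(S) \fgt^\bb A_{j+1}(S)$ for every $j$.

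The first step is to upgrade the hypothesis ``$\bmu$ generates $A_1$'' to ``$\bmu$ generates $A_j$ for all $j$.'' This is where Proposition~\ref{geom_basic} enters: it asserts that all $A_j$ are supported by the subgrid $(\mag A_1)\bmu^*$. Since $\bmu$ generates $A_1$, the monomial $\mag A_1$ lies in $\GRID^\ebmu$, hence $(\mag A_1)\bmu^* \subseteq \GRID^\ebmu$, and therefore $\supp A_j \subseteq \GRID^\ebmu$; that is, $\bmu$ generates $A_j$. Together with the standing hypothesis that $\bmu$ witnesses $A_j$, we conclude that $\bmu$ both generates and witnesses every $A_j$.

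With that in hand, the first required property is immediate from Proposition~\ref{gencomp}(ii) applied to $T = A_j$: $\bb$ witnesses $A_j(S)$. For the second, fix $j$. We have $A_{j+1} \fst^\ebmu A_j$, and $\bmu$ witnesses and generates $A_j$, so Proposition~\ref{gencomp}(iii), applied with $A = A_{j+1}$ and $B = A_j$, gives $A_{j+1}(S) \fst^\bb A_j(S)$, i.e.\ $A_j(S) \fgt^\bb A_{j+1}(S)$. Combining the two properties, $\sum_{j=1}^\infty A_j(S)$ is $\bb$-geometrically convergent, as claimed.

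I do not expect a genuine obstacle here. In contrast with the derivative analogue, Proposition~\ref{derivseriesgeom}, there is no need to discard terms whose magnitude is $\fe 1$, because Proposition~\ref{gencomp} carries no such exclusion for composition. The only point requiring a moment's care is the promotion of the generation hypothesis from $A_1$ to all $A_j$, which Proposition~\ref{geom_basic} settles cleanly; one should also note in passing that $A_1(S)\ne 0$ since composition with $S\in\LP$ is injective, so $\mag(A_1(S))$ makes sense and the resulting series indeed meets the definition of $\bb$-geometric convergence.
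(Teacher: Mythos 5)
Your proof is correct and follows the same route as the paper's: promote the generation hypothesis from $A_1$ to all $A_j$ via the common supporting subgrid $(\mag A_1)\bmu^*$, then apply Proposition~\ref{gencomp}(ii) for the witnessing of each $A_j(S)$ and Proposition~\ref{gencomp}(iii) for the comparisons $A_j(S) \fgt^\bb A_{j+1}(S)$. The paper's version is merely terser, leaving the promotion step implicit.
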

\begin{proof}
Now $\bmu$ generates and witnesses all $A_j$, so $\bb$
generates and witnesses all $A_j(S)$.
And $A_j \fgt^\ebmu A_{j+1}$ so $A_j(S) \fgt^\bb A_{j+1}(S)$.
Therefore $\sum A_j(S)$ converges $\bb$-geometrically.
\end{proof}

\begin{pr}\label{compserieswitnessmult}
Let $\bmu$ be a ratio set, let $S \in \LP$,
and let $\bb$ be an
$S$-composition addendum for $\bmu$.  Assume
multiple series $\sum A_\bp$
converges $\bmu$-geo\-metric\-ally and $\bmu$ generates $A_\0$.
Then $\sum A_\bp(S)$ converges $\bb$-geometrically.
\end{pr}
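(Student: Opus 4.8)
The plan is to run the proof of Proposition~\ref{compserieswitness} essentially verbatim, with the single index $j$ replaced by the multi-index $\bp \in \N^n$ and the chain condition ``$A_j \fgt^\ebmu A_{j+1}$ for all $j$'' replaced by ``$A_\bp \fgt^\ebmu A_\bq$ whenever $\bp < \bq$ and $A_\bp, A_\bq \ne 0$.'' Concretely, I would check in turn the three clauses in the definition of a $\bb$-geometrically convergent $n$-fold multiple series: that $\bb$ witnesses every $A_\bp(S)$, that $A_\0(S) \ne 0$, and that $A_\bp(S) \fgt^\bb A_\bq(S)$ for $\bp < \bq$ with both terms nonzero.

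First I would note that, since $\sum A_\bp$ converges $\bmu$-geometrically, the multiple-series analogue of Proposition~\ref{geom_basic} (the unnumbered proposition preceding Lemma~\ref{multsummationlemma}) shows every $A_\bp$ is supported by the subgrid $(\mag A_\0)\bmu^*$. Because $\bmu$ generates $A_\0$, we have $\mag A_\0 \in \GRID^\ebmu$, hence $(\mag A_\0)\bmu^* \subseteq \GRID^\ebmu$, so $\bmu$ generates $A_\bp$ for every $\bp$; and $\bmu$ witnesses each $A_\bp$ by hypothesis. Applying Proposition~\ref{gencomp}(i) and~(ii), $\bb$ generates and witnesses $A_\bp(S)$ for every $\bp$ (the case $A_\bp = 0$ being the trivial one $A_\bp(S) = 0$). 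For the decrease condition, fix $\bp < \bq$ with $A_\bp \ne 0$ and $A_\bq \ne 0$; then $A_\bp \fgt^\ebmu A_\bq$, i.e. $A_\bq \fst^\ebmu A_\bp$, and since $\bmu$ both witnesses and generates $A_\bp$, Proposition~\ref{gencomp}(iii) yields $A_\bq(S) \fst^\bb A_\bp(S)$, that is $A_\bp(S) \fgt^\bb A_\bq(S)$. Finally, $A_\0(S) \ne 0$ because composition with $S \in \LP$ is injective (as $\LP$ is a group under composition). Matching these facts against the definition shows $\sum A_\bp(S)$ converges $\bb$-geometrically.

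I do not expect a genuine obstacle here; the only bookkeeping points are (a) that vanishing terms $A_\bp = 0$ pass through harmlessly, since then $A_\bp(S) = 0$, and (b) recording $A_\0(S)\ne 0$, which is immediate from injectivity of composition. Everything else is the single-index argument read off with $\bp$ in place of $j$, so this is the natural place to simply say ``the proof of Proposition~\ref{compserieswitness} adapts with no difficulty,'' exactly as is done after Lemma~\ref{multsummationlemma}.
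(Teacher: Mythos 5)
Your proposal is correct and is exactly the adaptation the paper intends: the paper states this proposition without proof, relying (as with Lemma~\ref{multsummationlemma}) on the single-index argument of Proposition~\ref{compserieswitness} carrying over with $j$ replaced by $\bp$, which is precisely what you carry out. Your explicit use of the multiple-series analogue of Proposition~\ref{geom_basic} to get that $\bmu$ generates every $A_\bp$ from the hypothesis on $A_\0$, and of Proposition~\ref{gencomp}(i)--(iii) for the three clauses, matches the paper's reasoning in the single-index case.
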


\subsection*{Grid-Based Operator?}
Composition is not a ``grid-based operator'' of its right-hand
argument in the sense of \cite[p.~122]{hoeven}.

Consider
\begin{equation*}
	T = e^{-e^x},\qquad\qquad
	S = x + \sum_{j=1}^\infty a_j x^{-j} .
\end{equation*}
In fact, for our argument we will use only $a_j \in \{0,1\}$.

First let us compute $T \circ S$.  Writing $s = \sum_{j=1}^\infty a_j x^{-j}$,
we have
\begin{equation*}
	e^{S} = e^{x+s} = e^x\left(1+s+\frac{s^2}{2!}+\frac{s^3}{3!}+\cdots\right) ,
\end{equation*}
a transseries with support (contained in) $\SET{x^{-j}e^x}{j=0,1,\cdots}$.
So $e^S$ is purely large.  Next,
$T \circ S = e^{-e^S}$, which is a monomial.  For each subset
$E \subseteq \{1, 2, 3, \cdots \}$, if $S = x + \sum_{j \in E} x^{-j}$,
then we get a monomial $\m_E = T \circ S$.  Since logarithm exists for
transseries, the set $E$ can be recovered from $\m_E$, so there are
uncountably many monomials $\m_E$ of this kind.

Now what would it mean if $\Phi(Y) := T \circ (x+Y)$ were a grid-based
operator on $\R\lbb \MM \rbb$, where $\MM$ is a set of monomials
containing $x^{-j}, j \in \N$?  Say $\Phi = \sum_i \Phi_i$,
where $\Phi_i(Y) = \check{\Phi}_i(Y,Y,\cdots,Y)$ and
$\check{\Phi}_i$ is strongly $i$-linear.  So
\begin{align*}
	\Phi_i\left(\sum_{j \in E} x^{-j}\right) &=
	\sum_{j_1,\cdots,j_i \in E}
	\check{\Phi}_i\left(x^{-j_1},\cdots,x^{-j_i}\right),
	\\
	\Phi\left(\sum_{j \in E} x^{-j}\right) &=
	\sum_i \Phi_i\left(\sum_{j \in E} x^{-j}\right) ,
\end{align*}
and these are point-finite sums.  There are countably many terms
$\check{\Phi}_i\big(x^{-j_1},\cdots,x^{-j_i}\big)$, and each involves
only countably many monomials.  So since there are
uncountably many sets $E$, there are in fact
monomials $\m_E$ that are in none of these supports,
and thus is not in the support of any
$\Phi\big(\sum_{j \in E} x^{-j}\big)$.

\subsection*{Inverse}
Let $\bmu$ be a ratio set, let $S \in \LP$ and let
$T$ be inverse to $S$ so that $T \circ S = S \circ T = x$.
We would like ``composition addendum'' construction also
to be inverse.  It doesn't happen directly.  But
perhaps there is something almost as good.

\begin{qu}\label{invcompo}
Are there ratio sets $\ba, \bb$ so that
$\ba^* \supseteq \bmu$, $\bb$ is an $S$-composition addendum
for $\ba$ and $\ba$ is a $T$-composition addendum for $\bb$?
In particular: Using the construction of Definition~\ref{DCA},
let $\bb$ be composition addendum for $\bmu$, then
$\ba$ composition addendum for $\bb$.  Does it automatically
happen that $\bb$ is composition addendum for $\ba$?
If not two steps, does it stabilize in three?
\end{qu}

\section{Fixed Point}
The fixed point theorem in \cite[\Ecostinfixed]{edgar} (which
comes from Costin \cite{costintop} for example) uses
a ratio set $\bmu$ in an essential way.
And it was a main reason for the extent
of the use of ratio sets in that paper.  But here we will
discuss ``fixed point'' again.

Here is a ``geometric convergence'' version that is sometimes useful
but does not fit as a special case of \cite[\Ecostinfixed]{edgar}.

\begin{pr}\label{geometric_fixed}
Let $\SA \subseteq \T$,
let $\Phi \takes \SA \to \SA$ be a function,
and let $\ba$ be a ratio set.
Assume:
\begin{enumerate}
\item[{\rm(a)}]~if $\ba$ witnesses $S \in \SA$ then $\ba$ witnesses $\Phi(S)$;
\item[{\rm(b)}]~if $S,T \in \SA$ and $\ba$ witnesses $S-T$, then
$\ba$ witnesses $\Phi(S)-\Phi(T)$ and
$S-T \fgt^\ba \Phi(S)-\Phi(T)$;
\item[{\rm(c)}]~if $T_j \in \SA \;(j=1,2,\cdots)$
and $T_j$ converges $\ba$-geometrically
to $T$, then $T \in \SA$
\item[{\rm(d)}]~There exists $T_0 \in \SA$ such that
$\ba$ witnesses both $T_0$ and $\Phi(T_0)-T_0$.
\end{enumerate}
Then there is  $S \in \SA$ with $S = \Phi(S)$.
\end{pr}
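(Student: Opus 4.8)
The plan is to run the iteration $T_{j+1} := \Phi(T_j)$ for $j\ge 0$, starting from the $T_0$ provided by hypothesis~(d), and to show it converges $\ba$-geometrically to a fixed point. Each $T_j$ lies in $\SA$ since $\Phi$ maps $\SA$ into itself. Write $B_j := T_{j+1}-T_j$. If $B_j = 0$ for some $j$ then $T_j = \Phi(T_j)$ and we are done, so assume every $B_j \ne 0$.

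The first step is two short inductions producing the data for geometric convergence. By~(d), $\ba$ witnesses $T_0$, and by~(a) it then witnesses every $T_j$. By~(d), $\ba$ also witnesses $B_0 = \Phi(T_0)-T_0$; and if $\ba$ witnesses $B_{j-1} = T_j - T_{j-1}$, then applying~(b) to the pair $(T_j,T_{j-1})$ shows $\ba$ witnesses $\Phi(T_j)-\Phi(T_{j-1}) = B_j$ and that $B_{j-1} \fgt^\ba B_j$. So $\ba$ witnesses every $B_j$ and $B_0 \fgt^\ba B_1 \fgt^\ba \cdots$; in other words $\sum_{j\ge 0}B_j$ is $\ba$-geometrically convergent (with $B_0 \ne 0$). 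By Proposition~\ref{geom_basic} it converges point-finitely, so $T_n = T_0 + \sum_{j<n}B_j$ converges to $S := T_0 + \sum_{j\ge 0}B_j$; and, arguing as in the proof of Proposition~\ref{geometric} with the $\ba$-geometrically convergent tails $S-T_n = \sum_{j\ge n}B_j$ (together with Propositions~\ref{wit_EL} and~\ref{wit_LE}), one gets that $\ba$ witnesses $S-T_n$, that $S-T_n \fgt^\ba S-T_{n+1}$, and that $S-T_n \sim B_n$, for all $n$. By~(c), $S\in\SA$.

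It remains to prove $S = \Phi(S)$, which I expect to be the heart of the matter. Since $\ba$ witnesses $S-T_j$, hypothesis~(b) applies to the pair $(S,T_j)$: $\ba$ witnesses $\Phi(S)-\Phi(T_j) = \Phi(S)-T_{j+1}$ and $S-T_j \fgt^\ba \Phi(S)-T_{j+1}$. Writing $\Phi(S)-S = \big(\Phi(S)-T_{j+1}\big)-\big(S-T_{j+1}\big)$, where $\Phi(S)-T_{j+1}\fst^\ba S-T_j$ by the line just above and $S-T_{j+1}\fst^\ba S-T_j$ because $S-T_j\fgt^\ba S-T_{j+1}$, and using that the support of a difference lies in the union of the supports, I obtain $\Phi(S)-S\fst^\ba S-T_j$ for every $j$. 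Since $\ba$ witnesses $S-T_j$ and $S-T_j\sim B_j$, Proposition~\ref{wit_LE} sharpens this to $\Phi(S)-S\fst^\ba B_j$ for every $j$.

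Finally, suppose $\Phi(S)\ne S$ and put $\fc := \mag(\Phi(S)-S)$. From $\Phi(S)-S\fst^\ba B_j$, using that $\ba$ witnesses $B_j$, one reads off $\fc/\mag B_j \in \ba^+$ for every $j$. But $\ba$ witnesses $B_j$ and $B_j\fgt^\ba B_{j+1}$ force the leading monomials to decrease strictly, $\mag B_0 \fgt \mag B_1 \fgt \cdots$, so $\fc/\mag B_0 \fst \fc/\mag B_1 \fst \fc/\mag B_2 \fst \cdots$ is an infinite strictly increasing sequence lying inside $\ba^+$. This is impossible, since $\ba^+$ is contained in a grid and therefore contains no infinite strictly $\fst$-increasing sequence. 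Hence $\Phi(S) = S$. The one genuinely delicate point is this last contradiction: the relations $\Phi(S)-S\fst^\ba S-T_j$ alone do not force $\Phi(S)-S=0$ (the differences $S-T_j$ tend to $0$ only point-finitely), and the device is to push them down to the leading monomials $\mag B_j$ via the witnessing hypotheses, turning the family of relations into a single forbidden increasing chain.
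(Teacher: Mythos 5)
Your proof is correct and follows essentially the same route as the paper's: iterate $T_{j+1}=\Phi(T_j)$, use (a), (b), (d) to get that $\ba$ witnesses all $T_j$ and all $B_j=T_{j+1}-T_j$ with $B_j\fgt^\ba B_{j+1}$, pass to the $\ba$-geometric limit $S$ via Proposition~\ref{geometric} and (c), and then apply (b) to the pair $(S,T_j)$ to control $\Phi(S)-T_{j+1}$. The only divergence is the finish: the paper concludes from point-finiteness that $T_{j+1}\to\Phi(S)$ and invokes uniqueness of limits, whereas you push the relations down to $\mag(\Phi(S)-S)/\mag B_j\in\ba^+$ and contradict the fact that $\ba^+$ admits no infinite strictly $\fst$-increasing sequence --- a slightly more explicit but equivalent endgame.
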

\begin{proof}
First, choose $T_0 \in \SA$, using (d).
Then recursively define $T_{j+1} = \Phi(T_j)$ for $j \in \N$.
Now $\ba$ witnesses $T_0$ and $T_1-T_0$.
By (a), $\ba$ witnesses all $T_j$.
By (b), $\ba$ witnesses all $T_{j+1}-T_j$ and
$T_1-T_0 \fgt^\ba T_2-T_1 \fgt^\ba T_3-T_2 \fgt^\ba \cdots$.
So by Proposition~\ref{geometric} $T_j$
converges $\ba$-geometrically to some $S$.
So $S \in \SA$ and $\ba$ witnesses $S-T_j$ for all $j$.  Now
$(S-T_j)$ is point-finite, so by (c)
$(\Phi(S)-T_{j+1})$ is also point-finite, so $T_{j+1} \to \Phi(S)$.
Therefore $S = \Phi(S)$.
\end{proof}

The usual uniqueness proof does not work with these hypotheses.

\section{Witnessed Taylor's Theorem}
A simple version of Taylor's Theorem will approximate
$T(S+U)$ by $T(S)+T'(S)\cdot U$ when $U$ is small enough.
Under the right conditions, we should have
$T(S+U) -T(S) \sim T'(S)\cdot U$, see Theorem~\ref{taylor1}.
Here we want to consider a witnessed version of this.

Below we consider a condition
$\m(S)\cdot U \fst 1$ for all $\m \in \AA$, where $\AA$ is
a subgrid.  This may be written as
$(\AA\circ S)\cdot U \fst 1$.  Since a subgrid $\AA$ has a
maximum element $\m = \max \AA$, we can write
$(\AA \circ S)\cdot U \fst 1$ if and only if $(\m\circ S)\cdot U \fst 1$.
But the version with a witness will be
of the form $(\AA\circ S)\cdot U \fst^\bnu 1$, which is not equivalent
to $(\m\circ S)\cdot U \fst^\bnu 1$ unless $\bnu$ witnesses
$\AA \circ S$.

\subsection*{tsupp}
\begin{de}\label{de:tsupp}
We associate to each ratio set $\bmu$ a subgrid $\tsupp \bmu$.
[I was using $\lsupp \bmu$ for this at first, but it seems that
is not quite right.  I write here something that works in
the proofs, but perhaps it is sometimes larger than
really needed.]  This is defined recursively:

\noindent
(i) For non-monomials:
If $T \in \T$, then define $\tsupp T = \bigcup_{\g \in \supp T} \tsupp \g$,
and verify that it is a subgrid.

\noindent
(ii) For $b \in \R$, $b \ne 0$, define $\tsupp x^b = \{x^{-1}\}$;
$\tsupp 1 = \emptyset$.

\noindent
(iii) For $b \in \R$, $L \in \T_\bullet$ purely large, define
$\tsupp(x^be^L) = \supp(L') \cup \tsupp(L) \cup \{x^{-1}\}$.

\noindent
(iv) If $\tsupp$ has been defined on $\G_{\bullet,M}$, then define
it on $\G_{\bullet,M+1}$ by: $\tsupp(\g\circ \log)
= ((\tsupp \g)\circ \log)\cdot x^{-1} \cup \{x^{-1}\}$.

\noindent
(v) Sets: If $\AA \subseteq \T$, write
$\tsupp \AA = \bigcup_{\g \in \AA} \tsupp \g$.
\end{de}

\begin{ex} Compute:
$\tsupp(x^b) = \{x^{-1}\}$;
$\tsupp(e^{bx}) = \{1,x^{-1}\}$;
$\tsupp((\log x)^b) = \{(x \log x)^{-1},x^{-1}\}$.
\end{ex}

\begin{re}
Note that $x^{-1} \in \tsupp \bmu$ in every nontrivial case.
\end{re}

\begin{re}\label{tsuppprod}
If $\m,\n \in \G$, then $\tsupp(\m\n) \subseteq \tsupp \m \cup \tsupp \n$.
Also $\tsupp(1/\m) = \tsupp \m$.
\end{re}

\begin{re}
If $\AA$ is a subgrid, then there is a (finite!) ratio set
$\ba$ such that $\tsupp \AA = \tsupp \ba$.  Simply
choose $\ba$ so that
$\ba \subseteq \AA \subseteq \GRID^\ba$ and apply the following.
\end{re}

\begin{pr}
Let $\AA$ be a subgrid.  Then $\bigcup_{\g \in \AA} \tsupp \g$
is a subgrid.  If $\bmu$ is a ratio set, then
$\tsupp \GRID^\ebmu = \tsupp \bmu$.
\end{pr}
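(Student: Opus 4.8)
The plan is to treat the two assertions in turn, the identity first since it is short. For $\tsupp\GRID^\ebmu=\tsupp\bmu$, the inclusion $\supseteq$ is immediate because $\bmu\subseteq\GRID^\ebmu$. For $\subseteq$, write $\bmu=\{\mu_1,\dots,\mu_n\}$ and fix $\g=\bmu^\bk\in\GRID^\ebmu$ with $\bk\in\Z^n$. Applying Remark~\ref{tsuppprod} (i.e.\ $\tsupp(\m\n)\subseteq\tsupp\m\cup\tsupp\n$ and $\tsupp(1/\m)=\tsupp\m$) repeatedly along a word for $\g$ in the letters $\mu_i^{\pm1}$ gives $\tsupp\g\subseteq\bigcup_{i=1}^n\tsupp\mu_i=\tsupp\bmu$; taking the union over $\g\in\GRID^\ebmu$ yields the claim. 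Combined with the first assertion applied to the finite set $\bmu$, this also shows $\tsupp\GRID^\ebmu$ is a subgrid.

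For the first assertion, that $\tsupp\AA=\bigcup_{\g\in\AA}\tsupp\g$ is a subgrid, I would induct on the logarithmic depth of $\AA$, treating the log-free case by a secondary induction on the exponential height. If $\AA\subseteq\G_0$ then $\tsupp\AA\subseteq\{x^{-1}\}$, a subgrid. If $\AA\subseteq\G_N$ with $N\ge1$, each $\g\in\AA$ is of the form $x^be^L$ with $L$ purely large, log-free, of exponential height $\le N-1$, so by~(iii) of Definition~\ref{de:tsupp} $\tsupp\g=\supp(L')\cup\tsupp(L)\cup\{x^{-1}\}$. Since $\supp L$ is contained in the log-free exponent subgrid $\AA_1$ of $\AA$, and $\supp(L')\subseteq\bigcup_{\n\in\supp L}\supp(\n')$, $\tsupp(L)=\bigcup_{\n\in\supp L}\tsupp\n$, we obtain
\[
	\tsupp\AA\ \subseteq\ \Big(\textstyle\bigcup_{\n\in\AA_1}\supp(\n')\Big)\ \cup\ \tsupp(\AA_1)\ \cup\ \{x^{-1}\}.
\]
Here $\AA_1$ is a subgrid of exponential height $\le N-1$ by Lemma~\ref{lfexpsubgrid} and the Remark following it, so $\tsupp(\AA_1)$ is a subgrid by the secondary induction hypothesis; and choosing a ratio set $\bnu$, $\bn$ with $\AA_1\subseteq\GRID^{\bnu,\bn}$ and a derivative addendum $\ba$ for $\bnu$ (Proposition~\ref{monoderivcompare}), Remark~\ref{gridderiv} places $\bigcup_{\n\in\AA_1}\supp(\n')$ inside the single grid $\GRID^{\ba,\widetilde\bn}$. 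As $\{x^{-1}\}$ is a subgrid and a finite union of subgrids is a subgrid, $\tsupp\AA$ lies in a subgrid and hence is one. For the depth step, given $\AA$ of logarithmic depth $M+1$, put $\widetilde\AA:=\AA\circ\exp$, which has depth $M$ and is again a subgrid because composition with $\exp$ carries grids to grids ($\GRID^{\ebmu,\bm}\circ\exp=\GRID^{\ebmu\circ\exp,\bm}$); by~(iv) of Definition~\ref{de:tsupp},
\[
	\tsupp\AA\ =\ \big((\tsupp\widetilde\AA)\circ\log\big)\cdot x^{-1}\ \cup\ \{x^{-1}\},
\]
and $\tsupp\widetilde\AA$ is a subgrid by the induction hypothesis, so composing with $\log$ (again a grid-to-grid map), multiplying by the monomial $x^{-1}$ (here one uses Proposition~\ref{griddef}), and adjoining $\{x^{-1}\}$ keeps it a subgrid.

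The step I expect to be the main obstacle is the log-free height-$N$ case: one must see that the derivatives $L'$, as $x^be^L$ ranges over the (in general infinitely many) monomials of $\AA$, all have support inside one fixed grid. This is precisely what Remark~\ref{gridderiv} delivers — uniform control of $\supp(\n')$ over a whole grid via a derivative addendum — once everything has been routed through the log-free exponent subgrid $\AA_1$. A lesser point to verify carefully is that composition with $\exp$ and with $\log$ sends subgrids to subgrids, so that $\widetilde\AA$ and $(\tsupp\widetilde\AA)\circ\log$ are legitimate; this follows from $\GRID^{\ebmu,\bm}\circ\exp=\GRID^{\ebmu\circ\exp,\bm}$ (and its $\log$ analogue) together with Proposition~\ref{griddef}.
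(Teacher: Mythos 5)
Your argument for the identity $\tsupp \GRID^\ebmu = \tsupp\bmu$ is exactly the paper's: $\supseteq$ from $\bmu\subseteq\GRID^\ebmu$, and $\subseteq$ by iterating Remark~\ref{tsuppprod} along a word for $\bmu^\bk$. The difference is in the first assertion: the paper's proof in fact addresses \emph{only} the identity and silently leaves the subgrid claim to the reader (Definition~\ref{de:tsupp}(i) itself says ``verify that it is a subgrid''), whereas you supply that verification by a double induction on logarithmic depth and exponential height. Your induction is sound: the containment
\[
	\tsupp\AA \subseteq \Bigl(\textstyle\bigcup_{\n\in\AA_1}\supp(\n')\Bigr)\cup\tsupp(\AA_1)\cup\{x^{-1}\}
\]
is correct because $\supp L\subseteq\AA_1$ and $\supp(L')\subseteq\bigcup_{\n\in\supp L}\supp(\n')$, the set $\AA_1$ is a subgrid of lower height by Lemma~\ref{lfexpsubgrid}, and Remark~\ref{gridderiv} (via a derivative addendum from Proposition~\ref{monoderivcompare}) does give a single grid containing all $\supp(\n')$ for $\n$ ranging over a grid containing $\AA_1$ --- just note the small caveat in that remark about the case $\bm=\0$, which it resolves by enlarging the grid. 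The depth step via $\widetilde\AA=\AA\circ\exp$ and clause~(iv) is also fine, since right composition with $\exp$ or $\log$ is an order-preserving group homomorphism of $\G$ and hence carries grids to grids. Two remarks: the union over $\g\in\AA$ reduces, by the remark preceding the proposition together with the identity you prove, to a \emph{finite} union $\bigcup_i\tsupp\mu_i$, so the induction really only needs to be run for a single monomial; and in the height step one could alternatively invoke a hereditary addendum (Propositions~\ref{heredsubgrid} and~\ref{derivgen}) in place of Remark~\ref{gridderiv}. Either way, what you wrote is a legitimate proof of a part of the statement that the paper does not actually prove.
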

\begin{proof}
Since $\bmu \subseteq \GRID^\ebmu$ we have
$\tsupp \GRID^\ebmu \supseteq \tsupp \bmu$.
Write $\bmu = \{\mu_1,\cdots,\mu_n\}$.
If $\g \in \GRID^\ebmu$, then $\g = \bmu^\bk$ for
some $\bk$, so by Remark~\ref{tsuppprod} we
have $\tsupp \g \subseteq \bigcup_{i=1}^n \tsupp \mu_i
= \tsupp\bmu$.  So $\tsupp \GRID^\ebmu \subseteq \tsupp \bmu$.
\end{proof}

\begin{re}
If $\g \in \G_0$, then $\tsupp \g \subset \G_0$.
For $N \in \N$, $N \ge 1$: if $\g \in \G_N$,
then $\tsupp \g \subset \G_{N-1}$.  For $N,M \in \N$,
$N \ge 1, M \ge 1$: if $\g \in \G_{N,M}$, then
$\tsupp \g \subset \G_{\max(N-1,M),M}$.
If $\g$ is log-free, then $\tsupp \g$ is log-free.
If $\g$ has depth $M$, then $\tsupp \g$ has depth $M$.
\end{re}

\subsection*{Taylor Order 1}
Taylor's Theorem of order $1$ is the following:

\textit{Let $T, U_1, U_2 \in \T, S \in \LP$.
Assume $T \not\fe 1$,
$((\tsupp T) \circ S)\cdot U_1 \fst 1$, and
$((\tsupp T) \circ S)\cdot U_2 \fst 1$.
Then $S+U_1, S+U_2 \in \LP$ and
$T(S+U_1)-T(S+U_2) \sim T'(S) \cdot(U_1-U_2)$.}

This is proved below (Theorem~\ref{taylor1}).

\begin{ex}
Not valid with $\lsupp$ in place of $\tsupp$.
Let $T = \log x$, $S = x$, $U = x$.  So
$\lsupp T = \{T'/T\} = \{1/(x\log x)\}$.
And $(\lsupp T)\cdot U \fst 1$.  So
$$
	T(x+U)-T(x) = \log(2x)-\log(x) = \log 2,
	\qquad
	T'(x) U = \frac{x}{x} = 1,
$$
but $\log 2 \not\sim 1$.

Here $\tsupp T = \{1/(x\log x), 1/x\}$ so we would require
$U \fst x$.
\end{ex}

\begin{re}
Below note:  If $\AA \cdot U_1 \fst^\bb 1$, and
$\AA \cdot U_2 \fst^\bb 1$, then $\AA\cdot(U_1-U_2) \fst^\bb 1$.
Also note that we have not required that $U_1, U_2$ are witnessed
by $\bb$, only that they are generated by it, and their difference
is witnessed by it.
\end{re}

\subsection*{Special Case}
We will consider first the special case $S=x$ of
Taylor's Theorem of order $1$.
The special case is enough for the proof for the existence of
compositional inverses in Theorem~\ref{inv},
which is used in turn for a general case
of Taylor's Theorem.

\textit{Let $T, U_1, U_2 \in \T$.
Assume $T \not\fe 1$,
$(\tsupp T) \cdot U_1 \fst 1$, and
$(\tsupp T) \cdot U_2 \fst 1$.
Then $x+U_1, x+U_2 \in \LP$ and
$T(x+U_1)-T(x+U_2) \sim T'(x) \cdot(U_1-U_2)$.}

This is proved below (Theorem~\ref{sptaylor1}).
Here is the witnessed version of it.

\begin{thm}[Special Witnessed Taylor Order 1]\label{spwittaylor1}
Let $\bmu \subset \G$ be a ratio set.
Then there is a ratio set $\ba$ such that for all
ratio sets $\bb$ with $\bb^* \supseteq \ba$,
for all $T \in \TWG\ebmu\ebmu$ with $T \not\fe 1$,
and for all $U_1,U_2 \in \T^\bb$ with
$U_1-U_2 \in \TW\bb$ and
$$
	(\tsupp \bmu)\cdot U_1  \fst^\bb 1,\qquad
	(\tsupp \bmu)\cdot U_2  \fst^\bb 1 \;{\rm :}
$$
{\rm(a)}~$T(x+U_1)-T(x+U_2) \sim T'(x)\cdot(U_1-U_2)$.

\noindent{\rm(b)}~$\bb$ witnesses $T(x+U_1)-T(x+U_2)$.

\noindent{\rm(c)} $\bb$ generates $T(x+U_1)-T(x+U_2)$.

\noindent{\rm(d)}~If also $T \fst^\bmu x$ and $U_1 \ne U_2$, then
$$
	\frac{T(x+U_1)-T(x+U_2)}{U_1-U_2} \fst^\bb 1 .
$$
\end{thm}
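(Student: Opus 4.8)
The plan is to prove the statement by induction on the exponential height of the ratio set $\bmu$, reducing positive logarithmic depth to the log-free case by the conjugation device used in the proof of Proposition~\ref{monoderivcompare} (replace $\bmu$ by $\SET{\g\circ\exp_M}{\g\in\bmu}$, build the addendum there, conjugate back, and adjoin $\l_M'$). Alongside clauses (a)--(d) I would carry a \emph{one-sided auxiliary}: if $U\in\T^\bb$ with $(\tsupp\bmu)\cdot U\fst^\bb 1$, then $T(x+U)-T(x)\in\T^\bb$ and $T(x+U)-T(x)\sim T'(x)\cdot U$. This auxiliary is needed because the hypotheses furnish only $U_1,U_2\in\T^\bb$ (not witnessed by $\bb$), so a factor such as $e^{L(x+U_2)}$ that appears when a monomial is pushed through its exponential will be known to be $\bb$-\emph{generated} only. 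The ratio set $\ba$ is assembled from: $\bmu$ itself; a derivative addendum for $\bmu$ as in Proposition~\ref{monoderivcompare}; the ratio set supplied by the inductive hypothesis applied to a ratio set that generates and witnesses the purely large log-free exponents $L_i$ of the $\mu_i$ (this drops the height by one); $\tsupp\bmu$; and finitely many extra monomials generating each $e^{L_i}$ and each power $x^{b_i}$, so that $\GRID^\ba$ contains $e^L$ and $x^b$ whenever $x^be^L\in\GRID^\ebmu$. All of this depends only on $\bmu$, and enlarging to any $\bb$ with $\bb^*\supseteq\ba$ is harmless since every relation used ($\fst^\bullet$, ``witnesses'', ``generates'') passes from $\ba$ to $\bb$.

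In the base case $\bmu\subset\G_0$ one has $\ba=\bmu\cup\{x^{-1}\}$ and $T=\sum_b a_bx^b$, and the identity
\[
	(x+U_1)^b-(x+U_2)^b=(U_1-U_2)\,x^{b-1}\Bigl(b+\sum_{j\ge 2}\binom bj\sum_{k=0}^{j-1}(U_1/x)^k(U_2/x)^{j-1-k}\Bigr)
\]
together with $x^{-1}\in\tsupp\bmu$ (so $U_i/x\fst^\bb 1$) and Propositions~\ref{witless1} and~\ref{witproduct} shows each summand is $\bb$-witnessed with leading part $bx^{b-1}(U_1-U_2)$; summing over $b$ and using $T\not\fe 1$ gives (a)--(d). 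For the inductive step fix $\m=x^be^L\in\supp T$ with $L$ purely large log-free and write $\m(x+U_i)=(x+U_i)^b\,e^{L(x+U_i)}$. Since $L$ has lower height, $L\not\fe 1$, the generating set of $L$ has $\tsupp$ contained in $\tsupp\bmu$ (clause (iii) of Definition~\ref{de:tsupp}), and $U_1-U_2\in\TW\bb$, the inductive hypothesis applies to $L$ and gives $L(x+U_1)-L(x+U_2)\sim L'(x)(U_1-U_2)$, which is $\bb$-witnessed and, because $\supp L'\subseteq\tsupp\bmu$ and $(\tsupp\bmu)\cdot(U_1-U_2)\fst^\bb 1$, is $\fst^\bb 1$. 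Factoring out $\m$, the difference equals $\m\bigl[(1+U_1/x)^be^{L(x+U_1)-L(x)}-(1+U_2/x)^be^{L(x+U_2)-L(x)}\bigr]$; writing the bracket as $P_1(E_1-E_2)+(P_1-P_2)E_2$ with $P_i=(1+U_i/x)^b$ and $E_i=e^{L(x+U_i)-L(x)}$, and controlling $E_2$ by the one-sided auxiliary while using $e^{L(x+U_1)-L(x+U_2)}-1$ (Propositions~\ref{D_exp}, \ref{D_power}) for $E_1-E_2$, one gets $\m(x+U_1)-\m(x+U_2)\sim\m\,(bx^{-1}+L'(x))(U_1-U_2)=\m'(x)(U_1-U_2)$, everywhere $\bb$-generated, and $\bb$-witnessed once the bracket's magnitude is certified --- which is exactly what $\m\fsteq^\ebmu\mag T$ (i.e. $\bmu$ witnesses $T$) provides.

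Globally, $T(x+U_1)-T(x+U_2)=\sum_{\m\in\supp T}T[\m]\bigl(\m(x+U_1)-\m(x+U_2)\bigr)$ is $\bb$-geometrically convergent by Propositions~\ref{compconverge} and~\ref{compserieswitness} applied with $S=x+U_1$ and $S=x+U_2$. Each term is $T[\m]\,\m'(x)(U_1-U_2)(1+\epsilon_\m)$ with $\epsilon_\m\fst^\bb 1$, and by Propositions~\ref{derivcompare} and~\ref{genderiv} (using $\m\fsteq^\ebmu\mag T$ and $T\not\fe 1$) $\mag(\m'(x)\epsilon_\m)\fst\mag\bigl((\mag T)'\bigr)=\mag(T'(x))$; hence the error series has magnitude strictly below $\mag\bigl(T'(x)(U_1-U_2)\bigr)$, proving (a) since $T\not\fe 1$ forces $T'(x)\ne 0$. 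Clauses (b) and (c) then follow from the monomial-level facts together with Propositions~\ref{witproduct} and~\ref{sumproduct}: $U_1-U_2\in\TW\bb$, $T'(x)$ is witnessed and generated by $\bb$ (Propositions~\ref{monoderivcompare} and~\ref{genderiv}), and the error is $\bb$-generated and $\fsteq^\bb\mag(T'(x))$. For (d), the extra hypothesis $T\fst^\bmu x$ gives $\mag(T'(x))\fst^\bb 1$ via Proposition~\ref{monoderivcompare} (and $x'=1$), so $T'(x)\fst^\bb 1$ and the error is $\fst^\bb 1$; since the quotient equals $T'(x)+(\text{error})$, it is $\fst^\bb 1$.

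The main obstacle is the bookkeeping across the exponential in the inductive step: a priori $e^{L(x+U_i)}$ is only $\bb$-generated, so one must keep the genuinely two-sided piece $L(x+U_1)-L(x+U_2)$ (witnessed, from the inductive hypothesis proper) strictly separate from the one-sided pieces $L(x+U_i)-L(x)$ (only generated, from the auxiliary), and verify that the witnessed conclusions of (b) and (d) survive this. The second delicate point --- routine but not automatic --- is that the single ratio set $\ba$ must be chosen before $T,U_1,U_2$ are known; this works only because smallness of the $U_i$ is measured against the fixed set $\tsupp\bmu$, which makes all the binomial and exponential expansions uniformly $\bb$-controlled. The argument parallels, and refines, that of the unwitnessed Theorem~\ref{sptaylor1}.
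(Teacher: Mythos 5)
Your plan follows essentially the same architecture as the paper's proof: induction on exponential height in the log-free case, reduction of logarithmic depth by conjugation with $\exp_M$, a monomial-level binomial/exponential expansion (the paper's Prop.~\ref{WTG0}, \ref{WTlog}, \ref{WTexp}, where the monomial is written $x^be^L=e^{b\log x+L}$ and the difference is $x^be^L(e^{Q_1}-e^{Q_2})$ rather than your $P_1(E_1-E_2)+(P_1-P_2)E_2$ telescoping, but the content is the same), summation over $\supp T$ via geometric convergence (Prop.~\ref{WTsum}), and derivation of (c), (d) from (a), (b) via a derivative addendum (Prop.~\ref{part_b}). Your explicit one-sided auxiliary is a reasonable way to organize the control of $e^{L(x+U_i)-L(x)}$ that the paper handles inside Prop.~\ref{WTexp}, and the paper packages your ad hoc list of requirements on $\ba$ into the recursive notion of a \emph{Taylor addendum}, whose clause $x^{-1}\fst^\ba L'$ is exactly the dominance comparison between $bx^{-1}$ and $L'$ that your bracket decomposition also needs and which you should state explicitly as a requirement on $\ba$.

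Two points in your justification need repair. First, you invoke Propositions~\ref{compconverge} and~\ref{compserieswitness} with $S=x+U_i$; these require $\bb$ to be an $(x+U_i)$-composition addendum for $\bmu$, and the fact that a Taylor addendum with $(\tsupp\bmu)\cdot U_i\fst^\bb 1$ has this property is Corollary~\ref{GN_compo}, which the paper \emph{deduces from} Theorem~\ref{spwittaylor1}. To avoid circularity you must either fold that statement into the induction at lower height or, as the paper does, bypass composition addenda entirely and sum the termwise differences directly with the Multiple Summation Lemma~\ref{multsummationlemma} (taking $V=U_1-U_2$, $B_\bp=T[\m]\,\m'$, $A_\bp=T[\m](\m(x+U_1)-\m(x+U_2))$). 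Second, your claim that clause (b) follows ``from the monomial-level facts together with Propositions~\ref{witproduct} and~\ref{sumproduct}'' cannot stand: witnesses are not closed under sums (the paper's remark after Prop.~\ref{witproduct}), and \ref{sumproduct} concerns generation only. The witnessing of the sum is precisely what the geometric-convergence machinery (Prop.~\ref{geom_basic} / Lemma~\ref{multsummationlemma}) is for, so your summation step must be run through that lemma rather than through closure under addition.
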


This will be proved in several stages.

\begin{pr}\label{part_b}
In Theorem~\ref{spwittaylor1}, if $\bb$ satisfies
{\rm(a)} and {\rm(b)} and $\bb$ is a derivative addendum for $\bmu$, then
$\bb$ also satisies {\rm(c)} and {\rm(d)}.
\end{pr}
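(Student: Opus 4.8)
The plan is to take parts (a) and (b) of Theorem~\ref{spwittaylor1} as given and to derive (c) and (d) by routine generator/witness bookkeeping, using only that $\bb$ is a derivative addendum for $\bmu$. Throughout write $D = T(x+U_1)-T(x+U_2)$ and note $T'(x) = T'\circ x = T'$; the case $T = 0$ forces $D = 0$ and makes (c) and (d) trivial, so assume $T \ne 0$.

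For (c): since $T \in \TWG\bmu\bmu$, the ratio set $\bmu$ generates $T$, so Proposition~\ref{genderiv} (applicable because $\bb$ is a derivative addendum for $\bmu$) gives that $\bb$ generates $T'$. As $U_1,U_2 \in \T^\bb$, Proposition~\ref{sumproduct} gives that $\bb$ generates $U_1-U_2$ and hence the product $T'\cdot(U_1-U_2)$. If $D \ne 0$ then part (a) gives $D \sim T'\cdot(U_1-U_2)$, so $\mag D = \mag\big(T'\cdot(U_1-U_2)\big)$ lies in $\GRID^\bb$ (it is a monomial in the support of a transseries generated by $\bb$); then part (b) says $\bb$ witnesses $D$, i.e.\ $\supp D \subseteq (\mag D)\,\bb^* \subseteq \GRID^\bb$, whence $\bb$ generates $D$. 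The point here is that (b) by itself only pins $\supp D$ inside $(\mag D)\bb^*$; it is (a) together with ``$\bb$ generates $T'$'' that forces $\mag D$ into $\GRID^\bb$.

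For (d): assume in addition $T \fst^\bmu x$ and $U_1 \ne U_2$, and set $E = D/(U_1-U_2)$. First I check $x \in \GRID^\bmu$: choose any $\m \in \supp T$; then $\m \in \GRID^\bmu$ since $\bmu$ generates $T$, and $\m/x \in \bmu^+$ since $T \fst^\bmu x$, so $x = \m\cdot(\m/x)^{-1} \in \GRID^\bmu$. Now Proposition~\ref{derivcompare} applies with $S := T$ and with $x$ in the role of ``$T$'': indeed $T,x \in \T^\bmu$, $T \fst^\bmu x$, $x \not\fe 1$, and $\bmu$ witnesses $x$ (as $\sm x = 0$); it yields $T' \fst^\bb x' = 1$, so in particular $\mag(T') \in \bb^+$. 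If $D = 0$ then $E = 0$ and $E \fst^\bb 1$ trivially; otherwise part (a) gives $E \sim T'$, hence $\mag E = \mag(T') \in \bb^+$. Finally part (b) says $\bb$ witnesses $D$, and $U_1-U_2 \in \TW\bb$ says $\bb$ witnesses $U_1-U_2$, so $\bb$ witnesses the quotient $E$ (the nonzero transseries witnessed by $\bb$ being closed under quotients); hence $\supp E \subseteq (\mag E)\,\bb^* \subseteq \bb^+\bb^* = \bb^+$, that is $E \fst^\bb 1$.

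The step I expect to need the most care is the applicability of Proposition~\ref{derivcompare} in (d): that result (via Proposition~\ref{monoderivcompare}(c)) requires both arguments to lie in $\GRID^\bmu$, so one has to notice that the hypothesis $T \fst^\bmu x$ is precisely what supplies $x \in \GRID^\bmu$. Everything else reduces to the closure properties of ``generated by $\bb$'' and ``witnessed by $\bb$'' together with parts (a) and (b).
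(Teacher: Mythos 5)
Your proof is correct and follows essentially the same route as the paper: (c) via Proposition~\ref{genderiv} and closure of generation under products, and (d) via $T' \fst^\bb 1$ together with closure of witnessing under quotients and part (a). You also usefully make explicit two points the paper leaves tacit, namely that $T \fst^\bmu x$ forces $x \in \GRID^\ebmu$ so that Proposition~\ref{derivcompare} applies, and that (a) plus ``$\bb$ generates $T'(U_1-U_2)$'' is what puts $\mag\big(T(x+U_1)-T(x+U_2)\big)$ into $\GRID^\bb$.
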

\begin{proof}
(c) From Proposition~\ref{genderiv}, since $\bmu$ generates $T$
we have $\bb$ generates $T'$.  Also $\bb$ generates $U_1$ and $U_2$,
so it generates $U_1-U_2$ and $T'(x)\cdot(U_1-U_2)$.
Therefore $\bb$ generates $T(x+U_1)-T(x+U_2)$.

(d) Assume $T \fst^\bmu x$.  Then $T' \fst^\bb 1$.
Since $\bb$ witnesses $U_1-U_2$, by Proposition~\ref{productwitness}
we get $T'(x)\cdot(U_1-U_2) \fst^\bb U_1-U_2$.
Since $\bb$ witnesses both $(T(x+U_1)-T(x+U_2))$ and $U_1-U_2$,
we conclude
$\bb$ witnesses $(T(x+U_1)-T(x+U_2))/(U_1-U_2)$.
Apply Proposition~\ref{wit_EL} to conclude
$(T(x+U_1)-T(x+U_2))/(U_1-U_2) \fst^\bb 1$.
\end{proof}

Write $\B[\AA,\bb,T]$ to mean:
For all $U_1,U_2 \in \T^\bb$, if
$U_1-U_2 \in \TW\bb$, 
$\AA  \cdot U_1 \fst^\bb 1$, and $\AA  \cdot U_2 \fst^\bb 1$, then
$\bb$ witnesses $T(x+U_1)-T(x+U_2)$ and
$T(x+U_1)-T(x+U_2) \sim T'(x)\cdot(U_1-U_2)$.

Write $\A[\bmu,\ba]$ to mean:
For all $\bb$ with $\bb^* \supseteq \ba$ and
for all $T \in \TWG\ebmu\ebmu$ with $T \not\fe 1$,
we have $\B[\tsupp \bmu,\bb,T]$.

So Theorem~\ref{spwittaylor1} says: for all $\bmu$ there
exists $\ba$ such that $\A[\bmu,\ba]$.

\begin{de}
Let $\bmu, \ba \subset \G_\bullet$ be a log-free ratio sets.
We say (recursively)
that $\ba$ is a \Def{Taylor addendum} for $\bmu$ if:

(a)~$\ba$ is a derivative addendum for $\bmu$;

(b)~for all $x^be^L \in \GRID^\bmu$ with $b\ne 0, L \ne 0$, we have
$x^{-1} \fst^\ba L'$;

(c)~$\ba$ is a Taylor addendum for $\tbmu$,
where $\tbmu$ is an exponent generator for $\bmu$.

\noindent
Begin the recursion by saying $\emptyset$ is a Taylor
addendum for $\emptyset$.
\end{de}

\begin{re}
If (c) holds for one exponent generator, then it also
holds for any other exponent generator, since they
generate the same subgroup of $\GRID^{\tebmu}$.
\end{re}

\begin{de}\label{de:tayloraddendum}
Let $\ba \subseteq \G_{\bullet,M}$ be a ratio set of logarithmic
depth $M$.  Then
$\widetilde{\bmu} = \bmu\circ\exp_M := \SET{\g\circ\exp_M}{\g \in \bmu}$
is a log-free ratio set.  We say that $\ba$
is a \Def{Taylor addendum} for $\bmu$ iff
$\ba\circ\exp_M$ is a Taylor addendum for $\widetilde{\bmu}$.
\end{de}

We will show: If $\ba$ is a Taylor addendum for $\bmu$,
then $\A[\bmu,\ba]$.

\begin{lem}\label{taylorlemma}
Let $\bmu \subset \G_\bullet$.  Then there is a
Taylor addendum for $\bmu$.
\end{lem}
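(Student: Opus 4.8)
The plan is to prove existence by induction on the exponential height $N$ of the log-free ratio set $\bmu$ (so $\bmu\subset\G_N$), assembling $\ba$ as the union of three ratio sets, one for each clause of the recursive definition of a Taylor addendum. In the base case $\bmu\subset\G_0$ every $x^be^L\in\GRID^\ebmu$ has $L=0$, so clause~(b) is vacuous; the log-free exponent subgrid of $\bmu$ is empty, so its only exponent generator is $\emptyset$ and clause~(c) imposes nothing beyond what the base of the recursion already grants (all the conditions defining ``Taylor addendum for $\emptyset$'' being vacuous). Hence any derivative addendum for $\bmu$ — for instance $\bmu\cup\{x^{-1}\}$ produced by Proposition~\ref{monoderivcompare} — will do.

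For the inductive step, write $\bmu=\{x^{b_1}e^{L_1},\dots,x^{b_n}e^{L_n}\}\subset\G_N$ with $N\ge1$, each $L_i$ purely large, log-free, of height $<N$. First I would pick an exponent generator $\tbmu$ for $\bmu$; it lies in $\G_{N-1}$, so by the induction hypothesis there is a Taylor addendum $\ba_0$ for $\tbmu$ (clause~(c)). Next, by Proposition~\ref{monoderivcompare}, take a derivative addendum $\ba_1$ for $\bmu$ (clause~(a)). For clause~(b), note that every $x^be^L\in\GRID^\ebmu$ has $L=\sum_i k_iL_i$ for some $\bk\in\Z^n$, so by the Support Lemma~\ref{supportlemma} applied to $L_1',\dots,L_n'$ the set of magnitudes $\mag(L')$ arising from nonzero such $L$ is finite; granting the estimate $\mag(L')\fgt x^{-1}$ recorded below, $\ba_2:=\SET{(x\,\mag(L'))^{-1}}{L=\textstyle\sum_i k_iL_i\ne0}$ is then a finite subset of $\Gsmall$. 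Set $\ba:=\ba_0\cup\ba_1\cup\ba_2$, a log-free ratio set.

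The estimate $\mag(L')\fgt x^{-1}$ for $L\ne0$ purely large log-free rests on two standard facts: $\mag(L')=\mag\bigl((\mag L)'\bigr)$, because by \cite[\Ederivpropertiesa]{edgar} the non-leading monomials of $L$ have derivatives of strictly smaller magnitude, so no cancellation reaches the leading term; and $\mag(\g')\fgt x^{-1}$ for a log-free $\g\in\Glarge$, which follows from a short height-induction showing $\mag(\g^\dagger)\fgteq x^{-1}$ (writing $\g=x^be^M$: either $M=0$ and $\g^\dagger=bx^{-1}$ with $b>0$, or $\g^\dagger=bx^{-1}+M'$ with the $M'$-term dominating by induction) together with $\g\fgt1$. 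Granting this, the verification of the three clauses for $\ba=\ba_0\cup\ba_1\cup\ba_2$ is short. Clause~(b): for $x^be^L\in\GRID^\ebmu$ with $b\ne0$, $L\ne0$ we have $(x\,\mag(L'))^{-1}\in\ba_2\subseteq\ba^+$, so $x^{-1}\fst^\ba\mag(L')$, and since $\mag(L')\in\supp(L')$ this gives $x^{-1}\fst^\ba L'$. Clauses~(a) and~(c) follow from $\ba\supseteq\ba_1$ and $\ba\supseteq\ba_0$ once one observes that passing to a larger ratio set preserves the property of being a derivative addendum for a fixed $\bmu$ (each condition of Proposition~\ref{monoderivcompare} — generation, witnessing, and $\fst$-domination — is monotone in the ratio set) and, by a parallel induction on height, the property of being a Taylor addendum for a fixed $\bnu$. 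The corresponding general-depth existence then follows from Definition~\ref{de:tayloraddendum} by conjugating with $\exp_M$.

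The step I expect to be the main obstacle is clause~(b): one must dominate the single monomial $x^{-1}$, uniformly over the infinitely many exponents $L$ occurring in $\GRID^\ebmu$, using only finitely many monomials, which is exactly where the Support Lemma and the estimate $\mag(L')\fgt x^{-1}$ must be combined. A secondary point requiring care is making the recursion in the definition of Taylor addendum bottom out coherently at the empty exponent generator, so that the monotonicity induction used for clause~(c) is well founded.
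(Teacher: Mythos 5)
Your proposal is correct and follows essentially the same route as the paper: induction on the height $N$, with a derivative addendum handling clause~(a), the induction hypothesis applied to an exponent generator $\tbmu$ handling clause~(c), and the Support Lemma~\ref{supportlemma} reducing clause~(b) to finitely many requirements (you apply it to the $L_i'$ and adjoin the monomials $(x\,\mag(L'))^{-1}$ directly, where the paper applies it to the $L_i$, lists the requirements $x^{-1}\fst^\ba\g_i'$ on the finitely many magnitudes $\g_i$, and also builds in some extra witnessing conditions not demanded by the definition). Your explicit verification that $\mag(L')\fgt x^{-1}$ for nonzero purely large log-free $L$ supplies a fact the paper's proof uses tacitly when it asserts that the finite list of requirements can be met.
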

\begin{proof}
Let $\bmu \subset \G_N$.  The proof is by induction on $N$.
For $N=0$, let $\ba$ be a derivative addendum for $\bmu$;
then (b) and (c) hold vacuously.

Assume $N > 0$ and the result holds for $N-1$.
Let $\tbmu$ be an exponent generator for
$\bmu$.  By the induction hypothesis, there is a Taylor addendum
$\widetilde{\ba}$ for $\tbmu$.
Write $\bmu = \{\mu_1,\cdots,\mu_n\}$,
$\mu_i = x^{b_i}e^{L_i}$, and
$$
	\SW := \SET{\sum_{i=1}^n p_i L_i}{\bp \in \Z^n} .
$$
So for any $x^b e^L \in \GRID^\ebmu$, we have $L \in \SW$.
The log-free exponent subgrid for $\GRID^\ebmu$ is
$\AA = \bigcup_{i=1}^n \supp L_i$ and
$\AA \subset \GRID^\tebmu \subset \G_{N-1}$.
From Lemma~\ref{supportlemma} there are only finitely many different
magnitudes in $\SW$:
$$
	\SET{\mag L}{L \in \SW} = \{\g_1,\cdots,\g_m\} .
$$
Let $\ba$ be a ratio set such that
$\ba^* \supseteq \widetilde{\ba}$,
$\ba$ is a derivative addendum for $\bmu$,
and for $1 \le i \le n$: 

$\g_i \fgt^\ba 1$,

$\ba$ witnesses $\AA_i := \SET{\m \in \AA}{\m \fsteq \g_i}$,

$\ba$ witnesses $\g_i'$,

$x^{-1} \fst^\ba \g_i'$.

\noindent
Such a ratio set exists since there are only finitely many requirements.
If $x^b e^L$ is any element of $\GRID^\ebmu$ with $L \ne 0$, then
$\mag L = \g_i$ for some $i$, so $L \subseteq \AA_i$ and
$L \fsteq^\ba \g_i$ so $\ba$ generates and witnesses $L$.
If $x^b e^L \in \GRID^\ebmu$, $b\ne 0$, $L\ne 0$,
then $x^{-1} \fst^\ba \g_i' \sim \mag(L')$,
so $x^{-1} \fst^\ba L'$.
\end{proof}

\begin{re}
Follow the construction to see: if $\bmu \subset \G_N$, then
the Taylor addendum $\ba$ may be chosen so that
$\ba \subset \G_N$.
\end{re}

\begin{pr}\label{WTsum}
Let $\bmu,\bb$ be ratio sets,
let $T \in \TWG\ebmu\ebmu$, $T \not\fe 1$, 
let $\AA \subset \G$, $x^{-1} \in \AA$.
Assume $\B[\AA,\bb,\g]$ for all $\g \in \supp T$.
Assume $\bb$ is a derivative addendum for $\bmu$.
Then $\B[\AA,\bb,T]$.
\end{pr}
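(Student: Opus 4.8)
The plan is to expand $T$ over its support as a $\bmu$-geometrically convergent multiple series, compose termwise with $x+U_i$, and reassemble the resulting pieces with the Multiple Summation Lemma~\ref{multsummationlemma}; the derivative-addendum hypothesis is exactly what controls the series of monomial derivatives. First, the reductions: if $U_1=U_2$ the conclusion is trivial (both sides vanish), so assume $U_1\ne U_2$. Since $x^{-1}\in\AA$ and $\AA\cdot U_i\fst^\bb 1$, we get $x^{-1}U_i\fst 1$, i.e.\ $U_i\fst x$; hence $x+U_i\sim x$ is large and positive, so $x+U_i\in\LP$ and $T(x+U_i)$ is defined.

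Since $T\in\TWG\ebmu\ebmu$ and $T\ne 0$, Remark~\ref{gridmultiseries} gives $\supp T\subseteq\m\bmu^*$ with $\m=\mag T$, and $\m\in\GRID^\ebmu$ because $\bmu$ also generates $T$. Fix a representation $T=\sum_{\bp\in\N^n}a_\bp\g_\bp$ with $\g_\bp:=\m\bmu^\bp$, chosen so that $a_\bp\ne 0$ only when $\g_\bp\in\supp T$ (put the coefficient $T[\g]$ on, say, the lexicographically least $\bp$ with $\g_\bp=\g$). This is a $\bmu$-geometrically convergent $n$-fold multiple series whose leading term $a_\0\m$ is nonzero; since a derivative addendum satisfies $\bb^*\supseteq\bmu$ by Proposition~\ref{monoderivcompare}(a), the series is also $\bb$-geometrically convergent.

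Because composition on the left respects point-finite sums, $T(x+U_i)=\sum_\bp a_\bp\,\g_\bp(x+U_i)$, and therefore
$$
	T(x+U_1)-T(x+U_2)=\sum_{\bp\in\N^n}C_\bp,\qquad
	C_\bp:=a_\bp\bigl(\g_\bp(x+U_1)-\g_\bp(x+U_2)\bigr).
$$
For each $\bp$ with $a_\bp\ne 0$ we have $\g_\bp\in\supp T$, so $\B[\AA,\bb,\g_\bp]$ applies to our fixed $U_1,U_2$ — they satisfy its hypotheses precisely because they satisfy those of $\B[\AA,\bb,T]$ — and yields that $\bb$ witnesses $\g_\bp(x+U_1)-\g_\bp(x+U_2)$, hence $C_\bp$, and that $C_\bp\sim a_\bp\,\g_\bp'(x)\cdot(U_1-U_2)$.

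To reassemble, note that $\sum_\bp a_\bp\g_\bp'$ is the term-by-term derivative of $\sum_\bp a_\bp\g_\bp=T$; since $\bmu$ generates $T$ and $\mag(a_\0\m)=\m\not\fe 1$ (as $T\not\fe 1$), Proposition~\ref{derivseriesgeommult} applied with the derivative addendum $\bb$ shows $\sum_\bp a_\bp\g_\bp'$ converges $\bb$-geometrically, with sum $T'$. Now apply Lemma~\ref{multsummationlemma} with $V:=U_1-U_2$ (witnessed by $\bb$), with $B_\bp:=a_\bp\g_\bp'$, and with the $C_\bp$ above in the role of $A_\bp$: since $\bb$ witnesses each $C_\bp$ and $C_\bp\sim B_\bp V$, the lemma gives that $\sum_\bp C_\bp$ converges $\bb$-geometrically and $\sum_\bp C_\bp\sim T'\cdot(U_1-U_2)=T'(x)\cdot(U_1-U_2)$. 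Geometric $\bb$-convergence of $\sum_\bp C_\bp$ means $\bb$ witnesses its sum $T(x+U_1)-T(x+U_2)$; together with the asymptotic relation this is exactly $\B[\AA,\bb,T]$. The only real work is the bookkeeping: arranging that the expansion of $T$, the differentiated series, and the difference-of-composition series are all indexed by the same $\N^n$ so that Proposition~\ref{derivseriesgeommult} and Lemma~\ref{multsummationlemma} apply verbatim, and choosing the $a_\bp$ so that every index with a nonzero coefficient has $\g_\bp\in\supp T$, which is what makes $\B[\AA,\bb,\g]$ available term by term; once the statement that composition distributes over the point-finite sum for $T$ is recorded, everything else is assembling cited results.
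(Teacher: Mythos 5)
Your proposal is correct and follows essentially the same route as the paper's proof: expand $T$ as a $\bmu$-geometrically convergent multiple series via Remark~\ref{gridmultiseries}, apply $\B[\AA,\bb,\g]$ termwise, use Proposition~\ref{derivseriesgeommult} to get $\bb$-geometric convergence of $\sum a\g'=T'$, and reassemble with Lemma~\ref{multsummationlemma}. The extra bookkeeping you supply (the choice of coefficients $a_\bp$, the check that $x+U_i\in\LP$) is consistent with the paper, which only adds the small remark that a constant term of $T$ may be deleted first since it affects neither hypothesis nor conclusion.
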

\begin{proof}
In the proof of $\B[\AA,\bb,T]$,
if $T$ has a constant term it may be deleted, since that
changes neither the hypothesis nor the conclusion.
Let $U_1,U_2 \in \T^\bb$ with
$U_1-U_2 \in \TW\bb$, 
$\AA \cdot U_1 \fst^\bb 1$, and $\AA \cdot U_2 \fst^\bb 1$.
Then for any term $a\g$ of $T$:
$\bb$ witnesses $a\g(x+U_1)-a\g(x+U_2)$ and
\begin{equation*}
	a\g(x+U_1)-a\g(x+U_2) \sim a\g'\cdot(U_1-U_2) .
\tag{1}
\end{equation*}
Now the series $T = \sum a\g$ (considered as a multiple series
according to its grid, as in Remark~\ref{gridmultiseries})
converges $\bmu$-geometrically,
so $T' = \sum a\g'$ converges $\bb$-geometrically
by Proposition~\ref{derivseriesgeommult}.
So we may sum (1) using Lemma~\ref{multsummationlemma} to get:
$\bb$ witnesses $T(x+U_1)-T(x+U_2)$ and
$T(x+U_1)-T(x+U_2) \sim T'\cdot(U_1-U_2)$.
\end{proof}

\begin{pr}\label{WTG0}
Let $b \in \R$, $b \ne 0$, and let $\bb$ be a ratio set.
Then\hfill\break
$\B[\{x^{-1}\},\bb,x^b]$.
\end{pr}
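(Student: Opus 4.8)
The plan is a direct computation with the binomial series; no general Taylor theorem is needed. Fix a ratio set $\bb$ and $U_1,U_2\in\T^\bb$ with $U_1-U_2\in\TW\bb$ and $\{x^{-1}\}\cdot U_i\fst^\bb 1$ for $i=1,2$. Writing $T=x^b$, so that $T'=T'(x)=bx^{b-1}$, I must show that $\bb$ witnesses $W:=T(x+U_1)-T(x+U_2)$ and that $W\sim bx^{b-1}(U_1-U_2)$. Put $V_i:=x^{-1}U_i$; the hypothesis $\{x^{-1}\}\cdot U_i\fst^\bb 1$ says precisely $\supp V_i\subseteq\bb^+$, so $V_i\fst 1$, hence $x+U_i=x(1+V_i)\in\LP$ with $\mag(x+U_i)=x$, and $(x+U_i)^b=T\circ(x+U_i)$ is defined. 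Expanding a power of $1+(\text{small})$,
$$
	(x+U_i)^b=x^b(1+V_i)^b=x^b\Bigl(1+bV_i+R_i\Bigr),\qquad R_i:=\sum_{j\ge2}\binom bj V_i^{\,j},
$$
where each series $\sum_{j\ge0}\binom bj V_i^{\,j}$ is point-finitely convergent (a power series in the small $V_i$); subtracting,
$$
	W=bx^{b-1}(U_1-U_2)+x^b(R_1-R_2).
$$

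The heart of the matter is to prove $R_1-R_2\fst^\bb V_1-V_2$. I would regroup the two convergent expansions into the blocks $\binom bj(V_1^{\,j}-V_2^{\,j})$, $j\ge2$: since the finer family $\{\binom bj V_i^{\,j}\}_{i,j}$ is point-finite, this regrouped series is again point-finite, so $\supp(R_1-R_2)\subseteq\bigcup_{j\ge2}\supp(V_1^{\,j}-V_2^{\,j})$. For each $j\ge2$ apply the identity
$$
	V_1^{\,j}-V_2^{\,j}=(V_1-V_2)\sum_{k=0}^{j-1}V_1^{\,k}V_2^{\,j-1-k}.
$$
Each summand of the second factor is a product of $j-1\ge1$ factors drawn from $\{V_1,V_2\}$, each of whose supports lies in $\bb^+$, so that factor has support in $\bb^+$, whence $\supp(V_1^{\,j}-V_2^{\,j})\subseteq\supp(V_1-V_2)\cdot\bb^+$. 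Taking the union over $j$ gives $\supp(R_1-R_2)\subseteq\supp(V_1-V_2)\cdot\bb^+$, i.e.\ $R_1-R_2\fst^\bb V_1-V_2$; multiplying by the monomial $x^b$ yields $x^b(R_1-R_2)\fst^\bb x^{b-1}(U_1-U_2)$.

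To conclude, set $Q:=bx^{b-1}(U_1-U_2)$ and $P:=x^b(R_1-R_2)$, so $W=Q+P$ with $P\fst^\bb Q$ (the nonzero constant $b$ does not affect $\fst^\bb$). Since $\bb$ witnesses $U_1-U_2$, and being witnessed by $\bb$ is preserved under multiplication by a nonzero constant times a monomial, $\bb$ witnesses $Q$; hence $\supp Q\subseteq(\mag Q)\bb^*$, and therefore $\supp P\subseteq\supp Q\cdot\bb^+\subseteq(\mag Q)\bb^+\subseteq(\mag Q)\bb^*$. As $\mag Q\notin(\mag Q)\bb^+$ there is no cancellation at $\mag Q$, so $\mag W=\mag Q$ with the same leading coefficient, and $\supp W\subseteq\supp Q\cup\supp P\subseteq(\mag Q)\bb^*=(\mag W)\bb^*$. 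Thus $\bb$ witnesses $W$ and $W\sim Q=bx^{b-1}(U_1-U_2)=T'(x)\cdot(U_1-U_2)$, which is exactly $\B[\{x^{-1}\},\bb,x^b]$. The degenerate case $U_1=U_2$ (then $W=Q=0$) is trivial.

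The step I expect to require the most care is the regrouping into blocks $\binom bj(V_1^j-V_2^j)$: one must verify that this bracketing of the binomial series stays point-finite, so that $\supp(R_1-R_2)$ really is bounded by the union of the block supports — and it is precisely here that the \emph{strict} hypothesis $\supp V_i\subseteq\bb^+$ (rather than merely $\bb^*$) is essential, since it is what forces the comparison factor $\sum_k V_1^{\,k}V_2^{\,j-1-k}$ into $\bb^+$.
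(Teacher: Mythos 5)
Your proposal is correct and follows essentially the same route as the paper's proof: expand $(x+U_i)^b$ by the binomial series, factor each difference via $V_1^{\,j}-V_2^{\,j}=(V_1-V_2)\sum_{k}V_1^{\,k}V_2^{\,j-1-k}$, and observe that for $j\ge 2$ the extra factor lies in $\bb^+$ so every such term is $\fst^\bb$ the witnessed $j=1$ term. You merely spell out in more detail the point-finiteness of the regrouping and the non-cancellation at $\mag Q$, which the paper leaves implicit.
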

\begin{proof}
Let $U_1,U_2 \in \T^\bb$.  Assume
$U_1\fst^\bb x$, $U_2 \fst^\bb x$, $U_1-U_2 \in \TW\bb$.  Then
\begin{align*}
	(x+U_1)^b-(x+U_2)^b &=
	x^b\sum_{j=1}^\infty \binom{b}{j}\left(
	\left(\frac{U_1}{x}\right)^j-\left(\frac{U_2}{x}\right)^j\right)
	\\ &=
	x^b\sum_{j=1}^\infty \binom{b}{j}\left(\frac{U_1-U_2}{x}\right)
	\sum_{k=0}^{j-1}\left(\frac{U_1}{x}\right)^{k}
	\left(\frac{U_2}{x}\right)^{j-1-k} .
\end{align*}
Now $\bb$ witnesses the fact that
each term ($j>1$) is $\fst$ the first term ($j=1$), and $\bb$ witnesses
that first term $(U_1-U_2)/x$.  So $\bb$ witnesses
the sum $(x+U_1)^b-(x+U_2)^b$ and
$(x+U_1)^b-(x+U_2)^b \sim x^{b-1}(U_1-U_2)$.
\end{proof}

\begin{co}\label{WTG0_A}
Let $\bmu \subset \Gsmall_0$ be a
ratio set.  Let $\ba$ be a ratio set such that
$\ba$ is a derivative addendum for $\bmu$.
Then $\A[\bmu,\ba]$.
\end{co}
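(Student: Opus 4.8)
The plan is to unwind the definitions and reduce everything to the two building blocks already established: Proposition~\ref{WTG0} (the single-monomial case $x^b$) and Proposition~\ref{WTsum} (summing the monomial cases over a log-free, height-$0$ support). First I would identify $\tsupp\bmu$. Since $\bmu\subset\Gsmall_0$, each $\mu_i$ has the form $x^{b_i}$ with $b_i<0$, so clause~(ii) of Definition~\ref{de:tsupp} gives $\tsupp\mu_i=\{x^{-1}\}$ and hence $\tsupp\bmu=\{x^{-1}\}$ (the case $\bmu=\emptyset$ being vacuous, since then no $T\in\TWG\ebmu\ebmu$ has $T\not\fe 1$). Thus the statement $\A[\bmu,\ba]$ reduces to: for every ratio set $\bb$ with $\bb^*\supseteq\ba$ and every $T\in\TWG\ebmu\ebmu$ with $T\not\fe 1$, we have $\B[\{x^{-1}\},\bb,T]$.

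Fix such $\bb$ and $T$. Because $\bmu\subset\G_0$, the assumption that $\bmu$ generates $T$ forces $\supp T\subseteq\GRID^\ebmu\subseteq\G_0$, so every $\g\in\supp T$ is a pure power $x^b$ of $x$. Deleting the constant term of $T$, if present --- which, exactly as in the proof of Proposition~\ref{WTsum}, changes neither the hypothesis nor the conclusion of $\B[\{x^{-1}\},\bb,\,\cdot\,]$, and which (since $T\not\fe 1$) leaves $T$ nonzero and still $\not\fe 1$ --- I may assume $1\notin\supp T$, so every $\g\in\supp T$ is $x^b$ with $b\ne 0$. For each such $\g$, Proposition~\ref{WTG0} gives $\B[\{x^{-1}\},\bb,x^b]$, i.e.\ $\B[\{x^{-1}\},\bb,\g]$. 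Moreover $x^{-1}\in\{x^{-1}\}$, and $\bb$ is a derivative addendum for $\bmu$: the hypothesis $\bb^*\supseteq\ba$ (equivalently $\bb^*\supseteq\ba^*$) propagates each of the three conclusions of Proposition~\ref{monoderivcompare} from $\ba$ to $\bb$ --- $\bb^*\supseteq\bmu$ is immediate, while ``witnessed by $\ba$'' implies ``witnessed by $\bb$'' since $\ba^*\subseteq\bb^*$, and ``generated by $\ba$'' implies ``generated by $\bb$'' since $\GRID^{\ba,\bm}=\ba^\bm\ba^*\subseteq\bb^\bl\bb^*=\GRID^{\bb,\bl}$ whenever $\ba^\bm=\bb^\bl$. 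Now Proposition~\ref{WTsum}, applied with $\AA=\{x^{-1}\}$, yields $\B[\{x^{-1}\},\bb,T]$, which is exactly what is needed; re-inserting the deleted constant term (again harmless) completes the argument.

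I do not expect a genuine obstacle: all the analytic content lives in Propositions~\ref{WTG0} and~\ref{WTsum}, and this corollary is essentially bookkeeping. The only points that require a moment's care are the identification $\tsupp\bmu=\{x^{-1}\}$ in the height-$0$ setting, the harmless excision of the constant term so that Proposition~\ref{WTG0} --- which assumes $b\ne 0$ --- applies to every remaining monomial of $\supp T$, and the routine check that the property ``derivative addendum for $\bmu$'' is inherited by any $\bb$ with $\bb^*\supseteq\ba$.
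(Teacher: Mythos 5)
Your proposal is correct and follows essentially the same route as the paper: identify $\tsupp\bmu=\{x^{-1}\}$, note that any $\bb$ with $\bb^*\supseteq\ba$ is still a derivative addendum for $\bmu$, get $\B[\{x^{-1}\},\bb,\g]$ for each monomial $\g\in\GRID^\ebmu$ from Proposition~\ref{WTG0}, and sum via Proposition~\ref{WTsum}. Your extra remarks (excising the constant term so that $b\ne 0$, and checking that the derivative-addendum property passes from $\ba$ to $\bb$) only make explicit steps the paper leaves implicit or delegates to the proof of Proposition~\ref{WTsum}.
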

\begin{proof}
Let $\bb$ be a ratio set with $\bb^* \supseteq \ba$.
Then $\bb$ is also a derivative addendum for $\bmu$.
Since $\tsupp \bmu = \{x^{-1}\}$,  for all $\g \in \GRID^\ebmu$ we have
$\B[\tsupp \bmu,\bb,\g]$.
So $\B[\tsupp \bmu,\bb,T]$ for all $T \in \TWG\ebmu\ebmu$
with $T \not\fe 1$ by Proposition~\ref{WTsum}.
This proves $\A[\bmu,\ba]$.
\end{proof}

\begin{pr}\label{WTlog}
Let $\bb$ be a ratio set.
Then
$\B[\{x^{-1}\},\bb,\log]$.
\end{pr}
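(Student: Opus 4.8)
The plan is to collapse the difference of two logarithms into a single logarithm and expand it as a $\bb$-geometrically convergent power series. First I would dispose of the trivial case $U_1=U_2$ (both sides vanish, $0$ is witnessed by $\emptyset\subseteq\bb$, and $0\sim0$). The hypothesis $\{x^{-1}\}\cdot U_i\fst^\bb 1$ is exactly $U_i/x\fst^\bb 1$, so in particular $U_i\fst x$, hence $x+U_i\sim x$ and $x+U_i\in\LP$, so that $\log(x+U_i)$ is defined. Using the transseries identity $\log(AB)=\log A+\log B$ one gets $\log(x+U_i)=\log x+\log(1+U_i/x)$, and therefore
\[
	\log(x+U_1)-\log(x+U_2)=\log(1+U_1/x)-\log(1+U_2/x)=\log(1+V),
	\qquad
	V:=\frac{U_1-U_2}{x}\cdot\frac{1}{1+U_2/x}
\]
(since $1+V=(1+U_1/x)/(1+U_2/x)$).

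Next I would verify the witness and smallness properties of $V$. From $U_2/x\fst^\bb 1$, the transseries $1+U_2/x$ has magnitude $1$ and small part $U_2/x\fst^\bb 1$, so $\bb$ witnesses it; by Proposition~\ref{D_power}, $\bb$ witnesses $\dfrac{1}{1+U_2/x}=\sum_{k\ge0}(-1)^k(U_2/x)^k$, which is $\sim1$, so its support lies in $\bb^*$. The assumption $U_1-U_2\in\TW\bb$ says $\bb$ witnesses $U_1-U_2$, hence $\bb$ witnesses $\dfrac{U_1-U_2}{x}$ (product with the monomial $x^{-1}$, Proposition~\ref{witproduct}), and since $\supp\bigl((U_1-U_2)/x\bigr)\subseteq\supp(U_1/x)\cup\supp(U_2/x)\subseteq\bb^+$ we get $\dfrac{U_1-U_2}{x}\fst^\bb 1$. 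Multiplying the two factors (Proposition~\ref{witproduct}) shows $\bb$ witnesses $V$, and $\supp V\subseteq\bb^+\!\cdot\bb^*=\bb^+$ gives $V\fst^\bb 1$.

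Now expand $\log(1+V)=\sum_{j=1}^\infty\frac{(-1)^{j+1}}{j}V^j$. Since $\bb$ witnesses $V$ (so it witnesses each $V^j$ by repeated use of Proposition~\ref{witproduct}) and $V\fst^\bb 1$ forces $V^j\fgt^\bb V^{j+1}$ (each monomial of $\supp V^{j+1}$ is a product of one from $\supp V^j$ and one from $\supp V\subseteq\bb^+$), this series is $\bb$-geometrically convergent (Definition~\ref{geom_conv}); by Proposition~\ref{geom_basic} its sum is witnessed by $\bb$ and is $\sim$ its first term $V$. Finally, $\dfrac{1}{1+U_2/x}\sim1$ gives $V\sim\dfrac{U_1-U_2}{x}=x^{-1}(U_1-U_2)$, and the derivative of $\log$ is $x^{-1}$, so with $T=\log$ we obtain
\[
	T(x+U_1)-T(x+U_2)=\log(1+V)\sim V\sim x^{-1}(U_1-U_2)=T'(x)\cdot(U_1-U_2),
\]
together with $\bb$ witnessing $T(x+U_1)-T(x+U_2)$. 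This is precisely $\B[\{x^{-1}\},\bb,\log]$.

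There is no serious obstacle here; the computation parallels the proof of Proposition~\ref{WTG0}. The only point needing care is the witness bookkeeping for $V$ — in particular that $\bb$ witnesses $V$, which is exactly where the hypothesis $U_1-U_2\in\TW\bb$ (rather than merely $U_1,U_2\in\T^\bb$) is used — and the verification $V\fst^\bb1$, after which the geometric convergence of the logarithmic series does the rest.
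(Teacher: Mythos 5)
Your proof is correct, but it takes a different route from the paper's. The paper expands both logarithms as series in $U_1/x$ and $U_2/x$, takes the difference termwise, and factors each $\left(U_1/x\right)^j-\left(U_2/x\right)^j$ as $\frac{U_1-U_2}{x}\sum_{k}\left(U_1/x\right)^k\left(U_2/x\right)^{j-1-k}$, so that the whole double sum is $(U_1-U_2)/x$ times something $\sim 1$, with each $j>1$ term witnessed to be $\fst$ the $j=1$ term; this is deliberately the same telescoping computation as in Proposition~\ref{WTG0}, and it uses nothing beyond the series definition of $\log(1+S)$. You instead invoke the functional equation $\log A-\log B=\log(A/B)$ to collapse the difference into a single series $\log(1+V)$ with $V=\frac{U_1-U_2}{x}\cdot\frac{1}{1+U_2/x}$, and then run the geometric-convergence machinery (Definition~\ref{geom_conv}, Proposition~\ref{geom_basic}) on $\sum_j\frac{(-1)^{j+1}}{j}V^j$. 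Your witness bookkeeping for $V$ is right and is indeed the only delicate point: $U_1-U_2\in\TW\bb$ gives that $\bb$ witnesses $(U_1-U_2)/x$, $\supp\bigl((U_1-U_2)/x\bigr)\subseteq\bb^+$ gives the smallness, and Propositions \ref{D_power} and~\ref{witproduct} handle the factor $1/(1+U_2/x)$. What your version buys is a cleaner reduction to a single geometric series and an explicit appeal to the already-proved summation results; what the paper's version buys is self-containedness (no reliance on multiplicativity of $\log$) and uniformity with the $x^b$ case, where the analogous collapse would go through $(1+V)^b$ rather than a functional equation. Your explicit handling of $U_1=U_2$ is a harmless addition the paper leaves implicit.
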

\begin{proof}
Let $U_1,U_2 \in \T^\bb$.  Assume
$U_1\fst^\bb x$, $U_2 \fst^\bb x$, and $U_1-U_2 \in \TW\bb$.  Then
\begin{align*}
	\log(x+U_1)-\log(x+U_2) &=
	\log\left(1+\frac{U_1}{x}\right) - \log\left(1+\frac{U_2}{x}\right)
	\\&=
	\sum_{j=1}^\infty \frac{(-1)^{j+1}}{j}\left(
	\left(\frac{U_1}{x}\right)^j-\left(\frac{U_2}{x}\right)^j\right)
	\\ &=
	\sum_{j=1}^\infty \frac{(-1)^{j+1}}{j}\left(\frac{U_1-U_2}{x}\right)
	\sum_{k=0}^{j-1}\left(\frac{U_1}{x}\right)^{k}
	\left(\frac{U_2}{x}\right)^{j-1-k} .
\end{align*}
Now $\bb$ witnesses the fact that
each term ($j>1$) is $\fst$ the first term ($j=1$), and $\bb$ witnesses
that first term $(U_1-U_2)/x$.  So $\bb$ witnesses
the sum $\log(x+U_1)-\log(x+U_2)$ and
$\log(x+U_1)-\log(x+U_2) \sim (U_1-U_2)/x$.
\end{proof}

\begin{co}\label{WTlogcor}
In the preceding proof, if $\bb$ also generates $x$, then
$\bb$ generates $\log(x+U_1)-\log(x+U_2)$.
\end{co}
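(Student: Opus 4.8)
The plan is to reuse the computation already carried out in the proof of Proposition~\ref{WTlog} and simply track supports. Recall that in that proof $U_1,U_2 \in \T^\bb$ with $U_1 \fst^\bb x$, $U_2 \fst^\bb x$, and $U_1-U_2 \in \TW\bb$, and that one obtains the identity $\log(x+U_1)-\log(x+U_2) = \log(1+U_1/x)-\log(1+U_2/x)$ together with the expansions $\log(1+U_i/x) = \sum_{j\ge 1}\frac{(-1)^{j+1}}{j}(U_i/x)^j$. All that remains is to check that the support of this transseries sits inside a single grid generated by $\bb$.

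First I would note that $U_i \fst^\bb x$ means $\supp(U_i) \subseteq \{x\}\bb^+$, hence $\supp(U_i/x) = x^{-1}\supp(U_i) \subseteq \bb^+$ for $i = 1,2$. Since $\bb^+$ is closed under products, every power $(U_i/x)^j$ then has support in $\bb^+$, so $\supp\big(\log(1+U_i/x)\big) \subseteq \bb^+ \subseteq \bb^* = \GRID^{\bb,\0}$; that is, $\bb$ generates each $\log(1+U_i/x)$. Finally, $\log(x+U_1)-\log(x+U_2)$, being the difference of these two transseries, is generated by $\bb$ by Proposition~\ref{sumproduct}. The assumption that $\bb$ generates $x$ supplies the alternative, term-by-term route: it puts $x^{-1}\in\GRID^\bb$, so $\bb$ generates $U_i/x$ and $(U_1-U_2)/x$ directly by Proposition~\ref{sumproduct}, and it also keeps $\log x$ itself generated by $\bb$ in case one does not wish to cancel it in the difference.

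I expect no genuine obstacle. The single point that needs care --- and the only place the argument could fail if pushed carelessly --- is that the infinite sum $\sum_{j\ge 1}(U_i/x)^j$ must be pinned to one fixed grid over $\bb$, not spread across a family of grids whose shifts run off to infinity. This is exactly why one leans on the witnessed relation $U_i \fst^\bb x$, which gives $\supp(U_i/x)\subseteq\bb^+$, rather than merely the generation statement $\supp(U_i/x)\subseteq\GRID^\bb$: closure of $\bb^+$ under products then keeps the entire series inside $\GRID^{\bb,\0}$, and the rest is bookkeeping.
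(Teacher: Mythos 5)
Your argument is correct, and it actually proves slightly more than the corollary states: the hypothesis that $\bb$ generates $x$ is never used in your main line. You write the difference as $\log(1+U_1/x)-\log(1+U_2/x)$ and observe that the witnessed relation $U_i\fst^\bb x$ gives $\supp(U_i/x)\subseteq\bb^+$, so closure of $\bb^+$ under products pins every power $(U_i/x)^j$, hence each whole series, inside $\bb^*=\GRID^{\bb,\0}$. The paper instead uses the factored form
\[
\log(x+U_1)-\log(x+U_2)=\sum_{j\ge1}\frac{(-1)^{j+1}}{j}\,\frac{U_1-U_2}{x}\sum_{k=0}^{j-1}\Bigl(\frac{U_1}{x}\Bigr)^{k}\Bigl(\frac{U_2}{x}\Bigr)^{j-1-k},
\]
anchors the grid by noting that $(U_1-U_2)/x\in\T^\bb$ \emph{because} $x$, hence $x^{-1}$, lies in $\GRID^\bb$, and absorbs the remaining factors into $\bb^*$. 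So the two proofs differ in where the anchoring grid comes from: yours from $U_i\fst^\bb x$ alone (making the hypothesis on $x$ dispensable for the difference), the paper's from the extra generator $x$ together with the already-displayed common-factor expansion. One small correction: your closing aside that generating $x$ ``keeps $\log x$ itself generated by $\bb$'' is false --- for instance $\bb=\{x^{-1}\}$ generates $x$ but $\log x\notin\GRID^\bb$, since $\GRID^\bb$ consists only of powers of $x$ --- which is precisely why one must either cancel $\log x$ in the difference, as you do, or factor it out as the paper does. That slip is inessential to your argument.
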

\begin{proof} Now $\bb$ generates $U_1$ and $U_2$, so it generates $U_1-U_2$.
If $\bb$ also generates $x$, then it generates $1/x$ and
$(U_1-U_2)/x$, and therefore $\bb$ generates $\log(x+U_1)-\log(x+U_2)$.
\end{proof}

\begin{pr}\label{WTexp}
Let $\bmu,\bb$ be ratio sets.
Let $b \in \R$ and let $L \in \TWG\ebmu\ebmu$ be purely large, $L \ne 0$.
Assume $\B[\AA,\bb,L]$ and  {\rm(}if $b \ne 0${\rm)} assume
$x^{-1} \fst^\bb L'$.
Then $\B[\{x^{-1}\}\cup\AA\cup\supp L',\bb,x^be^L]$.
\end{pr}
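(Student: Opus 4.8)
The plan is to write $(x^be^L)(x+U_i)=(x+U_i)^b\,e^{L(x+U_i)}$ for $i=1,2$ and to control the two factors separately: the power $(x+U_i)^b$ by Proposition~\ref{WTG0} (this is why $\{x^{-1}\}$ appears in $\AA':=\{x^{-1}\}\cup\AA\cup\supp L'$), and the exponential by writing $e^{L(x+U_1)}-e^{L(x+U_2)}=e^{L(x+U_2)}\big(e^{L(x+U_1)-L(x+U_2)}-1\big)$ and feeding the inner difference into the hypothesis $\B[\AA,\bb,L]$ (this is why $\AA$ appears), with $\supp L'$ present so that each exponent $L(x+U_i)$ stays within reach of $\bb$. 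First I would record the consequences of the hypotheses on $U_1,U_2$ (which include $U_1-U_2\in\TW\bb$): from $x^{-1}\in\AA'$ we get $U_i\fst^\bb x$; from $\AA\subseteq\AA'$, $\AA\cdot U_i\fst^\bb 1$; from $\supp L'\subseteq\AA'$, $L'(x)\cdot U_i\fst^\bb 1$ and hence $L'(x)\cdot(U_1-U_2)\fst^\bb 1$. Proposition~\ref{WTG0} (for $b\ne 0$) then gives that $\bb$ witnesses $(x+U_1)^b-(x+U_2)^b\sim x^{b-1}(U_1-U_2)$, and a binomial expansion gives that $\bb$ witnesses each $(x+U_i)^b=x^b(1+s_i)$, since $\supp s_i\subseteq\bb^+$. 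Feeding the admissible pair $(U_1,U_2)$ into $\B[\AA,\bb,L]$ gives that $\bb$ witnesses $M_1-M_2:=L(x+U_1)-L(x+U_2)\sim L'(x)(U_1-U_2)$; by Proposition~\ref{wit_EL} this is $\fst^\bb 1$, so $\bb$ witnesses $e^{M_1-M_2}-1\sim M_1-M_2$.

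The step I expect to be the main obstacle is to show that $L(x+U_i)-L$ is small with support contained in $\bb^+$; granted this, $\la(L(x+U_i))=L$, $\const(L(x+U_i))=0$, and $e^{L(x+U_i)}=e^L\sum_{j\ge0}(L(x+U_i)-L)^j/j!$ is witnessed by $\bb$ with magnitude $e^L$ (so in particular $e^{L(x+U_i)}\sim e^L$). Morally this is $\B[\AA,\bb,L]$ applied with the second shift equal to $0$ (note $\AA\cdot 0\fst^\bb 1$ trivially), but $U_i$ need not lie in $\TW\bb$, so the definition of $\B$ does not literally apply to the pair $(U_i,0)$. I would instead prove the one-sided estimate $\supp(L(x+U)-L)\subseteq\bb^+$ — for $L$ purely large with $\supp L'\subseteq\AA'$ and $\AA'\cdot U\fst^\bb 1$ — directly, by a separate induction on the height of $L$ running in parallel with the induction behind $\B[\AA,\bb,L]$: at height $0$ it is a degree count inside a binomial expansion as in Proposition~\ref{WTG0}, and the inductive step uses the exponential expansion together with inductive control of the exponent subgrid.

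Granting this, the rest is bookkeeping. With $G_i:=(x+U_i)^b e^{M_i}$,
\begin{equation*}
	G_1-G_2=\big((x+U_1)^b-(x+U_2)^b\big)e^{M_1}\;+\;(x+U_2)^b\,e^{M_2}\big(e^{M_1-M_2}-1\big),
\end{equation*}
and each bracketed factor, together with $e^{M_1}$ and $e^{M_2}$, is witnessed by $\bb$, so by Proposition~\ref{witproduct} both summands are witnessed by $\bb$. For $b=0$ the first summand vanishes and $G_1-G_2=e^{M_2}(e^{M_1-M_2}-1)\sim e^L L'(x)(U_1-U_2)=(e^L)'(x)(U_1-U_2)$, giving (a) and (b); note no hypothesis relating $x^{-1}$ to $L'$ is used, as the statement allows. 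For $b\ne 0$ the first summand has magnitude $x^{b-1}e^{\la M_1}\mag(U_1-U_2)$ and the second has magnitude $x^b e^{\la M_2}\mag(L')\mag(U_1-U_2)$, whose ratio is $x^{-1}/\mag(L')$; the hypothesis $x^{-1}\fst^\bb L'$ (together with $\bb$ witnessing $L'$, which holds in the intended applications; cf.\ Proposition~\ref{part_b}) puts this ratio in $\bb^+$, so the first summand is $\fst^\bb$ the second. Since a $\fst^\bb$-smaller summand does not spoil the witness of the larger one, $\bb$ witnesses $G_1-G_2$; and $G_1-G_2\sim(x+U_2)^b e^{M_2}(M_1-M_2)\sim x^b e^L L'(x)(U_1-U_2)\sim(x^be^L)'(x)(U_1-U_2)$, giving (a) and (b).
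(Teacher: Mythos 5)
Your proposal is correct in outline but follows a genuinely different decomposition from the paper's. The paper writes $x^be^L=e^{b\log x+L}$ as a \emph{single} exponential, sets $Q_i=b\log(x+U_i)+L(x+U_i)-b\log x-L(x)$, shows each $Q_i$ and $Q_1-Q_2$ is small and witnessed by $\bb$, and then uses the one identity $e^{Q_1}-e^{Q_2}=(Q_1-Q_2)\sum_j\frac{1}{j!}\sum_k Q_1^kQ_2^{j-1-k}$ before factoring out the monomial $x^be^L$. You instead keep the two factors $(x+U_i)^b$ and $e^{L(x+U_i)}$ separate and telescope, which forces you to compare the magnitudes of the two resulting summands; that comparison is exactly where you need $x^{-1}\fst^\bb\mag(L')$ rather than the stated $x^{-1}\fst^\bb L'$, a distinction the paper's single-exponential route largely sidesteps (it uses $x^{-1}\fst L'$ only to absorb $bx^{-1}$ into $L'$, though its witnessing of $Q_i$ quietly relies on the same strengthening). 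Both versions of the hypothesis hold in the intended application (Lemma~\ref{taylorlemma} arranges $x^{-1}\fst^\ba\g_i'\sim\mag(L')$ with $\ba$ witnessing $\g_i'$), so this is a presentational rather than a mathematical difference. The step you flag as the main obstacle --- that $L(x+U_i)-L$ is small with support in $\bb^+$, so that $e^{L(x+U_i)}\sim e^L$ is witnessed --- is indeed the one point neither argument fully discharges: the paper simply asserts that $\bb$ witnesses $L(x+U_i)-L(x)$ with $L(x+U_i)-L(x)\sim L'\cdot U_i$, which amounts to applying $\B[\AA,\bb,L]$ to the pair $(U_i,0)$ even though $U_i\in\TW\bb$ is not among the hypotheses. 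Your observation that this is not literally licensed is accurate, and your proposed fix (a one-sided estimate $\supp(L(x+U)-L)\subseteq\bb^+$ proved by an induction parallel to the one establishing $\B[\AA,\bb,L]$) is the right shape of repair, though you leave it as a sketch; carrying it out would actually tighten the paper's own argument. Your handling of the $b=0$ case, and your remark that no relation between $x^{-1}$ and $L'$ is needed there, matches the paper's ``similar but easier'' aside.
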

\begin{proof}
We take the case $b\ne 0$.  The case $b=0$ is similar but easier.
Write $\g = x^be^L$, so that $\g = e^{b\log x + L}$.
Let $U_1,U_2 \in \T^\bb$ with
$U_1-U_2 \in \TW\bb$, 
$(\{x^{-1}\}\cup\AA\cup\supp L')\cdot U_1 \fst^\bb 1$, and
$(\{x^{-1}\}\cup\AA\cup\supp L')\cdot U_2 \fst^\bb 1$.
Then $\widetilde{\bb}$
witnesses $L(x+U_1)-L(x)$, $L(x+U_2)-L(x)$, and $L(x+U_1)-L(x+U_2)$;
also $L(x+U_1)-L(x) \sim L'\cdot U_1$,
$L(x+U_2)-L(x) \sim L'\cdot U_2$,
and $L(x+U_1)-L(x+U_2) \sim L'\cdot(U_1-U_2)$.
By Proposition~\ref{WTlog}, $\bb$
witnesses $b\log(x+U_1)-b\log(x)$,
$b\log(x+U_2)-b\log(x)$, and $b\log(x+U_1)-b\log(x+U_2)$;
also $b\log(x+U_1)-b\log(x) \sim b U_1/x$,
$b\log(x+U_2)-b\log(x) \sim b U_2/x$, and
$b\log(x+U_1)-b\log(x+U_2) \sim b (U_1-U_2)/x$.
Let
\begin{align*}
	Q_1 &= b\log(x+U_1)+L(x+U_1)-b\log(x)-L(x) ,
	\\
	Q_2 &= b\log(x+U_2)+L(x+U_2)-b\log(x)-L(x) ,
	\\
	Q_1-Q_2 &= b\log(x+U_1)+L(x+U_1)-b\log(x+U_2)+L(x+U_2) .
\end{align*}
Since $x^{-1} \fst L'$, we have $Q_1 \sim L'\cdot U_1
\sim (bx^{-1}+L')\cdot U_1 \fst^\bb 1$.
Similarly $Q_2 \sim (bx^{-1}+L')\cdot U_2 \fst^\bb 1$
and $Q_1-Q_2 \sim (bx^{-1}+L')\cdot(U_1-U_2) \fst^\bb 1$.
Since $x^{-1} \fst^\bb L'$, we have
$\bb$ witnesses $Q_1$ so
$Q_1 \fst^\bb 1$.  Similarly $\bb$ witnesses $Q_2$, $Q_2 \fst^\bb 1$,
$\bb$ witnesses $Q_1-Q_2$, and $Q_1-Q_2 \fst^\bb 1$.  Now
$$
	e^{Q_1} - e^{Q_2} = \sum_{j=1}^\infty \frac{Q_1^j-Q_2^j}{j!}
	= (Q_1-Q_2) \sum_{j=1}^\infty \frac{1}{j!} 
	 \sum_{k=0}^{j-1} Q_1^k Q_2^{j-1-k},
$$
so $\bb$ witnesses $e^{Q_1} - e^{Q_2}$ and
$e^{Q_1} - e^{Q_2} \sim Q_1-Q_2$.
Then
\begin{align*}
	\g(x+U_1)-\g(x+U_2) &=
	e^{b\log(x+U_1)+L(x+U_1)} - e^{b\log(x+U_2)+L(x+U_2)}
	\\ &=
	e^{b\log x + L}\left(e^{Q_1}-e^{Q_2}\right) =
	x^b e^L \left(e^{Q_1}-e^{Q_2}\right) .
\end{align*}
Now $\bb$ witnesses $e^{Q_1}-e^{Q_2}$ and $x^b e^L$ is a monomial,
so $\bb$ witnesses $\g(x+U_1)-\g(x+U_2)$.  Continuing:
\begin{align*}
	\g(x+U_1)-\g(x+U_2) &= x^b e^L \left(e^{Q_1}-e^{Q_2}\right)
	\sim x^b e^L (Q_1-Q_2) 
	\\ &\sim x^be^L (bx^{-1}+L')\cdot(U_1-U_2)
	= \g'\cdot(U_1-U_2) .
\end{align*}
Therefore $\B[\{x^{-1}\}\cup\AA\cup\supp L',\bb,x^be^L]$.
\end{proof}

\begin{pr}\label{WTGN}
Let $\bmu \subset \Gsmall_\bullet$ be a log-free ratio set.
Let $\ba$ be a Taylor addendum for $\bmu$.
Then $\A[\bmu,\ba]$.
\end{pr}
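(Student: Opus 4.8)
The plan is to induct on the height $N$ of the log-free ratio set $\bmu\subset\G_N$ (the passage to positive logarithmic depth being reduced to this case by the $\exp_M$/$\log_M$ substitution of Definition~\ref{de:tayloraddendum}). Two trivial monotonicities will be used throughout: if $\AA'\subseteq\AA$ then $\B[\AA',\bb,\g]$ implies $\B[\AA,\bb,\g]$, since enlarging the index set only strengthens the hypotheses $\AA\cdot U_i\fst^\bb 1$; and if $\bb^*\supseteq\ba$ then $\bb$ inherits from $\ba$ the property of being a derivative addendum for $\bmu$, and $\TWG\ba\ba\subseteq\TWG\bb\bb$. For $N=0$, a Taylor addendum for $\bmu$ is just a derivative addendum (clauses (b) and (c) being vacuous), so $\A[\bmu,\ba]$ is exactly Corollary~\ref{WTG0_A}; the degenerate case $\bmu=\emptyset$ is trivial.

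For the inductive step, fix $\bmu\subset\G_N$ with $N\ge1$ and a Taylor addendum $\ba$ for $\bmu$; write $\mu_i=x^{b_i}e^{L_i}$ with $L_i$ purely large log-free, and $\SW=\SET{\sum_i p_iL_i}{\bp\in\Z^n}$, so every $\g\in\GRID^\ebmu$ has the form $x^be^L$ with $L\in\SW$. By the Support Lemma~\ref{supportlemma}, $\SW$ has only finitely many magnitudes; hence, exactly as in the proof of Lemma~\ref{taylorlemma}, one may choose an exponent generator $\tbmu\subset\G_{N-1}$ for $\bmu$ that moreover \emph{witnesses} every element of $\SW$, not merely generates it. By the remark following the definition of Taylor addendum, $\ba$ contains (in the semigroup sense) a Taylor addendum $\widetilde\ba$ for $\tbmu$; since $\tbmu$ has height $N-1$, the inductive hypothesis gives $\A[\tbmu,\widetilde\ba]$.

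Now fix $\bb$ with $\bb^*\supseteq\ba$ and $T\in\TWG\ebmu\ebmu$ with $T\not\fe1$. Since $\bb$ is then a derivative addendum for $\bmu$ and $x^{-1}\in\tsupp\bmu$, Proposition~\ref{WTsum} reduces $\B[\tsupp\bmu,\bb,T]$ to establishing $\B[\tsupp\bmu,\bb,\g]$ for each $\g\in\supp T\subseteq\GRID^\ebmu$. Write such a $\g$ as $x^be^L$ with $L\in\SW$. If $L=0$ then $\g=x^b$: for $b=0$ the statement is trivial, and for $b\ne0$ Proposition~\ref{WTG0} gives $\B[\{x^{-1}\},\bb,x^b]$, hence $\B[\tsupp\bmu,\bb,x^b]$ by monotonicity (as $x^{-1}\in\tsupp\bmu$). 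If $L\ne0$, then $L$ is purely large and nonzero, so $L\not\fe1$, and $L\in\TWG{\tbmu}{\tbmu}$ — it is generated by $\tbmu$ because each $L_i$ is, and witnessed by $\tbmu$ because $L\in\SW$. Applying $\A[\tbmu,\widetilde\ba]$ with $\bb$ (legitimate since $\bb^*\supseteq\ba^*\supseteq\widetilde\ba$) yields $\B[\tsupp\tbmu,\bb,L]$, while clause (b) in the definition of Taylor addendum, together with $\bb^*\supseteq\ba$, gives $x^{-1}\fst^\bb L'$ when $b\ne0$. Proposition~\ref{WTexp} (with $\AA=\tsupp\tbmu$) then gives $\B[\{x^{-1}\}\cup\tsupp\tbmu\cup\supp L',\bb,x^be^L]$; and since $\{x^{-1}\}\cup\supp L'\subseteq\tsupp(x^be^L)\subseteq\tsupp\bmu$ and $\tsupp\tbmu\subseteq\tsupp\bigl(\bigcup_i\supp L_i\bigr)\subseteq\tsupp\GRID^\ebmu=\tsupp\bmu$ (Remark~\ref{tsuppprod}, using that $\tbmu$ lies in the subgroup generated by $\bigcup_i\supp L_i\subseteq\GRID^\ebmu$), monotonicity gives $\B[\tsupp\bmu,\bb,x^be^L]$. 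This closes the induction.

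I expect the main obstacle to be twofold: the bookkeeping in the last step — keeping the three subgrids $\tsupp\bmu$, $\tsupp\tbmu$, $\supp L'$ distinct and checking that each feeds correctly into $\tsupp\bmu$ — and, more substantively, the need to choose the exponent generator $\tbmu$ so that it \emph{witnesses} (not merely generates) every element of $\SW$: the inductive hypothesis $\A[\tbmu,\cdot]$ is phrased only for transseries both witnessed and generated by $\tbmu$, whereas the exponent $L$ of a monomial of $\GRID^\ebmu$ is a priori only generated. With that arranged, the proof is just the assembly of already-proved pieces: Proposition~\ref{WTsum} to descend from $T$ to its monomials, Proposition~\ref{WTG0} for the pure-power monomials, Proposition~\ref{WTexp} (fed by the inductive hypothesis through its $\B[\AA,\bb,L]$ hypothesis) for the exponential ones, with Corollary~\ref{WTG0_A} as the base case.
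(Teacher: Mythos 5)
Your proof is correct and follows essentially the same route as the paper's: induction on the height $N$, with Corollary~\ref{WTG0_A} as base case, the inductive hypothesis applied through an exponent generator $\tbmu$, Proposition~\ref{WTexp} to handle each monomial $x^be^L \in \GRID^\ebmu$, and Proposition~\ref{WTsum} to pass from monomials to $T$. The one point where you go beyond the paper is in choosing $\tbmu$ so that it \emph{witnesses} (not merely generates) every exponent $L\in\SW$ — needed because the inductive hypothesis $\A[\tbmu,\cdot]$ applies only to elements of $\TWG\tbmu\tbmu$ — a requirement the paper's own proof passes over silently; the finiteness argument you give via the Support Lemma~\ref{supportlemma} is exactly the right patch (and mirrors the construction in Lemma~\ref{taylorlemma}), so your version is, if anything, the more complete one.
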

\begin{proof}
Say $\bmu \subset \Gsmall_N$.  The proof is by induction on $N$.
The case $N=0$ is Corollary~\ref{WTG0_A}.  Now let $N>1$ and
assume the result holds for $N-1$.
Let $\tbmu \subset \Gsmall_{N-1}$ be an exponent generator for $\bmu$.
Then $\ba$ is a Taylor addendum for $\tbmu$.
So by the induction hypothesis, $\A[\tbmu,\ba]$.

Let $\bb$ be a ratio set with $\bb^* \supseteq \ba$.
Note that for all $x^be^L \in \GRID^\ebmu$,
we have $\tsupp \bmu \supseteq \{x^{-1}\} \cup \tsupp \tbmu \cup \supp L'$.
We have $\B[\tsupp \bmu,\bb,x^be^L]$ for all $x^be^L \in \GRID^\ebmu$
by Proposition~\ref{WTexp}.
Thus by Proposition~\ref{WTsum} we have
$B[\tsupp \bmu,\bb,T]$ for all $T \in \TWG\ebmu\ebmu$
with $T \not\fe 1$.  Therefore $\A[\bmu,\ba]$.
\end{proof}

\begin{pr}\label{WTlogind}
Let $T \in \T$, let $\ba$ be a ratio set, and
let $\AA\subset \G$.
Define
$$
	\BB = \frac{\AA\circ\log}{x} \cup \{x^{-1}\} ,\quad
	\bb = \ba\circ\log := \SET{\fa\circ\log}{\fa \in \ba}.
$$
Assume $\B[\AA,\ba,T]$.
Then $\B[\BB,\bb,T\circ\log]$.
\end{pr}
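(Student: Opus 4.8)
The plan is to conjugate the hypothesis $\B[\AA,\ba,T]$ by the substitution $x\mapsto\log x$. Write $\sigma$ and $\tau$ for the operations $Z\mapsto Z\circ\log$ and $Z\mapsto Z\circ\exp$ on $\T$; since $\log,\exp\in\LP$ and $\log\circ\exp=\exp\circ\log=x$, these are mutually inverse, order-preserving ring automorphisms of $\T$ whose restrictions to $\G$ are group automorphisms. I would first record the consequences needed: composition with $\log$ distributes over point-finite sums and is injective on monomials, so there is no cancellation, i.e.\ $\supp(Z\circ\log)=(\supp Z)\circ\log$; $\sigma$ maps $\ba^{+}$ onto $\bb^{+}=(\ba\circ\log)^{+}$, $\ba^{*}$ onto $\bb^{*}$, and $\GRID^{\ba,\bn}$ onto $\GRID^{\bb,\bn}$; hence $\sigma$ preserves $\fst$ and $\sim$, carries ``$A\fst^{\ba}B$'' to ``$A\circ\log\fst^{\bb}B\circ\log$'', carries ``$\ba$ generates $Z$'' to ``$\bb$ generates $Z\circ\log$'', and carries ``$\ba$ witnesses $Z$'' to ``$\bb$ witnesses $Z\circ\log$'' (using $\mag(Z\circ\log)=(\mag Z)\circ\log$); the same holds with $\tau$, $\bb$, $\ba$ in place of $\sigma$, $\ba$, $\bb$. (If $T$ is a constant, both $\B$-statements are trivial, so assume $T$ is non-constant, whence $T'\ne0$ and $T'\circ\log\ne0$.)

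Now fix $V_{1},V_{2}\in\T^{\bb}$ with $V_{1}-V_{2}\in\TW\bb$, $\BB\cdot V_{1}\fst^{\bb}1$, $\BB\cdot V_{2}\fst^{\bb}1$; as $\supp(V_{1}-V_{2})\subseteq\supp V_{1}\cup\supp V_{2}$ we also get $\BB\cdot(V_{1}-V_{2})\fst^{\bb}1$. Put $P_{i}=V_{i}/x$. Since $x^{-1}\in\BB$, we have $\supp P_{i}\subseteq\bb^{+}$ and $\supp(P_{1}-P_{2})\subseteq\bb^{+}$, and by Proposition~\ref{witproduct} (applied with the monomial $x^{-1}$) $\bb$ witnesses $P_{1}-P_{2}$. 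As $P_{i}$ is small, $W_{i}:=\log(1+P_{i})$ is defined, $\log(x+V_{i})=\log x+W_{i}$, and hence $(T\circ\log)(x+V_{i})=T(\log x+W_{i})$. Finally set $U_{i}:=W_{i}\circ\exp\in\T$, so $U_{i}\circ\log=W_{i}$; the aim is to feed $U_{1},U_{2}$ into $\B[\AA,\ba,T]$.

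The crux is to verify the hypotheses of $\B[\AA,\ba,T]$ for $U_{1},U_{2}$. Expanding $W_{i}=\log(1+P_{i})$ as a power series gives $\supp W_{i}\subseteq\bb^{+}$, hence $\supp U_{i}=(\supp W_{i})\circ\exp\subseteq\ba^{+}$, so $U_{i}\in\T^{\ba}$. For $\AA\cdot U_{i}\fst^{\ba}1$: applying $\sigma$, $\m U_{i}\fst^{\ba}1$ iff $(\m\circ\log)W_{i}\fst^{\bb}1$, and writing $W_{i}=P_{i}\cdot Y_{i}$ with $Y_{i}=\sum_{j\ge0}\tfrac{(-1)^{j}}{j+1}P_{i}^{j}$ (so $\supp Y_{i}\subseteq\bb^{*}$) gives $(\m\circ\log)W_{i}=\bigl(\tfrac{\m\circ\log}{x}V_{i}\bigr)Y_{i}$, whose support lies in $\bb^{+}\cdot\bb^{*}=\bb^{+}$ because $\tfrac{\m\circ\log}{x}\in\BB$ and $\BB\cdot V_{i}\fst^{\bb}1$. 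The one step that is not a mere support computation—and the step I expect to be the main obstacle—is that $\ba$ must \emph{witness} $U_{1}-U_{2}$ (a transseries supported in $\bb^{+}$ need not be witnessed by $\bb$). Here I would use, exactly as in the proof of Proposition~\ref{WTlog}, the factorization
\[
	W_{1}-W_{2}=\sum_{j=1}^{\infty}\frac{(-1)^{j+1}}{j}\bigl(P_{1}^{j}-P_{2}^{j}\bigr)
	=(P_{1}-P_{2})\,G,\qquad
	G=\sum_{j=1}^{\infty}\frac{(-1)^{j+1}}{j}\sum_{k=0}^{j-1}P_{1}^{k}P_{2}^{\,j-1-k},
\]
in which every monomial of $G$ apart from the leading $1$ is a nontrivial product of factors from $\bb^{+}$; thus $\supp G\subseteq\bb^{*}$, $G\sim1$, and $\sm G$ is supported in $\bb^{+}$, so $\bb$ witnesses $G$; since $\bb$ also witnesses $P_{1}-P_{2}$, Proposition~\ref{witproduct} gives that $\bb$ witnesses $W_{1}-W_{2}$, and applying $\tau$ shows $\ba$ witnesses $U_{1}-U_{2}=(W_{1}-W_{2})\circ\exp$.

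With the hypotheses in hand, $\B[\AA,\ba,T]$ yields that $\ba$ witnesses $T(x+U_{1})-T(x+U_{2})$ and $T(x+U_{1})-T(x+U_{2})\sim T'(x)\cdot(U_{1}-U_{2})$. Applying $\sigma$: $\bb$ witnesses $\bigl(T(x+U_{1})-T(x+U_{2})\bigr)\circ\log$, which equals $T(\log x+W_{1})-T(\log x+W_{2})=(T\circ\log)(x+V_{1})-(T\circ\log)(x+V_{2})$, and this is $\sim\bigl(T'(x)(U_{1}-U_{2})\bigr)\circ\log=T'(\log x)\cdot(W_{1}-W_{2})$. Since $W_{1}-W_{2}=(P_{1}-P_{2})G$ with $G\sim1$, we have $W_{1}-W_{2}\sim P_{1}-P_{2}=(V_{1}-V_{2})/x$ (the case $V_{1}=V_{2}$ being trivial), so, cancelling the nonzero factor $T'(\log x)$ and using the chain rule $(T\circ\log)'=(T'\circ\log)\cdot x^{-1}$, we get $(T\circ\log)(x+V_{1})-(T\circ\log)(x+V_{2})\sim T'(\log x)\cdot\tfrac{V_{1}-V_{2}}{x}=(T\circ\log)'(x)\cdot(V_{1}-V_{2})$. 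Together with the witnessing statement, this is precisely $\B[\BB,\bb,T\circ\log]$.
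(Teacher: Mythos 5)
Your proof is correct and follows essentially the same route as the paper: substitute $U_i=(\log(x+V_i)-\log x)\circ\exp$, verify the hypotheses of $\B[\AA,\ba,T]$ for these, apply it, and conjugate back by $\circ\log$. The only differences are notational (your $U,V$ are the paper's $V,U$) and that you inline the factorization argument that the paper obtains by citing Proposition~\ref{WTlog} and Corollary~\ref{WTlogcor}.
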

\begin{proof}
Let $U_1, U_2 \in \T^\bb$.  Assume
$U_1-U_2 \in \TW\bb$, $\BB\cdot U_1 \fst^\bb 1$, and
$\BB\cdot U_2 \fst^\bb 1$.  Now $x^{-1}$ in $\BB$,
so by Proposition~\ref{WTlog}, we conclude
$\bb$ witnesses $\log(x+U_1) - \log(x)$, $\log(x+U_2) - \log(x)$, and
$\log(x+U_1) - \log(x+U_2)$; and
$\log(x+U_1) - \log(x) \sim U_1/x$,
$\log(x+U_2) - \log(x) \sim U_2/x$, and
$\log(x+U_1) - \log(x+U_2) \sim (U_1-U_2)/x$.
Since $x^{-1} \in \bb$, by Corollary~\ref{WTlogcor}
we conclude that $\bb$ generates
$\log(x+U_1) - \log(x)$ and $\log(x+U_2) - \log(x)$.
Now define $V_1 := (\log(x+U_1) - \log(x))\circ\exp$ and
$V_2 := (\log(x+U_2) - \log(x))\circ\exp$, so that
$V_1,V_2 \in \TWG\ba\ba$, $V_1-V_2 \in \TW\ba$,
$V_1 \sim \big(U_1/x\big)\circ\exp$,
$V_2 \sim \big(U_2/x\big)\circ\exp$, and
and $V_1-V_2 \sim \big((U_1-U_2)/x\big)\circ\exp$.
By the definition of $\BB$ in terms of $\AA$, it follows
that $\AA\cdot V_1 \fst^\ba 1$ and $\AA\cdot V_2 \fst^\ba 1$.
We may apply $\B[\AA,\ba,T]$ to conclude
$\ba$ witnesses $T(x+V_1)-T(x+V_2)$ and
$T(x+V_1)-T(x+V_2) \sim T'\cdot(V_1-V_2)$.
Now
\begin{equation*}
	T\big(\log(x+U_1)\big) - T\big(\log(x+U_2)\big)
	=
	\big(T(x+V_1) - T(x+V_2)\big)\circ\log ,
\end{equation*}
so $\bb$ witnesses $T\big(\log(x+U_1)\big) - T\big(\log(x+U_2)\big)$.
Continuing,
\begin{align*}
	&T\big(\log(x+U_1)\big) - T\big(\log(x+U_2)\big)
	\sim
	\big(T'\cdot (V_1-V_2)\big)\circ \log
	\\ &\qquad\qquad\sim
	\frac{T'(\log x)\cdot (U_1-U_2)}{x}
	=
	(T\circ\log)'\cdot(U_1-U_2) .
\end{align*}
\end{proof}

\begin{co}\label{WTG}
Let $\bmu \subset \Gsmall$ be a ratio set.
Let $\ba$ be a Taylor addendum for $\bmu$.
Then $\A[\bmu,\ba]$.
\end{co}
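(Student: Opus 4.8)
The plan is to induct on the logarithmic depth $M$ of $\bmu$. The base case $M=0$ is log-free and is exactly Proposition~\ref{WTGN}, which I invoke directly. For the inductive step I strip off one logarithm: set $\bnu := \bmu\circ\exp$, a ratio set of logarithmic depth $M-1$, so that $\bmu = \bnu\circ\log$. By associativity of composition, $(\ba\circ\exp)\circ\exp_{M-1} = \ba\circ\exp_M$ and $\bnu\circ\exp_{M-1} = \bmu\circ\exp_M$, so Definition~\ref{de:tayloraddendum} shows that if $\ba$ is a Taylor addendum for $\bmu$ then $\ba\circ\exp$ is a Taylor addendum for $\bnu$; hence the induction hypothesis gives $\A[\bnu,\ba\circ\exp]$.

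Next I unwind $\A[\bmu,\ba]$. Fix a ratio set $\bb$ with $\bb^*\supseteq\ba$ and a transseries $T\in\TWG\bmu\bmu$ with $T\not\fe 1$; I must establish $\B[\tsupp\bmu,\bb,T]$. Put $T_0 := T\circ\exp$, so $T = T_0\circ\log$; because composition with $\log$ is an order-preserving group embedding of $\G$ that carries grids to grids (so $\GRID^{\bmu,\bm} = \GRID^{\bnu,\bm}\circ\log$), we get $T_0\in\TWG\bnu\bnu$, and since $\g\circ\exp = 1$ only for $\g=1$ we get $T_0\not\fe 1$. The device that handles the universal quantifier over $\bb$ is to put $\bb_0 := \bb\circ\exp$: this is again a ratio set in $\Gsmall$, and since composition with $\exp$ is a group homomorphism of $\G$, the relation $\bb^*\supseteq\ba$ becomes $\bb_0^*\supseteq\ba\circ\exp$, so $\A[\bnu,\ba\circ\exp]$ applies and yields $\B[\tsupp\bnu,\bb_0,T_0]$. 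Finally I invoke Proposition~\ref{WTlogind} with its ``$\ba$'' taken to be $\bb_0$, its ``$\AA$'' taken to be $\tsupp\bnu$, and its ``$T$'' taken to be $T_0$: then its ``$\bb$'' is $\bb_0\circ\log = (\bb\circ\exp)\circ\log = \bb$, and its ``$\BB$'' is $\bigl((\tsupp\bnu)\circ\log\bigr)\cdot x^{-1}\cup\{x^{-1}\}$, which equals $\tsupp(\bnu\circ\log) = \tsupp\bmu$ by clause~(iv) of Definition~\ref{de:tsupp}. Since $\B[\tsupp\bnu,\bb_0,T_0]$ has just been verified, Proposition~\ref{WTlogind} produces $\B[\tsupp\bmu,\bb,T_0\circ\log] = \B[\tsupp\bmu,\bb,T]$, which completes the induction.

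The step I expect to be the crux is reconciling the \emph{arbitrary} witness $\bb$ appearing in the definition of $\A[\bmu,\ba]$ with Proposition~\ref{WTlogind}, which is stated only for witnesses of the special shape $\ba\circ\log$: one cannot apply it to a general $\bb$ with $\bb^*\supseteq\ba$ head-on. The resolution above --- relabelling the given $\bb$ as ``$\ba\circ\log$'' by choosing the new base ratio set to be $\bb\circ\exp$, and then checking that $\bb\circ\exp$ still generates the Taylor addendum $\ba\circ\exp$ of $\bnu$ --- is the only genuinely nontrivial point. Everything else is bookkeeping: composition with $\exp$ and with $\log$ is an order-preserving group embedding of $\G$, it preserves smallness (so $\bnu$ and $\bb\circ\exp$ are ratio sets), it carries grids to grids (so ``witnessed'' and ``generated'' transfer between $T$ and $T_0$), and it interacts correctly with $\fe$, $\fst$, and $\tsupp$ (the last exactly via clause~(iv) of Definition~\ref{de:tsupp}).
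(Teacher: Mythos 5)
Your proof is correct and follows the same route as the paper, whose own proof is only the two-line sketch ``induction on $M$, apply Definitions \ref{de:tsupp} and \ref{de:tayloraddendum} using Propositions \ref{WTGN} and \ref{WTlogind}.'' You have filled in the detail the sketch leaves implicit --- in particular the conjugation trick $\bb_0 := \bb\circ\exp$ that reconciles the arbitrary $\bb$ with $\bb^*\supseteq\ba$ in $\A[\bmu,\ba]$ against the special form $\bb=\ba\circ\log$ in Proposition \ref{WTlogind} --- and that resolution is exactly what is needed.
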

\begin{proof}
By induction on $M$, where $\bmu \subset \G_{\bullet,M}$.
Apply Definitions \ref{de:tsupp} and~\ref{de:tayloraddendum}
using Propositions \ref{WTGN} and~\ref{WTlogind}.
\end{proof}

Together with Proposition~\ref{part_b},
this completes the proof of Theorem~\ref{spwittaylor1}.

Is the addendum $\bb$ constructed above much larger than necessary?

\begin{co}\label{GN_compo}
Let $\bmu,\bb \subset \Gsmall$ be ratio sets.
Let $B \in \T^\bb$.
Assume $\bb$ is a Taylor addendum for $\bmu$,
$\bb^* \supseteq \bmu$, and $(\tsupp \bmu)\cdot B \fst^\bb 1$.
Then $\bb$ is an $(x+B)$-composition addendum for $\bmu$.
\end{co}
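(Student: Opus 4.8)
The plan is to reduce the corollary to a single smallness estimate on each generator, then obtain that estimate from the witnessed Taylor machinery just developed.

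Write $\bmu=\{\mu_1,\dots,\mu_n\}$ with $\mu_i=e^{L_i}$, each $L_i$ purely large and negative. Since $x^{-1}\in\tsupp\bmu$, the hypothesis $(\tsupp\bmu)\cdot B\fst^\bb 1$ forces $x^{-1}B\fst^\bb 1$, hence $B\fst x$, so $x+B\sim x$ is large and positive and each composition $\mu_i\circ(x+B)$ is defined. By Definition~\ref{DCA} it suffices to check, for each $i$, that $\sm(L_i\circ(x+B))\fst^\bb 1$ and $e^{\la(L_i\circ(x+B))}\in\bb^*$ (then $\bb$ itself serves as the witness $\ba_i$ there). Both follow from
\begin{equation*}
  L_i\circ(x+B)-L_i \fst^\bb 1 .
  \tag{$*$}
\end{equation*}
Indeed $(*)$ says $L_i\circ(x+B)-L_i$ is small with support in $\bb^+$; hence $\la(L_i\circ(x+B))=L_i$ (so $e^{\la(L_i\circ(x+B))}=\mu_i\in\bmu\subseteq\bb^*$), $\const(L_i\circ(x+B))=0$, and $\sm(L_i\circ(x+B))=L_i\circ(x+B)-L_i$, which is $\fst^\bb 1$. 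So everything reduces to $(*)$.

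For $(*)$: a transmonomial of $\bmu$ is trivially generated and witnessed by $\bmu$, so $\mu_i\in\TWG\bmu\bmu$ with $\mu_i\not\fe 1$; as $\bb$ is a Taylor addendum for $\bmu$, Corollary~\ref{WTG} gives $\A[\bmu,\bb]$, hence $\B[\tsupp\bmu,\bb,\mu_i]$. Reading this with $U_1=B$, $U_2=0$ would give that $\bb$ witnesses $\mu_i(x+B)-\mu_i$ and $\mu_i(x+B)-\mu_i\sim\mu_i'\cdot B=\mu_i\,L_i'\,B$. Since $\supp L_i'\subseteq\tsupp\bmu$ (clause (iii) of Definition~\ref{de:tsupp}) and $(\tsupp\bmu)\cdot B\fst^\bb 1$, we get $\supp(\mu_i L_i' B)\subseteq\mu_i\,(\tsupp\bmu)\,\supp B\subseteq\mu_i\bb^+$; so the leading monomial of $\mu_i(x+B)-\mu_i$ lies in $\mu_i\bb^+$, and because $\bb$ witnesses this difference its whole support does. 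As $\mu_i\in\bmu\subseteq\bb^*$ with $\mu_i\ne 1$ we have $\mu_i\in\bb^+$, whence $\mu_i\bb^+\subseteq\bb^+$; thus $\mu_i(x+B)/\mu_i=1+S_i$ with $S_i\fst^\bb 1$, and $L_i\circ(x+B)-L_i=\log(1+S_i)$ has support in $\bb^+$, giving $(*)$.

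The one genuine obstacle is that $\B[\tsupp\bmu,\bb,\mu_i]$ as stated requires $U_1-U_2=B\in\TW\bb$, whereas we are given only $B\in\T^\bb$; and this really matters (e.g.\ with $\bmu=\{x^{-1},e^{-x}\}$, $\bb=\{x^{-1},xe^{-x}\}$, $B=x^{-1}+xe^{-x}$, every hypothesis holds but $\bb$ does not witness $B$). The remedy is to re-run the induction proving Theorem~\ref{spwittaylor1}, keeping at each stage only the weaker conclusion ``$T\circ(x+B)-T\fst^\bb 1$'' in place of ``$\bb$ witnesses $T\circ(x+B)-T$'': base case $\bmu\subset\G_0$ from the binomial and logarithm expansions of Propositions~\ref{WTG0} and~\ref{WTlog} together with $\supp(B/x)\subseteq\bb^+$; the exponential step from Proposition~\ref{WTexp} with the exponent generator $\tbmu$ of $\bmu$ (again a Taylor addendum, with $\tsupp\tbmu\subseteq\tsupp\bmu$); the logarithmic step from the substitution of Proposition~\ref{WTlogind}. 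This weaker conclusion needs only $B\in\T^\bb$ and $(\tsupp\bmu)\cdot B\fst^\bb 1$, since in those proofs ``$\bb$ witnesses $U_1-U_2$'' was used solely to witness the already-small leading term $\sim T'\cdot(U_1-U_2)$; here that term --- together with every higher Taylor term $L_i^{(k)}\cdot B^k$, controlled exactly as in the proof of Proposition~\ref{WTexp} via the derivative-stability of $\tsupp\bmu$ (clause (iii) of Definition~\ref{de:tsupp} and Proposition~\ref{derivgen}) and $\supp(B/x)\subseteq\bb^+$ --- already has support in $\bb^+$.
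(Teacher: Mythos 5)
Your reduction of the corollary to the single estimate $(*)$, and your plan to obtain $(*)$ from the witnessed Taylor theorem at $U_1=B$, $U_2=0$, is essentially the paper's own argument; the only cosmetic difference is that you apply $\B[\tsupp\bmu,\bb,\cdot]$ to the monomial $\mu_i$ and then take logarithms, where the paper applies it to the exponent $L_i$ directly (your variant is arguably cleaner, since $\mu_i\in\TWG\ebmu\ebmu$ is immediate, whereas applying $\A[\tbmu,\bb]$ to $L_i$ needs a word of justification). The ``obstacle'' you isolate is real: the corollary gives only $B\in\T^\bb$, while $\B[\tsupp\bmu,\bb,T]$ demands $U_1-U_2\in\TW\bb$, and your example $\bb=\{x^{-1},xe^{-x}\}$, $B=x^{-1}+xe^{-x}$ does show the stated hypotheses do not supply this. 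The paper's proof simply does not address the point: it invokes parts (a)--(c) of Theorem~\ref{spwittaylor1} for $L_i$ with $U_1=B$, $U_2=0$ without checking $B\in\TW\bb$. Since in the only uses of the corollary (Propositions \ref{inv_A0} and~\ref{inv_AN}) one has $B\in\SD\subseteq\TWG\bb\bb$, the cheap repair is to add the hypothesis $B\in\TW\bb$, after which your argument (or the paper's) closes by direct citation.

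The gap in your proposal is the repair itself. ``Re-run the induction keeping only the weaker conclusion $T\circ(x+B)-T\fst^\bb 1$'' is not a proof, and the invariant you name is not the right one: already at the base case, for $T=x^b$ with $b\ge 1$ the difference $(x+B)^b-x^b\sim bx^{b-1}B$ need not be $\fst 1$ at all (take $b=2$, $B=x^{1/2}$), and the induction necessarily passes through large monomials of $\GRID^\ebmu$ and through the purely large exponents $L$. What actually propagates is $\supp\bigl(\g(x+U_1)-\g(x+U_2)\bigr)\subseteq\g\,\bb^+$ for monomials $\g$, i.e.\ witnessing relative to $\g'\cdot(U_1-U_2)$. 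Moreover, your claim that $U_1-U_2\in\TW\bb$ ``was used solely to witness the already-small leading term'' is inaccurate: in Proposition~\ref{WTsum} the witnessing of $U_1-U_2$ enters through the Multiple Summation Lemma~\ref{multsummationlemma}, where ``$\bmu$ witnesses $V$'' is an explicit hypothesis, and that summation step is still needed on your route, one level down, to handle the transseries $L_i$ inside Proposition~\ref{WTexp}. So as written the proposal trades one unjustified citation for a sketched re-derivation of the whole Taylor machinery whose stated induction hypothesis fails. Either add $B\in\TW\bb$ to the statement (harmless for the applications), or carry out the support-tracking induction in full with the corrected invariant.
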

\begin{proof}
Write $\bmu = \{\mu_1,\cdots,\mu_n\}$ and $\mu_i = e^{L_i}$.
Then $\supp L_i \subseteq \tsupp \bmu$, so $L_i' \cdot B \fst^\bb 1$.
Now $\tsupp L_i \subseteq \tsupp \bmu$, so we have
$\bb$ generates $L_i(x+B) - L_i$ and $L_i(x+B) - L_i
\sim L_i'\cdot B \fst^\bb 1$.
So $\sm(L_i\circ(x+B)) = L_i(x+B) - L_i$ and $\bb$ witnesses the fact
that this is ${}\fst 1$.
And $e^{\la(L_i\circ(x+B))} = e^{L_i} = \mu_i$
is witnessed by $\bb$.
\end{proof}

The non-witnessed version is a consequence.

\begin{thm}[Special Taylor Order 1]\label{sptaylor1}
Let $T, U_1, U_2 \in \T$.
Assume $(\tsupp T) \cdot U_1 \fst 1$, and
$(\tsupp T) \cdot U_2 \fst 1$.
Then $x+U_1, x+U_2 \in \LP$ and
$T(x+U_1)-T(x+U_2) \sim T'(x) \cdot(U_1-U_2)$.
\end{thm}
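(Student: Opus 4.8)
The plan is to deduce this from the witnessed version, Theorem~\ref{spwittaylor1}, after manufacturing the appropriate witnesses. First I would normalize $T$: subtracting $\const T$ changes neither $\tsupp T$, nor $T'$, nor the difference $T(x+U_1)-T(x+U_2)$, so I may assume $\const T=0$; if $T=0$ the statement is trivial, and otherwise $\mag T\neq 1$, i.e.\ $T\not\fe 1$. In that case $\supp T$ contains a monomial $\g\neq 1$, so $x^{-1}\in\tsupp\g\subseteq\tsupp T$, and the hypothesis $(\tsupp T)\cdot U_i\fst 1$ forces $U_i\fst x$; hence $x+U_1,x+U_2\sim x$ lie in $\LP$, which settles the first assertion.

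Next I would produce a ratio set $\bmu$ that generates and witnesses $T$ and satisfies $\tsupp\bmu=\tsupp T$. Applying the remark that every subgrid $\AA$ admits a ratio set $\bmu_0$ with $\AA\subseteq\GRID^{\bmu_0}$ and $\tsupp\bmu_0=\tsupp\AA$ to the subgrid $\supp T$, I get $\bmu_0$ with $\supp T\subseteq\GRID^{\bmu_0}$ and $\tsupp\bmu_0=\tsupp T$; then $\bmu_0$ generates $T$. Writing $T=a\g(1+S)$ with $a\in\R\setminus\{0\}$, $\g=\mag T$, $S$ small, the support of $S$ is a subgrid lying in $\Gsmall$ and inside $\GRID^{\bmu_0}$, so the Subgrid Witness Theorem --- followed by adjoining the (small) maximum of $\supp S$ --- produces a ratio set $\bnu\subseteq\GRID^{\bmu_0}$ with $S\fst^\bnu 1$; and $\bnu\subseteq\GRID^{\bmu_0}$ gives $\tsupp\bnu\subseteq\tsupp\GRID^{\bmu_0}=\tsupp\bmu_0=\tsupp T$. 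Now $\bmu:=\bmu_0\cup\bnu$ generates $T$ (through $\bmu_0$), witnesses $T$ (through $\bnu$), and has $\tsupp\bmu=\tsupp T$, so $T\in\TWG\bmu\bmu$. Let $\ba$ be the ratio set that Theorem~\ref{spwittaylor1} provides for this $\bmu$.

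Then I would choose the ratio set for $U_1,U_2$. The hypothesis reads $(\tsupp\bmu)\cdot U_i\fst 1$, and $(\tsupp\bmu)\cdot\supp U_i$ is a subgrid lying in $\Gsmall$, so by the same device there are ratio sets $\bd_1,\bd_2$ with $(\tsupp\bmu)\cdot U_i\fst^{\bd_i}1$. Let $\bb$ be the union of $\ba$, $\bd_1$, $\bd_2$, a ratio set generating $\supp U_1\cup\supp U_2$, and a witness for $U_1-U_2$ (if $U_1=U_2$ the conclusion is $0\sim 0$). Then $\bb^*\supseteq\ba$, $T\in\TWG\bmu\bmu$ with $T\not\fe 1$, $U_1,U_2\in\T^\bb$, $U_1-U_2\in\TW\bb$, and $(\tsupp\bmu)\cdot U_i\fst^\bb 1$ for $i=1,2$ --- the witnessing and generation facts persisting because these relations are preserved under enlarging the ratio set --- so Theorem~\ref{spwittaylor1} applies and yields $T(x+U_1)-T(x+U_2)\sim T'(x)\cdot(U_1-U_2)$, as required.

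The step needing care is the construction of $\bmu$: it must simultaneously generate $T$, witness $T$, and have $\tsupp$ no larger than $\tsupp T$. This $\tsupp$-control is the crux, because Theorem~\ref{spwittaylor1} governs $(\tsupp\bmu)\cdot U_i$ whereas the present hypothesis controls only $(\tsupp T)\cdot U_i$, and $\tsupp T\subseteq\tsupp\bmu$ in general with strict inclusion for a carelessly chosen $\bmu$. After that, everything is routine bookkeeping with the monotonicity of the witnessing and generation relations under unions of ratio sets.
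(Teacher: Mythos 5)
Your proposal is correct and follows the same route as the paper's proof: reduce to the witnessed version, Theorem~\ref{spwittaylor1}, by producing a ratio set $\bmu$ with $T \in \TWG\ebmu\ebmu$, $T \not\fe 1$, and then a suitable $\bb$ with $\bb^* \supseteq \ba$. You are in fact more careful than the paper on the one delicate point: the paper takes an arbitrary $\bmu$ with $T \in \TWG\ebmu\ebmu$ and then asserts that a $\bb$ with $(\tsupp \bmu)\cdot U_i \fst^\bb 1$ can be chosen, which presupposes $(\tsupp \bmu)\cdot U_i \fst 1$ even though the hypothesis only controls $(\tsupp T)\cdot U_i$; your construction forcing $\tsupp \bmu = \tsupp T$ (via the remark on subgrids and the Subgrid Witness Theorem) closes exactly that gap.
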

\begin{proof}
We may assume without loss of generality that $T\not\fe 1$,
since subtracting a constant from $T$ does not change
the conclusion.
Since $x^{-1} \in \tsupp T$, from $(\tsupp T)\cdot U_1 \fst 1$
we conclude $U_1 \fst x$.  Similarly $U_2 \fst x$.  So
$x+U_1, x+U_2 \in \LP$.  Let $\bmu$ be a ratio set with
$T \in \TWG\ebmu\ebmu$, and let $\ba$ be the
Taylor addendum for $\bmu$.  Choose $\bb \supseteq \ba$
such that $U_1, U_2 \in \T^\bb$, $U_1 - U_2 \in \TW\bb$,
and
$$
	(\tsupp \bmu)\cdot U_1  \fst^\bb 1,\qquad
	(\tsupp \bmu)\cdot U_2  \fst^\bb 1 .
$$
Then from Theorem~\ref{spwittaylor1}(a) we conclude
$T(x+U_1)-T(x+U_2) \sim T'(x) \cdot(U_1-U_2)$.
\end{proof}

\section{Compositional Inverse}
Notation: $\LP = \SET{S \in \T}{S \fgt 1, S > 0}$.
The set $\LP$ is a group inder the ``composition'' operation $\circ$.
We assume associativity is known.  The identity is $x \in \LP$.

\begin{thm}\label{inv}
Let $T \in \LP$.  Then there exists $S \in \LP$ with $T \circ S = x$.
\end{thm}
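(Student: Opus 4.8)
The plan is to reduce $T$, by composing on the left and right with explicitly invertible ``elementary'' transseries, to the case $T=x+B$ with $B$ small enough that the witnessed Taylor theorem (Theorem~\ref{spwittaylor1}) applies, and then to invert $x+B$ by the geometric fixed point theorem (Proposition~\ref{geometric_fixed}). The reduction machinery is this: if $E\in\LP$ has a known two-sided inverse $E^{-1}\in\LP$ and $T=E\circ T_1$, then $T\circ S=x$ whenever $T_1\circ S_1=x$ and $S=S_1\circ E^{-1}$ (using associativity); similarly if $T=T_1\circ E$ then $S=E^{-1}\circ S_1$ works. Since $\exp_k$, $\log_k$, $x\mapsto ax$ ($a>0$) and $x\mapsto x^d$ ($d>0$) all have explicit inverses in $\LP$, it suffices to invert $T$ after composing with any of these.

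\emph{Reduction to $x+B$.} First, using exponentiality (\cite[\Cexponentiality]{edgarc}) --- or, elementarily, by repeatedly replacing $T$ by $\log\circ T$ while $\mag T$ carries a genuine exponential, and then by $\exp\circ T$ finitely often --- I would reduce to the case that $\mag T$ is polynomially bounded, so $\mag T=c\,x^{b_0}\,\m$ with $c,b_0>0$ and $\m$ a product of powers of the iterated logarithms. Then $\log\circ T\circ\exp\in\LP$ (it is $\fgt 1$ since $T\circ\exp\not\fe 1$, by the note after Proposition~\ref{D_log}), and it equals $b_0x+B_1$ with $\mag B_1\fst x$, because $(T\circ\exp)=c\,e^{b_0x}\,(\m\circ\exp)\,(1+\text{small})$ and $\m\circ\exp$ contributes only iterated logarithms; so after one scaling we reach $T=x+B$ with $B\fst x$. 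One further conjugation by $\exp$ reaches $B\fst 1$, using that $(x+B)(e^y)=e^y\bigl(1+B(e^y)e^{-y}\bigr)$ with $B(e^y)e^{-y}\fst 1$, so that $\log\circ(x+B)\circ\exp=x+B'$ with $B'\fst 1$.

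\emph{The fixed point.} Suppose $T=x+B$ with $B\fst 1$, fix a ratio set $\bmu$ generating and witnessing $B$ with $B\fst^\bmu 1$, and choose a Taylor addendum $\bb$ for $\bmu$ with $\bb^*\supseteq\bmu$. Set $\SA=\SET{x+Y}{Y\in\T^\bb,\ \bb\text{ witnesses }Y,\ (\tsupp\bmu)\cdot Y\fst^\bb 1}$ and $\Phi(x+Y)=x-B(x+Y)$. By Corollary~\ref{GN_compo} the single ratio set $\bb$ is an $S$-composition addendum for $\bmu$ for \emph{every} $S\in\SA$; hence $B(S)$ is generated and witnessed by $\bb$ and $B(S)\fst^\bb x$, whence $\Phi(S)\in\SA$, so $\Phi\takes\SA\to\SA$ and hypotheses (a), (d) of Proposition~\ref{geometric_fixed} hold. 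For the contraction (b), given $P,Q\in\SA$ with $\bb$ witnessing $P-Q$, Theorem~\ref{spwittaylor1} applied with ``$T$'' $=B$ (its hypotheses hold since $P,Q\in\SA$) shows $\bb$ witnesses $\Phi(P)-\Phi(Q)=-(B(P)-B(Q))$, and its part (d), using $B\fst^\bmu 1\fst^\bmu x$, gives $B(P)-B(Q)\fst^\bb P-Q$, i.e. $P-Q\fgt^\bb\Phi(P)-\Phi(Q)$; hypothesis (c) follows from closure under geometric limits (Propositions~\ref{geometric} and~\ref{geom_basic}). Starting from $T_0=x-B\in\SA$, Proposition~\ref{geometric_fixed} produces $S\in\SA$ with $\Phi(S)=S$, i.e. $x-B(S)=S$, i.e. $(x+B)\circ S=x$. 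Unwinding the reductions gives the desired $S$ for the original $T$.

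\emph{The main obstacle.} The delicate point is making Theorem~\ref{spwittaylor1} applicable, i.e. arranging $(\tsupp\bmu)\cdot B\fst^\bmu 1$, equivalently $\mag B\fst(\max\tsupp\bmu)^{-1}$: this can genuinely fail (e.g.\ for $T=x+x^{-1}+e^{-x^2}$, where the derivative $-2x$ of the exponent of $e^{-x^2}$ forces $x\in\tsupp\bmu$ while $x^{-1}$ is not $\fst x^{-1}$), and it cannot be repaired by further $\exp$/$\log$ conjugation. I would handle it, before invoking the fixed point, by splitting off the finitely many monomials of $\supp B$ that are $\fgteq(\max\tsupp\bmu)^{-1}$ (a subgrid contains only finitely many monomials above a fixed one), writing $x+B=(x+B_0)\circ(x+W)$ with $B_0$ a finite sum and $\mag W$ strictly below the threshold (so $x+W$ is admissible), and then inverting $x+B_0$ by a finite induction that peels off one monomial at a time, the base case $x+c\m$ (a single small monomial) being always admissible since $\m\fst(\max\tsupp\m)^{-1}$. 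Carrying out this induction while keeping the ratio sets, witnesses, and addenda coherent, together with the bookkeeping in the elementary reductions, is the bulk of the work; uniqueness of $S$ is not claimed (and, per the remark after Proposition~\ref{geometric_fixed}, the usual argument does not apply).
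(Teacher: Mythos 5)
Your overall architecture --- reduce by explicit conjugations to $x+B$ with $B$ small, then run the geometric fixed point theorem (Proposition~\ref{geometric_fixed}) on $\Phi(Y)=-B(x+Y)$ with the contraction supplied by Theorem~\ref{spwittaylor1} --- is exactly the paper's, and you have correctly located the one genuinely delicate point: the Taylor hypothesis $(\tsupp\bmu)\cdot B\fst 1$ need not hold for the $\bmu$ attached to $B$ itself. But your resolution of that obstacle is where the proposal diverges from the paper and where it has a real gap. The paper does not split $\supp B$ by size against a threshold; it splits by \emph{exponential height}. In Proposition~\ref{inv_lfx} one writes $T=x+A_0+A_1$ with $\supp A_0\subset\G_{N-1}$ and $\supp A_1\subset\Gsmall_N\setminus\G_{N-1}$, inverts $x+A_0$ by induction on height, conjugates $A_1$ past that inverse (which keeps it purely of height $N$ by \cite[\EcomponN]{edgar}), and then inverts $x+C$ with $C$ purely of top height by Proposition~\ref{inv_AN}. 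For that last piece the Taylor hypothesis is \emph{automatic}: $\tsupp\bmu\subset\G_{N-1}$ while $\mag C\in\Gsmall_N\setminus\G_{N-1}$, so $(\tsupp\bmu)\cdot\mag C\fst 1$ by Height Wins. Your own example $x+x^{-1}+e^{-x^2}$ is dispatched exactly this way ($A_0=x^{-1}$, $A_1=e^{-x^2}$), and the induction is on the integer $N$, hence well-founded; logarithmic depth is then stripped off by conjugating with $\exp_M$, and the general $T\in\LP$ is reached via exponentiality, just as in your first reduction.

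By contrast, your threshold-and-peeling scheme leaves two steps unproved, and they are not routine bookkeeping. First, the factorization $x+B=(x+B_0)\circ(x+W)$ with prescribed $B_0$ requires solving $W+B_0(x+W)=B$ for $W$, which is itself a compositional-inverse/fixed-point problem of the same type you are trying to establish; you cannot simply assert it. Second, the ``finite induction that peels off one monomial at a time'' has no well-founded measure: after composing with $(x+c_1\m_1)^{[-1]}=x+D_1$, the remainder $B_0+D_1(x+B_0)$ is an infinite series supported on an enlarged grid (that of $D_1$ plus derivative and composition addenda), its set of above-threshold monomials need not have fewer elements than before, and the threshold $(\max\tsupp\bmu)^{-1}$ itself moves as the ratio set is augmented at each step. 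Finiteness of a subgrid above a fixed monomial only helps if both the grid and the threshold stay fixed, which they do not. So as written the induction is not shown to terminate. If you replace the threshold decomposition by the height decomposition, everything you have already set up (Corollary~\ref{GN_compo}, Theorem~\ref{spwittaylor1}, Proposition~\ref{geometric_fixed}) applies verbatim and the proof closes.
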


The proof proceeds in stages.  See \cite[\S~5.4.1]{hoeven},
\cite[Cor.~6.25]{DMM}.

\begin{pr}\label{inv_A0}
Let $A \in \T_0$, $A \fst x$.
Then there is $B \in \T_0$, $B \fst x$, so that
$(x+A)\circ(x+B) = x$.
\end{pr}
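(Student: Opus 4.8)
The plan is to obtain $B$ as a fixed point of $\Phi(B):=-A\circ(x+B)$. For $B\fst x$ we have $x+B\in\LP$, so $\Phi(B)$ is defined; since $\G_0$ is stable under $y\mapsto y\circ(x+B)$ (because $(x+B)^b=x^b(1+B/x)^b$ expands into a point-finite series over $\G_0$), $\Phi(B)\in\T_0$; and $(x+A)\circ(x+B)=x$ is equivalent to $(x+B)+A(x+B)=x$, i.e.\ to $B=\Phi(B)$. First I would remove a possible constant term: with $c=\const A$ and $A_1=A-c$ one has $x+A=(x+c)\circ(x+A_1)$, where $\const A_1=0$ and $A_1\fst x$; so if $x+A_1$ has an inverse $x+B_1$ with $B_1\in\T_0$, $B_1\fst x$, then $x+A$ has the inverse $(x+B_1)\circ(x-c)$, that is $x+B$ with $B=-c+B_1\circ(x-c)\in\T_0$, again $\fst x$. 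Hence assume $\const A=0$; if moreover $A=0$ take $B=0$, so assume $A\ne0$, whence $A\not\fe1$ and $A'\fst1$ (every exponent occurring in $A$ is $<1$, so in $A'$ every exponent is $<0$).

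Next I would fix ratio sets. By Proposition~\ref{pwitness} choose $\bmu\subset\Gsmall_0$ with $x^{-1}\in\bmu$ generating and witnessing $A$, witnessing $A\fst x$ (so $A\fst^\ebmu x$), and witnessing $A'\fst1$. Let $\ba$ be a Taylor addendum for $\bmu$ (Lemma~\ref{taylorlemma}) with $\ba\subset\G_0$ and $\ba^*\supseteq\bmu$; then $\ba$ is a derivative addendum for $\bmu$, $A\fst^\ba x$, $A'\fst^\ba1$, and $\ba$ witnesses $A$. Since $\bmu,\ba\subset\G_0$ we have $\tsupp\bmu=\{x^{-1}\}$ and $\tsupp A\subseteq\{x^{-1}\}$. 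Put
$$
	\SA:=\SET{B\in\T_0}{B\fst x,\ \ba\text{ generates }B,\ B\fst^\ba x}.
$$
For $B\in\SA$ the condition $(\tsupp\bmu)\cdot B\fst^\ba1$ is exactly $B\fst^\ba x$, so Corollary~\ref{GN_compo} makes $\ba$ an $(x+B)$-composition addendum for $\bmu$ for every such $B$.

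Then I would verify $\Phi\colon\SA\to\SA$ and the hypotheses of the geometric fixed point theorem, Proposition~\ref{geometric_fixed}. For $B\in\SA$: $\Phi(B)=-A(x+B)\in\T_0$; Proposition~\ref{gencomp}(i),(ii) (with composition addendum $\ba$ and $\bmu$ generating and witnessing $A$) gives that $\ba$ generates and witnesses $A(x+B)$, which gives clause~(a); and from $A\fst^\ebmu x$ together with the fact that $\bmu$ generates and witnesses the monomial $x=(x^{-1})^{-1}$, Proposition~\ref{gencomp}(iii) gives $A(x+B)\fst^\ba x\circ(x+B)=x+B\fsteq^\ba x$ (each monomial of $x+B$ is $x$ or a monomial of $B$, which is $\fst^\ba x$), hence $A(x+B)\fst^\ba x$; so $\Phi(B)\fst^\ba x$, $\Phi(B)\fst x$, and $\ba$ generates $\Phi(B)$ (using $x^{-1}\in\ba$), i.e.\ $\Phi(B)\in\SA$. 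For hypothesis~(d) take $T_0=0$: then $\ba$ trivially witnesses $0$ and $\Phi(0)-0=-A$. For hypothesis~(c): a sequence in $\SA$ converging $\ba$-geometrically has limit $T$ supported by a grid over $\ba\subset\G_0$, so $T\in\T_0$ and $\ba$ generates $T$, and $T\fst^\ba x$ since $T=T_0+\sum_j(T_{j+1}-T_j)$ with every summand $\fst^\ba x$; so $T\in\SA$.

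The heart of the argument is hypothesis~(b), where the witnessed Taylor theorem enters. If $B_1,B_2\in\SA$ and $\ba$ witnesses $B_1-B_2$, then the hypotheses of Theorem~\ref{spwittaylor1} hold (case $S=x$, $T=A$), so $\ba$ witnesses $A(x+B_1)-A(x+B_2)$ and $A(x+B_1)-A(x+B_2)\sim A'(x)\cdot(B_1-B_2)$; since $A'\fst^\ba1$ and $\ba$ witnesses $B_1-B_2$, Proposition~\ref{productwitness} gives $A'(x)\cdot(B_1-B_2)\fst^\ba B_1-B_2$, and then Proposition~\ref{wit_EL} gives $A(x+B_1)-A(x+B_2)\fst^\ba B_1-B_2$, i.e.\ $\ba$ witnesses $\Phi(B_1)-\Phi(B_2)$ and $B_1-B_2\fgt^\ba\Phi(B_1)-\Phi(B_2)$. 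Proposition~\ref{geometric_fixed} then yields $B\in\SA$ with $B=\Phi(B)$, i.e.\ $(x+A)\circ(x+B)=x$, and $B\in\T_0$, $B\fst x$; the general case follows by the constant-term reduction. The main obstacle is the bookkeeping underlying the second and third paragraphs: arranging one ratio set $\ba$ that is simultaneously a derivative addendum for $\bmu$ and, via Corollary~\ref{GN_compo}, an $(x+B)$-composition addendum for every $B\in\SA$, so that the special witnessed Taylor estimate can be applied uniformly to supply the contraction in~(b); the constant-term reduction itself is a minor technicality.
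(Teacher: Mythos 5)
Your proof follows essentially the same route as the paper's: the fixed point $\Phi(B)=-A\circ(x+B)$ handled via Proposition~\ref{geometric_fixed}, with Example~\ref{G0_special}/Corollary~\ref{GN_compo} supplying the composition addendum, Proposition~\ref{gencomp} giving that $\ba$ witnesses $\Phi(B)$, and the Special Witnessed Taylor Theorem~\ref{spwittaylor1} giving the contraction in hypothesis~(b). The only (cosmetic) deviations are that you strip the constant term first and start the iteration at $T_0=0$ rather than $T_0=-A$ (the latter is slightly safer, since ``$\ba$ witnesses $0$'' is not literally covered by the definition, which is stated for nonzero $T$), and your domain $\SA$ is a looser version of the paper's $\SD$; none of this affects correctness.
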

\begin{proof}
Write $\fa = \mag A$.  So $\fa \fst x$.
Let $\bmu \subseteq \G_0$ be a ratio set that generates $A$,
witnesses $A$, and witnesses $x+A$. In particular, $\fa \fst^\ebmu x$.
Now $\bb := \bmu \cup \{x^{-1}\} \subset \G_0$
is a Taylor addendum for $\bmu$ (Lemma~\ref{taylorlemma}
and Example~\ref{ex:derivativeaddendum}).
Let $\BB = \SET{\g \in \G_0}{\g \fsteq^\bb \fa}$ and
$\SD = \SET{B \in \TWG\bb\bb}{\supp B \subseteq \BB, B \sim -\fa}$.
Define $\Phi$ by
$$
	\Phi(B) := -A\circ(x+B) .
$$

I claim $\Phi$ maps $\SD$ into itself.
Indeed, let $B \in \SD$. Then by Example~\ref{G0_special}, $\bb$
is an $(x+B)$-composition addendum for $\bmu$.  But $\bmu$
generates and witnesses $A$, so $\bb$ generates and witnesses
$A\circ(x+B)$ by Proposition~\ref{gencomp}.
Note $\tsupp\bmu = \{x^{-1}\}$.
We have $B/x \fsteq^\bb \fa/x \fst^\bb 1$.
Then by Special Taylor~\ref{spwittaylor1} we have:
$\bb$ witnesses $A\circ(x+B)-A$ and $A\circ(x+B)-A \fst^\bb B \fsteq \fa$,
so $A\circ(x+B) \sim A \sim \fa$ and thus $\Phi(B) \in \SD$.
Therefore $\Phi$ maps $\SD$ into itself.

Note $T_0 := -A \in \SD$, $\Phi(T_0) \in \SD$,
$\bb$ witnesses $T_0$,
and---as just seen---$\bb$ witnesses $\Phi(T_0)-T_0$.

If $\bb$ witnesses $B \in \SD$, then $\bb$ witnesses $\Phi(B)$.

If $T_j \in \SD$ and $T_j$ converges geometrically to $T$,
then $T \in \SD$ by Proposition~\ref{geom_basic}.

Next let $B_1, B_2 \in \SD$ and assume $\bb$ witnesses $B_1-B_2$.
Then by Proposition~\ref{spwittaylor1} as above, we have:
$\bb$ witnesses $A\circ(x+B_1)-A\circ(x+B_2)$ and
$A\circ(x+B_1)-A\circ(x+B_2) \fst^\bb B_1-B_2$.  That is,
$\bb$ witnesses $\Phi(B_1)-\Phi(B_2)$ and
$\Phi(B_1)-\Phi(B_2) \fst^\bb B_1-B_2$.

We may now apply the fixed point theorem
Proposition~\ref{geometric_fixed} to conclude
there is $B \in \SD$ such that $B = \Phi(B)$.
That is:
$B = -A\circ(x+B)$ or
$x+B=x-A\circ(x+B)$ or
$x+B+A\circ(x+B) = x$ or
$(x+A)\circ(x+B) = x$.
\end{proof}

\begin{pr}\label{inv_AN}
Let $N \in \N$, $N \ge 1$.  Let $A \in \T$ with
$\supp A \subseteq \Gsmall_N \setminus \G_{N-1}$.
Then there is $B \in \T$ with
$\supp B \subseteq \Gsmall_N \setminus \G_{N-1}$
such that $(x+A)\circ(x+B) = x$.
\end{pr}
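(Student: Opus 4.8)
The plan is to follow the proof of Proposition~\ref{inv_A0} closely, with the $\G_0$-specific Taylor estimates there replaced by the Special Witnessed Taylor Theorem~\ref{spwittaylor1}. Since $(x+A)\circ(x+B)=x$ is equivalent to $B=-A\circ(x+B)$, I would define $\Phi(B):=-A\circ(x+B)$ and produce $B$ as a fixed point of $\Phi$ via the geometric fixed point theorem, Proposition~\ref{geometric_fixed}, on a carefully chosen domain.

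\textit{Ratio sets.} Put $\fa:=\mag A$. By the Subgrid Witness Theorem~\ref{sgwt} I would choose a log-free ratio set $\bmu$ of height $\le N$ that generates and witnesses both $A$ and $x+A$, so $\supp A\subseteq\fa\bmu^*$. Since $\bmu\subset\Gsmall_N$ we have $\tsupp\bmu\subset\G_{N-1}$, and because each monomial of $A$ lies in $\Gsmall_N\setminus\G_{N-1}$ its outer exponent $L$ is purely large of height exactly $N-1$ with negative leading coefficient; hence for $\m\in\supp A$ and $\nu\preccurlyeq\max\tsupp\bmu$ the product $\nu\m$ is again $\fst 1$ (the exponential $e^L$ dominates the lower-height factor). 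Thus $(\tsupp\bmu)\cdot A$ is a subgrid of $\Gsmall$ and by Theorem~\ref{sgwt} has a witness; likewise $A'$ is small (each $\m\fst 1$ gives $\m'\fst x'=1$), so $A'\fst 1$ has a witness. Let $\bb$ be a Taylor addendum for $\bmu$ (Lemma~\ref{taylorlemma}), built so that in addition $\bb$ witnesses $x+A$, $\bb$ witnesses $A'\fst 1$, and $\bb$ witnesses $(\tsupp\bmu)\cdot A\fst 1$ (the constructions of Taylor and derivative addenda have enough slack, and only finitely many further requirements are imposed). Finally set
$$
	\BB:=\bigl(\fa\bmu^*\bigr)\cap\bigl(\Gsmall_N\setminus\G_{N-1}\bigr),\qquad
	\SD:=\SET{B\in\T}{\supp B\subseteq\BB,\ B\sim -A}.
$$
Every $B\in\SD$ is then automatically witnessed by $\bb$ and generated by $\bmu$, and $(\tsupp\bmu)\cdot B\fst^\bb 1$.

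\textit{$\Phi$ maps $\SD$ into $\SD$.} Let $B\in\SD$. By Corollary~\ref{GN_compo}, $\bb$ is an $(x+B)$-composition addendum for $\bmu$; since $\bmu$ generates and witnesses $A$, Proposition~\ref{gencomp} gives that $\bb$ generates and witnesses $A\circ(x+B)$. Applying Theorem~\ref{spwittaylor1} with $T=A$, $U_1=B$, $U_2=0$ gives $A\circ(x+B)-A\sim A'(x)\cdot B$ with $\bb$ a witness, and since $A'\fst^\bb 1$ and $\bb$ witnesses $B$, Proposition~\ref{productwitness} yields $A\circ(x+B)-A\fst^\bb B\fsteq^\bb\fa$; hence $A\circ(x+B)\sim A$, $\Phi(B)\sim -A$, and $\supp\Phi(B)\subseteq\fa\bb^*$. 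The one substantive point is $\supp\Phi(B)\subseteq\Gsmall_N\setminus\G_{N-1}$, i.e.\ that no monomial of $A\circ(x+B)$ has height below $N$. I would prove this by a downward induction tracking the monomials of $\m\circ(x+B)=\m\,(1+S_\m)$ ($S_\m$ small): since the monomials of $B$ lie in $\BB$, their exponents are purely large of height exactly $N-1$ with negative leading coefficient, and this property is stable under the products, powers and derivatives that build $S_\m$ and under multiplication by $\m$ — no cancellation can collapse the height because everything ``contributes negatively''. Together with $A\circ(x+B)\fsteq^\bb\fa$ this gives $\supp\Phi(B)\subseteq\BB$, so $\Phi(B)\in\SD$. \emph{This height-preservation is the step I expect to require the most care.}

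\textit{The fixed point.} I would then check the hypotheses of Proposition~\ref{geometric_fixed} for $\Phi$ on $\SD$ with ratio set $\bb$: (a) and (d) hold since every element of $\SD$ is witnessed by $\bb$ and, for $T_0=-A\in\SD$, so is $\Phi(T_0)-T_0$ by the estimate above; (b) follows from Theorem~\ref{spwittaylor1} with $U_1=B_1,U_2=B_2$ together with Propositions~\ref{productwitness} and~\ref{wit_EL}, which give that $\bb$ witnesses $\Phi(B_1)-\Phi(B_2)$ and $\Phi(B_1)-\Phi(B_2)\sim -A'(x)(B_1-B_2)\fst^\bb B_1-B_2$; (c) holds because point-finite limits keep supports inside the fixed subgrid $\BB$ and a $\bb$-geometric limit $T$ has $T\sim T_1\sim -A$ by Proposition~\ref{geom_basic}. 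Proposition~\ref{geometric_fixed} then produces $B\in\SD$ with $B=\Phi(B)$, that is $x+B+A\circ(x+B)=x$, equivalently $(x+A)\circ(x+B)=x$; and $B\in\SD$ gives $\supp B\subseteq\Gsmall_N\setminus\G_{N-1}$. Everything apart from the height-preservation of $\supp\Phi(B)$ is a direct transcription of the $N=0$ argument with the general witnessed Taylor theorem in place of the $\G_0$ computations.
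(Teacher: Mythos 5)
Your proposal follows essentially the same route as the paper: the same operator $\Phi(B)=-A\circ(x+B)$, the same combination of Corollary~\ref{GN_compo}, Proposition~\ref{gencomp}, Theorem~\ref{spwittaylor1}, and the fixed point theorem Proposition~\ref{geometric_fixed}. The one step you single out as delicate --- that $\supp(A\circ(x+B))$ stays inside $\Gsmall_N\setminus\G_{N-1}$ --- is not re-proved in the paper either; it is quoted from \cite[\EcomponN]{edgar}, so you should simply cite that result rather than attempt the ``downward induction,'' whose sketch as written (``everything contributes negatively, no cancellation can collapse the height'') is too loose to stand on its own. One small correction: since $\Phi(B)$ is generated and witnessed by the addendum $\bb$ and not by $\bmu$, the domain must be built from $\fa\bb^*$ --- the paper takes $\BB=\SET{\g\in\G_N}{\g\fsteq^\bb\fa}$ --- not from $\fa\bmu^*$; with your $\BB$ the inclusion $\supp\Phi(B)\subseteq\BB$ that you assert does not follow, since $\fa\bb^*\supseteq\fa\bmu^*$ is the wrong direction.
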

\begin{proof}
Write $\fa = \mag A \in \Gsmall_N \setminus \G_{N-1}$.
Let $\bmu \subset \Gsmall_N$ be a ratio set
that generates $A$, witnesses $A$, and witnesses $x+A$.
Now $\tsupp \bmu \subset \G_{N-1}$ so $(\tsupp \bmu)\cdot\fa \fst 1$.
Let $\bb$ be a Taylor addendum for $\bmu$ such that
$\bb^* \supseteq \bmu$ and
$(\tsupp \bmu)\cdot\fa \fst^\bb 1$.
Let $\BB = \SET{\g \in \G_N}{\g \fsteq^\bb \fa}$ and
$\SD = \SET{B \in \TWG\bb\bb}{\supp B \subseteq \BB, B \sim -\fa}$.
Define $\Phi$ by
$$
	\Phi(B) := -A\circ(x+B) .
$$

I claim $\Phi$ maps $\SD$ into itself.  Let $B \in \SD$.
Then by Corollary~\ref{GN_compo} $\bb$ is an
$(x+B)$-composition addendum for $\bmu$.
Since $\bmu$ generates and witnesses $A$, it follows
that $\bb$ generates and witnesses $A\circ(x+B)$.
By Special Taylor~\ref{spwittaylor1} we have
$\bb$ witnesses $A\circ(x+B) - A$ and
$A\circ(x+B) - A \fst^\bb B \sim -\fa$, so
$\bb$ witnesses $A\circ(x+B)$ and $A\circ(x+B) \sim A \sim \fa$.
Thus $\Phi(B) \sim -\fa$.  Also $\supp A\circ(x+B) \subseteq \Gsmall_N$
by \cite[\EcomponN]{edgar}, so $\Phi(B) \in \SD$.
Therefore $\Phi$ maps $\SD$ into itself.

Note $T_0 := -A \in \SD$, $\Phi(T_0) \in \SD$,
$\bb$ witnesses $T_0$,
and---as just seen---$\bb$ witnesses $\Phi(T_0)-T_0$.

If $\bb$ witnesses $B \in \SD$, then $\bb$ witnesses $\Phi(B)$.

If $T_j \in \SD$ and $T_j$ converges geometrically to $T$,
then $T \in \SD$ by Proposition~\ref{geom_basic}.

Next let $B_1, B_2 \in \SD$ and assume $\bb$ witnesses $B_1-B_2$.
Then by Proposition~\ref{spwittaylor1} as above, we have:
$\bb$ witnesses $A\circ(x+B_1)-A\circ(x+B_2)$ and
$A\circ(x+B_1)-A\circ(x+B_2) \fst^\bb B_1-B_2$.  That is,
$\bb$ witnesses $\Phi(B_1)-\Phi(B_2)$ and
$\Phi(B_1)-\Phi(B_2) \fst^\bb B_1-B_2$.

We may now apply the fixed point theorem
Proposition~\ref{geometric_fixed} to conclude
there is $B \in \SD$ such that $B = \Phi(B)$.
That is:
$B = -A\circ(x+B)$ or
$x+B=x-A\circ(x+B)$ or
$x+B+A\circ(x+B) = x$ or
$(x+A)\circ(x+B) = x$.
\end{proof}

\begin{pr}\label{inv_lfx}
Let $T \in \T_\bullet$.  Assume $T \sim x$.
Then there exists $S \in \T_\bullet$ with 
$S \sim x$ and $T\circ S = x$.
\end{pr}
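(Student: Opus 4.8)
The plan is to induct on the exponential height $N$ of $T$, equivalently of $A:=T-x$ (note $A\in\T_\bullet$, and $A\fst x$ since $T\sim x$; if $A=0$ take $S=x$). For $N=0$ we have $A\in\T_0$, $A\fst x$, and Proposition~\ref{inv_A0} supplies $B\in\T_0$ with $B\fst x$ and $(x+A)\circ(x+B)=x$, so $S:=x+B$ works.

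Assume $N\ge 1$. Split $A=A^\sharp+A^\flat$, where $A^\sharp$ collects the monomials of $A$ of height exactly $N$ and $A^\flat$ collects the rest (of height $\le N-1$). A log-free monomial $\g$ with $\g\fst x$ and height $\ge 1$ is automatically small (its exponent is large and negative), so $\supp A^\sharp\subseteq\Gsmall_N\setminus\G_{N-1}$, while $\supp A^\flat\subseteq\G_{N-1}$ and $A^\flat\fst x$; since the height of $A$ equals $N$, $A^\sharp\ne 0$, and we set $\fa:=\mag A^\sharp$, a small monomial of height $N$. (If $A^\flat=0$ this is exactly Proposition~\ref{inv_AN}; in general we adapt its fixed-point argument to produce only a partial inverse.) Choose a log-free ratio set $\bmu$ generating and witnessing each of $A,A^\sharp,A^\flat$ and witnessing $x+A$, arranged so that $A\fst^\bmu x$ and $A^\flat\fst^\bmu x$, and a Taylor addendum $\bb$ for $\bmu$ with $\bb^*\supseteq\bmu$ and $(\tsupp\bmu)\cdot\fa\fst^\bb 1$; this last requirement is attainable because $\tsupp\bmu\subseteq\G_{N-1}$ while $\fa$ has height $N$, so $\g\fa$ is small for each $\g\in\tsupp\bmu$ and only finitely many ratios need be added. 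Put $\BB:=\SET{\g\in\G_N}{\g\fsteq^\bb\fa}$, $\SD:=\SET{B\in\TWG\bb\bb}{\supp B\subseteq\BB,\ \mag B=\fa}$, and
$$
	\Phi(B):=A^\flat-A\circ(x+B)=-A^\sharp\circ(x+B)-\bigl(A^\flat\circ(x+B)-A^\flat\bigr).
$$

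Arguing as in Proposition~\ref{inv_AN}, and using Corollary~\ref{GN_compo} (so that $\bb$ is an $(x+B)$-composition addendum for $\bmu$ whenever $B\in\SD$), Proposition~\ref{gencomp}, and Special Witnessed Taylor~\ref{spwittaylor1}: the composite $A^\sharp\circ(x+B)\sim A^\sharp$ is small of height $\le N$, witnessed by $\bb$, with magnitude $\fa$; and $A^\flat\circ(x+B)-A^\flat\sim(A^\flat)'\cdot B$ is small of height $\le N$, witnessed by $\bb$, with magnitude $\mag((A^\flat)')\,\fa\fst\fa$ (since $A^\flat\fst^\bmu x$ forces $(A^\flat)'\fst^\bb 1$ by Proposition~\ref{derivcompare}). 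Hence $\Phi$ maps $\SD$ into $\SD$. Moreover $T_0:=-A^\sharp\in\SD$ with $\bb$ witnessing $\Phi(T_0)-T_0=-\bigl(A\circ(x-A^\sharp)-A\bigr)$; $\SD$ is closed under $\bb$-geometric limits by Proposition~\ref{geom_basic}; and whenever $B_1,B_2\in\SD$ and $\bb$ witnesses $B_1-B_2$, Special Witnessed Taylor~\ref{spwittaylor1}(b),(d) with $T=A$ shows that $\bb$ witnesses $\Phi(B_1)-\Phi(B_2)=-\bigl(A\circ(x+B_1)-A\circ(x+B_2)\bigr)$ and that $B_1-B_2\fgt^\bb\Phi(B_1)-\Phi(B_2)$. (As in the proof of Theorem~\ref{sptaylor1}, constant terms of $A$ or $A^\flat$ may be discarded before invoking~\ref{spwittaylor1}, since they do not affect these differences.) Proposition~\ref{geometric_fixed} then yields $B\in\SD$ with $B=\Phi(B)$, that is $B+A\circ(x+B)=A^\flat$, that is
$$
	(x+A)\circ(x+B)=(x+B)+A\circ(x+B)=x+A^\flat .
$$
Now $x+A^\flat\in\T_\bullet$ has height $\le N-1$ and satisfies $x+A^\flat\sim x$, so by the inductive hypothesis there is $S'\in\T_\bullet$, $S'\sim x$, with $(x+A^\flat)\circ S'=x$. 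By associativity of composition, $S:=(x+B)\circ S'$ satisfies $T\circ S=(x+A)\circ(x+B)\circ S'=x$; and $S\sim x$ (a composite of transseries $\sim x$), while $S$ is log-free because $B,S'$ are and $S'\in\LP$ (cf.\ \cite[\EcomponN]{edgar}). This closes the induction.

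The real work is the fixed-point step: verifying $\Phi(\SD)\subseteq\SD$ together with all the ``$\bb$ witnesses / generates'' bookkeeping — i.e.\ that $A^\sharp\circ(x+B)$ and $A^\flat\circ(x+B)$ stay small of height $\le N$ and are controlled by $\bb$ — and, crucially, securing $(\tsupp\bmu)\cdot\fa\fst^\bb 1$, which is precisely what makes Special Witnessed Taylor applicable here; this is where splitting $A$ by height pays off, since it forces $\fa$ to dominate $\tsupp\bmu$. The remaining novelty over Proposition~\ref{inv_AN} is purely organizational: the fixed point only removes the height-$N$ part of $A$, and one finishes by recursion on the strictly smaller height of $x+A^\flat$.
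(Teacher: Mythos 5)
Your proof is correct, but it runs the induction in the opposite order from the paper, and the comparison is worth recording. The paper writes $T = x + A_0 + A_1$ with the same height split, but \emph{first} inverts the low part $x+A_0$ by the inductive hypothesis, then conjugates: $T\circ(x+B_0) = x + C$ with $C = A_1\circ(x+B_0)$ still supported in $\Gsmall_N\setminus\G_{N-1}$ (this needs \cite[\EcomponN]{edgar}), so that Proposition~\ref{inv_AN} applies \emph{verbatim} to $x+C$; the inverse is then $(x+B_0)\circ(x+D)$. You instead kill the height-$N$ part first, via the modified fixed-point map $\Phi(B)=A^\flat - A\circ(x+B)$, obtaining $(x+A)\circ(x+B)=x+A^\flat$, and only then recurse on the strictly lower height of $x+A^\flat$. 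The trade-off: the paper's order lets it cite Proposition~\ref{inv_AN} as a black box at the price of the conjugation step and the support claim for $C$; your order avoids conjugation but cannot reuse~\ref{inv_AN} directly, so the fixed-point bookkeeping must be redone for the mixed perturbation $A=A^\sharp+A^\flat$. The one place where your sketch does real new work is the verification that $\Phi(\SD)\subseteq\SD$: since witnessing is not closed under sums, you must (and essentially do) check that $-A^\sharp\circ(x+B)$ has magnitude $\fa$ with $\bb$-witnessed support and that $A^\flat\circ(x+B)-A^\flat \sim (A^\flat)'\cdot B$ is $\fst^\bb \fa$, which rests on $(A^\flat)'\fst^\bb 1$ via Proposition~\ref{derivcompare}; for that you should state explicitly that $\bb$ contains a derivative addendum for $\bmu$ and that $x\in\T^\ebmu$ (e.g.\ $x^{-1}\in\bmu$). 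With those requirements added to the construction of $\bmu$ and $\bb$, the argument goes through at the same level of rigor as Proposition~\ref{inv_AN}.
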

\begin{proof}
Let $N \in \N$ be minimum so that $T \in \T_N$.  The
proof is by induction on $N$.  The case $N=0$ is
Proposition~\ref{inv_A0}.  Now assume $N \ge 1$
and the result is known for smaller values.

Now $T = x+A_0+A_1$, where $\supp A_0 \subset \G_{N-1}$,
$\supp A_1 \subset \Gsmall_N\setminus\G_{N-1}$, $A_0 \fst x$.
The induction hypothesis may be applied
to $x+A_0$, so there is $B_0$ with $\supp B_0 \subseteq \G_{N-1}$,
$B_0 \fst x$, and $(x+A_0)\circ(x+B_0)=x$.
Therefore $x+B_0+A_0\circ(x+B_0)=x$ so
$B_0+A_0\circ(x+B_0)=0$.

Write $C = A_1\circ(x+B_0)$ so that
$\supp C \subset \Gsmall_{N}\setminus\G_{N-1}$
by \cite[\EcomponN]{edgar}.
By Proposition~\ref{inv_AN} there is $D$ with
$\supp D \subset \Gsmall_{N}\setminus\G_{N-1}$
and $(x+C)\circ(x+D) = x$.
Let $E = D+B_0\circ(x+D)$
so that $\supp E \subset \G_N$ by \cite[\EcomponN]{edgar},
$E \fst x$,
and $x+E = (x+B_0)\circ(x+D)$.
Let $S = x + E$.
\begin{align*}
	x &= (x+C)\circ(x+D)=\big(x+0+A_1\circ(x+B_0)\big)\circ(x+D)
	\\ &=
	\big(x+B_0+A_0\circ(x+B_0)+A_1\circ(x+B_0)\big)\circ(x+D)
	\\ &=
	(x+A_0+A_1)\circ(x+B_0)\circ(x+D) = T \circ S .
\end{align*}
with $S = x+E \sim x$.
\end{proof}

\begin{pr}\label{inv_x}
Let $T \in \T$.  Assume $T \sim x$.
Then there exists $S \in \T$ with 
$S \sim x$ and $T\circ S = x$.
\end{pr}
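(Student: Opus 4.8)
The plan is to reduce the statement to the log-free case, Proposition~\ref{inv_lfx}, by peeling off logarithms. First, $T\sim x$ and $x>0$ give $T>0$ and $T\fgt 1$, so $T\in\LP$. Let $M$ be the logarithmic depth of $T$ and put $R:=T\circ\exp_M$. Since $\LP$ is a group under $\circ$, $R\in\LP$; by the identification $\G_{n,m}\subseteq\G_{n+1,m+1}$ (a depth-$M$ monomial is $\widehat{\g}\circ\log_M$ with $\widehat{\g}$ log-free), substitution $x\mapsto\exp_M$ carries each monomial of $\supp T$ to a log-free one, so $R\in\T_\bullet$; and, since composition on the right preserves $\sim$, $R=T\circ\exp_M\sim x\circ\exp_M=\exp_M$. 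Also $T=R\circ\log_M$.

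Next I would check that $U:=\log_M\circ R$ is log-free with $U\sim x$, by induction on $k$ ($0\le k\le M$): $W_k:=\log_k\circ R$ is log-free with $W_k\sim\exp_{M-k}$, the case $k=0$ being the previous paragraph. For the inductive step, write $W_k=\exp_{M-k}\cdot(1+\epsilon_k)$ with $\epsilon_k$ small and log-free; since $M-k\ge 1$,
$$
	W_{k+1}=\log W_k=\exp_{M-k-1}+\log(1+\epsilon_k)
	=\exp_{M-k-1}+\sum_{j=1}^\infty\frac{(-1)^{j+1}}{j}\,\epsilon_k^{\,j},
$$
and the series is a grid-based sum of powers of the log-free small $\epsilon_k$, hence small and log-free, while $\exp_{M-k-1}$ is log-free and large, so $W_{k+1}$ is log-free with $W_{k+1}\sim\exp_{M-k-1}$. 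Taking $k=M$ gives $U$ log-free with $U\sim x$.

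Then I would apply Proposition~\ref{inv_lfx} to $U$, obtaining $V\in\T_\bullet$ with $V\sim x$ and $U\circ V=x$; since $U=\log_M\circ R$, composing on the left with $\exp_M$ gives $R\circ V=\exp_M$. Setting $S:=\exp_M\circ V\circ\log_M$, which lies in $\LP$ as a composition of members of $\LP$, and using $\log_M\circ\exp_M=x$ and associativity,
$$
	T\circ S=R\circ\log_M\circ\exp_M\circ V\circ\log_M=R\circ V\circ\log_M=\exp_M\circ\log_M=x .
$$
That $S\sim x$ then comes for free: with $T=x+A$, $A\fst x$, the relation $T\circ S=x$ reads $S+A\circ S=x$, and $S\in\LP$, $A\fst x$ force $A\circ S\fst x\circ S=S$, so $x/S\to 1$.

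The one genuinely substantial ingredient is the log-free inverse theorem, Proposition~\ref{inv_lfx}, which I take as already proved. Granting it, the step I expect to need the most care is the log-free reduction above: that $R=T\circ\exp_M$ really is log-free, and that iterating $\log$ on $R$ stays within $\T_\bullet$. This rests on the elementary fact that $\log(1+\epsilon)$ is log-free when $\epsilon$ is log-free and small, together with the structure theory relating depth-$M$ transseries to log-free transseries through composition with $\exp_M$ and $\log_M$.
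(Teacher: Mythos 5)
Your proposal is correct and follows essentially the same route as the paper: conjugate by $\exp_M$ to reduce to the log-free case (your $U=\log_M\circ R$ is exactly the paper's $T_1=\log_M\circ T\circ\exp_M$), apply Proposition~\ref{inv_lfx}, and conjugate back via $S=\exp_M\circ S_1\circ\log_M$. The extra work you do --- the induction showing $\log_k\circ R$ stays log-free, and the closing verification that $S\sim x$ --- is detail the paper leaves implicit, but the argument is the same.
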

\begin{proof}
If $T$ is log-free, this follows from Proposition~\ref{inv_lfx}.
If $T \in \T_{\bullet,M}$, then
$T_1 = \log_M \circ T \circ \exp_M \in \T_\bullet$ and still
$T_1 \sim x$.  So there is $S_1$ with $T_1 \circ S_1 = x$.
Then $S = \exp_M \circ S_1 \circ \log_M$
satisfies $T \circ S = x$.
\end{proof}

\begin{pr}\label{inv_leaf}
Let $M \in \Z$.  Let $T \in \T$ with $T \sim \l_M$.
Then there exists $S \in \LP$ with $T\circ S = x$.
\end{pr}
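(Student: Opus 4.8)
The plan is to reduce the statement to the already-established case $T \sim x$ (Proposition~\ref{inv_x}) by a single right-composition. The point is that $\l_M$ is itself a transmonomial whose compositional inverse is $\exp_M$: for $M \ge 0$ one has $\l_M \circ \exp_M = \log_M \circ \exp_M = x$, and for $M < 0$ we read $\l_M = \exp_{|M|}$ and $\exp_M = \log_{|M|}$, so again $\l_M \circ \exp_M = x$. In every case $\exp_M \in \LP$: for $M<0$ note $\log_{|M|} x > 0$ and $\log_{|M|} x \fgt 1$, and for $M=0$ it is just the identity $x$.

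First I would show that $T_1 := T \circ \exp_M$ satisfies $T_1 \sim x$. Since $T \sim \l_M$ and $\l_M$ is a monomial with coefficient $1$, write $T = \l_M + R$ with $R \fst \l_M$. Then $T_1 = \l_M \circ \exp_M + R \circ \exp_M = x + R\circ\exp_M$, so it suffices to see $R \circ \exp_M \fst x$. Pick (Proposition~\ref{pwitness}) a ratio set $\bmu$ with $R \fst^\bmu \l_M$, and let $\bb$ be an $\exp_M$-composition addendum for $\bmu$. For each $\m \in \supp R$ we have $\m \fst^\bmu \l_M$, hence $\m \circ \exp_M \fst^\bb \l_M \circ \exp_M = x$ by Proposition~\ref{compomono}(iv); since $x$ is a single monomial this says every monomial of $\m\circ\exp_M$ is $\fst x$, and as this holds for all $\m \in \supp R$ we get $R\circ\exp_M \fst x$, so $T_1 \sim x$. (Alternatively, invoke composition continuity \cite[\Ecompcontin]{edgar}.) Also $T \sim \l_M \fgt 1$ and $T>0$ force $T \in \LP$, hence $T_1 \in \LP$.

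Next I would apply Proposition~\ref{inv_x} to $T_1$: there is $S_1 \in \T$ with $S_1 \sim x$ and $T_1 \circ S_1 = x$; in particular $S_1 \in \LP$. Then set $S := \exp_M \circ S_1$. As $\LP$ is a group under composition and $\exp_M, S_1 \in \LP$, we have $S \in \LP$, and by associativity $T \circ S = (T\circ\exp_M)\circ S_1 = T_1 \circ S_1 = x$, as required.

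The only nontrivial step is the claim $T \circ \exp_M \sim x$, i.e.\ that right-composing by a fixed element of $\LP$ preserves the leading term; everything else is bookkeeping with the group structure of $\LP$. One should also keep in mind that $M$ ranges over all of $\Z$, but the positive, zero, and negative cases are treated identically once $\exp_M$ is identified as the compositional inverse of $\l_M$.
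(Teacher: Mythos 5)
Your proof is correct and follows exactly the paper's route: conjugate by $\exp_M$ to reduce to the case $T_1 = T\circ\exp_M \sim x$, apply Proposition~\ref{inv_x}, and set $S = \exp_M\circ S_1$. The only difference is that you supply the justification for $T_1 \sim x$ (via a composition addendum), which the paper asserts without proof.
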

\begin{proof}
Let $T_1 = T \circ \exp_M$, so that $T_1 \sim x$.
Then there is $S_1$ with $T_1 \circ S_1 = x$.
So $S = \exp_M \circ S_1$ satisfies
$T \circ S = x$.
\end{proof}

\begin{proof}[Proof of Theorem~\ref{inv}]
Let $T \in \LP$.
There exist $m,p$ so that $\log_m \circ T \sim \l_p$
(\cite[\Cexponentiality]{edgarc}).
But there exists $S_1$ with
$(\log_m \circ T)\circ S_1 = x$.  Then
$S = S_1 \circ \exp_m$ satisfies
$T \circ S = x$.
\end{proof}

\begin{re}
As is well-known: if right inverses all exist, then they are
full inverses.  Review of the proof:
Suppose $T \circ S = x$ as found.  Start with $S$ and
get a right-inverse $T_1$ so $S \circ T_1 = x$.
Then
\begin{equation*}
	T = T \circ x
	= T \circ (S \circ T_1)
	= (T \circ S) \circ T_1
	= x \circ T_1 = T_1 .
\end{equation*}
\end{re}

\begin{no}
Write $T^{[-1]}$ for the compositional inverse of $T$.
\end{no}

\subsection*{Taylor's Theorem Again}
The general order one Taylor's Theorem is deduced from
the case $\sim x$ using a compositional inverse.

\begin{thm}[Taylor Order 1]\label{taylor1}
Let $T, U_1, U_2 \in \T, S \in \LP$.
Assume $((\tsupp T) \circ S)\cdot U_1 \fst 1$, and
$((\tsupp T) \circ S)\cdot U_2 \fst 1$.
Then $S+U_1, S+U_2 \in \LP$ and
$T(S+U_1)-T(S+U_2) \sim T'(S) \cdot(U_1-U_2)$.
\end{thm}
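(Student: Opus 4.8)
The plan is to reduce to the special case $S=x$, i.e.\ to Theorem~\ref{sptaylor1}, by composing everything on the right with the compositional inverse of $S$. As in the proof of Theorem~\ref{sptaylor1} we may assume $T\not\fe 1$ (subtracting the constant term of $T$ changes neither the hypotheses nor the conclusion), so that $x^{-1}\in\tsupp T$.

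First I would introduce $R:=T{}$\dots no: introduce $R:=S^{[-1]}\in\LP$, which exists by Theorem~\ref{inv}, so that $S\circ R=R\circ S=x$, and put $W_i:=U_i\circ R$ for $i=1,2$. The identities I would rely on, all consequences of associativity and of the fact that right composition by a fixed element of $\LP$ distributes over sums, differences and products, are
$$
	(S+U_i)\circ R = x+W_i,\qquad
	T(S+U_i)=\big(T(x+W_i)\big)\circ S,\qquad
	W_i\circ S = U_i .
$$
Here the first holds since $S\circ R=x$; the second by composing $\big(T(S+U_i)\big)\circ R=T(x+W_i)$ on the right with $S$; the third since $R\circ S=x$.

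Next I would transport the hypotheses. Since right composition by $R\in\LP$ preserves smallness, composing $\big((\tsupp T)\circ S\big)\cdot U_i\fst 1$ on the right with $R$ and using $(\m\circ S)\circ R=\m$ gives $(\tsupp T)\cdot W_i\fst 1$ for $i=1,2$; as $x^{-1}\in\tsupp T$ this in particular gives $W_i\fst x$, and from $x^{-1}\circ S=1/S$ the original hypothesis already gives $U_i\fst S$, so $S+U_i\in\LP$. Now Theorem~\ref{sptaylor1} applied to $T,W_1,W_2$ yields $x+W_1,x+W_2\in\LP$ and
$$
	T(x+W_1)-T(x+W_2)\sim T'(x)\cdot(W_1-W_2).
$$
Since right composition by $S\in\LP$ preserves the relation $\sim$, applying $(\,\cdot\,)\circ S$ to both sides and using the identities above together with $T'(x)\circ S=T'(S)$ and $W_i\circ S=U_i$ gives
$$
	T(S+U_1)-T(S+U_2)=\big(T(x+W_1)-T(x+W_2)\big)\circ S
	\sim\big(T'(x)\cdot(W_1-W_2)\big)\circ S = T'(S)\cdot(U_1-U_2),
$$
which is the assertion.

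I expect no genuine obstacle here: the entire content sits in the special case Theorem~\ref{sptaylor1} and in the existence of compositional inverses, Theorem~\ref{inv}; the present statement is only the transport of Theorem~\ref{sptaylor1} along $R=S^{[-1]}$. The only things to be careful about are the routine facts that, for fixed $S\in\LP$, the map $A\mapsto A\circ S$ is a ring homomorphism of $\T$ and preserves $\fsteq$, smallness, and $\sim$; these are standard (see the composition material around \cite[\Ecompexist]{edgar}--\cite[\Ecompcontin]{edgar}) and are used freely in the argument above.
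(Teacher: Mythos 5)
Your proposal is correct and follows essentially the same route as the paper: the paper also introduces $\widetilde U_i$ with $\widetilde U_i\circ S=U_i$ (your $W_i=U_i\circ S^{[-1]}$), transports the hypotheses via $(\m\circ S)\circ S^{[-1]}=\m$, applies Theorem~\ref{sptaylor1}, and composes back on the right with $S$. Your write-up is merely more explicit about the identities being used.
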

\begin{proof}
Because $S$ has an inverse, there exist
$\widetilde{U}_1,\widetilde{U}_2$ such that
$\widetilde{U}_1\circ S = U_1$ and $\widetilde{U}_2 \circ S = U_2$.
Then
\begin{align*}
	((\tsupp T) \circ S)\cdot U_1 \fst 1
	&\Longleftrightarrow
	((\tsupp T) \circ S)\cdot (\widetilde{U}_1\circ S) \fst 1
	\\ 	&\Longleftrightarrow
	(\tsupp T)\cdot \widetilde{U}_1 \fst 1 .
\end{align*}
Similarly $(\tsupp T)\cdot \widetilde{U}_2 \fst 1$.
Therefore by Theorem~\ref{sptaylor1},
$x+\widetilde{U}_1,x+\widetilde{U}_2 \in \LP$ and
$T(x+\widetilde{U}_1)-T(x+\widetilde{U}_2)
\sim T'(x)\cdot(\widetilde{U}_1-\widetilde{U}_2)$.
Compose on the right with $S$ to get
$S+U_1, S+U_2 \in \LP$ and 
$T(S+U_1)-T(S+U_2) \sim T(S)\cdot(U_1-U_2)$.
\end{proof}

\begin{qu}The witnessed version should be something like this:

\textit{Let $\bmu$ be a ratio set, and let $S \in \LP$.
Then there is a ratio set $\ba$ such that:
for all ratio sets $\bb$ with $\bb^* \supseteq \ba$,
for all $T \in \TWG\ebmu\ebmu$ with $T \not\fe 1$,
and for all $U_1, U_2 \in \T^\bb$ with
$U_1-U_2 \in \TW\bb$, and
$$
	((\tsupp \bmu) \circ S)\cdot U_1 \fst^\bb 1,\qquad
	((\tsupp \bmu) \circ S)\cdot U_2 \fst^\bb 1 :
$$
}

{\rm(a)}~$T(S+U_1)-T(S+U_2) \sim T'(S)\cdot(U_1-U_2)$.

{\rm(b)}~\textit{$\bb$ witnesses $T(S+U_1)-T(S+U_2)$.}

\textit{{\rm(c)}~$\bb$ generates $T(S+U_1)-T(S+U_2)$.}

\textit{{\rm(d)}~If also $T \fst^\bmu x$ and $U_1 \ne U_2$, then
$$
	\frac{T(S+U_1)-T(S+U_2)}{U_1-U_2} \fst^\bb 1 .
$$
}
\noindent
But deducing this from the special case
in Theorem~\ref{spwittaylor1} would require a positive
answer to Question~\ref{invcompo}.  If that doesn't
work out, then perhaps adapting the proof above
(\ref{part_b} through \ref{WTG}) would be required.
\end{qu}

\section{Mean Value Theorem}
Consider \cite[\Cderivcompare]{edgarc}:
\textit{Let $A, B \in \T$, $S_1,S_2 \in \P$, $A' \fst B'$,
$S_1 < S_2$.  Then}
\begin{equation*}
	A \circ S_2 - A \circ S_1 \fst 
	B \circ S_2 - B \circ S_1 .
\end{equation*}
Let us consider witnessed versions of it.

\subsection*{Fixed Upper Term}
\begin{pr}\label{fuppermono}
Let $\fb \in \G$, $\fb \ne 1$, $S_1, S_2 \in \P$, $S_1 < S_2$ be given.
Let $\bmu = \{\mu_1,\cdots,\mu_n\}$ be a ratio set.
Then there is a ratio set $\ba$ such that:
for every $\fa \in \G$, if $\bmu$ witnesses $\fa \fst \fb$,
then $\ba$ witnesses $\fa(S_2)-\fa(S_1) \fst \fb(S_2)-\fb(S_1)$.
\end{pr}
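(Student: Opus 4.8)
The plan is to reduce the claim to the non-witnessed comparison \cite[\Cderivcompare]{edgarc} together with the Subgrid Witness Theorem, rather than trying to follow witnesses through the composition: the individual compositions $\fa\circ S_i$ admit no uniform witness, but, as the argument shows, they all land in one common grid, and that is enough.

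First I would collect the relevant monomials into a single fixed grid. The hypothesis that $\bmu$ witnesses $\fa\fst\fb$ says $\fa=\fb\bmu^\bk$ for some $\bk>\0$, so $\fa\in\fb\bmu^+$; and by Proposition~\ref{griddef} there are a finite $\bnu\subset\Gsmall$ and $\bm$ with $\fb\bmu^*\subseteq\GRID^{\bnu,\bm}$ (e.g.\ $\bnu=\bmu\cup\{\fb^{\pm1}\}$, the exponent chosen so the adjoined monomial is small). Fix $S_i$-composition addenda $\bd_i$ for $\bnu$ (Definition~\ref{DCA}), $i=1,2$. By Remark~\ref{composupp}, every $\fa\in\fb\bmu^+\subseteq\GRID^{\bnu,\bm}$ has $\supp(\fa\circ S_i)\subseteq\GRID^{\bd_i,\widetilde{\bm}_i}$ with $\widetilde{\bm}_i$ depending only on $\bnu,\bm,S_i$; hence, since two grids always lie in a common third grid, there is one fixed grid containing $\supp\big(\fa(S_2)-\fa(S_1)\big)$ for all such $\fa$.

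Second, since $\fa\fst\fb$ are monomials, \cite[\Ederivpropertiesa]{edgar} gives $\fa'\fst\fb'$, so \cite[\Cderivcompare]{edgarc} (using $S_1<S_2$) yields $\fa(S_2)-\fa(S_1)\fst\fb(S_2)-\fb(S_1)$ for every such $\fa$. As $\fb\ne1$ it represents a strictly monotone germ, so $\fb(S_1)\ne\fb(S_2)$ and $\g_\fb:=\mag\big(\fb(S_2)-\fb(S_1)\big)$ is a genuine monomial; the far-less relation above then says that every $\g\in\supp\big(\fa(S_2)-\fa(S_1)\big)$ satisfies $\g\fst\g_\fb$, \emph{uniformly} in $\fa$. (If $\fa=1$, which can happen when $\fb\in\GRID^\bmu$, the difference is $0$; if $\bmu=\emptyset$ there are no such $\fa$, so the statement is vacuous.) Now put
$$
  \AA \;:=\; \{\g_\fb\}\;\cup\bigcup_{\fa\in\fb\bmu^+}\supp\big(\fa(S_2)-\fa(S_1)\big).
$$
By the first paragraph $\AA$ lies in a fixed grid (enlarged, if need be, to contain $\supp(\fb(S_2)-\fb(S_1))$), hence is a subgrid, and by the second paragraph $\max\AA=\g_\fb$. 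The Subgrid Witness Theorem~\ref{sgwt} therefore provides a ratio set $\ba$ with $\AA\subseteq\g_\fb\,\ba^*$, and this $\ba$ depends only on $\fb,\bmu,S_1,S_2$. For any $\fa\in\fb\bmu^+$ and $\g\in\supp(\fa(S_2)-\fa(S_1))\subseteq\AA$ we get $\g/\g_\fb\in\ba^*$ with $\g/\g_\fb$ small, hence $\g/\g_\fb\in\ba^*\cap\Gsmall=\ba^+$, i.e.\ $\g\in\g_\fb\,\ba^+\subseteq\supp(\fb(S_2)-\fb(S_1))\,\ba^+$; since $\g$ was arbitrary, $\fa(S_2)-\fa(S_1)\fst^\ba\fb(S_2)-\fb(S_1)$, which is the assertion.

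The only delicate point is the first paragraph --- that the supports $\supp(\fa(S_2)-\fa(S_1))$ do not spread out as $\bk$ varies but remain inside one grid; this is precisely what the composition-addendum construction (Remark~\ref{composupp}) delivers, and it is why the argument proceeds through a single common \emph{grid} rather than a single common \emph{witness}. The remainder is routine assembly using \cite[\Cderivcompare]{edgarc} and Theorem~\ref{sgwt}.
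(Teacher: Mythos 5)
Your proof is correct, and it takes a genuinely different route from the paper's. The paper factors $\fa = \fb\g_1\cdots\g_J$ with $\g_j \in \bmu$, telescopes $\fa(S_2)-\fa(S_1)$ into $J+1$ products, and checks that $\ba$ witnesses each product to be $\fst \fb(S_2)-\fb(S_1)$; the analytic content is the estimate $\fb(S_1)\big(\mu_i(S_2)-\mu_i(S_1)\big) \fst \fb(S_2)-\fb(S_1)$, obtained from \cite[\Cderivcompare]{edgarc} applied to $\mu_i$ and $\log\fb$ together with the elementary inequality $\log t < t-1$. The resulting $\ba$ is assembled from an explicit finite list of witnessing requirements (three per $\mu_i$), so the construction is effective and stays inside the witness calculus of Section~\ref{sec:begin}. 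You instead apply \cite[\Cderivcompare]{edgarc} directly to each pair $(\fa,\fb)$ to get the non-witnessed conclusion, use composition addenda and Remark~\ref{composupp} to confine all the supports $\supp\big(\fa(S_2)-\fa(S_1)\big)$ to one grid sitting strictly below the fixed monomial $\g_\fb$, and then extract a uniform witness a posteriori from Theorem~\ref{sgwt}. This is shorter and makes transparent why one $\ba$ serves infinitely many $\fa$, at the cost of being non-constructive about $\ba$ (it arises from the minimal elements of an infinite union of supports) and of importing the composition-addendum machinery, which the paper's argument avoids here. Your degenerate cases ($\fa=1$, $\bmu=\emptyset$) and the identity $\ba^*\cap\Gsmall=\ba^+$ (valid since $\ba^*=\ba^+\cup\{1\}$ and $\ba^+\subseteq\Gsmall$) are handled correctly; the one fact you use tacitly in common with the paper is that $\fb\ne 1$ forces $\fb(S_2)\ne\fb(S_1)$, so that $\g_\fb$ exists.
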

\begin{proof}
First, $\fb = e^B$ where $B$ is purely large and nonzero.  So
$B \fgt 1$.  Each $\mu_i \fst 1 \fst B$.  By \cite[\Cderivcompare]{edgarc},
for each $i$ we have
\begin{equation*}
	\mu_i(S_2) - \mu_i(S_1) \fst B(S_2) - B(S_1) .
\tag{1}
\end{equation*}
Next I claim
\begin{equation*}
	\fb(S_1)\big(\mu_i(S_2) - \mu_i(S_1)\big) \fst \fb(S_2)-\fb(S_1) .
\tag{2}
\end{equation*}
We take two cases.

\textit{Case 1.} $\fb(S_1) \fgt \fb(S_2)$.  Then
$\fb(S_2)-\fb(S_1) \sim \fb(S_1)$, $\mu_i(S_1) \fst 1$,
$\mu_i(S_2) \fst 1$, so we have
$$
	\fb(S_1)\big(\mu_i(S_2) - \mu_i(S_1)\big) \fst
	\fb(S_1) \sim \fb(S_2)-\fb(S_1) .
$$

\textit{Case 2.} $\fb(S_1) \fsteq \fb(S_2)$.  If
$B(S_2) > B(S_1)$ then (since $\exp$ is increasing)
$\fb(S_2) > \fb(S_1)$ and
\begin{align*}
	\fb(S_1)\big(B(S_2)-B(S_1)\big)
	&= \fb(S_1)\log\big(\fb(S_2)/\fb(S_1)\big)
	< \fb(S_1)\left(\frac{\fb(S_2)}{\fb(S_1)}-1\right)
	\\ &=\fb(S_2) - \fb(S_1) ,
\end{align*}
and both extremes are positive, so combining this with (1) we
get (2).
On the other hand, if $B(S_2) < B(S_1)$, then
\begin{align*}
	\fb(S_1)\big(B(S_1)-B(S_2)\big)
	&= \fb(S_1)\log\big(\fb(S_1)/\fb(S_2)\big)
	< \fb(S_1)\left(\frac{\fb(S_1)}{\fb(S_2)}-1\right)
	\\ & =\frac{\fb(S_1)}{\fb(S_2)}\big(\fb(S_1) - \fb(S_2)\big)
	\fsteq \fb(S_1) - \fb(S_2) ,
\end{align*}
and both extremes are positive, so combining this with (1) we
get (2).  This completes the proof of (2).

Now let the ratio set $\ba$ be such that:
for each $i$, $\ba$ witness $\mu_i(S_1) \fst 1$, 
$\mu_i(S_2) \fst 1$, and (2).  Such $\ba$ exists
because this is only a finite list of requirements.

Now let $\fa \in \G$ and let
$\bmu$ witness $\fa \fst \fb$.  We must show that
$\ba$ witnesses $\fa(S_2)-\fa(S_1) \fst \fb(S_2)-\fb(S_1)$.
Now $\fa = \fb\g_1\g_2\cdots\g_J$,
where $\g_j \in \bmu$ for all $j$ and $J \ge 1$.
Compute
{\allowdisplaybreaks
\begin{align*}
	\fa(S_2) - \fa(S_1)
	&= \fb(S_2)\prod_{j=1}^J \g_j(S_2) - \fb(S_1)\prod_{j=1}^J \g_j(S_2)
	\\ &= \big(\fb(S_2)-\fb(S_1)\big)\prod_1^J\g_j(S_2)
	\\ &\quad+
	\fb(S_1)\big(\g_1(S_2)-\g_1(S_1)\big)\prod_2^J\g_j(S_2)
	\\ &\quad+
	\fb(S_1)\g_1(S_1)\big(\g_2(S_2)-\g_2(S_1)\big)\prod_3^J\g_j(S_2)
	\\ &\quad+\dots
	\\ &\quad+
	\fb(S_1)\prod_1^{k-1}\g_j(S_1)\big(\g_k(S_2)-\g_k(S_1)\big)
	\prod_{k+1}^J\g_j(S_2)
	\\ &\quad+\dots
	\\ &\quad+
	\fb(S_1)\prod_1^{J-1}\g_j(S_1)\big(\g_J(S_2)-\g_J(S_1)\big) .
\end{align*}
}
Finally note that $\ba$ witnesses that each of these terms is
$\fst \fb(S_2)-\fb(S_1)$: Each term has one or more factors
$\g_j(S_1) \fst^\ba 1$ or $\g_j(S_2) \fst^\ba 1$,
and $\ba$ witnesses $1$, so we may apply
Proposition~\ref{prodwitnessmult} even if
$\ba$ does not witness $\fb(S_2)-\fb(S_1)$.
\end{proof}

\begin{co}
Let $B \in \T$, $B \not\fe 1$, $S_1, S_2 \in \P$, $S_1 < S_2$ be given.
Let $\bmu = \{\mu_1,\cdots,\mu_n\}$ be a ratio set.
Then there is a ratio set $\bnu$ such that:
for every $A \in \T$, if $\bmu$ witnesses both $B$ and $A \fst B$,
then $\bnu$ witnesses $A(S_2)-A(S_1) \fst B(S_2)-B(S_1)$.
\end{co}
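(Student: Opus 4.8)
The plan is to reduce the statement to the monomial case already settled in Proposition~\ref{fuppermono}. Set $\fb = \mag B$; since $B \not\fe 1$ we have $\fb \ne 1$, hence $\fb(S_1) \ne \fb(S_2)$ (as in Proposition~\ref{fuppermono}), so $\fb(S_2)-\fb(S_1) \ne 0$. First I would invoke Proposition~\ref{fuppermono} with the monomial $\fb$, the given pair $S_1 < S_2$, and the given ratio set $\bmu$, to obtain a ratio set $\ba$. Then I would let $\bc$ be a witness for the single transseries $\fb(S_2)-\fb(S_1)$ (a fixed object, depending only on $B,S_1,S_2$) and set $\bnu := \ba \cup \bc$. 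Thus $\bnu^* \supseteq \ba$ and $\bnu$ witnesses $\fb(S_2)-\fb(S_1)$; the claim is that $\bnu$ is the required ratio set, which plainly depends only on $B,S_1,S_2,\bmu$.

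Next I would fix $A \in \T$ with $\bmu$ witnessing both $B$ and $A \fst B$. Since $\bmu$ witnesses $B$ we have $\supp B \subseteq \fb\bmu^*$, and since $\bmu$ witnesses $A \fst B$ we have $\supp A \subseteq (\supp B)\bmu^+ \subseteq \fb\bmu^+$; hence $\bmu$ witnesses $\fa \fst \fb$ for every $\fa \in \supp A$. Writing $A = \sum_\fa a_\fa \fa$ and using that composition is linear in the coefficients, $\supp\big(A(S_2)-A(S_1)\big) \subseteq \bigcup_{\fa \in \supp A}\supp\big(\fa(S_2)-\fa(S_1)\big)$. Proposition~\ref{fuppermono} gives that $\ba$ witnesses $\fa(S_2)-\fa(S_1) \fst \fb(S_2)-\fb(S_1)$ for each $\fa \in \supp A$; taking the union over $\supp A$ and using $\bnu^* \supseteq \ba$ yields $A(S_2)-A(S_1) \fst^\bnu \fb(S_2)-\fb(S_1)$.

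The step needing the most care — and the main obstacle — is passing from the reference $\fb(S_2)-\fb(S_1)$ to the actual target $B(S_2)-B(S_1)$, since $B$ is an infinite series and cancellation could a priori spoil the magnitude. I would write $B = b\fb(1+V)$ with $b \ne 0$ and $V$ small; as $\bmu$ witnesses $B$, $V \fst^\bmu 1$, so $\fb V = \sum_\g v_\g(\fb\g)$ with each $\g \in \bmu^+$, and each monomial $\fb\g$ is witnessed by $\bmu$ to satisfy $\fb\g \fst \fb$. Applying Proposition~\ref{fuppermono} once more to each $\fb\g$ and summing over $\g$ gives $(\fb V)(S_2)-(\fb V)(S_1) \fst^\ba \fb(S_2)-\fb(S_1)$, in particular $(\fb V)(S_2)-(\fb V)(S_1) \fst \fb(S_2)-\fb(S_1)$. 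Since
\[ B(S_2)-B(S_1) = b\big(\fb(S_2)-\fb(S_1)\big) + b\big((\fb V)(S_2)-(\fb V)(S_1)\big), \]
it follows that $\tfrac1b\big(B(S_2)-B(S_1)\big) \sim \fb(S_2)-\fb(S_1)$; in particular the two sides have the same magnitude (so $B(S_2)-B(S_1)\ne 0$).

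Finally I would assemble the three facts: $A(S_2)-A(S_1) \fst^\bnu \fb(S_2)-\fb(S_1)$, $\bnu$ witnesses $\fb(S_2)-\fb(S_1)$, and $\fb(S_2)-\fb(S_1) \sim \tfrac1b\big(B(S_2)-B(S_1)\big)$. By Proposition~\ref{wit_LE} these give $A(S_2)-A(S_1) \fst^\bnu \tfrac1b\big(B(S_2)-B(S_1)\big)$, and since multiplying by the nonzero constant $\tfrac1b$ leaves the support unchanged, this is precisely $A(S_2)-A(S_1) \fst^\bnu B(S_2)-B(S_1)$, i.e.\ $\bnu$ witnesses $A(S_2)-A(S_1) \fst B(S_2)-B(S_1)$. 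Beyond the series bookkeeping in the third paragraph, nothing here is more than a routine application of Propositions~\ref{fuppermono} and~\ref{wit_LE} together with the fact that the witnessed relation $\fst^{\,\cdot}$ depends only on supports and is preserved when the ratio set is enlarged.
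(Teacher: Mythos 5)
Your proof is correct and follows essentially the same route as the paper: reduce to the monomial case via Proposition~\ref{fuppermono}, take $\bnu$ to be the union of the resulting $\ba$ with a witness for $\fb(S_2)-\fb(S_1)$ where $\fb=\mag B$, and then transfer the comparison from $\fb(S_2)-\fb(S_1)$ to $B(S_2)-B(S_1)$ using that these have the same magnitude. Your third paragraph merely makes explicit (again via Proposition~\ref{fuppermono} applied to the non-dominant terms of $B$) the equality $\mag\big(\fb(S_2)-\fb(S_1)\big)=\mag\big(B(S_2)-B(S_1)\big)$ that the paper asserts without comment, which is a reasonable bit of added care rather than a different argument.
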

\begin{proof}
Let $\fb = \mag B$, so $\fb \ne 1$.  Let $\ba$ be the ratio set
of Proposition~\ref{fuppermono}.  Let
$\bb$ witness $\fb(S_2)-\fb(S_1)$.  Let $\bnu = \ba \cup \bb$.

Let $A$ be such that $\bmu$
witnesses both $B$ and $A \fst B$.  Now if
$\g \in \supp(A(S_2)-A(S_1))$, then there is $\fa \in \supp A$
with $\g \in \supp(\fa(S_2)-\fa(S_1))$.  But then
there is $\fb_0 \in \supp(B)$ with
$\fa \fst^\ebmu \fb_0 \fsteq^\ebmu \fb$, so by
Proposition~\ref{fuppermono} there is
there is $\m \in \supp(\fb(S_2)-\fb(S_1))$ with
$\g \fst^\bnu \m$.  And
$\m \fsteq^\bnu \mag(\fb(S_2)-\fb(S_1)) = \mag(B(S_2)-B(S_1))$.
This shows that $A(S_2)-A(S_1) \fst^\bnu B(S_2)-B(S_1)$.
\end{proof}

\begin{re}\label{Wfuppermonoc}
The particular case $\fb = x$ appears in \cite[\Cmvti]{edgarc}.
The construction for $\bnu$ from $\bmu$ in that case:
Let $\bmu = \{\mu_1,\cdots,\mu_n\}$ and $S_1 < S_2$ be given.
For each $i$, let $\ba_i$ witness:
$$
	\mu_i(S_1) \fst 1, \qquad
	\mu_i(S_2) \fst 1, \qquad
	\mu_i(S_2) - \mu_i(S_1) \fst \log S_2-\log S_1 .
$$
Then $\bnu = \bigcup_{i=1}^n \ba_i$ satisfies:
if $A \in \T$ and $\bmu$ witnesses $A \fst x$, then $\bnu$
witnesses $A(S_2)-A(S_1) \fst S_2-S_1$.

Also, since $x$ is increasing, (2) suffices, so we
could replace
$$
	\mu_i(S_2) - \mu_i(S_1) \fst \log S_2-\log S_1
	\qquad\text{by}\qquad
	\mu_i(S_2)-\mu_i(S_1) \fst \frac{S_2}{S_1} - 1.
$$
\end{re}

\subsection*{General Upper Term}
\begin{thm}\label{MVT}
Let $\bmu \subset \Gsmall$ be a ratio set.  Let $S_1, S_2 \in \LP$
with $S_1 < S_2$.  Then there is a ratio set $\ba$ such that:

{\rm(a)}~If $\fa,\fb \in \GRID^\ebmu$, $\fa \fst^\ebmu \fb$,
and $\fb \ne 1$,
then $\fa(S_2)-\fa(S_1) \fst^\ba \fb(S_2)-\fb(S_1)$.

{\rm(b)}~If $\g \in \GRID^\ebmu$ and $\g \fst^\ebmu 1$,
then $\ba$ witnesses $\g(S_2)-\g(S_1)$.

{\rm(b$'$)}~If $\g \in \GRID^\ebmu$ and $\g \fgt^\ebmu 1$,
then $\ba$ witnesses $\g(S_2)-\g(S_1)$.

{\rm(c)}~If $B \in \TWG\ebmu\ebmu$, $\fb = \mag B$, and $\fb \ne 1$,
then $\fb(S_2)-\fb(S_1) \fe B(S_2)-B(S_1)$.

{\rm(d)}~If $B \in \TWG\ebmu\ebmu$ and $B \fst^\ebmu 1$,
then $\ba$ witnesses $B(S_2)-B(S_1)$.

{\rm(e)}~If $A, B \in \TWG\ebmu\ebmu$, $A \fst^\ebmu B \fst^\ebmu 1$, then
$A(S_2)-A(S_1) \fst^\ba B(S_2)-B(S_1)$.

{\rm(f)}~If $\sum A_j$ converges $\bmu$-geometrically and $A_1 \fst^\ebmu 1$,
then $\sum(A_j(S_2)-A_j(S_1))$ converges $\ba$-geometrically

{\rm(g)}~If the multiple series $\sum A_\bp$ converges $\bmu$-geometrically
and $A_\0 \fst^\ebmu 1$, then $\sum(A_\bp(S_2)-A_\bp(S_1))$
converges $\ba$-geometrically.

\end{thm}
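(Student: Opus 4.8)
I would first pin down $\ba$ once and for all. Write $\bmu=\{\mu_1,\dots,\mu_n\}$ with $\mu_l=e^{L_l}$, so each $L_l$ is purely large and negative (because $\mu_l\fst 1$), and set $D_l:=L_l(S_2)-L_l(S_1)$. Since a small transmonomial is a strictly decreasing function and $S_1<S_2$, we have $L_l(S_1)>L_l(S_2)$, so $D_l$ is purely large and negative and $e^{D_l}\in\Gsmall$. Let $\bb_1,\bb_2$ be $S_1$- and $S_2$-composition addenda for $\bmu$ (Definition~\ref{DCA}) and put
$$\ba:=\bmu\cup\bb_1\cup\bb_2\cup\{e^{D_1},\dots,e^{D_n}\}.$$
All of (a)--(g) will be proved from this single $\ba$, in the stated order, each part feeding the next. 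Throughout I use that $\bb_j$ (hence $\ba$) generates and witnesses $\m(S_j)$ for $\m\in\GRID^\ebmu$, that $\m\fst^\ebmu 1\Rightarrow\m(S_j)\fst^{\bb_j}1$ (Proposition~\ref{compomono}), and that composition with $S_j\in\LP$ carries $\bmu$-convergent series to $\bb_j$-convergent ones (Proposition~\ref{compconverge}); the routine fact that $L_l\circ S_j$ is purely large is also used.

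\textbf{Part (a).}
Given $\fa,\fb\in\GRID^\ebmu$ with $\fa\fst^\ebmu\fb$ and $\fb\ne 1$, write $\fa=\fb\,\mu_{i_1}\cdots\mu_{i_J}$ with $J\ge 1$ and expand $\fa(S_2)-\fa(S_1)$ by the telescoping identity from the proof of Proposition~\ref{fuppermono}. Each of the finitely many resulting terms has the shape $X\cdot W$ with $X\in\{\fb(S_2)-\fb(S_1),\,\fb(S_1)\}$ and $W$ a product of factors each of which is some $\mu_{i_j}(S_k)$ or some $\mu_{i_j}(S_2)-\mu_{i_j}(S_1)$; by Proposition~\ref{compomono}(iii) and Proposition~\ref{witless1} each such factor is $\fst^\ba 1$, so $W\fst^\ba 1$. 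Since $\fb\ne 1$ is strictly monotone, $\fb(S_1)\ne\fb(S_2)$, hence $\fb(S_1)\in\supp(\fb(S_2)-\fb(S_1))$; therefore $\supp(X W)\subseteq\supp(\fb(S_2)-\fb(S_1))\,\ba^+$ in both cases, i.e.\ each term is $\fst^\ba\fb(S_2)-\fb(S_1)$, and so is the sum. The point is that, because $\fb\in\GRID^\ebmu$, the monomials $\fb(S_1),\fb(S_2)$ are directly governed by $\bb_1,\bb_2$, so none of the delicate magnitude estimates of Case~2 of Proposition~\ref{fuppermono} are needed.

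\textbf{Parts (b)--(e).}
For $\g=\bmu^\bk\fst^\ebmu 1$ ($\bk>\0$) we get $\g(S_2)/\g(S_1)=\prod_l(e^{D_l})^{k_l}\in\ba^*$, so $\supp(\g(S_2)-\g(S_1))=\{\g(S_1),\g(S_2)\}\subseteq\mag(\g(S_2)-\g(S_1))\,\ba^*$, which is (b); (b$'$) follows by applying (b) to $\g^{-1}\fst^\ebmu 1$ and writing $\g(S_2)-\g(S_1)=\big(\g^{-1}(S_1)-\g^{-1}(S_2)\big)\big/\big(\g^{-1}(S_1)\g^{-1}(S_2)\big)$, a quotient of $\ba$-witnessed transseries (Proposition~\ref{D_power} and the remark after it). For (c) write $B=a\fb(1+U)$ with $U$ small, $\supp U\subseteq\bmu^+$; then $B(S_2)-B(S_1)=a\big[(\fb(S_2)-\fb(S_1))+\sum_{\m\in\supp U}c_\m\big((\fb\m)(S_2)-(\fb\m)(S_1)\big)\big]$, the sum being $\ba$-convergent and each summand $\fst^\ba\fb(S_2)-\fb(S_1)$ by (a) (since $\fb\m\fst^\ebmu\fb$, $\fb\ne 1$), whence $B(S_2)-B(S_1)\fe\fb(S_2)-\fb(S_1)$. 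For (d) the same expansion with $\m_0:=\mag B\in\bmu^+$ gives $B(S_2)-B(S_1)=c_{\m_0}(\m_0(S_2)-\m_0(S_1))+R$ with $R\fst^\ba\m_0(S_2)-\m_0(S_1)$; since $\ba$ witnesses $\m_0(S_2)-\m_0(S_1)$ by (b), and $R$ is strictly smaller, $\ba$ witnesses $B(S_2)-B(S_1)$. For (e): each $\n\in\supp A$ satisfies $\n\fst^\ebmu\m\fsteq^\ebmu\mag B=\m_0\ne 1$ for some $\m\in\supp B$, so $\n(S_2)-\n(S_1)\fst^\ba\m_0(S_2)-\m_0(S_1)$ by (a); summing and using the magnitude identity from (d) yields $A(S_2)-A(S_1)\fst^\ba B(S_2)-B(S_1)$.

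\textbf{Parts (f), (g) and the obstacle.}
If $\sum A_j$ is $\bmu$-geometrically convergent with $A_1\fst^\ebmu 1$, then each $A_j\in\TWG\ebmu\ebmu$ (Proposition~\ref{geom_basic}) and $A_j\fst^\ebmu 1$, so (d) makes $\ba$ witness every $A_j(S_2)-A_j(S_1)$, (e) gives the chain $A_j(S_2)-A_j(S_1)\fgt^\ba A_{j+1}(S_2)-A_{j+1}(S_1)$, and $A_1(S_2)-A_1(S_1)\ne 0$ because its magnitude is $(\mag A_1)(S_1)$ or $(\mag A_1)(S_2)$ and these differ; hence $\sum(A_j(S_2)-A_j(S_1))$ is $\ba$-geometrically convergent. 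Part (g) is the identical argument with the multiple-series forms of (d), (e) applied index by index. The one place needing genuine care is part (b): witnessing the two-monomial difference $\g(S_2)-\g(S_1)$ demands $\g(S_2)/\g(S_1)\in\ba^*$ for \emph{every} $\g\in\GRID^\ebmu$ with $\g\fst^\ebmu 1$, which is exactly why the $n$ extra monomials $e^{D_l}$ must be thrown into $\ba$ (and why one must verify they really are small). Everything else is bookkeeping on top of the composition-addendum and geometric-convergence machinery already in hand.
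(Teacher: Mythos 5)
Your proposal has genuine gaps; the two central ones are in the construction of $\ba$ and in the key estimate for part (a).

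First, the set $\ba$ you define is not in general a ratio set. You assert that $D_l := L_l(S_2)-L_l(S_1)$ is purely large, but only $D_l<0$ follows from monotonicity. Purely largeness of $L_l$ is not preserved under taking such differences: for $L_l=-x$, $S_1=x$, $S_2=x+x^{-1}$ one gets $D_l=-x^{-1}$, which is small, so $e^{D_l}=e^{-x^{-1}}$ is not a monomial and cannot be adjoined to a ratio set. Relatedly, you repeatedly treat $\g(S_1)$ and $\g(S_2)$ as monomials (``$\supp(\g(S_2)-\g(S_1))=\{\g(S_1),\g(S_2)\}$'', ``$\fb(S_1)\in\supp(\fb(S_2)-\fb(S_1))$''). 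They are transseries with generally infinite support ($x^{-1}\circ(x+1)=x^{-1}-x^{-2}+\cdots$), so these support identities do not typecheck, and part (b) as written collapses. In the example above, $\supp(\g(S_2)-\g(S_1))$ for $\g=e^{-kx}$ is $\{x^{-j}e^{-kx}:j\ge1\}$, and witnessing it requires controlling the ratio $x^{-1}$, which your $\ba$ does not visibly contain. The paper instead witnesses only the finitely many objects $\mu_i(S_1)$, $\mu_i(S_2)$, $\mu_i(S_1)-\mu_i(S_2)$ and builds (b) by multiplying out $\g=\g_1\cdots\g_J$, with a case split on whether the dominances $\dom(\g_j(S_1))$ and $\dom(\g_j(S_2))$ agree.

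Second, in part (a) the decisive inequality is that each telescoping term $\fb(S_1)\cdot(\g_k(S_2)-\g_k(S_1))\cdot(\text{small factors})$ is $\fst^\ba \fb(S_2)-\fb(S_1)$. You justify this by claiming $\fb(S_1)$ lies in the support of the difference, so that multiplying by something $\fst^\ba 1$ lands below it. But $\fb(S_2)-\fb(S_1)$ can be far smaller than $\fb(S_1)$ because of cancellation: with $\fb=e^{x}$, $S_1=x$, $S_2=x+e^{-x}$ one has $\fb(S_1)=e^x$ while $\fb(S_2)-\fb(S_1)\fe 1$. The statement that saves the day is exactly the paper's claim (2), $\fb(S_1)\cdot(\mu_i(S_2)-\mu_i(S_1))\fst^\ba \fb(S_2)-\fb(S_1)$, whose proof requires a five-case analysis of $V=\fb(S_2)/\fb(S_1)$ (using $\log V\fst^\ba V-1$ when $V\sim1$, etc.) together with the Support Lemma to bound the finitely many magnitudes of $\sum p_i(L_i(S_2)-L_i(S_1))$ so that $\mu_i(S_2)-\mu_i(S_1)\fst^\ba\log V$ can be witnessed uniformly. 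Your proposal skips this entirely; without it, parts (a), and hence (c)--(g) which you derive from (a) and (b), are not established. The high-level architecture (telescoping identity, then summing over supports, then geometric convergence) does match the paper, but the analytic core is missing.
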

\begin{proof}
Write $\bmu = \{\mu_1,\cdots,\mu_n\}$ with $\mu_i = e^{L_i}$,
and $L_i$ is purely large.
By the Support Lemma~\ref{supportlemma}, the set
$$
	\SW := \SET{\sum_{i=1}^n p_i (L_i(S_2)-L_i(S_1))}{\bp \in \Z^n}
	\setminus \{0\}
$$
has finitely many different magnitudes:
$\SET{\mag Q}{Q \in \SW} = \{\g_1,\cdots,\g_m\}$.
If $1 \le j \le m$, then $\g_j = \mag Q$ with
$Q = \sum_{i=1}^n p_i (L_i(S_2)-L_i(S_1))$.
So for $1 \le l \le n$, since
$\sum p_iL_i \fgt 1 \fgt \mu_l$ we have
\begin{equation*}
	\mu_l(S_2) - \mu_l(S_1) \fst \sum p_i (L_i(S_2)-L_i(S_1)) \sim \g_j
\end{equation*}
by \cite[\Cderivcompare]{edgarc}.

Let the ratio set $\ba$
be such that:

$\ba$ witnesses $\mu_i(S_1)$;

$\ba$ witnesses $\mu_i(S_2)$;

$\ba$ witnesses $\mu_i(S_1)-\mu_i(S_2)$;

$\mu_i(S_1) \fst^\ba 1$;

$\mu_i(S_2) \fst^\ba 1$;

$\mu_i(S_1) - \mu_i(S_2) \fst^\ba \g_j$ for all $i,j$;

if $1 \le i,k \le n$ and
$\mu_i(S_1)-\mu_i(S_2) \fgt \mu_k(S_1)-\mu_k(S_2)$,
then
$$
	\mu_i(S_1)-\mu_i(S_2) \fgt^\ba \mu_k(S_1)-\mu_k(S_2) .
$$

Now let $\bmu^\bp \in \GRID^\ebmu$ with $\bmu^\bp \ne 1$.
Then $\bmu^\bp(S_1) \ne \bmu^\bp(S_2)$ and
$\log\bmu^\bp(S_2) - \log\bmu^\bp(S_1) \in \SW$,
so $\mag(\log\bmu^\bp(S_2) - \log\bmu^\bp(S_1)) = \g_j$
for some $j$.  Then for $1 \le i \le n$ we have
$\mu_i(S_2) - \mu_i(S_1) \fst^\ba \g_j$, so
of course
\begin{equation*}
	\mu_i(S_2) - \mu_i(S_1) \fst^\ba
	\log\bmu^\bp(S_2) - \log\bmu^\bp(S_1) .
\tag{1}
\end{equation*}
Write $V = \bmu^\bp(S_2)/\bmu^\bp(S_1)$
and note $V>0, V \ne 1$.  Since $\ba$ witnesses
$\mu_i(S_1)$ and $\mu_i(S_2)$ for all $i$,
by Propositions \ref{witproduct} and ~\ref{D_power},
$\ba$ witnesses $V$.
Next I claim
\begin{equation*}
	\bmu^\bp(S_1)\cdot\big(\mu_i(S_2)-\mu_i(S_1)\big)
	\fst^\ba
	\bmu^\bp(S_2) - \bmu^\bp(S_1) ,
\tag{2}
\end{equation*}
or equivalently $\mu_i(S_2)-\mu_i(S_1) \fst^\ba V-1$.
We prove this in five cases.

\textit{Case 1.} $V \sim 1$.
Then since $\ba$ witnesses $V$, we have $V-1 \fst^\ba 1$, so
\begin{equation*}
	\log V = \log\big(1+(V-1)\big) =
	\sum_{j=1}^\infty \frac{(-1)^{j+1}}{j}\,(V-1)^j
	\fst^\ba V-1 .
\end{equation*}
So by (1) we have $\mu_i(S_2)-\mu_i(S_1) \fst^\ba \log V \fst^\ba V-1$,
and therefore $\mu_i(S_2)-\mu_i(S_1) \fst^\ba V-1$.

\textit{Case 2.} $V \sim c, c \in \R, c>0, c \ne 1$.
Then
\begin{equation*}
	\mu_i(S_2)-\mu_i(S_1) \fst^\ba 1 \fe c-1 \fsteq^\ba V-1,
\end{equation*}
so by Proposition~\ref{wit_LE} we have
$\mu_i(S_2)-\mu_i(S_1) \fst^\ba V-1$.

\textit{Case 3.} $V \fst 1$.
Then $\mu_i(S_2)-\mu_i(S_1) \fst^\ba (-1) \sim V-1$, so
by Proposition~\ref{wit_LE} we have
$\mu_i(S_2)-\mu_i(S_1) \fst^\ba V-1$.
(Note: We do not say $\ba$ witnesses $V-1$.)

\textit{Case 4.} $V \fgt 1, \const V = 0$.  Then
$1 \in \supp(V-1)$, so $\mu_i(S_2)-\mu_i(S_1) \fst^\ba 1 \fsteq^\ba V-1$.
Thus $\mu_i(S_2)-\mu_i(S_1) \fst^\ba V-1$.
(Again in this case: We do not say $\ba$ witnesses $V-1$:
See Remark~\ref{case4counter}.)

\textit{Case 5.} $V \fgt 1, \const V \ne 0$.  Since $\ba$
witnesses $V$, this means $1 \fst^\ba \mag V = \mag(V-1)$.
So $\mu_i(S_2)-\mu_i(S_1) \fst^\ba 1 \fst^\ba V-1$.
Thus $\mu_i(S_2)-\mu_i(S_1) \fst^\ba V-1$.

This completes the proof of (2).

(a) Now let $\fa, \fb \in \GRID^\ebmu$ with $\fa \fst^\ebmu \fb$
and $\fb \ne 1$. We must show that
$\fa(S_2)-\fa(S_1) \fst^\ba \fb(S_2)-\fb(S_1)$.
Now $\fa = \fb\g_1\g_2\cdots\g_J$,
where $\g_j \in \bmu$ for all $j$ and $J \ge 1$.
Compute
\begin{align*}
	\fa(S_2) - \fa(S_1)
	&= \fb(S_2)\prod_{j=1}^J \g_j(S_2) - \fb(S_1)\prod_{j=1}^J \g_j(S_2)
	\\ &= \big(\fb(S_2)-\fb(S_1)\big)\prod_1^J\g_j(S_2)
	\\ &\quad+
	\fb(S_1)\big(\g_1(S_2)-\g_1(S_1)\big)\prod_2^J\g_j(S_2)
	\\ &\quad+
	\fb(S_1)\g_1(S_1)\big(\g_2(S_2)-\g_2(S_1)\big)\prod_3^J\g_j(S_2)
	\\ &\quad+\dots
	\\ &\quad+
	\fb(S_1)\prod_1^{k-1}\g_j(S_1)\big(\g_k(S_2)-\g_k(S_1)\big)
	\prod_{k+1}^J\g_j(S_2)
	\\ &\quad+\dots
	\\ &\quad+
	\fb(S_1)\prod_1^{J-1}\g_j(S_1)\big(\g_J(S_2)-\g_J(S_1)\big) .
\end{align*}
Note that $\ba$ witnesses that each of these terms is
$\fst \fb(S_2)-\fb(S_1)$; for this apply (2) in all terms
except the first.  Each term has one or more factors
$\g_j(S_1) \fst^\ba 1$ or $\g_j(S_2) \fst^\ba 1$,
and $\ba$ witnesses $1$, so we may apply
Proposition~\ref{prodwitnessmult} even if
$\ba$ does not witness $\fb(S_2)-\fb(S_1)$.
Thus $\fa(S_2) - \fa(S_1) \fst^\ba \fb(S_2)-\fb(S_1)$
by Proposition~\ref{witless1}.

(b) Let $\g \in \GRID^\ebmu$ with $\g \fst^\ebmu 1$.
Then $\g = \g_1\g_2\cdots\g_J$,
where $\g_j \in \bmu$ for all $j$ and $J \ge 1$.
Now $\g_j \fst 1$, $\g_j > 0$ and $S_1 < S_2$, so
$0 < \g_j(S_2) < \g_j(S_1)$ and therefore
$\dom(\g_j(S_1)) \ge \dom(g_j(S_2))$ and
$\mag(\g_j(S_1)) \fgteq^\ba \mag(\g_j(S_2))$.  We consider two cases.

\textit{Case 1.} $\dom(g_j(S_1)) > \dom(g_j(S_2))$ for some $j$.
Then
$$
	\dom(\g(S_1)) = \prod_{j=1}^J \dom(\g_j(S_1)) >
	\prod_{j=1}^J \dom(\g_j(S_2)) = \dom(\g(S_2)) .
$$
So $\mag(\g(S_1)-\g(S_2)) = \mag(\g(S_1))$.
Now let $\m \in \supp(\g(S_1)-\g(S_2))$.  One possibility
is $\m \in \supp(\g(S_1))$, so $\m = \prod_{j=1}^J \m_j$
with $\m_j \in \supp(\g_j(S_1))$ for all $j$.
But since $\ba$ witnesses $\g_j(S_1)$, this means
$\m_j \fsteq^\ba \mag(\g_j(S_1))$.  Therefore
$\m = \prod \m_j \fsteq^\ba \prod\mag(\g_j(S_1)) = \mag(\g(S_1))
=\mag(\g(S_1)-\g(S_2))$.
The other possibility is $\m \in \supp(\g(S_2))$, so
$\m = \prod \m_j$ with $\m_j \in \supp(\g_j(S_2))$.  But
$\ba$ witnesses $\g_j(S_1)-\g_j(S_2)$ and $\g_j(S_2)$, so
$\m_j \fsteq^\ba \mag(\g_j(S_2)) \fsteq^\ba \mag(\g_j(S_1))$.
Then as before $\m \fsteq^\ba \mag(\g(S_1)-\g(S_2))$.
Therefore $\ba$ witnesses $\g(S_1)-\g(S_2)$.

\textit{Case 2.} $\dom(g_j(S_1)) = \dom(g_j(S_2))$ for all $j$.
Write $\g_j(S_2) = \g_j(S_1)\cdot(1-V_j)$
with $V_j \fst^\ba 1, V_j>0$.  Note $\ba$ witnesses
$V_j = (\g_j(S_1)-\g_j(S_2))/\g_j(S_1)$.
Then
$$
	1 - \prod_{j=1}^J (1-V_j) = \sum_{j=1}^J V_j + U,
$$
where each term of $U$ is $\fst^\ba$ one of the $V_j$
and $\mag \sum V_j$ is $\mag V_j$ for the largest of the $V_j$.
So $\ba$ witnesses
$1 - \prod (1-V_j)$.  Now
$\g(S_1)-\g(S_2) = \g(S_1)\cdot (1 - \prod (1-V_j))$
and $\ba$ also witnesses $\g(S_1)$, so
$\ba$ witnesses $\g(S_1)-\g(S_2)$.

(b$'$) Let $\g \in \GRID^\ebmu$, $\g \fgt^\ebmu 1$.  As already
noted, $\ba$ witnesses $\g(S_1)$ and $\g(S_2)$.  Now
$\g^{-1} \fst^\ebmu 1$, so we apply (b) to it:
$\ba$ witnesses $\g^{-1}(S_1) - \g^{-1}(S_2)$ and therefore
$\ba$ witnesses
$\g(S_2) - \g(S_1) = \g^{-1}(S_1)\g^{-1}(S_2)
\big(\g^{-1}(S_1) - \g^{-1}(S_2)\big)$.

(c) Let $B \in \TWG\ebmu\ebmu$.  Then $\fb := \mag B \in \GRID^\ebmu$.
Assume $\fb \ne 1$.
Let $B = \sum a_\m \m$.  If $a_\m \m$ is any term in $B$ other
than the dominant term, then since $\bmu$ witnesses $B$, we
have $\m \fst^\ebmu \fb$, and therefore
$\m(S_2)-\m(S_1) \fst^\ba \fb(S_2) - \fb(S_1)$
by (a).  So
\begin{align*}
	a_\m \m(S_1) - a_\m \m(S_1) &\fst^\ba \fb(S_2)-\fb(S_1)
	\quad\text{if $\m \fst \fb$,}
\tag{1}
	\\
	a_\m \m(S_1) - a_\m \m(S_1) &\fe \fb(S_2)-\fb(S_1)
	\quad\text{if $\m = \fb$.}
\end{align*}
Summing these, we get $B(S_2) - B(S_1) \fe \fb(S_2)-\fb(S_1)$.

(d) With the notation of (c), assume also $\fb \fst^\ba 1$.
Then sum (1) and note $\ba$ witnesses $\fb(S_2)-\fb(S_1)$ to
conclude that $\ba$ witnesses $B(S_2)-B(S_1)$.

(e) Let $A, B \in \TWG\ebmu\ebmu$, $A \fst^\ebmu B \fst^\ebmu 1$.
Write $\fb = \mag B$.  For every $\fa \in \supp A$ we have
$\fa \fst^\ba \fb$, so as in (c) we conclude
$A(S_2) - A(S_1) \fst^\ba B(S_2)-B(S_1)$.

(f) follows from (d) and (e).

(g) follows from (d) and (e).

\end{proof}

\begin{re}\label{case4counter}
In Case 4 in the proof for Theorem~\ref{MVT}:
Although $\mu_i(S_2)-\mu_i(S_1) \fst^\ba V-1$
and $V \fgt 1$, we cannot conclude
$\mu_i(S_2)-\mu_i(S_1) \fst^\ba V$.  In fact, we cannot choose ratio set $\ba$
that will achieve this.
For an example: let $\bmu = \{\mu_1,\mu_2\}$,
$\mu_1 = e^{-x}$, $\mu_2=e^{-e^x}$, $S_1=x$, $S_2=2x$.
Write $\nu_i = \mu_i(S_2)/\mu_i(S_1)$, so
$\nu_1 = e^{-x}$ and
$\nu_2 = e^{-e^{2x}+e^x}$ are small monomials and
$\nu_1^{j} \nu_2^{-1} \fgt 1$ for $j \in \N$.
Take $\bp=(j,-1)$ so $\bmu^\bp = \mu_1^{j} \mu_2^{-1}$,
$V = \bmu^\bp(S_2)/\bmu^\bp(S_1)$.
Assume
$\mu_1(S_2)-\mu_1(S_1) \fst^\ba V$ (for all $j$).
Compute
\begin{align*}
	\mu_1(S_2)-\mu_1(S_1) &= e^{-2x}-e^{-x} ,
	\\
	V=\frac{\bmu^\bp(S_2)}{\bmu^\bp(S_1)} &=
	\frac{\displaystyle e^{e^{2x}-2jx}}{\displaystyle e^{e^x-jx}}
	= e^{e^{2x}-e^x-jx},
\end{align*}
a monomial.  So we have $e^{-x} \fst^\ba e^{e^{2x}-e^x-jx}$ for  all $j$.
This means $\ba^*$ contains all
$e^{-e^{2x}+e^x+(j-1)x}$ and is therefore not well-ordered.
So we have a contradiction.
\end{re}

\begin{re}
The following is not true:  Given $\bmu$, $S_2$, $S_1$, there is $\ba$
so that: if $\g \in \GRID^\ebmu$ then $\ba$
witnesses $\g(S_2)-\g(S_1)$.  This is a continuation of
Remark~\ref{case4counter}.  Let $\bmu, S_1, S_2$ be as before.
Let $\g = \mu_1^j\mu_2^{-1}$. So
$\g(S_2) - \g(S_1) = e^{e^{2x}-2jx} - e^{e^x-jx}$.
If $\ba$ witnesses this, then
$e^{-e^{2x}+e^x+jx} \in \ba^*$.
As noted, this is not possible for all $j$ that this belong to the
same grid $\ba^*$.
\end{re}

\begin{re}
The following is not true:  Given $\bmu, S_1, S_2$, there is $\ba$
so that: if $\sum A_j$ converges $\bmu$-geometrically, then
$\sum(A_j(S_2) - A_j(S_1))$
converges $\ba$-geometrically.
This is another continuation of Remark~\ref{case4counter}.
Let $A_j = \mu_1^j\mu_2^{-1}$.  Then $\sum A_j$
converges $\bmu$-geometrically.  But there is no ratio set $\ba$
that witnesses all terms $A_j(S_2) - A_j(S_1)$.
[Does this suggest that we should we change the definition
of geometric convergence?]
\end{re}

Potentially, there is a separate theorem like Theorem~\ref{MVT}
for each $[\DD_n]$ in \cite[\S\Ctaylorlabel]{edgarc}.


\begin{thebibliography}{99}

\bibitem{asch}
M. Aschenbrenner, L. van den Dries,
``Asymptotic differential algebra.''
In~\cite{proc}, pp.~49--85

\bibitem{costintop}
O. Costin, ``Topological construction of transseries and introduction to generalized Borel summability.''
In~\cite{proc}, pp.~137--175

\bibitem{costinglobal}
O. Costin, ``Global reconstruction of analytic functions
from local expansions and a new general method of converting
sums into integrals.'' preprint, 2007.\hfill\break
\texttt{http://arxiv.org/abs/math/0612121}

\bibitem{costinasymptotics}
O. Costin,
\textit{Asymptotics and Borel Summability.}
CRC Press, London, 2009

\bibitem{proc}
O. Costin, M. D. Kruskal, A. Macintyre (eds.),
\emph{Analyzable Functions and Applications}
(\emph{Contemp. Math.} \textbf{373}).
Amer. Math. Soc., Providence RI, 2005

\bibitem{DMM}
L. van den Dries, A. Macintyre, D. Marker,
``Logarithmic-exponential series.''
\emph{Annals of Pure and Applied Logic} \textbf{111} (2001) 61--113

\bibitem{edgar}
G.~Edgar,
``Transseries for beginners.'' preprint, 2009.\hfill\break
\texttt{http://arxiv.org/abs/0801.4877} or\hfill\break
\texttt{http://www.math.ohio-state.edu/\hbox{$\sim$}edgar/preprints/trans\_begin/}

\bibitem{edgarc}
G.~Edgar,
``Transseries: composition, recursion, and convergence.''
\textit{forthcoming}\hfill\break
\texttt{http://arxiv.org/abs/0909.1259v1} or\hfill\break
\texttt{http://www.math.ohio-state.edu/\hbox{$\sim$}edgar/preprints/trans\_compo/}

\bibitem{gravett}
K. A. H. Gravett,
``Valued linear spaces.''
\emph{Quart. J. Math. Oxford} (2) \textbf{7} (1955) 309--315

\bibitem{higman}
G. Higman, ``Ordering by divisibility in abstract algebras.''
\textit{Proc. London Math. Soc.} \textbf{2} (1952) 326--336

\bibitem{hoeven}
J. van der Hoeven,
\emph{Transseries and Real Differential Algebra}
(\emph{Lecture Notes in Mathematics} \textbf{1888}).
Springer, New York, 2006

\end{thebibliography}
\end{document}